\documentclass[11pt,twoside]{amsart}

\usepackage[a4paper,inner=3.0cm,outer=2.5cm,top=2.8cm,bottom=2.8cm]{geometry}
\usepackage[T1]{fontenc}
\usepackage[english]{babel}
\usepackage{lmodern}
\usepackage{amssymb,amsthm,amsmath,mathtools}
\usepackage{mathrsfs}
\usepackage{tikz-cd}
\usepackage{enumitem}
\usepackage{xcolor}
\usepackage[pagebackref]{hyperref}
\usepackage[nameinlink,noabbrev]{cleveref}
\usepackage{aliascnt}
\usepackage[protrusion=true,expansion=false]{microtype}

\hypersetup{
  colorlinks=true,
  linkcolor=blue!45!black,
  citecolor=blue!45!black,
  urlcolor=blue!55!black,
  pdftitle={Finite-coefficient K-theory of henselian valued fields and Gersten injectivity},
  pdfauthor={Niels Feld}
}

\renewcommand*{\backrefalt}[4]{%
  \ifcase #1\relax
  \or {\footnotesize\textup{[p.~#2]}}%
  \else {\footnotesize\textup{[pp.~#2]}}%
  \fi
}

\newcommand{\FilSp}{\operatorname{FilSp}}
\newcommand{\Fil}{\operatorname{Fil}}
\DeclareMathOperator{\Frac}{Frac}
\DeclareMathOperator{\Perf}{Perf}
\DeclareMathOperator{\Spec}{Spec}
\newcommand{\et}{\mathrm{\acute{e}t}}
\newcommand{\mot}{\operatorname{mot}}
\newcommand{\sep}{\operatorname{sep}}
\newcommand{\syn}{\operatorname{syn}}
\newcommand{\stackstag}[1]{%
  \href{https://stacks.math.columbia.edu/tag/#1}{\textup{Tag~#1}}}
\newcommand{\arxivhtmlref}[3]{%
  \href{https://arxiv.org/html/#1\##2}{#3}}
\newcommand{\arxivabsref}[2][]{%
  \href{https://arxiv.org/abs/#2}{arXiv:#2#1}}

\newtheorem{theorem}{Theorem}[section]
\crefname{theorem}{theorem}{theorems}
\Crefname{theorem}{Theorem}{Theorems}
\newaliascnt{proposition}{theorem}
\newtheorem{proposition}[proposition]{Proposition}
\aliascntresetthe{proposition}
\crefname{proposition}{proposition}{propositions}
\Crefname{proposition}{Proposition}{Propositions}
\newaliascnt{lemma}{theorem}
\newtheorem{lemma}[lemma]{Lemma}
\aliascntresetthe{lemma}
\crefname{lemma}{lemma}{lemmas}
\Crefname{lemma}{Lemma}{Lemmas}
\newaliascnt{corollary}{theorem}
\newtheorem{corollary}[corollary]{Corollary}
\aliascntresetthe{corollary}
\crefname{corollary}{corollary}{corollaries}
\Crefname{corollary}{Corollary}{Corollaries}
\theoremstyle{definition}
\newaliascnt{definition}{theorem}
\newtheorem{definition}[definition]{Definition}
\aliascntresetthe{definition}
\crefname{definition}{definition}{definitions}
\Crefname{definition}{Definition}{Definitions}
\newaliascnt{construction}{theorem}
\newtheorem{construction}[construction]{Construction}
\aliascntresetthe{construction}
\crefname{construction}{construction}{constructions}
\Crefname{construction}{Construction}{Constructions}
\newaliascnt{convention}{theorem}
\newtheorem{convention}[convention]{Convention}
\aliascntresetthe{convention}
\crefname{convention}{convention}{conventions}
\Crefname{convention}{Convention}{Conventions}
\theoremstyle{remark}
\newaliascnt{remark}{theorem}
\newtheorem{remark}[remark]{Remark}
\aliascntresetthe{remark}
\crefname{remark}{remark}{remarks}
\Crefname{remark}{Remark}{Remarks}

\providecommand{\llbracket}{[\![}
\providecommand{\rrbracket}{]\!]}
\numberwithin{equation}{section}
\title[Finite-coefficient K-theory of henselian valued fields and Gersten injectivity]
{Finite-coefficient K-theory of henselian valued fields and Gersten injectivity}
\author{Niels Feld}
\address{IRMAR, University of Rennes, 263 Avenue G\'en\'eral Leclerc,
35000 Rennes, France}
\email{\href{mailto:niels.feld@univ-rennes.fr}{niels.feld@univ-rennes.fr}}
\date{15 August 2026}
\subjclass[2020]{19D45, 13A18, 14F42}
\keywords{algebraic \(K\)-theory, henselian valuation rings, finite coefficients,
motivic filtration, tame inertia, Gersten injectivity}

\begin{document}

\begin{abstract}
Let \(W\) be a henselian valuation ring with fraction field \(L\), residue
field \(k\), and value group \(\Gamma_W\). Let \(N=\ell^\nu\) be
invertible in \(W\). Choose an ordered \(\mathbf Z/N\)-basis \(B\) of
\(\Gamma_W/N\Gamma_W\). Products of suitable filtration-one classes
define an equivalence of complete filtered spectra
\[
\bigoplus_{\substack{J\subseteq B\\J\text{ finite}}}
\Sigma^{|J|}\Fil_{\mot}^{\bullet-|J|}K(k;\mathbf Z/N)
\xrightarrow{\ \simeq\ }
\Fil_{\mot}^{\bullet}K(L;\mathbf Z/N).
\]
The summand indexed by \(\varnothing\) is the generic restriction map,
after the rigidity equivalence
\(K(W;\mathbf Z/N)\simeq K(k;\mathbf Z/N)\), and is therefore split
injective. Independently of this splitting, excision yields a
lifting theorem for regular henselian pairs. In particular, this yields finite-coefficient Gersten injectivity for noetherian henselian regular
local rings. Applications to completions along regular primes
give relative and sometimes nonhenselian examples. More generally, if \(P\) is a Prüfer
ring and \(R\) is a henselian local ind-smooth \(P\)-algebra, then \(R\)
is a domain and
\(K_n(R;\mathbf Z/N)\to K_n(\Frac(R);\mathbf Z/N)\) is injective for
every \(n\), provided \(N\in R^\times\). These injectivity consequences
extend from prime-power to arbitrary finite invertible coefficients by
primary decomposition. 

\end{abstract}

\maketitle

\tableofcontents

\section{Introduction}\label{sec:introduction}

Let \(W\) be a henselian valuation ring. Write \(L\) for its fraction
field, \(k\) for its residue field, and \(\Gamma_W\) for its value group.
Fix a prime power \(N=\ell^\nu\) which is invertible in \(W\). Gabber's
rigidity theorem identifies \(K(W;\mathbf Z/N)\) with
\(K(k;\mathbf Z/N)\); see
\cite[\arxivhtmlref{1803.10897v2}{S1.Thmdefinition4}{Thm.~1.4}]{ClausenMathewMorrow21HenselianPairs} and
\cite{Gabber92Rigidity}. The generic restriction map
\[
K(W;\mathbf Z/N)\longrightarrow K(L;\mathbf Z/N)
\]
is not part of that theorem. We describe it by combining tame Galois
cohomology with the motivic filtration of Bouis.

\subsection*{Main results}

Bouis associates with every qcqs scheme \(T\) a decreasing multiplicative
filtration \(\Fil_{\mot}^\bullet K(T)\). Its graded pieces are
\(\mathbf Z(q)_{\mot}(T)[2q]\)
\cite[Defs.~3.18 and~3.20, Thm.~C\textup{(1)}]{BouisMixedCharacteristicMotivicCohomology}.
For finite coefficients on a field, the comparison theorem of
Bouis--Kundu identifies these graded pieces with truncated étale Tate
twists
\cite[\arxivhtmlref{2506.09910v2}{S5.Thmtheorem1}{Thm.~5.1}]{BouisKunduBL}.
We recall the precise conventions in
\Cref{sec:finite-coefficient-motivic-filtrations}.
Filtered closed-point rigidity and generic restriction define the map
\[
s_{W,N}\colon \Fil_{\mot}^\bullet K(k;\mathbf Z/N)
\longrightarrow \Fil_{\mot}^\bullet K(L;\mathbf Z/N)
\]
used below; see \Cref{lem:rigidity-specialization-filtered-inflation}.

\begin{theorem}[value-exterior splitting]
\label{thm:intro-value-exterior-splitting}
Choose a \(\mathbf Z/N\)-basis \(B\) of
\(\Gamma_W/N\Gamma_W\) and a total order on \(B\).

For each \(b\in B\), choose \(\varpi_b\in L^\times\) whose value has
class \(b\) in \(\Gamma_W/N\Gamma_W\). There is a map
\[
\Phi_B\colon
\bigoplus_{\substack{J\subseteq B\\J\text{ finite}}}
\Sigma^{|J|}\Fil_{\mot}^{\bullet-|J|}K(k;\mathbf Z/N)
\longrightarrow
\Fil_{\mot}^{\bullet}K(L;\mathbf Z/N)
\]
such that, on the summand indexed by
\(J=\{b_1<\cdots<b_r\}\), its homotopy class is specialization followed
by right multiplication, in this order, by
\(
\widetilde\kappa_L(\varpi_{b_1}),\ldots,
\widetilde\kappa_L(\varpi_{b_r}).
\)
The map \(\Phi_B\) is an equivalence of complete filtered spectra.
Consequently,
\[
K(L;\mathbf Z/N)\simeq
\bigoplus_{\substack{J\subseteq B\\J\text{ finite}}}
\Sigma^{|J|}K(k;\mathbf Z/N).
\]
Under this equivalence, the filtered specialization \(s_{W,N}\) is the
inclusion of the summand indexed by \(\varnothing\).
\end{theorem}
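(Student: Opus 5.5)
Since both sides are complete filtered spectra, I would reduce to checking that \(\Phi_B\) induces an equivalence on every graded piece. The graded pieces of the source in weight \(q\) are \(\bigoplus_J \Sigma^{|J|}\mathbf Z/N(q-|J|)_{\mot}(k)[2q-2|J|]\). Via the Bouis--Kundu comparison these are \(\Sigma^{|J|}\tau^{\le q-|J|}R\Gamma_{\et}(k,\mu_N^{\otimes(q-|J|)})[2q-2|J|]\). On the target, the graded piece is \(\tau^{\le q}R\Gamma_{\et}(L,\mu_N^{\otimes q})[2q]\). Because the filtration is multiplicative, the map on graded pieces is computed by specialization on \(\mathrm{gr}\) followed by cup product with the images of \(\widetilde\kappa_L(\varpi_b)\) in \(\mathrm{gr}^1\). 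These images are the Kummer classes \(\kappa(\varpi_b)\in H^1(L,\mu_N)\).

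The problem therefore becomes a statement in Galois cohomology. I would first pass to cohomology groups. The inflation \(H^i(k,\mu_N^{\otimes j})\to H^i(L,\mu_N^{\otimes j})\) factors through the tame quotient. The wild inertia is pro-\(p\) with \(p\) the residue characteristic, so it is cohomologically trivial for \(\ell\)-torsion coefficients, and \(H^*(L)=H^*(G_L^{t})\). Next, the tame inertia is \(\mathrm{Hom}(\Gamma_W,\widehat{\mathbf Z}'(1))\)-like, and mod \(N\) its relevant piece is \(I_N=\mathrm{Hom}(\Gamma_W/N,\mu_N)\). Since \(W\) is henselian, \(G_L^t\) is an extension of \(G_k\) by the tame inertia. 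The Hochschild--Serre spectral sequence then degenerates, because the extension splits after the choice of the \(\varpi_b\), which give \(N\)-th roots and hence a section.

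This yields \(H^n(L,\mu_N^{\otimes q})\cong\bigoplus_J H^{n-|J|}(k,\mu_N^{\otimes(q-|J|)})\). Here \(H^*(I_N,\mathbf Z/N)\) is the exterior algebra on the duals \(\kappa(\varpi_b)\); for an infinite basis I would take a colimit over finite sub-bases. I would verify this exterior-algebra statement carefully, since the group cohomology of \((\mathbf Z/N)^r\) is not exterior. The correct input is the pro-\(\ell\) group \(\mathbf Z_\ell(1)^r\), whose cohomology with \(\mathbf Z/N\) coefficients is exterior. I would therefore work with the full \(\ell\)-part of tame inertia, \(\mathrm{Hom}(\Gamma_W,\mathbf Z_\ell(1))\), whose mod-\(N\) cohomology is exterior on \(\mathrm{Hom}(\Gamma_W/N,\mathbf Z/N)\).

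Next I would check compatibility with the truncations. The summand \(J\) contributes in degrees \(\le (q-|J|)+|J|=q\), so the degree truncations match termwise. Hence the map on \(\mathrm{gr}^q\) is an equivalence of truncated complexes.

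Completeness of the target and convergence of the source sum are needed to conclude \(\Phi_B\) is a filtered equivalence. For fields the motivic filtration is exhaustive and complete, and the direct sum of complete filtrations is complete in each degree because weights shift up by \(|J|\). Forgetting the filtration then gives the stated splitting. The last claim holds by construction, since the \(J=\varnothing\) summand is \(s_{W,N}\) itself.

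Constructing \(\Phi_B\) as an actual filtered map, rather than a homotopy class, uses the module structure of \(\Fil_{\mot}K(L;\mathbf Z/N)\) over \(\Fil_{\mot}K(L)\) and the lifts \(\widetilde\kappa_L(\varpi_b)\in\Fil^1\) defined earlier.

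\textbf{Main obstacle.} I expect the hardest step to be identifying the product-with-\(\widetilde\kappa_L\) map on graded pieces with the cup product in the tame Hochschild--Serre decomposition, including signs and ordering. Establishing that the degeneration is multiplicative needs the same care. I would first handle one element \(|B|=1\), using the residue/Gysin sequence for the rank-one case. I would then induct along the ordered basis by iterated tame extensions, noting that right multiplication order matches the iteration.
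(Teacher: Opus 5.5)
The reduction to graded pieces, the matching of truncations, and the use of exterior cohomology of \(\prod_b\mathbf Z_\ell(1)\) with a colimit over finite sub-bases all agree with the paper.

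\textbf{The main gap.} You assert in your first paragraph, and then list as the main obstacle, that on \(\operatorname{gr}^q\) right multiplication by \(\widetilde\kappa_L(\varpi_b)\) is cup product with the Kummer class \(\delta_N(\varpi_b)\). Multiplicativity of the filtration only gives the \emph{motivic} product with \(\kappa_L(\varpi_b)\in H^1_{\mot}(L,\mathbf Z(1))\cong L^\times\). The Bouis--Kundu comparisons are separate equivalences, one for each weight. Nothing you invoke says that they carry motivic products to cup products. Nor does anything say that the mod-\(N\) reduction of \(\kappa_L(a)\) realizes to \(\delta_N(a)\) rather than to some multiple of it. A non-unit multiple would destroy the graded isomorphism.

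Your proposed remedy cannot be run for a general henselian \(W\). That remedy is a Gysin sequence for \(|B|=1\), followed by induction along \(B\) through iterated tame extensions. The localization-plus-dévissage argument needs a discretely valued ring, but \(|B|=1\) already allows, for example, \(\Gamma_W=\mathbf Z[1/2]\) with \(\ell=3\). Rank-one groups such as \(\mathbf Z+\mathbf Z\alpha\subset\mathbf R\), or an infinite \(B\), do not present \(L\) as a tower of discretely valued steps indexed by the basis.

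\textbf{The missing idea.} You need to reduce the discrepancy to a universal scalar, as the paper does.
\begin{enumerate}[label=\textup{(\arabic*)},leftmargin=*]
\item Étale sheafification is symmetric monoidal, with \(L_{\et}\mathbf Z/N(q)_{\mot}\simeq\mu_N^{\otimes q}\) and \(L_{\et}\mathbf Z(1)_{\mot}\simeq\mathbf G_m[-1]\).
\item Hence the sheafified pairing is an endomorphism of the invertible sheaf \(\mu_N^{\otimes(q+1)}\) on the connected scheme \(\Spec(F)\). That is, it is a scalar \(\lambda_q\in\mathbf Z/N\).
\item Realization carries \(x\cdot\kappa_F(a)\) to \(\lambda_q\) times the cup product of the realization of \(x\) with \(\delta_N(a)\).
\item The scalar is invariant under field extension, so it can be computed once, on \(k_0((t))\) with \(k_0\) separably closed.
\item There, localization, dévissage, rigidity and \(i^*i_*=0\) make multiplication by \([t]\) an isomorphism \(K_{2q}\to K_{2q+1}\) between free rank-one \(\mathbf Z/N\)-modules. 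This forces \(\lambda_q\) to be a unit.
\end{enumerate}
Your Gysin idea is the right test case, but only after this reduction.

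\textbf{Smaller points.}
\begin{enumerate}[label=\textup{(\roman*)},leftmargin=*]
\item \emph{Degeneration.} Choosing the \(\varpi_b\) does not produce a section of \(G_L\to G_k\). Even a section would only kill differentials landing in row \(0\). Degeneration comes instead from the fact that the \(\delta_N(\varpi_b)\) restrict to a basis of \(E_2^{0,1}(1)=\Gamma_W/N\Gamma_W\), so they are permanent cycles. Together with row \(0\) they generate the \(E_2\)-page over all weights, so the derivation rule forces \(E_2=E_\infty\).
\item \emph{Completeness of the source.} Completeness of each summand does not give completeness of the infinite coproduct, because sums do not commute with sequential limits. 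What you need is the uniform bound \(\pi_i\Fil^s_{\mot}K(k;\mathbf Z/N)=0\) for \(i\le s-2\). This gives \(\pi_i\mathcal E_B^s=0\) for \(i\le s-2\), independently of \(J\).
\item \emph{Specialization on graded pieces.} Identifying \(\operatorname{gr}^q s_{W,N}\) with truncated inflation is not automatic. It requires filtered Gabber rigidity and Gabber's affine proper base change.
\end{enumerate}
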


The Kummer-normalized classes
\(\widetilde\kappa_L(\varpi_b)\) are defined in
\Cref{def:kummer-normalized-weight-one-classes}, and the map
\(\Phi_B\) is constructed in \Cref{con:value-exterior-map}. It depends
on the ordered basis \(B\) and the lifts \(\varpi_b\). Constructing an
actual map also requires representatives of the filtration-one classes,
but its homotopy class is independent of those representatives. The
filtered and unfiltered decompositions are therefore noncanonical, but
the empty summand is canonical. The theorem is
\Cref{thm:value-exterior-splitting}. No finiteness assumption is made on
the rank of \(W\) or on the cardinality of \(B\).

\begin{corollary}[Gersten injectivity for henselian valuation rings]
\label{cor:intro-henselian-valuation-generic-injectivity}
For every \(n\in\mathbf Z\), the map
\[
K_n(W;\mathbf Z/N)\longrightarrow K_n(L;\mathbf Z/N)
\]
is split injective.
\end{corollary}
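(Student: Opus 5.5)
The corollary should follow formally from \Cref{thm:intro-value-exterior-splitting} together with Gabber rigidity. I would proceed in three steps.

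First, I would identify the map in the statement with the underlying (unfiltered) map of the filtered specialization \(s_{W,N}\), up to the rigidity equivalence. By \Cref{lem:rigidity-specialization-filtered-inflation}, \(s_{W,N}\) is defined as the composite of the inverse of the filtered closed-point rigidity equivalence \(\Fil_{\mot}^\bullet K(W;\mathbf Z/N)\simeq\Fil_{\mot}^\bullet K(k;\mathbf Z/N)\) with generic restriction \(\Fil_{\mot}^\bullet K(W;\mathbf Z/N)\to\Fil_{\mot}^\bullet K(L;\mathbf Z/N)\). Both filtrations are complete and exhaustive in the relevant sense: completeness is stated in the theorem, and the underlying object of \(\Fil^\bullet_{\mot}\) in filtration degree \(0\) is \(K(-;\mathbf Z/N)\). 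Passing to underlying spectra therefore gives the commutative triangle
\[
K(k;\mathbf Z/N)\xleftarrow{\ \simeq\ }K(W;\mathbf Z/N)\longrightarrow K(L;\mathbf Z/N),
\]
whose composite from left to right is the underlying map of \(s_{W,N}\). Here the left arrow is the restriction to the closed point, which is an equivalence by Gabber's rigidity theorem as recalled in the introduction.

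Second, I would use the final clause of \Cref{thm:intro-value-exterior-splitting}. Under the equivalence \(\Phi_B\), the map \(s_{W,N}\) is the inclusion of the summand indexed by \(J=\varnothing\) into \(\bigoplus_J\Sigma^{|J|}\Fil_{\mot}^{\bullet-|J|}K(k;\mathbf Z/N)\). The inclusion of a summand into a direct sum of spectra has a retraction, namely the projection onto that summand, and direct sums commute with passing to underlying spectra. Hence the underlying map \(K(k;\mathbf Z/N)\to K(L;\mathbf Z/N)\) admits a left inverse \(\mathrm{pr}_\varnothing\circ\Phi_B^{-1}\) in the homotopy category of spectra.

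Third, I would take \(\pi_n\) for each \(n\in\mathbf Z\). Precomposing with the rigidity isomorphism \(K_n(W;\mathbf Z/N)\cong K_n(k;\mathbf Z/N)\), the generic restriction \(K_n(W;\mathbf Z/N)\to K_n(L;\mathbf Z/N)\) becomes the inclusion of a direct summand, so it is split injective.

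The only point needing care is the first step. The theorem is phrased for filtered objects, and we must check that "underlying spectrum" means the colimit over the filtration, or the degree-\(0\) term. For fields and for \(W\), Bouis's filtration has \(\Fil^{0}_{\mot}\) equal to all of \(K\), or at least \(\Fil^{\le 0}\) agrees with \(K\) after reduction mod \(N\). I expect this to be immediate from the conventions in \Cref{sec:finite-coefficient-motivic-filtrations}, so there is no real obstacle.
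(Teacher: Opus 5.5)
Your proposal is correct and is essentially the paper's argument (\Cref{thm:henselian-valuation-generic-injectivity} via \Cref{cor:specialisation-split-injective}): the retraction is \(\sigma_{W,N}\circ\mathrm{pr}_\varnothing\circ\Phi_B^{-1}\), where \(\sigma_{W,N}\) is a homotopy inverse of the nonconnective rigidity equivalence and \(j^*_{W,N}\simeq s_{W,N}\circ\rho_{W,N}\). The point you flag in step one is indeed immediate, because the filtration is \(\mathbf N\)-indexed with \(\Fil^0_{\mot}K(-;\mathbf Z/N)\simeq K(-;\mathbf Z/N)\), so completeness plays no role there; note also that the nonconnective form of rigidity you quote needs the vanishing of negative \(K\)-groups of valuation rings, which the paper records separately.
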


This is \Cref{thm:henselian-valuation-generic-injectivity}.

\begin{remark}[arbitrary finite coefficients]
\label{rem:primary-decomposition-generic-injectivity}
The splitting theorem is stated for a prime power in order to use a
\(\mathbf Z/\ell^\nu\)-basis of the value quotient. Its injectivity
consequences, including the Prüfer-base and regular applications below,
extend to every finite \(N\) invertible in the relevant ring. Write
\(N=\prod_{\ell\mid N}\ell^{\nu_\ell}\). 
We have the natural primary decomposition
\[
K(-;\mathbf Z/N)\simeq
\bigoplus_{\ell\mid N}K(-;\mathbf Z/\ell^{\nu_\ell}).
\]
Apply the prime-power cases to the summands; the kernel of a generic
restriction map decomposes accordingly.
\end{remark}

\begin{theorem}[Prüfer-base Gersten injectivity]
\label{thm:intro-prufer-ind-smooth-generic-injectivity}
Let \(P\) be a Prüfer ring, let \(R\) be a henselian local ind-smooth
\(P\)-algebra, and let \(N=\ell^\nu\) be invertible in \(R\). Then
\(R\) is a domain, and
\[
K_n(R;\mathbf Z/N)\longrightarrow K_n(\Frac(R);\mathbf Z/N)
\]
is injective for every \(n\in\mathbf Z\).
\end{theorem}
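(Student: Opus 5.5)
The plan is to reduce to the henselian valuation ring case (\Cref{cor:intro-henselian-valuation-generic-injectivity}) through a lifting argument along a regular henselian pair, using excision, as the abstract indicates.

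\textbf{Step 1: reduction to finite type.} Write \(R\) as a filtered colimit of henselizations at primes of smooth finitely presented \(P\)-algebras. Since \(K(-;\mathbf Z/N)\) commutes with filtered colimits of rings, and the fraction fields form a compatible colimit, injectivity for \(R\) reduces to the case where \(R\) is the henselization of a smooth \(P\)-algebra \(A\) at a prime \(\mathfrak q\). By a further colimit we may also take \(P\) to be the localization of the base at \(\mathfrak p=\mathfrak q\cap P\). Thus \(P=V\) is a valuation ring.

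\textbf{Step 2: \(R\) is a domain.} A smooth algebra over a valuation ring is a normal domain locally. It is flat over \(V\), and its fibres are smooth, hence normal. Since \(V\) is Prüfer, smooth \(V\)-algebras are locally domains: they are torsion-free and have integral generic fibre near \(\mathfrak q\). Henselization preserves being a normal local domain, and filtered colimits of domains along injective maps are domains.

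\textbf{Step 3: injectivity via valuation rings.} The key construction is a henselian valuation ring \(W\) with \(R\subseteq W\subseteq \Frac(R)\), or more generally \(W\) dominating \(R\) with \(\Frac(R)\to \Frac(W)\), together with compatibility at residue fields. Concretely, the local smooth \(V\)-algebra \(R\) is, up to étale localization, a henselized polynomial ring \(V[x_1,\dots,x_d]^h\) at some prime. Iterating Gauss-type valuation extensions, for example \(x_i\mapsto\) Gauss norms with residue field extension, gives a valuation ring \(W_0\) of \(\Frac(R)\) dominating \(R\). Let \(W=W_0^h\). Then the composite
\[
K(R;\mathbf Z/N)\to K(W;\mathbf Z/N)\simeq K(k_W;\mathbf Z/N)
\]
factors through rigidity \(K(R;\mathbf Z/N)\simeq K(\kappa(\mathfrak m_R);\mathbf Z/N)\), followed by the residue field extension \(\kappa(\mathfrak m_R)\to k_W\). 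By \Cref{cor:intro-henselian-valuation-generic-injectivity}, \(K(W;\mathbf Z/N)\to K(\Frac W;\mathbf Z/N)\) is injective. Since the square with \(\Frac(R)\to \Frac(W)\) commutes, injectivity for \(R\) follows once \(K(\kappa;\mathbf Z/N)\to K(k_W;\mathbf Z/N)\) is injective.

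\textbf{Main obstacle.} I expect that last field extension to be the hard point. I would choose \(W_0\) so that \(k_W\) is purely transcendental over \(\kappa(\mathfrak m_R)\), by taking Gauss valuations in coordinates adapted to \(\mathfrak m_R\). Finite-coefficient \(K\)-theory of fields is then injective along purely transcendental extensions: this holds for a rational function field by the specialization or homotopy splitting at a rational point, and passes to colimits. If the adapted coordinate choice fails because \(\mathfrak q\) is not a rational point of its fibre, I would instead use the excision/lifting theorem for regular henselian pairs alluded to in the abstract. That would reduce to \(R/\mathfrak pR\) and the closed fibre, where the problem becomes the classical Gersten injectivity for henselian regular local rings over a field.
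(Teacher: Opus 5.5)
Your Steps~1 and~2 agree with the paper. The paper descends the kernel class together with a nonzero denominator \(f\) to one smooth stage \(C\) and passes to \((C_{\mathfrak q})^h\), which also removes your need for injective transition maps.

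\textbf{The gap is in Step~3.} You never produce a henselian valuation ring that dominates \(R\) and has controlled residue field.

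\emph{The Gauss valuation does not dominate \(R\).} On \(V[x_1,\dots,x_d]\) it has centre \(\mathfrak m_VV[x_1,\dots,x_d]\), the generic point of the special fibre, so every \(x_i\) is a unit in its valuation ring. That ring contains \(V[x]_{\mathfrak q}\) but does not dominate it unless \(\mathfrak q\) is that generic point. Hence there is no local map \(R\to W_0^h\), and no residue map \(\kappa(\mathfrak m_R)\to k_W\) for your purely-transcendental argument to act on.

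\emph{The étale reduction hides a residue extension.} ``Up to étale localization'' conceals a possible finite residue extension, and finite extensions of degree divisible by \(\ell\) can kill mod-\(N\) classes.

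\emph{The missing idea} is the residue-preserving valuation of \Cref{lem:prufer-pointed-smooth-residue-valuation}. Let \(\mathfrak p\) be the centre on the base. By Kundu's lemma on generic points of special fibres of smooth algebras, \(A_{\mathfrak pA}\) is a valuation ring of \(\Frac(A)\) with residue field \(\Frac(A/\mathfrak pA)\). Compose it with the parameter-chain valuation of the regular local ring \(A/\mathfrak pA\); this gives a valuation ring dominating \(A\) with residue field exactly \(k\). After henselizing, rigidity at both ends makes \(K(A^h;\mathbf Z/N)\to K(W;\mathbf Z/N)\) an equivalence, so your ``main obstacle'' never arises.

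\emph{Two further points you skip.} First, Gabber rigidity is a connective statement, so the negative degrees need \(K_i(A^h)=0\) for \(i<0\); the paper gets this from Antieau--Mathew--Morrow. Second, you must check that \(A^h\to W\) is injective, so that \(\Frac(A^h)\) maps to \(\Frac(W)\).

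\textbf{Your fallback does not close the gap as stated.} \Cref{thm:regular-henselian-pair-lifting} assumes \(R\) is noetherian and regular of finite Krull dimension. Its proof applies \Cref{lem:regular-local-parameter-chain-valuation} to the regular local ring \(R_{\mathfrak p}\). Over a nondiscrete or higher-rank valuation base, \(R\) is not noetherian, and \(R_{\mathfrak pR}\) is a valuation ring carrying the arbitrary value group of \(V\).

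A repaired version of the fallback would be a legitimate variant. It would combine three things: rigidity \(K(R;\mathbf Z/N)\simeq K(R/\mathfrak pR;\mathbf Z/N)\); equicharacteristic Gersten injectivity for the henselian regular local ring \(R/\mathfrak pR\); and \((R_{\mathfrak pR})^h\), with residue field \(\Frac(R/\mathfrak pR)\), as the linking valuation ring. But it needs exactly the inputs identified above. One is Kundu's lemma that this localization is a valuation ring. The other is Gersten injectivity for henselian valuation rings of arbitrary value group, i.e.\ \Cref{thm:henselian-valuation-generic-injectivity}, which comes from the filtered splitting rather than from excision.
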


This is \Cref{thm:prufer-ind-smooth-generic-injectivity}. In particular,
the domain conclusion is not an additional hypothesis.

\begin{theorem}[regular henselian-pair lifting]
\label{thm:intro-regular-henselian-pair-lifting}
Let \(R\) be a noetherian regular domain of finite Krull dimension. Let
\(\mathfrak p\subset R\) be prime, and let \(N=\ell^\nu\) be invertible
in \(R\). Assume that \(R/\mathfrak p\) is regular and that
\((R,\mathfrak p)\) is a henselian pair
\cite[\stackstag{09XE}]{StacksProject}. Put
\(F=\Frac(R)\) and \(E=\Frac(R/\mathfrak p)\). If
\[
K_n(R/\mathfrak p;\mathbf Z/N)\longrightarrow
K_n(E;\mathbf Z/N)
\]
is injective for every \(n\in\mathbf Z\), then
\[
K_n(R;\mathbf Z/N)\longrightarrow K_n(F;\mathbf Z/N)
\]
is injective for every \(n\in\mathbf Z\).
\end{theorem}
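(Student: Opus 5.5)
The plan is to reduce to \Cref{cor:intro-henselian-valuation-generic-injectivity} through a henselian valuation ring of rank \(d=\dim R_{\mathfrak p}\) that dominates \(R_{\mathfrak p}\) and has residue field exactly \(E\). We then compare two routes from \(K(R;\mathbf Z/N)\) to \(K(E;\mathbf Z/N)\). The first ingredient is rigidity for henselian pairs (Gabber; \cite{ClausenMathewMorrow21HenselianPairs}). Since \((R,\mathfrak p)\) is henselian and \(N\in R^\times\), the map \(K(R;\mathbf Z/N)\to K(R/\mathfrak p;\mathbf Z/N)\) is an equivalence. Combined with the hypothesis, the composite
\[
K_n(R;\mathbf Z/N)\xrightarrow{\ \simeq\ }K_n(R/\mathfrak p;\mathbf Z/N)\longrightarrow K_n(E;\mathbf Z/N)
\]
is injective for every \(n\). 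This composite is induced by the ring map \(R\to R/\mathfrak p\to E\).

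Next I construct the valuation ring. The ring \(A=R_{\mathfrak p}\) is regular local of finite dimension \(d\) (finite Krull dimension enters here), with residue field \(E\). Choose a regular system of parameters \(x_1,\dots,x_d\). For each \(i\), the ring \(A_i=A/(x_1,\dots,x_{i-1})\) is regular local, so its localization at the prime \((x_i)\) is a DVR \(V_i\). The residue field of \(V_i\) is \(\Frac(A_{i+1})\), and the last residue field is \(\Frac(A_{d+1})=E\). Composing these DVRs in the standard way gives a valuation ring \(V\subset F\) of rank \(d\) dominating \(A\). Its value group is \(\mathbf Z^d\) with the lexicographic order, and its residue field is \(E\). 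Let \(W=V^h\) be the henselization, with fraction field \(L\supset F\). Henselization does not change the residue field, so \(W\) has residue field \(E\). (If \(d=0\), then \(A=F=E\) and there is nothing to construct.)

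Then we compare. By \Cref{cor:intro-henselian-valuation-generic-injectivity}, \(K_n(W;\mathbf Z/N)\to K_n(L;\mathbf Z/N)\) is injective. Rigidity for \(W\) identifies \(K(W;\mathbf Z/N)\) with \(K(E;\mathbf Z/N)\) via reduction. The ring maps \(R\to A\to W\to E\) and \(R\to R/\mathfrak p\to E\) agree, since both are the canonical map \(R\to A/\mathfrak pA=E\). Hence the map \(K_n(R;\mathbf Z/N)\to K_n(W;\mathbf Z/N)\cong K_n(E;\mathbf Z/N)\) is the injective composite from the first step. Consider the commutative square formed by \(R\to F\to L\) and \(R\to W\to L\). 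An element killed in \(K_n(F;\mathbf Z/N)\) is then killed in \(K_n(L;\mathbf Z/N)\). By injectivity of \(W\to L\) it is killed in \(K_n(W;\mathbf Z/N)\), and hence it is zero.

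The main obstacle I expect is the valuation-theoretic construction: checking that the composite of the successive DVRs is a valuation ring of \(F\) dominating \(A\), with residue field exactly \(E\) rather than a transcendental extension. This rules out the naive choice \(\operatorname{ord}_{\mathfrak m}\), whose residue field is a rational function field over \(E\). It is also where regularity of \(R\), of \(R/\mathfrak p\), and of the intermediate quotients \(A_i\) is used. If the paper prefers the ``excision'' formulation, the same argument can be phrased as showing that the kernel of \(K(R)\to K(F)\) is detected on the summand \(s_{W,N}\) of \Cref{thm:intro-value-exterior-splitting}.
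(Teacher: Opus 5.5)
Your argument is correct. Its architecture matches the paper's proof: build the parameter-chain valuation on $R_{\mathfrak p}$ with residue field $E$, henselize, and compare through the two rigidity equivalences. The real difference is where Gersten injectivity of $W=V^h$ comes from.

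You cite \Cref{thm:henselian-valuation-generic-injectivity}, that is, the filtered value-exterior splitting. The paper instead cites \Cref{thm:lexicographic-finite-rank-generic-injectivity}. There, the value group of $W$ is $\mathbf Z^d$ with the lexicographic order, so Tamme excision applies to the cartesian square with vertices $W$, $W_{\mathfrak q}$, $W/\mathfrak q$, $\kappa(\mathfrak q)$ at the smallest nonzero prime $\mathfrak q$. Induction on $d$ then reduces everything to the henselian discrete-valuation case, which follows from localization, d\'evissage and $i^*i_*=0$.

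Your route is shorter and makes the value-group computation unnecessary: any henselian valuation ring dominating $R_{\mathfrak p}$ with residue field exactly $E$ would do. The price is that the theorem then depends on the whole motivic apparatus: Bouis's filtration and its completeness, the Bouis--Kundu comparison, the tame-inertia computation, and the Laurent-series unit test. The paper's route uses only Gabber rigidity, d\'evissage and Tamme excision. That is why the paper stresses that the regular henselian-pair results are logically independent of the splitting. On that route the lexicographic $\mathbf Z^d$ is essential, since each prime step must be a DVR and rank one alone does not give discreteness. Accordingly, ``excision'' in the paper means this prime-step Tamme excision, not detection on the summand $s_{W,N}$ as your closing remark suggests.

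One small omission: Gabber's theorem is a statement about connective $K$-theory. Your equivalence $K(R;\mathbf Z/N)\simeq K(R/\mathfrak p;\mathbf Z/N)$ of nonconnective spectra therefore also needs the vanishing of negative $K$-groups of the regular rings $R$ and $R/\mathfrak p$, which the paper records explicitly. Also, finiteness of $d=\dim R_{\mathfrak p}$ is automatic for a noetherian local ring, so that is not where finite Krull dimension of $R$ enters.
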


This is \Cref{thm:regular-henselian-pair-lifting}. Its maximal-ideal
case is useful to record; the proof below also gives a short induction
from Gabber rigidity and Gillet's arbitrary-DVR theorem.

\begin{corollary}[henselian regular local rings]
\label{cor:intro-henselian-regular-local-generic-injectivity}
Let \(A\) be a noetherian henselian regular local ring with fraction
field \(F\). If \(N=\ell^\nu\) is invertible in \(A\), then
\[
K_n(A;\mathbf Z/N)\longrightarrow K_n(F;\mathbf Z/N)
\]
is injective for every \(n\in\mathbf Z\).
\end{corollary}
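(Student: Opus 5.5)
We prove the corollary by induction on \(d=\dim A\), with the maximal-ideal case of \Cref{thm:intro-regular-henselian-pair-lifting} as the induction step. For \(d=0\), \(A\) is a field and there is nothing to prove. For \(d=1\), \(A\) is a henselian discrete valuation ring, hence a henselian valuation ring. \Cref{cor:intro-henselian-valuation-generic-injectivity} then gives split injectivity. Alternatively one can use Gillet's theorem for arbitrary DVRs, which gives injectivity with finite coefficients directly.

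For \(d\ge 2\), take \(\mathfrak m=(x_1,\dots,x_d)\) generated by a regular system of parameters, and put \(\mathfrak p=(x_1)\). Since \(x_1\notin\mathfrak m^2\), the quotient \(A/\mathfrak p\) is again a noetherian regular local ring, of dimension \(d-1\). It is henselian, being a quotient of a henselian local ring, and \(N\) is invertible in it. The pair \((A,\mathfrak p)\) is henselian, because \(\mathfrak p\subseteq\mathfrak m\) and \(A\) is henselian local (Stacks, henselian pairs are inherited by smaller ideals). Also \(A\) is a regular domain of finite Krull dimension. By induction, \(K_n(A/\mathfrak p;\mathbf Z/N)\to K_n(\Frac(A/\mathfrak p);\mathbf Z/N)\) is injective for all \(n\). \Cref{thm:intro-regular-henselian-pair-lifting} then yields injectivity for \(A\).

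A variant avoids the induction by taking \(\mathfrak p=\mathfrak m\) directly. Here \(A/\mathfrak m=k\) is a field, so its hypothesis holds trivially, and the theorem applies at once. We would present this one-line argument as the main proof and record the induction as a remark.

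The main obstacle lies inside \Cref{thm:intro-regular-henselian-pair-lifting}, not in the corollary itself. If one wanted an argument independent of it, the induction step would run as follows. Gabber rigidity identifies \(K(A;\mathbf Z/N)\) with \(K(A/(x_1);\mathbf Z/N)\). Compare \(A\) with its localization \(A_{(x_1)}\), a DVR with residue field \(\Frac(A/(x_1))\). Gillet's theorem gives injectivity from \(A_{(x_1)}\) into \(F\). The key check is that rigidity for \(A\) and for the henselization of \(A_{(x_1)}\) are compatible with reduction to \(\Frac(A/(x_1))\), so that the kernel of \(K(A)\to K(A_{(x_1)})\) injects into the kernel for \(A/(x_1)\), which is zero by induction. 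This step does not go through naively, because \(A_{(x_1)}\) need not be henselian. We would handle it with the excision argument the paper uses for henselian pairs: pass to the henselization of \(A_{(x_1)}\) and use that \(F\) embeds into its fraction field.
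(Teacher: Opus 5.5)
Your main argument, applying the regular henselian-pair lifting theorem with \(\mathfrak p=\mathfrak m\) so that the hypothesis on \(A/\mathfrak m=k\) holds trivially, is exactly the paper's proof. Your closing worry is unfounded: the direct induction does not need \(A_{(x_1)}\) to be henselian. Injectivity of \(K_n(A;\mathbf Z/N)\to K_n(A_{(x_1)};\mathbf Z/N)\) follows from the commuting square through \(\kappa((x_1))\) together with the rigidity equivalence \(K(A;\mathbf Z/N)\simeq K(A/(x_1);\mathbf Z/N)\), and Gillet's theorem then handles \(A_{(x_1)}\to F\) for an arbitrary discrete valuation ring. This is precisely the paper's alternative argument.
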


See \Cref{cor:henselian-regular-local-generic-injectivity}. Its proof records
the short induction from Gabber rigidity and Gillet's theorem for an
arbitrary discrete valuation ring
\cite{Gillet86TorsionGerstenDVR}. The étale-cohomological analogue for an
arbitrary henselian regular local ring is
\cite[Prop.~5]{Sakagaito20}; in dimension two, Sakagaito proves exactness
of the full étale Gersten complex \cite[Thm.~9]{Sakagaito20}.
The relative consequences of
\Cref{thm:intro-regular-henselian-pair-lifting} include the completed and
nonhenselian examples in
\Cref{cor:regular-prime-adic-completion,cor:excellent-nonhenselian-mixed-characteristic}.
A companion manuscript in preparation, \emph{Finite-coefficient Gersten
exactness from henselian valuations and relative coniveau width}, studies
exactness beyond the augmentation in relative surface and normal-crossings
situations. No result of the present paper depends on that manuscript.


For regular local rings containing a field, Gersten injectivity with
finite coefficients is known. Quillen proves the full integral Gersten
conjecture in the essentially finite type case
\cite[\S7, Thm.~5.11]{Quillen73}; Grayson's universal exactness shows that
the augmentation remains injective after applying both
\(M\mapsto M/N\) and \(M\mapsto M[N]\)
\cite[Cor.~6]{Grayson85UniversalExactness}. The coefficient exact sequence
then gives the finite-coefficient assertion at such a stage. For an
arbitrary regular local ring containing a field, Popescu approximation
\cite[\stackstag{07GC}]{StacksProject} and continuity of nonconnective
\(K\)-theory
\cite[\arxivhtmlref{1001.2282v4}{S9.E10}{Thm.~9.10}]{BGT13KTheoryStableCategories}
allow a kernel class and a denominator witnessing its generic vanishing
to descend to a common smooth stage.

Thus the remaining general regular-local injectivity problem is
nonhenselian mixed characteristic; exactness beyond the augmentation is a
separate and substantially stronger problem. If \(A\) is a nonhenselian
mixed-characteristic regular local ring and \(A^h\) is its henselization,
applying \Cref{cor:intro-henselian-regular-local-generic-injectivity} to
\(A^h\) gives Gersten injectivity only after henselization. To descend
back to \(A\), one would need, in particular, to control
\[
K_n(A;\mathbf Z/N)\longrightarrow K_n(A^h;\mathbf Z/N),
\]
and this is not supplied by rigidity.

\subsection*{Relation with previous work}

The associated graded of \Cref{thm:intro-value-exterior-splitting} is the
tame value-degree decomposition of Galois cohomology. Under the additional
assumption \(\mu_N\subset L\), Wadsworth proves, for an arbitrary basis of
\(\Gamma_W/N\Gamma_W\) and without a finiteness hypothesis on the value
group, a graded-ring isomorphism rather than merely an additive
decomposition
\cite[Hypotheses~3.5 and Thm.~3.6]{Wadsworth83pHenselian}. He also proves
the parallel value-degree decomposition for Milnor \(K\)-theory modulo
\(N\), with the henselian case requiring no roots-of-unity hypothesis
\cite[Prop.~2.3 and Cor.~2.4]{Wadsworth83pHenselian}.

In the roots-of-unity setting, Wadsworth further records, crediting Jacob,
a Hochschild--Serre proof
\cite[Rem.~3.15]{Wadsworth83pHenselian}. Without roots of unity,
\cite[Rem.~3.16]{Wadsworth83pHenselian} explicitly notes that the same
spectral-sequence method still yields some direct decomposition. What is
left unspecified there is its explicit Tate-twisted form and the
identification of its summands.

The value-exterior splitting \Cref{thm:intro-value-exterior-splitting}
upgrades these antecedents to finite-coefficient nonconnective algebraic
\(K\)-theory, at the level of complete filtered spectra and with the
specialization summand identified. For comparison, Bouis proves that,
for every henselian valuation ring and every prime-power coefficient
invertible in it, the finite-coefficient motivic complex is naturally the
corresponding truncated étale complex
\cite[\arxivhtmlref{2608.05220v1}{S5.Thmtheorem5}{Prop.~5.5}]{BouisSingularRings}.
Applied to \(W\), \(k\), and \(L\), this supplies the relevant objectwise
comparisons. It does not identify the generic restriction \(W\to L\), its
Hochschild--Serre filtration, or the value-exterior splitting; these are
the additional assertions proved here.
\Cref{thm:tame-cohomology-package-level-nu} gives the explicit twisted
summands without a roots-of-unity assumption and identifies the
basis-dependent direct sum with the canonical Hochschild--Serre
filtration. Its multiplicative formulation extends, rather than replaces,
Wadsworth's graded-ring packaging. These features are used in the lift to
filtered \(K\)-theory.

Three steps in that lift require care. First, filtered closed-point
rigidity transports completeness from the residue field to valuation rings
of arbitrary rank; see \Cref{rem:completeness-from-rigidity}. Second,
multiplication by a filtration-one class is compared with Kummer cup
product; see \Cref{lem:weight-one-operators-are-cup-products}. A Laurent-series test shows that the comparison scalar is a
unit; see \Cref{lem:laurent-series-weight-one-test}. Third, the source of \(\Phi_B\) is an infinite coproduct when \(B\)
is infinite. Its completeness is proved directly in
\Cref{lem:value-exterior-source-complete}. The principal result is the
filtered equivalence, including its arbitrary-rank form and the
identification of the generic restriction map.

The regular henselian-pair results of \Cref{sec:henselian-pair-injectivity} require less. The parameter-chain
valuation used there has lexicographic value group \(\mathbf Z^d\).
Prime-step excision reduces its Gersten injectivity to the
henselian discrete-valuation case; see
\Cref{thm:lexicographic-finite-rank-generic-injectivity}. Thus those results
are logically independent of the filtered splitting.

The Prüfer-base \Cref{thm:prufer-ind-smooth-generic-injectivity} is an application of Gersten
injectivity for valuation rings and hence, through
\Cref{thm:henselian-valuation-generic-injectivity}, of the filtered
splitting. Its passage beyond valuation rings uses a different reduction.

Bouis--Kundu prove that, for a finitary deflatable Nisnevich sheaf of
spectra \(\mathscr F\), vanishing of a fixed homotopy group
\(\pi_j\mathscr F\) on henselian valuation rings that are ind-smooth
\(P\)-algebras propagates to every henselian local ind-smooth
\(P\)-algebra
\cite[\arxivhtmlref{2506.09910v2}{S3.Thmtheorem13}{Cor.~3.13}]{BouisKunduBL}.
The direct argument of \Cref{sec:prufer-ind-smooth-generic-injectivity}
avoids having to compare raw finite-coefficient \(K\)-groups with their
Nisnevich sheafification.

For valuation rings containing a field, Kelly--Morrow proved integral
Gersten injectivity
\cite[\arxivhtmlref{1810.12203v1}{S1.Thmtheorem2}{Thm.~1.2}]{KellyMorrowValuationRings}.
Kundu
proved the corresponding statement for semilocalizations of smooth
integral algebras over an equicharacteristic valuation ring
\cite[Thm.~1.2]{Kundu26SmoothValuation}. Our result concerns finite
coefficients which are invertible in the valuation ring, in mixed
characteristic as well as equal characteristic, with arbitrary
ramification and arbitrary rank. Integral Gersten injectivity does not by itself formally imply its
finite-coefficient analogue. Indeed, the coefficient sequence
\[
0\longrightarrow K_n(D)/N\longrightarrow K_n(D;\mathbf Z/N)
\longrightarrow K_{n-1}(D)[N]\longrightarrow0
\]
shows that integral injectivity controls the right-hand torsion term; the
possible obstruction is injectivity of
\(K_n(D)/N\to K_n(\Frac(D))/N\). The
invertibility assumption in \Cref{cor:intro-henselian-regular-local-generic-injectivity} cannot be
omitted in general:
according to
\cite[\arxivhtmlref{2608.05005v1}{S1.Thmtheorem1}{Thm.~1.1}]{Feld26Counterexample},
Gersten injectivity fails with
residue-characteristic coefficients for a ramified mixed-characteristic
regular local ring. Those coefficients lie outside the present hypotheses.

\subsection*{Proof strategy}

The comparison of Bouis--Kundu gives
\[
\operatorname{gr}_{\mot}^qK(F;\mathbf Z/N)
\simeq
\tau^{\le q}R\Gamma(F_{\et},\mu_N^{\otimes q})[2q]
\]
for a field \(F\); see
\Cref{lem:ordinary-prufer-bl-range-input,thm:field-finite-coefficient-k-filtration-input}.
For the henselian valued field \(L\), the Hochschild--Serre spectral
sequence for inertia is multiplicative (\Cref{lem:multiplicative-hochschild-serre}). Its \(E_2\)-page is the tensor
product of the cohomology of \(k\) with the exterior algebra on
\(\Gamma_W/N\Gamma_W\). The value-one Kummer classes are permanent
cycles and generate the second factor; see
\Cref{thm:tame-cohomology-package-level-nu}. The spectral sequence therefore
degenerates. This proves \Cref{thm:tame-cohomology-package-level-nu}.

The motivic filtration is multiplicative
(\Cref{lem:global-motivic-filtration-kmodell-input}). A unit of \(L\)
defines a class in filtration one
(\Cref{lem:weight-one-edge-kummer-compatibility,def:kummer-normalized-weight-one-classes}).
Étale sheafification compares multiplication by this class with cup product
by its Kummer class, up to a scalar in \(\mathbf Z/N\)
(\Cref{lem:etale-sheafification-of-motivic-complexes}). The scalar is
preserved by field extension, and hence is constant within each
characteristic
(\Cref{lem:etale-sheafification-of-motivic-complexes}\textup{(iii)}).
We compute it over \(k((t))\), with \(k\) separably closed
(\Cref{lem:laurent-series-weight-one-test}). Localization and rigidity
show that multiplication by \([t]\) is an isomorphism in the relevant
\(K\)-groups
(\Cref{lem:laurent-series-weight-one-test}\textup{(iii)}). Hence the
scalar is a unit
(\Cref{lem:weight-one-scalar-is-a-unit}). The associated graded of
\(\Phi_B\) is then the tame cohomological isomorphism, up to units on
the summands
(\Cref{lem:value-exterior-associated-graded}).

Completeness promotes the graded equivalence to a filtered equivalence.
For the valuation ring itself, completeness is transported from its
residue field by filtered rigidity; no finite-rank hypothesis is used.
The infinite source is treated by a connectivity estimate. This proves
\Cref{thm:value-exterior-splitting}.

For \Cref{thm:intro-regular-henselian-pair-lifting}, one chooses a
valuation ring dominating \(R_{\mathfrak p}\), with residue field \(E\)
and lexicographic value group \(\mathbf Z^d\). Its henselization has the
same residue field and value group. Prime-step excision reduces Gersten
injectivity for this valuation ring to the henselian discrete-valuation
case. The result then follows from the two rigidity equivalences.

More generally,
\Cref{thm:finite-rank-prime-to-residue-characteristic-generic-injectivity}
records that finite-rank Gersten injectivity follows from the rank-one
case by prime-step excision and closed-point rigidity, independently of
the filtered splitting.

For \Cref{thm:intro-prufer-ind-smooth-generic-injectivity}, a class and a
nonzero denominator are descended to one smooth \(P\)-algebra. After
localizing at the point selected by the henselian target and then
henselizing, a composite valuation has the same residue field. Both
local rings have the finite-coefficient \(K\)-theory of that field, so
Gersten injectivity for the henselian valuation ring detects the class.

\subsection*{Organization}

\Cref{sec:conventions} fixes the coefficient, filtration, and valuation
conventions. \Cref{sec:finite-coefficient-motivic-filtrations} records
the motivic filtration and rigidity inputs. The tame cohomological
decomposition is proved in \Cref{sec:tame-inertia}.
\Cref{sec:filtered-specialization} lifts it to filtered \(K\)-theory.
The Gersten injectivity results and the independent prime-step induction
are proved in \Cref{sec:henselian-pair-injectivity}. The Prüfer-base
extension is proved in
\Cref{sec:prufer-ind-smooth-generic-injectivity}.

\section{Conventions}\label{sec:conventions}

Whenever the notation \(N=\ell^\nu\) is used, \(\ell\) is a prime and
\(\nu\geq1\), unless a statement explicitly says otherwise.

\begin{convention}[nonconnective \(K\)-theory and finite coefficients]
\label{conv:nonconnective-k-groups}
We work in the stable \(\infty\)-category \(\mathrm{Sp}\) of spectra.
All cofibers, limits, colimits, and equivalences of spectra below are
formed in \(\mathrm{Sp}\).
For a quasi-compact quasi-separated scheme \(Y\), write
\[
K(Y):=K(\Perf(Y)),\qquad K_n(Y):=\pi_nK(Y)\quad(n\in\mathbf Z)
\]
for nonconnective algebraic \(K\)-theory
\cite[\arxivhtmlref{1001.2282v4}{S9}{\S9}]{BGT13KTheoryStableCategories}.
It satisfies localization for perfect complexes
\cite[Thm.~7.4]{ThomasonTrobaugh90}. In the language of small stable
\(\infty\)-categories, nonconnective \(K\)-theory is a localizing invariant
\cite[\arxivhtmlref{1001.2282v4}{S9.SS3}{\S9.3}]{BGT13KTheoryStableCategories}. For \(N\ge1\), put
\begin{equation}\label{eq:finite-coefficient-k-theory}
K(Y;\mathbf Z/N):=\operatorname{cofib}\bigl(K(Y)\xrightarrow{N}K(Y)\bigr).
\end{equation}
On affine regular noetherian schemes of finite Krull dimension, the
resolution theorem identifies \(K(\Perf(Y))\) in nonnegative degrees with
Quillen \(K\)-theory, and negative \(K\)-groups vanish. Thus
\eqref{eq:finite-coefficient-k-theory} agrees there with
finite-coefficient Quillen \(K\)-theory in
nonnegative degrees.
\end{convention}

\begin{definition}[Gersten injectivity]
\label{def:generic-injectivity}
Let \(R\) be a domain and let \(N\ge1\) be invertible in \(R\). We say
that \(R\) satisfies \emph{Gersten injectivity with
\(\mathbf Z/N\)-coefficients} if
\[
K_n(R;\mathbf Z/N)\longrightarrow K_n(\Frac(R);\mathbf Z/N)
\]
is injective for every \(n\in\mathbf Z\).
\end{definition}

For a regular local domain, this is precisely injectivity at the first
arrow of the Gersten complex. We use the same terminology for the
valuation-ring and Prüfer-base settings below; no assertion about a full
Gersten complex or exactness at its subsequent terms is intended.

\begin{convention}[valuation-theoretic rank and henselianity]
\label{conv:valuation-rank}
A valuation ring is allowed to be a field. Its \emph{rank} is its Krull
dimension. Equivalently, it has finite rank \(r\) if and only if its
totally ordered value group has exactly \(r\) proper convex subgroups
\cite[Lemma~2.3.1]{EnglerPrestel05ValuedFields}.

Its ideals, hence its prime ideals, are
linearly ordered \cite[\stackstag{0ASN}]{StacksProject}. Consequently, a valuation ring of finite
rank \(r\) has exactly \(r+1\) prime ideals, a maximal chain has \(r\) strict inclusions,
rank zero is equivalent to being a field, and a positive finite-rank valuation ring has a
smallest nonzero prime.

A valuation ring \(W\) is \emph{henselian} if it is henselian as a local ring, that is, if a
simple root modulo \(\mathfrak m_W\) of a polynomial over \(W\) lifts to a root in \(W\)
\cite[\stackstag{04GG}\textup{(2)}]{StacksProject}. For a valuation ring this is equivalent
to the valued field \((\Frac(W),v)\) being henselian in the sense of
\cite[\S4.1, p.~86]{EnglerPrestel05ValuedFields}. In that reference, henselianity is
defined by uniqueness of prolongation to every algebraic extension; its equivalence with
simple-root lifting is \cite[Thm.~4.1.3\textup{(4)}]{EnglerPrestel05ValuedFields}.
\end{convention}

\begin{convention}[Galois cohomology, Tate twists, and Kummer classes]
\label{conv:galois-cohomology-kummer}
For a field \(F\), write \(G_F\) for its absolute Galois group and
\(H^a(F,M):=H^a_{\mathrm{cont}}(G_F,M)\)
for continuous cohomology with coefficients in a discrete torsion \(G_F\)-module \(M\).
If \(m\) is invertible in \(F\), set \(\mu_m^{\otimes0}:=\mathbf Z/m\); for \(q>0\) use
the usual tensor power, and for \(q<0\) set
\[
\mu_m^{\otimes q}:=
\bigl(\operatorname{Hom}_{\mathbf Z/m}(\mu_m,\mathbf Z/m)\bigr)^{\otimes(-q)}.
\]
The same notation denotes the corresponding locally constant étale sheaf on a scheme on
which \(m\) is invertible. If \(I\) is a closed normal subgroup of a profinite group \(G\)
and \(M\) is a discrete \(G/I\)-module, then
\(\operatorname{inf}_{G/I}^{G}\colon
H^a_{\mathrm{cont}}(G/I,M)\to H^a_{\mathrm{cont}}(G,M)\)
denotes the map induced by precomposition with \(G\twoheadrightarrow G/I\), where \(M\)
is regarded as a \(G\)-module through this quotient. We use the same notation for the
induced map \(R\Gamma(G/I,M)\to R\Gamma(G,M)\), and write simply
\(\operatorname{inf}\) when \(G\) and \(I\) are clear.

We use the identification of étale and Galois cohomology over a field:
\(H^a_{\et}(\Spec(F),\mu_m^{\otimes q})=H^a(F,\mu_m^{\otimes q})\)
\cite[\stackstag{03QQ} and \stackstag{03QU}]{StacksProject}. For a
scheme \(X\), write
\[
L_{\et}\colon
D\bigl(\operatorname{PSh}(X_{\et},\mathbf Z)\bigr)
\longrightarrow
D\bigl(\operatorname{Sh}(X_{\et},\mathbf Z)\bigr)
\]
for derived étale sheafification of complexes of abelian presheaves on
the small étale site. We use the same notation after changing the
coefficient ring.
Sheafification of modules is exact and commutes with tensor products
\cite[\stackstag{03CX} and \stackstag{03EK}]{StacksProject}; hence its derived functor is an
exact symmetric monoidal localization. The derived category of sheaves
carries the usual derived tensor product
\cite[\stackstag{0FPT}]{StacksProject}. Stalks at separable closures are conservative on the
small étale site \cite[\stackstag{03PN}]{StacksProject}. For \(a\in F^\times\),
write \(\delta_m(a)\in H^1(F,\mu_m)\) for its Kummer class.
\end{convention}

\begin{convention}[filtered spectra, tensor products, and completeness]
\label{conv:filtered-spectral-sequence-indexing}
A \emph{filtered spectrum} is a functor
\(\mathbf Z^{\mathrm{op}}\to\mathrm{Sp}\). We write
\(\FilSp:=\operatorname{Fun}(\mathbf Z^{\mathrm{op}},\mathrm{Sp})\), and write a
filtered spectrum as \(F^\bullet E\) or
\(\Fil^\bullet E\). 
We equip \(\FilSp\) with the Day-convolution symmetric monoidal structure
induced by addition on \(\mathbf Z^{\mathrm{op}}\). Explicitly,
\[
(E^\bullet\otimes G^\bullet)^q
\simeq
\operatorname*{colim}_{a+b\ge q}
E^a\wedge G^b.
\]
Thus a filtered pairing is specified by compatible maps
\(E^a\wedge G^b\to H^{a+b}\).
Its graded pieces are
\[
\operatorname{gr}^qE:=\operatorname{cofib}(F^{q+1}E\to F^qE).
\]
An \(\mathbf N\)-indexed filtration is extended to \(\mathbf Z\) by
\(F^qE=F^0E\) for \(q\le0\); thus
\(\operatorname{gr}^qE=0\) for \(q<0\). The filtration is
\emph{complete} if
\(\displaystyle\lim_{q\to\infty}F^qE\simeq0\) in \(\mathrm{Sp}\),
and its \emph{underlying spectrum} is
\[
|F^\bullet E|:=\operatorname*{colim}_{q\to-\infty}F^qE.
\]
For an \(\mathbf N\)-indexed filtration this is \(F^0E\). If the
filtration is equipped with compatible maps \(F^qE\to E\), it is
\emph{exhaustive} when the induced map \(|F^\bullet E|\to E\) is an
equivalence.
\end{convention}

\section{Finite-coefficient motivic filtrations}
\label{sec:finite-coefficient-motivic-filtrations}

We use Bouis's multiplicative motivic filtration on nonconnective
\(K\)-theory and the Beilinson--Lichtenbaum comparison of Bouis--Kundu.
The precise forms needed below are recorded in this section.

\begin{definition}[Bouis--Kundu notation]
\label{def:bouis-kundu-notation}
A \emph{Prüfer domain} is a domain whose localization at every prime is a
valuation ring. A \emph{Prüfer ring} is a finite product of Prüfer domains.
A \(P\)-algebra is \emph{ind-smooth} if it is a filtered colimit of smooth
\(P\)-algebras; an affine \(P\)-scheme is ind-smooth when its ring of
global sections is. We use cohomological grading; \(L_{\mathrm{Nis}}\) denotes
derived Nisnevich sheafification. Bouis's motivic complexes are normalized by
\[
\mathbf Z(q)_{\mot}(T)
=\operatorname{gr}^q_{\mot}K(T)[-2q],
\qquad
\mathbf Z/m(q)_{\mot}
=\mathbf Z(q)_{\mot}\otimes^{\mathbf L}\mathbf Z/m.
\]
We write
\[
H^j_{\mot}(T,\mathbf Z(q)):=H^j\bigl(\mathbf Z(q)_{\mot}(T)\bigr),
\qquad
H^j_{\mot}(T,\mathbf Z/m(q)):=H^j\bigl(\mathbf Z/m(q)_{\mot}(T)\bigr).
\]
For the construction see
\cite[Def.~3.18]{BouisMixedCharacteristicMotivicCohomology}; the
normalization is \cite[Def.~3.20 and Thm.~C\textup{(1)}]{BouisMixedCharacteristicMotivicCohomology}.
The complex \(\mathbf Z/p^\nu(q)_{\syn}\) denotes the syntomic Tate
twist used in \cite{BouisKunduBL}; when \(p\) is invertible it is
the étale Tate twist \(\mu_{p^\nu}^{\otimes q}\)
\cite[\arxivhtmlref{2202.04818v2}{S1.Thmtheorem2}{Ex.~1.2}]{BhattMathew23SyntomicTateTwists}.
\end{definition}

\begin{lemma}[Prüfer Beilinson--Lichtenbaum comparison]
\label{lem:ordinary-prufer-bl-range-input}
Let \(P\) be a Prüfer ring, let \(Y\) be ind-smooth over \(P\), let
\(p\) be a prime, and let \(\nu\ge1\) and \(q\ge0\). The comparison
map of \cite[\arxivhtmlref{2506.09910v2}{S4.Thmtheorem1.E1}{(4.1.1)}]{BouisKunduBL} is an equivalence
\[
\bigl(L_{\mathrm{Nis}}\tau^{\le q}
\mathbf Z/p^\nu(q)_{\syn}\bigr)(Y)
\xrightarrow{\ \simeq\ }
\mathbf Z/p^\nu(q)_{\mot}(Y).
\]
If \(p\) is invertible on \(Y\), its source is
\(
\bigl(L_{\mathrm{Nis}}\tau^{\le q}
R\Gamma_{\et}(-,\mu_{p^\nu}^{\otimes q})\bigr)(Y).
\)
If \(Y=\Spec(A)\) with \(A\) henselian local, Nisnevich sheafification
does not change evaluation at \(A\); hence
\[
\tau^{\le q}R\Gamma(A_{\et},\mu_{p^\nu}^{\otimes q})
\xrightarrow{\ \simeq\ }
\mathbf Z/p^\nu(q)_{\mot}(A)
\]
whenever \(p\in A^\times\). The same statement applies to fields.
\end{lemma}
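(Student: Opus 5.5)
The plan is to read \Cref{lem:ordinary-prufer-bl-range-input} off from the Beilinson--Lichtenbaum theorem of Bouis--Kundu and then specialize it in two elementary ways. The first assertion, that the comparison map \cite[(4.1.1)]{BouisKunduBL} is an equivalence for every \(Y\) ind-smooth over a Prüfer ring \(P\), is the content of \cite[Thm.~5.1]{BouisKunduBL}. I would first check the normalizations. The target of (4.1.1) is \(\mathbf Z/p^\nu(q)_{\mot}=\mathbf Z(q)_{\mot}\otimes^{\mathbf L}\mathbf Z/p^\nu\), built from Bouis's \(\operatorname{gr}^q_{\mot}K[-2q]\) as in \Cref{def:bouis-kundu-notation}. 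The source is the Nisnevich sheafification of the truncated syntomic twist. I would also note that a Prüfer ring is a finite product of Prüfer domains, so \(Y\) decomposes as a disjoint union and both sides split compatibly. This reduces everything to the case of a Prüfer domain, which is the setting of the cited theorem.

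The second assertion concerns the case \(p\in\mathcal O(Y)^\times\). Here I would invoke \cite[Ex.~1.2]{BhattMathew23SyntomicTateTwists}: on \(\mathbf Z[1/p]\)-schemes, \(\mathbf Z/p^\nu(q)_{\syn}\) is naturally equivalent to \(R\Gamma_{\et}(-,\mu_{p^\nu}^{\otimes q})\). Truncation \(\tau^{\le q}\) and \(L_{\mathrm{Nis}}\) are functorial, so they carry this equivalence of presheaves on the small Nisnevich site of \(Y\) to the displayed source. The point to check is that \(p\) is invertible on every étale \(Y\)-scheme, which is clear.

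For the henselian local statement, take \(Y=\Spec(A)\) with \(A\) henselian local. The Nisnevich topos of \(\Spec A\) has a point given by \(A\) itself, and every Nisnevich covering of \(\Spec A\) admits a section. Consequently, for any presheaf of complexes \(\mathcal F\), the map \(\mathcal F(A)\to(L_{\mathrm{Nis}}\mathcal F)(A)\) is an equivalence. Applying this with \(\mathcal F=\tau^{\le q}R\Gamma_{\et}(-,\mu_{p^\nu}^{\otimes q})\) gives the displayed equivalence with \(\tau^{\le q}R\Gamma(A_{\et},\mu_{p^\nu}^{\otimes q})\). This requires knowing that \(A\) is ind-smooth over some Prüfer ring, which is the situation in which the lemma is applied. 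Fields are henselian local rings and are ind-smooth over their prime field, or over \(\mathbf Z\), a Prüfer (Dedekind) domain. So the final sentence of the lemma follows.

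The main obstacle is bookkeeping, not mathematics. One must verify that the map labelled (4.1.1) in \cite{BouisKunduBL} has exactly this source and target under the present normalization of \(\mathbf Z/m(q)_{\mot}\), since Bouis's twists and the reduction mod \(p^\nu\) might differ by a shift. One must also confirm that Thm.~5.1 there is stated for ind-smooth, rather than merely smooth, \(P\)-schemes. If only the smooth case is stated, I would pass to filtered colimits: both sides are finitary, because motivic cohomology commutes with filtered colimits of rings by Bouis's construction and étale cohomology with torsion coefficients is finitary. Nisnevich sheafification and truncation preserve filtered colimits, so the smooth case implies the ind-smooth case.
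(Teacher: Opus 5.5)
Your proposal is correct and follows the paper's proof: Thm.~5.1 of Bouis--Kundu gives the equivalence, Bhatt--Mathew's Ex.~1.2 handles the locus where \(p\) is invertible, and every Nisnevich covering sieve of a henselian local scheme is maximal, so evaluation at \(A\) is unchanged by \(L_{\mathrm{Nis}}\). The paper justifies that last point by henselian lifting of sections, whereas you assert it without proof. One small slip: a field of positive characteristic is not ind-smooth over \(\mathbf Z\), since filtered colimits of smooth \(\mathbf Z\)-algebras are flat over \(\mathbf Z\), so take the prime field or, as the paper does, the field itself as the Prüfer base.
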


\begin{proof}
The first map is an equivalence by
\cite[\arxivhtmlref{2506.09910v2}{S5.Thmtheorem1}{Thm.~5.1}]{BouisKunduBL}. On the
locus where \(p\) is
invertible, the syntomic Tate twist is the étale Tate twist by
\cite[\arxivhtmlref{2202.04818v2}{S1.Thmtheorem2}{Ex.~1.2}]{BhattMathew23SyntomicTateTwists}. Finally, every
Nisnevich covering of the spectrum of a henselian local ring has a
member containing a point over the closed point with the same residue
field. After replacing that member by an affine open neighbourhood of
the point, henselian lifting extends it to a section over
\(\Spec(A)\) \cite[\stackstag{04GG}\textup{(7),(8)}]{StacksProject}. 
Thus every Nisnevich covering sieve of \(\Spec(A)\) is maximal;
equivalently, \(\Spec(A)\) is local for the Nisnevich topology.
Consequently, evaluation at \(A\) is unchanged by derived Nisnevich
sheafification.

\end{proof}

\begin{lemma}[valuative dimension of a field]
\label{lem:valuation-inputs-finite-valuative-dimension}
For every field \(F\), the scheme \(\Spec(F)\) has valuative dimension
zero.
\end{lemma}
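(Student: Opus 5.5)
Recall that the valuative dimension of an integral scheme \(X\) (or of a domain) is the supremum of the Krull dimensions of the valuation rings \(V\) with \(\mathcal O_{X,x}\subseteq V\) for a point \(x\) and with fraction field the function field of \(X\). For a general qcqs scheme one takes the supremum over its irreducible components with their reduced structure. For \(\Spec(F)\) there is exactly one point and one irreducible component, namely \(\Spec(F)\) itself with function field \(F\). So the plan is simply to unwind this definition and determine which valuation rings can occur.

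The key step is as follows. Let \(V\) be a valuation ring with \(F\subseteq V\subseteq \Frac(V)=F\). Since the inclusions force \(V=F\), the ring \(V\) is a field, and by \Cref{conv:valuation-rank} it has rank (Krull dimension) zero. The only valuation ring in question is therefore \(F\) itself, which does occur. The supremum is thus taken over a nonempty set consisting of \(0\) alone, and the valuative dimension is \(0\).

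The only point needing care is to match the precise definition of valuative dimension used in the cited source, presumably Bouis--Kundu or Elmanto--Hoyois--Iwasa--Kelly. Some formulations use valuation rings of \(\kappa(x)\) for all points, or valuation rings \(V\) with a map \(\Spec(V)\to X\) sending the generic point to the generic point. In each variant the relevant fields are extensions of \(F\): either \(F\) itself, in which case the argument above applies, or arbitrary extensions of \(F\) when morphisms from valuation spectra are considered. In the latter case one should check that the definition restricts to valuation rings whose generic point maps to a generic point and which are dominant and birational, so that the fraction field is \(F\). Birationality then forces \(V=F\) again. I expect this bookkeeping of definitions to be the only, and mild, obstacle.
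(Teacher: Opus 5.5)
Your proof is correct and is the same argument as the paper's: the paper also reads off from the definition in Elmanto--Hoyois--Iwasa--Kelly, \S2.3, that the only valuation ring \(V\) with \(F\subseteq V\subseteq\Frac(F)=F\) is \(F\) itself, which has rank zero. Your caveat that the definition must require the fraction field of \(V\) to be exactly \(F\) is the right one. Without it, a ring such as \(F[t]_{(t)}\) would contribute positive rank.
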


\begin{proof}
By \cite[\arxivhtmlref{2002.11647v3}{S2.SS3}{\S2.3}]{ElmantoHoyoisIwasaKelly2021}, the valuative dimension of
\(\Spec(F)\) is the supremum of the ranks of valuation rings \(V\) with
\(F\subseteq V\subseteq\Frac(F)=F\). The only such ring is \(F\), which
has rank zero.
\end{proof}

\begin{lemma}[finite-coefficient motivic filtration]
\label{lem:global-motivic-filtration-kmodell-input}
Let \(T\) be a qcqs scheme. Bouis's motivic filtration is a
contravariantly functorial multiplicative \(\mathbf N\)-indexed
filtration \(\Fil_{\mot}^{\bullet}K(T)\) with
\[
\Fil_{\mot}^{0}K(T)\simeq K(T),
\qquad
\operatorname{gr}_{\mot}^{q}K(T)
\simeq\mathbf Z(q)_{\mot}(T)[2q].
\]
If \(T\) has finite valuative dimension, this filtration is complete.

For \(m\ge1\), put
\[
\Fil_{\mot}^{\bullet}K(T;\mathbf Z/m)
:=\Fil_{\mot}^{\bullet}K(T)\wedge\mathbf S/m.
\]
It is exhaustive and, when \(T\) has finite valuative dimension,
complete. Its graded pieces are
\[
\operatorname{gr}_{\mot}^{q}K(T;\mathbf Z/m)
\simeq\mathbf Z/m(q)_{\mot}(T)[2q].
\]
It is naturally a filtered module over
\(\Fil_{\mot}^{\bullet}K(T)\).
\end{lemma}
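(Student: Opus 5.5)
The plan is to read off almost everything from Bouis's construction, and then derive the finite-coefficient statements by smashing with the Moore spectrum \(\mathbf S/m\).

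First, the integral statements. Functoriality, multiplicativity, \(\mathbf N\)-indexing, the identification \(\Fil^0_{\mot}K(T)\simeq K(T)\), and the graded pieces \(\operatorname{gr}^q_{\mot}K(T)\simeq\mathbf Z(q)_{\mot}(T)[2q]\) will be taken directly from \cite[Defs.~3.18 and~3.20, Thm.~C\textup{(1)}]{BouisMixedCharacteristicMotivicCohomology}. The graded identification is in fact the normalization recorded in \Cref{def:bouis-kundu-notation}. Completeness when \(T\) has finite valuative dimension should also appear among Bouis's main theorems, which are stated under a valuative-dimension hypothesis of the kind in \cite{ElmantoHoyoisIwasaKelly2021}. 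I will cite that statement rather than reprove it. The expected argument behind it is that \(\operatorname{gr}^q\) is concentrated in homotopical degrees \(\le q-\dim\), roughly \(q\)-connective upon Nisnevich localization, with a uniform bound in terms of the valuative dimension. This makes \(\lim_q\Fil^q\) vanish by a Milnor sequence argument.

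Next, the mod \(m\) version. Since \(-\wedge\mathbf S/m\) is exact, I will argue as follows.
\begin{enumerate}
\item Exhaustiveness: \(\Fil^0_{\mot}K(T;\mathbf Z/m)\simeq K(T)\wedge\mathbf S/m\), which is the cofiber of multiplication by \(m\), and this is \(K(T;\mathbf Z/m)\) by \eqref{eq:finite-coefficient-k-theory}.
\item Graded pieces: \(\operatorname{gr}^q\) commutes with \(-\wedge\mathbf S/m\) because cofibers commute with smashing. This gives \(\mathbf Z(q)_{\mot}(T)[2q]\otimes^{\mathbf L}\mathbf Z/m=\mathbf Z/m(q)_{\mot}(T)[2q]\), using that \(\mathbf Z(q)_{\mot}(T)\) is a \(\mathbf Z\)-module, so that \(-\wedge\mathbf S/m\) becomes \(-\otimes^{\mathbf L}_{\mathbf Z}\mathbf Z/m\).
\item Completeness: \(\mathbf S/m\) is a finite spectrum, so \(-\wedge\mathbf S/m\) commutes with limits. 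Hence \(\lim_q(\Fil^q\wedge\mathbf S/m)\simeq(\lim_q\Fil^q)\wedge\mathbf S/m\simeq0\).
\item Module structure: \(\Fil^\bullet_{\mot}K(T)\) is an \(E_\infty\)-algebra in \(\FilSp\), and the object \(\Fil^\bullet_{\mot}K(T)\otimes\mathbf S/m\) (with \(\mathbf S/m\) in filtration \(0\)) is a module over it by the Day convolution structure of \Cref{conv:filtered-spectral-sequence-indexing}. One should check that tensoring with a constant filtered spectrum placed in degree \(0\) agrees levelwise with \(\Fil^q\wedge\mathbf S/m\); this holds by the colimit formula, since \(\mathbf S/m\) sits in filtration \(0\) and in degree \(\le 0\) trivially.
\end{enumerate}

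The main obstacle is only bibliographic precision in the completeness input. Bouis's completeness may be stated for schemes of finite valuative dimension, or alternatively via Nisnevich descent and the bound on cohomological dimension from \cite{ElmantoHoyoisIwasaKelly2021}. If it is stated only for the connective filtration or for affine \(T\), I would reduce the general case using Zariski descent of both sides on a finite affine cover. Every step beyond that is formal.
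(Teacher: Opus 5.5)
Your proposal is correct and follows the paper's proof. You cite Bouis for the integral filtration; the paper takes \(\mathbf N\)-indexedness and exhaustiveness from his Prop.~4.46 and completeness from his Prop.~4.49, rather than from the definitions. You then transport everything through the exact functor \(-\wedge\mathbf S/m\), which preserves limits and colimits because \(\mathbf S/m\) is finite. Your \(H\mathbf Z\)-module identification of \(\operatorname{gr}^q_{\mot}K(T)\wedge\mathbf S/m\) and your Day-convolution description of the module structure only make explicit what the paper leaves implicit.

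Your side heuristic for completeness is off in two ways. First, one needs \(\operatorname{gr}^q\) concentrated in homotopical degrees \(\ge q-d\), not \(\le q-d\). Second, even with the correct bound, connectivity of the graded pieces alone does not force \(\lim_q\Fil^q\simeq0\) without a connectivity bound on the \(\Fil^q\) themselves. Since you cite the result rather than reprove it, the argument is unaffected.
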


\begin{proof}
Contravariant functoriality and multiplicativity are part of the
construction in
\cite[Def.~3.18]{BouisMixedCharacteristicMotivicCohomology}. The
normalization and the graded-piece formula are
\cite[Def.~3.20 and Thm.~C\textup{(1)}]{BouisMixedCharacteristicMotivicCohomology},
while \(\mathbf N\)-indexedness and exhaustiveness are
\cite[Prop.~4.46]{BouisMixedCharacteristicMotivicCohomology}.
Completeness in finite valuative dimension is
\cite[\arxivhtmlref{2412.06635v2}{S4.Thmtheorem49}{Prop.~4.49}]{BouisMixedCharacteristicMotivicCohomology}.
The Moore
spectrum \(\mathbf S/m\) is finite, hence dualizable. Smashing with a
dualizable spectrum has both a left and a right adjoint, given by smashing
with its dual, and therefore preserves colimits and limits. It is exact, so
it preserves the graded-piece formula, exhaustiveness, and completeness.
The module structure is induced by the multiplicative filtration.
\end{proof}

\begin{theorem}[Gabber rigidity for henselian pairs]
	\label{thm:gabber-rigidity-finite-coefficient-k}
	Let \((A,I)\) be a henselian pair and let \(m\ge1\) be invertible in \(A\). Then
	reduction induces an equivalence of connective \(K\)-theory spectra
	\[
	K^{\mathrm{cn}}(A;\mathbf Z/m)
	\xrightarrow{\ \simeq\ }
	K^{\mathrm{cn}}(A/I;\mathbf Z/m).
	\]
	In particular
	\(\pi_iK^{\mathrm{cn}}(A;\mathbf Z/m)\to\pi_iK^{\mathrm{cn}}(A/I;\mathbf Z/m)\) is an
	isomorphism for every \(i\ge0\).
\end{theorem}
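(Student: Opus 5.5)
This statement is Gabber's rigidity theorem in the generality of henselian pairs, due to Gabber \cite{Gabber92Rigidity}, and the plan is to cite it rather than reprove it. Concretely, I will appeal to \cite[\arxivhtmlref{1803.10897v2}{S1.Thmdefinition4}{Thm.~1.4}]{ClausenMathewMorrow21HenselianPairs}, which treats the fiber of \(K\to K(A/I)\) for an arbitrary henselian pair. The only work is to check that its hypotheses and conventions match ours.

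First, I record what Clausen--Mathew--Morrow prove. For a henselian pair \((A,I)\), the relative term \(K^{\mathrm{cn}}(A,I)\) becomes zero after inverting \(m\) in the sense relevant here: its \(m\)-adic completion vanishes when \(m\in A^\times\). Equivalently, \(K^{\mathrm{cn}}(A,I)\wedge\mathbf S/m\simeq0\).

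Second, I deduce the displayed statement. Smashing the fiber sequence
\[
K^{\mathrm{cn}}(A,I)\to K^{\mathrm{cn}}(A)\to K^{\mathrm{cn}}(A/I)
\]
with the Moore spectrum \(\mathbf S/m\) preserves fiber sequences. Moreover \(X\wedge\mathbf S/m\simeq\operatorname{cofib}(m\colon X\to X)\), which matches \eqref{eq:finite-coefficient-k-theory}. Vanishing of the first term therefore gives the equivalence on the other two. Taking \(\pi_i\) for \(i\ge0\) gives the stated isomorphisms.

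The main point needing care is whether the cited result is phrased for the classical form of rigidity, i.e.\ for \(K_*(-;\mathbf Z/m)\) defined with the Moore spectrum, or \(p\)-adically. If it is phrased \(p\)-adically, I will reduce to prime powers \(\ell^\nu\), using the primary decomposition already recorded in \Cref{rem:primary-decomposition-generic-injectivity}. Since \(m\) is invertible, each such \(\ell\) is invertible in \(A\), and \(\ell\)-adic vanishing of a spectrum is equivalent to vanishing mod \(\ell\), hence mod \(\ell^\nu\) by induction on \(\nu\) using the cofiber sequences for \(\ell^{\nu-1}\to\ell^\nu\to\ell\).

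I will not claim the nonconnective version here. The statement is only about \(K^{\mathrm{cn}}\), and any passage to nonconnective \(K\)-theory is left to later results in the paper.
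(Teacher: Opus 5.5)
Your proposal is correct and takes essentially the same approach as the paper. The paper also proves this statement purely by citing Gabber's theorem, in the form recorded by Clausen--Mathew--Morrow (Thm.~1.4), which is already stated for connective $K$-theory with $\mathbf Z/m$-coefficients. Your extra bookkeeping is sound but unnecessary once that citation is used; this covers both smashing the relative fibre sequence with $\mathbf S/m$ and the fallback reduction to prime powers.
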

\begin{proof}
	This is Gabber's rigidity theorem \cite{Gabber92Rigidity}. For a published account in
	exactly this form, with the same connectivity convention, see
	\cite[\arxivhtmlref{1803.10897v2}{S1.Thmdefinition4}{Thm.~1.4}]{ClausenMathewMorrow21HenselianPairs}.
\end{proof}

\begin{remark}[connectivity caveat]
The theorem does not give rigidity for nonconnective spectra: if
\(A/I\) is singular, the groups \(K_i(A/I)\) for \(i<0\) need not vanish,
and the comparison in degree zero must be made separately. For valuation
rings this caveat disappears by
\Cref{lem:negative-finite-coefficient-k-valuation-domain}.
\end{remark}

\begin{lemma}[negative \(K\)-theory of a valuation ring, integrally and with finite coefficients]
	\label{lem:negative-finite-coefficient-k-valuation-domain}
	Let \(W\) be a valuation domain. Then
	\(K_i(W)=0 \qquad(i<0)\),
	so \(K(W)\) is connective and in particular \(K_{-1}(W)=0\). Consequently, for every
	\(m\ge1\),
	\(\pi_iK(W;\mathbf Z/m)=0 \qquad(i<0)\).
\end{lemma}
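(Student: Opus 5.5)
The plan is to handle the integral statement first and then deduce the finite-coefficient statement from the coefficient long exact sequence.

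\emph{Integral vanishing.} A valuation domain \(W\) is coherent: every finitely generated ideal is principal. Every finitely presented \(W\)-module has a finite resolution by finitely generated free modules of length at most one. Indeed, finitely generated submodules of free \(W\)-modules are free, since \(W\) is a Prüfer domain, hence semihereditary, and over a local ring finitely generated projectives are free. So \(W\) is stably coherent of weak global dimension \(\le1\), and in this sense ``regular''.

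For such rings the negative \(K\)-groups vanish. One route is the standard vanishing for coherent rings of finite weak dimension: \(K(W)\) agrees with the \(K\)-theory of finitely presented modules, and the Bass fundamental theorem for \(G\)-theory of coherent regular rings shows that \(K_{-1}(W)=0\) and, inductively, that all lower groups vanish. Alternatively, the result of Kelly--Morrow on \(K\)-theory of valuation rings gives this directly, or one can cite the vanishing \(K_{<-d}=0\) for valuative dimension \(d\) of Elmanto--Hoyois--Iwasa--Kelly together with cdh-descent arguments.

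I would take the cleanest route: cite the statement that valuation rings are stably coherent with every finitely presented module of finite projective dimension. Then Bass's contracted-functor description \(K_{-n}(W)\) as iterated cokernels of \(K_0(W[t])\oplus K_0(W[t^{-1}])\to K_0(W[t,t^{-1}])\) vanishes. The reason is that \(W[t_1,\dots,t_n]\) and its Laurent localizations remain coherent and regular, so \(K_0\)-homotopy invariance and the fundamental theorem apply. The main obstacle is this regularity input for polynomial extensions over a non-noetherian \(W\). I would cite it, namely stable coherence of valuation domains and the fact that finitely presented modules over \(W[t_1,\dots,t_n]\) have finite projective dimension, rather than reprove it.

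\emph{Finite coefficients.} The cofiber sequence \(K(W)\xrightarrow{m}K(W)\to K(W;\mathbf Z/m)\) gives the long exact sequence
\[
K_i(W)\xrightarrow{m}K_i(W)\to\pi_iK(W;\mathbf Z/m)\to K_{i-1}(W)\xrightarrow{m}K_{i-1}(W).
\]
For \(i<0\) both \(K_i(W)\) and \(K_{i-1}(W)\) vanish, so \(\pi_iK(W;\mathbf Z/m)=0\).

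Note that the vanishing of \(K_{-1}(W)\) does not by itself handle \(\pi_0\); that degree is not claimed. The statement is only for \(i<0\), where the argument above suffices. Connectivity of \(K(W)\) then follows from the vanishing of all \(K_i(W)\) with \(i<0\).
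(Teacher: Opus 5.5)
Your proposal is correct. The finite-coefficient step is exactly the paper's argument: the long exact sequence of the cofibre of multiplication by \(m\), with both outer terms \(K_i(W)\) and \(K_{i-1}(W)\) vanishing for \(i<0\).

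The difference lies in the integral vanishing. The paper treats it as a black box. It cites Kelly--Morrow's theorem that \(K_n(\mathcal O)=0\) for all \(n<0\) and every valuation ring \(\mathcal O\), with no noetherian, rank or henselian hypothesis. Your main route instead unpacks the mechanism behind that theorem, in four steps:
\begin{enumerate}
\item stable coherence of valuation domains;
\item finite projective dimension of finitely presented modules over \(W[t_1,\dots,t_n]\), which follows from finite weak dimension once coherence is known, since finitely presented flat modules are projective;
\item the resolution theorem, identifying \(K\) with the \(K\)-theory of finitely presented modules;
\item the localization sequence ending in \(G_0(W[t])\to G_0(W[t,t^{-1}])\to0\), combined with Bass's contracted-functor description and iterated using that polynomial and Laurent extensions of \(W\) stay regular coherent.
\end{enumerate}
This is essentially how Kelly--Morrow prove their result. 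Your route therefore gains transparency, and it applies verbatim to any ring whose polynomial extensions are regular coherent. The cost is that you must cite stable coherence and the coherent \(G\)-theory localization and dévissage, both nontrivial for non-noetherian \(W\). The paper's citation buys brevity.

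There is one small overstatement. Your first paragraph shows only that \(W\) is coherent and that finitely presented \(W\)-modules have projective dimension \(\le1\); it does not show that \(W\) is \emph{stably} coherent. You acknowledge this yourself later, but it matters for the ``first route'': coherence and finite weak dimension of \(W\) alone do not control \(W[t]\) and \(W[t,t^{-1}]\). Stable coherence has to be imported as a separate input before the Bass iteration can run past \(K_{-1}\).
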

\begin{proof}
	The integral vanishing is
	\cite[\arxivhtmlref{1810.12203v1}{S1.I1.i3}{Thm.~1.3(iii)}]{KellyMorrowValuationRings}, which
	asserts \(K_n(\mathcal O)=0\) for all \(n<0\) and every valuation ring \(\mathcal O\); no
	noetherian, henselian, rank or residue-characteristic hypothesis enters there. A spectrum
	whose negative homotopy groups vanish is connective, so \(K(W)\) is connective.

	For the finite-coefficient assertion, \(K(W;\mathbf Z/m)\) is the cofiber of
	multiplication by \(m\) on \(K(W)\) by \Cref{conv:nonconnective-k-groups}, so
	the universal-coefficient sequence
	\[
	0\longrightarrow K_i(W)/m\longrightarrow\pi_iK(W;\mathbf Z/m)\longrightarrow
	K_{i-1}(W)[m]\longrightarrow0
	\]
	is exact for every \(i\). If \(i<0\) then also \(i-1<0\), so both outer terms vanish by the
	first assertion, and \(\pi_iK(W;\mathbf Z/m)=0\).
\end{proof}

\begin{lemma}[valuation-ring closed-point rigidity for nonconnective finite coefficients]
	\label{lem:valuation-closed-point-finite-coefficient-k-rigidity}
	Let \(W\) be a henselian valuation domain with residue field \(k\), and let \(m\ge1\) be
	invertible in \(W\). Then the reduction map induces an equivalence of nonconnective
	finite-coefficient \(K\)-theory spectra
	\(K(W;\mathbf Z/m) \xrightarrow{\ \simeq\ } K(k;\mathbf Z/m)\).
\end{lemma}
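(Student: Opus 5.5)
We will compare the nonconnective spectra with their connective covers, and then apply \Cref{thm:gabber-rigidity-finite-coefficient-k} to the henselian pair \((W,\mathfrak m_W)\).

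Both \(W\) and \(k\) are valuation domains (a field is a valuation ring of rank zero, by \Cref{conv:valuation-rank}). So \Cref{lem:negative-finite-coefficient-k-valuation-domain} applies to each of them: \(K(W)\) and \(K(k)\) are connective, and \(\pi_iK(-;\mathbf Z/m)=0\) for \(i<0\). Connectivity of \(K(W)\) means the canonical map \(K^{\mathrm{cn}}(W)\to K(W)\) from the connective cover is an equivalence, and likewise for \(k\).

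Next we want \(K^{\mathrm{cn}}(W;\mathbf Z/m)\to K(W;\mathbf Z/m)\) to be an equivalence. This follows by applying the exact functor \(-\wedge\mathbf S/m\), i.e. taking cofibers of multiplication by \(m\), to the equivalence above. It uses the convention that \(K^{\mathrm{cn}}(A;\mathbf Z/m)\) denotes the cofiber of \(m\) on connective \(K\)-theory, which is the convention of \cite{ClausenMathewMorrow21HenselianPairs}. Even if that convention were instead "connective cover of \(K(A;\mathbf Z/m)\)", the same conclusion holds, because \(\pi_iK(W;\mathbf Z/m)=0\) for \(i<0\).

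Reduction \(W\to k\) induces a square commuting with these connective-cover maps, by naturality of the connective cover and of the cofiber. The pair \((W,\mathfrak m_W)\) is henselian, since \(W\) is a henselian local ring (\Cref{conv:valuation-rank}), and \(m\in W^\times\). So Gabber rigidity (\Cref{thm:gabber-rigidity-finite-coefficient-k}) makes the top arrow \(K^{\mathrm{cn}}(W;\mathbf Z/m)\to K^{\mathrm{cn}}(k;\mathbf Z/m)\) an equivalence. The two vertical arrows are equivalences by the previous paragraphs, so by two-out-of-three the bottom arrow \(K(W;\mathbf Z/m)\to K(k;\mathbf Z/m)\) is an equivalence.

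Equivalently, one can check on homotopy groups. For \(i\ge0\), the groups \(\pi_i\) agree via rigidity. For \(i<0\), both sides vanish by \Cref{lem:negative-finite-coefficient-k-valuation-domain}. Since the reduction map is a map of spectra, it is then an equivalence by Whitehead.

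The only delicate point is degree zero and \(-1\) bookkeeping. The coefficient sequence puts \(K_{-1}[m]\) into \(\pi_0K(-;\mathbf Z/m)\), so \(\pi_0\) of the cofiber of \(m\) on \(K\) could differ from \(\pi_0\) of the cofiber on \(K^{\mathrm{cn}}\). The integral vanishing \(K_{-1}(W)=0=K_{-1}(k)\) from Kelly--Morrow removes this discrepancy. This is why the integral, not merely finite-coefficient, form of \Cref{lem:negative-finite-coefficient-k-valuation-domain} is needed.
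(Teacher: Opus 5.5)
Your proposal is correct and follows the paper's proof essentially step for step. You apply Gabber rigidity to the connective finite-coefficient spectra for the pair \((W,\mathfrak m_W)\). You use the Kelly--Morrow vanishing of integral negative \(K\)-groups, via \Cref{lem:negative-finite-coefficient-k-valuation-domain}, to identify \(K^{\mathrm{cn}}(-;\mathbf Z/m)\simeq K(-;\mathbf Z/m)\) for both \(W\) and \(k\). Composing these equivalences is exactly the paper's argument, and your remark on why the integral vanishing of \(K_{-1}\) is needed for \(\pi_0\) is accurate.
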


\begin{proof}
	The connective finite-coefficient comparison
	\[
	K^{\mathrm{cn}}(W;\mathbf Z/m)
	\xrightarrow{\ \simeq\ }
	K^{\mathrm{cn}}(k;\mathbf Z/m)
	\]
	holds by \Cref{thm:gabber-rigidity-finite-coefficient-k}, in the henselian-local case applied
	to the pair \((W,\mathfrak m_W)\).

	By \Cref{lem:negative-finite-coefficient-k-valuation-domain} the integral negative
	\(K\)-groups \(K_i(W)\) vanish for \(i<0\). Hence \(K(W)\) is connective and the canonical map
	\(K^{\mathrm{cn}}(W)\to K(W)\) is an equivalence; applying \(-\,/m\) gives
	\(K^{\mathrm{cn}}(W;\mathbf Z/m)\xrightarrow{\ \simeq\ }K(W;\mathbf Z/m)\).
	Since \(k\) is a field, \(K(k)\) is likewise connective, so
	\(K^{\mathrm{cn}}(k;\mathbf Z/m)\xrightarrow{\ \simeq\ }K(k;\mathbf Z/m)\).

	Composing these three equivalences identifies the reduction map
	\(K(W;\mathbf Z/m)\to K(k;\mathbf Z/m)\) as an equivalence.
\end{proof}

\begin{theorem}[field range for the prime-power motivic filtration]
\label{thm:field-finite-coefficient-k-filtration-input}
Let \(F\) be a field, let \(\ell\) be a prime invertible in \(F\), let \(\nu\ge1\), and
put \(N:=\ell^\nu\). For the Bouis motivic filtration of
\Cref{lem:global-motivic-filtration-kmodell-input},
\[
\pi_t\operatorname{gr}^q_{\mot}K(F;\mathbf Z/N)
\cong H^{2q-t}_{\mot}(F,\mathbf Z/N(q)).
\]
This group vanishes unless
\(q\ge0,\qquad 0\le 2q-t\le q\).
In the nonzero range, the Beilinson--Lichtenbaum comparison identifies it functorially with
\(H^{2q-t}_{\et}(F,\mu_N^{\otimes q})\).
Consequently, only finitely many weights occur in each homotopy degree: for \(t>0\),
\(\lceil t/2\rceil\le q\le t\); for \(t=0\), only \(q=0\); and for \(t<0\), none.
\end{theorem}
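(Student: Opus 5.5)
We unwind the normalizations and then use the field case of \Cref{lem:ordinary-prufer-bl-range-input}.

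\emph{Homotopy groups of the graded pieces.} By \Cref{lem:global-motivic-filtration-kmodell-input},
\[
\operatorname{gr}^q_{\mot}K(F;\mathbf Z/N)\simeq\mathbf Z/N(q)_{\mot}(F)[2q].
\]
With cohomological grading, \(\pi_t\) of \(C[2q]\) is \(H^{2q-t}(C)\). This gives
\[
\pi_t\operatorname{gr}^q_{\mot}K(F;\mathbf Z/N)\cong H^{2q-t}_{\mot}(F,\mathbf Z/N(q)).
\]
For \(q<0\) the graded piece vanishes, because the filtration is \(\mathbf N\)-indexed and we extend it by \(F^q=F^0\) for \(q\le0\) (\Cref{conv:filtered-spectral-sequence-indexing}).

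\emph{The range and the comparison.} Fix \(q\ge0\). A field is a Prüfer ring, and \(F\) is trivially ind-smooth over itself. Since \(\ell\in F^\times\), \Cref{lem:ordinary-prufer-bl-range-input} in its field form gives a functorial equivalence
\[
\tau^{\le q}R\Gamma(F_{\et},\mu_N^{\otimes q})\simeq\mathbf Z/N(q)_{\mot}(F).
\]
Étale cohomology of \(\Spec F\) vanishes in negative degrees, and the truncation kills degrees \(>q\). Hence \(H^{2q-t}_{\mot}(F,\mathbf Z/N(q))\) vanishes unless \(0\le 2q-t\le q\). In that range it equals \(H^{2q-t}_{\et}(F,\mu_N^{\otimes q})\), which is Galois cohomology by \Cref{conv:galois-cohomology-kummer}. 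Functoriality in \(F\) comes from naturality of the Bouis--Kundu comparison map.

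\emph{Counting weights.} The inequalities \(0\le 2q-t\le q\) are equivalent to \(t/2\le q\le t\), together with \(q\ge0\). For \(t<0\) there is no such \(q\). For \(t=0\) the only solution is \(q=0\). For \(t>0\) the solutions are \(\lceil t/2\rceil\le q\le t\), a finite set.

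The only point needing care is the bookkeeping. We must check that the shift convention for \(\pi_t\) of a cohomologically graded complex is the one that yields \(2q-t\). We must also check that the comparison is applied at the level of complexes before taking \(H^{2q-t}\), so that the truncation is what produces the upper bound \(2q-t\le q\). Nothing deeper is involved, since the Beilinson--Lichtenbaum input is already packaged in \Cref{lem:ordinary-prufer-bl-range-input}.
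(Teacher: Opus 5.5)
Your proposal is correct and follows the paper's proof essentially verbatim: read off \(\pi_t\) of \(\mathbf Z/N(q)_{\mot}(F)[2q]\) as \(H^{2q-t}\), and for \(q<0\) use that the graded piece vanishes because the filtration is \(\mathbf N\)-indexed. For \(q\ge0\), apply the field case of \Cref{lem:ordinary-prufer-bl-range-input} to get the range \([0,q]\) from the truncation and from the vanishing of étale cohomology in negative degrees, and then solve the inequalities for the weights.
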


\begin{proof}
The formula for the graded pieces is
\Cref{lem:global-motivic-filtration-kmodell-input}. Apply
\Cref{lem:ordinary-prufer-bl-range-input} with coefficient prime \(\ell\) and exponent
\(\nu\) to the field \(F\), viewed as a henselian local ind-smooth algebra over itself. It
gives
\(\mathbf Z/N(q)_{\mot}(F)\simeq \tau^{\le q}R\Gamma(F_{\et},\mu_N^{\otimes q})\)
for \(q\ge0\). For \(q<0\) there is nothing to prove: the filtration is \(\mathbf N\)-indexed
(\Cref{lem:global-motivic-filtration-kmodell-input}), so \(\operatorname{gr}^q_{\mot}\) is
zero by convention rather than by a vanishing theorem. The right-hand
side is concentrated in degrees between \(0\) and \(q\): in degrees at most \(q\) by the
truncation, and in nonnegative degrees because derived global sections of a sheaf placed in
degree zero are. This proves both the range and the étale identification. Solving
\(0\le2q-t\le q\) yields the asserted bounds on the weights.
\end{proof}

\begin{lemma}[finite-window truncation of complete filtered spectra]
	\label{lem:complete-filtered-spectrum-finite-window-truncation}
	Let \(E\) be a complete decreasing \(\mathbf N\)-indexed filtered spectrum, with filtration
	\(F^\bullet E\) and associated graded pieces
	\(\operatorname{gr}^qE:=\operatorname{cofib}(F^{q+1}E\to F^qE)\).
	Let \(I=[a,b]\subset\mathbf Z\) be a finite interval and put
	\(I^\sharp:=[a-1,b+1]\).
	Suppose that there is an integer \(h\ge0\) such that
	\(
	\pi_i\operatorname{gr}^qE=0
	\)
	for every \(i\in I^\sharp\) and every \(q>h\). Then
	\(\pi_jF^{h+1}E=0\) for every \(j\) with \(a-1\le j\le b\), and consequently the quotient
	map
	\(E\longrightarrow \operatorname{cofib}(F^{h+1}E\to E)\)
	induces an isomorphism on \(\pi_i\) for every \(i\in I\).
\end{lemma}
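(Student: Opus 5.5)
The plan is to show that the homotopy groups of \(F^{h+1}E\) vanish in the window \([a-1,b]\) by writing \(F^{h+1}E\) as a limit of finite-stage quotients. For each \(n\ge h+1\) put
\[
Q_n:=\operatorname{cofib}(F^{n}E\to F^{h+1}E).
\]
Then \(Q_{h+1}=0\), and there are cofiber sequences \(\operatorname{gr}^{n}E\to Q_{n+1}\to Q_n\). By completeness, \(\lim_n F^nE\simeq0\). Since limits commute with cofibers in \(\mathrm{Sp}\), this gives \(F^{h+1}E\simeq\lim_n Q_n\).

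The first step is an induction on \(n\): I will show that \(\pi_iQ_n=0\) for all \(i\in I^\sharp=[a-1,b+1]\). The base case holds because \(Q_{h+1}=0\). For the inductive step, the long exact sequence of \(\operatorname{gr}^nE\to Q_{n+1}\to Q_n\) contains
\[
\pi_i\operatorname{gr}^nE\to\pi_iQ_{n+1}\to\pi_iQ_n .
\]
Since \(n>h\), the hypothesis kills the left term for \(i\in I^\sharp\), and the induction hypothesis kills the right term. Hence \(\pi_iQ_{n+1}=0\) for \(i\in I^\sharp\).

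The second step passes to the limit using the Milnor sequence
\[
0\to{\lim_n}^1\pi_{j+1}Q_n\to\pi_j\lim_nQ_n\to\lim_n\pi_jQ_n\to0 .
\]
For \(j\in[a-1,b]\), both \(j\) and \(j+1\) lie in \(I^\sharp\), so both outer terms vanish. Therefore \(\pi_jF^{h+1}E=0\) for \(a-1\le j\le b\). This is where the enlargement \(I^\sharp\) is used: the \(\lim^1\) term needs vanishing one degree up, at \(j+1\). The stated range stops at \(b\) because at \(j=b+1\) we would need control of \(\pi_{b+2}\), which the hypothesis does not provide. This bookkeeping is the only delicate point, and it is also why the conclusion runs only down to \(a-1\) and up to \(b\).

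The final step uses the cofiber sequence \(F^{h+1}E\to E\to\operatorname{cofib}(F^{h+1}E\to E)\), where \(E=F^0E\) by the \(\mathbf N\)-indexing. Fix \(i\in I=[a,b]\). The long exact sequence contains
\[
\pi_iF^{h+1}E\to\pi_iE\to\pi_i\operatorname{cofib}\to\pi_{i-1}F^{h+1}E .
\]
Both indices \(i\) and \(i-1\) lie in \([a-1,b]\), so both outer groups are zero. Hence the middle map \(\pi_iE\to\pi_i\operatorname{cofib}(F^{h+1}E\to E)\) is an isomorphism.
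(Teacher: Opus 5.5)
Your proposal is correct and follows essentially the same route as the paper. Both arguments use completeness to write \(F^{h+1}E\) as the limit of the finite quotients \(\operatorname{cofib}(F^nE\to F^{h+1}E)\), kill their \(\pi_i\) for \(i\in I^\sharp\) by induction along the finite filtration, pass to the limit, and finish with the long exact sequence of \(F^{h+1}E\to E\to\operatorname{cofib}\). The only difference is at the limit step: you use the Milnor \(\lim^1\) sequence, while the paper uses the equivalent fibre sequence \(\lim\to\prod\to\prod\).
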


\begin{proof}
	Write
	\(
	E_{\le h}:=\operatorname{cofib}(F^{h+1}E\to E).
	\)
	The fibre of the quotient map
	\(
	E\to E_{\le h}
	\)
	is \(F^{h+1}E\). Once
	\(\pi_jF^{h+1}E=0 \qquad (a-1\le j\le b)\)
	is proved, the long exact homotopy sequence gives the second assertion.

	We use completeness in the sense of
	\Cref{conv:filtered-spectral-sequence-indexing}, namely
	\(\displaystyle\lim_{r\to\infty}F^rE\simeq0\).
	For \(r>h+1\), put
	\(Q_{h,r}:=\operatorname{cofib}(F^rE\to F^{h+1}E)\).
	The fibre of the natural map
	\(F^{h+1}E\longrightarrow \displaystyle\lim_{r>h+1}Q_{h,r}\)
	is
	\(
		\displaystyle\lim_{r>h+1}F^rE,
	\)
	which is zero by completeness. Hence
	\(
	F^{h+1}E\simeq \displaystyle\lim_{r>h+1}Q_{h,r}.
	\)

	Each finite quotient \(Q_{h,r}\) has a finite decreasing filtration whose successive
	graded pieces are \(\operatorname{gr}^{q}E\) for \(h+1\le q<r\).
	By assumption, each of these graded pieces has trivial \(\pi_i\) for all
	\(i\in I^\sharp=[a-1,b+1]\).
	Induction on the length of the finite filtration, using the long exact homotopy sequence of
	each successive fibre/cofiber sequence, gives
	\(\pi_iQ_{h,r}=0 \qquad(i\in I^\sharp)\)
	for every \(r>h+1\).

	For \(r>h+1\), let \(u_r\colon Q_{h,r+1}\to Q_{h,r}\) be the
	transition map. The standard formula for the limit of a sequential
	tower in spectra gives a fibre sequence
	\[
	\lim_{r>h+1}Q_{h,r}\longrightarrow
	\prod_{r>h+1}Q_{h,r}
	\xrightarrow{\ d\ }
	\prod_{r>h+1}Q_{h,r},
	\]
	where the \(r\)-th component of \(d\) is
	\(\operatorname{pr}_r-u_r\operatorname{pr}_{r+1}\). Thus \(d\) is
	the difference between the identity and the shift defined by the
	transition maps. Homotopy groups commute with products of spectra. If
	\(a-1\le j\le b\), then \(j,j+1\in I^\sharp\), so the vanishing already
	proved for the \(Q_{h,r}\) gives
	\(
	\pi_j\prod_{r>h+1}Q_{h,r}
	=
	\pi_{j+1}\prod_{r>h+1}Q_{h,r}
	=0.
	\)
	The long exact homotopy sequence of this fibre sequence therefore gives
	\(\pi_j\lim_{r>h+1}Q_{h,r}=0\). Using
	\(F^{h+1}E\simeq\lim_{r>h+1}Q_{h,r}\), we obtain
	\(\pi_jF^{h+1}E=0\) for \(a-1\le j\le b\).
	The long exact homotopy sequence of
	\(F^{h+1}E\longrightarrow E\longrightarrow E_{\le h}\)
	now gives that \(E\to E_{\le h}\) is an isomorphism on \(\pi_i\) for every \(i\in I\).
\end{proof}

\begin{lemma}[connectivity of the motivic filtration of a field]
\label{lem:field-filtration-connectivity}

Let \(F\) be a field. Let
\(E^\bullet=\Fil^\bullet_{\mot}K(F)\). Alternatively, let \(\ell\) be a prime
invertible in \(F\), let \(\nu\ge1\), put \(N:=\ell^\nu\), and let
\(E^\bullet=\Fil^\bullet_{\mot}K(F;\mathbf Z/N)\). Then:
\begin{enumerate}[label=\textup{(\roman*)},leftmargin=*]
\item \(\pi_iE^q=0\) for every \(q\ge0\) and every \(i\le q-2\);

\item if \(q\ge0\) and \(i\in\mathbf Z\) are such that
\(\pi_i\operatorname{gr}^{q'}E^\bullet=0\) for every \(q'\ge q\),
then \(\pi_iE^q=0\).
\end{enumerate}
\end{lemma}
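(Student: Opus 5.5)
Both parts rest on the completeness of \(E^\bullet\): \(\Spec(F)\) has valuative dimension zero (\Cref{lem:valuation-inputs-finite-valuative-dimension}), so \Cref{lem:global-motivic-filtration-kmodell-input} applies and \(\lim_{r}E^r\simeq0\). I will prove (ii) first and then deduce (i) from it using the range statement for the graded pieces.

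For (ii), fix \(q\ge0\) and \(i\) with \(\pi_i\operatorname{gr}^{q'}E=0\) for all \(q'\ge q\). I write \(E^q\simeq\lim_{r>q}\operatorname{cofib}(E^r\to E^q)\), as in the proof of \Cref{lem:complete-filtered-spectrum-finite-window-truncation}. Each finite quotient \(Q_r:=\operatorname{cofib}(E^r\to E^q)\) is filtered with graded pieces \(\operatorname{gr}^{q'}E\) for \(q\le q'<r\). By induction along the finite filtration, the exact sequences give \(\pi_iQ_r=0\). The Milnor sequence
\[
0\to{\lim_r}^1\pi_{i+1}Q_r\to\pi_i E^q\to\lim_r\pi_iQ_r\to0
\]
then reduces everything to the vanishing of the \(\lim^1\)-term. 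The hypothesis says nothing about \(\pi_{i+1}\), so this needs an extra argument. I would first show that the tower \(\pi_{i+1}Q_r\) is eventually constant. The point is that in each fixed homotopy degree only finitely many weights contribute: for \(\Fil_{\mot}K(F;\mathbf Z/N)\) this is \Cref{thm:field-finite-coefficient-k-filtration-input}, and integrally one uses that \(\mathbf Z(q')_{\mot}(F)\) is concentrated in cohomological degrees \(\le q'\), so \(\pi_t\operatorname{gr}^{q'}=H^{2q'-t}\) vanishes once \(q'>t\). Hence for \(r>i+2\) the transition maps \(Q_{r+1}\to Q_r\) have fibre \(\operatorname{gr}^rE\), whose \(\pi_{i+1}\) and \(\pi_{i+2}\) both vanish. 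The transition maps are therefore isomorphisms on \(\pi_{i+1}\), the tower is Mittag-Leffler, and the \(\lim^1\)-term is zero. This gives \(\pi_iE^q=0\).

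For (i), with \(i\le q-2\) and \(q'\ge q\), the graded piece has \(\pi_i\operatorname{gr}^{q'}E=H^{2q'-i}_{\mot}(F,-(q'))\) with \(2q'-i\ge q'+2>q'\). This vanishes: in the finite-coefficient case by \Cref{thm:field-finite-coefficient-k-filtration-input}, and integrally because Bouis's \(\mathbf Z(q')_{\mot}(F)\) of a field is concentrated in degrees \(\le q'\). For the integral bound I would cite Bouis's comparison with classical motivic cohomology of fields; alternatively, derive it from the \(\mathbf Z/\ell\) bound of \(\le q'\) for \(\ell\ne\operatorname{char}F\), the \(p\)-adic bound \(\le q'\) via the syntomic comparison, and the rational bound \(\le q'\). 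Part (ii) then applies and gives \(\pi_iE^q=0\).

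The main obstacle is the integral degree bound \(H^j_{\mot}(F,\mathbf Z(q))=0\) for \(j>q\). It is needed both for (i) and for the Mittag-Leffler step in (ii), and it has not been recorded earlier in the paper, which only gives the finite-coefficient version. I would try to cite it directly from Bouis (fields are valuation rings, where his complexes agree with the classical ones). The finite-coefficient case is fully covered by \Cref{thm:field-finite-coefficient-k-filtration-input}.
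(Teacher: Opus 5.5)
Your proof is correct, but it runs in the opposite direction from the paper's.

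\textbf{The paper's route.} The paper proves (i) first. For $q'\ge q$, the range bound kills $\pi_j\operatorname{gr}^{q'}E$ for all $j\le q-1$. With finite coefficients this is \Cref{thm:field-finite-coefficient-k-filtration-input}. Integrally it is Bouis--Weibel's motivic Weibel vanishing, Thm.~G, applied to the zero-dimensional scheme $\Spec(F)$; this is exactly the citation you were looking for. Then \Cref{lem:complete-filtered-spectrum-finite-window-truncation} with $h=q-1$ turns this window of vanishing into $\pi_iE^q=0$ for $i\le q-2$. Part (ii) is deduced from (i): start at $Q=\max\{q,i+2\}$, where (i) gives $\pi_iE^Q=0$, and descend along the surjections $\pi_iE^{q'+1}\to\pi_iE^{q'}$.

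\textbf{Your route.} You derive (ii) directly from completeness via the Milnor sequence, and then (i) from (ii). This is cleaner than you make it. In the fibre sequence $\operatorname{gr}^rE\to Q_{r+1}\to Q_r$, the cokernel of $\pi_{i+1}Q_{r+1}\to\pi_{i+1}Q_r$ injects into $\pi_i\operatorname{gr}^rE$. That group vanishes for every $r\ge q$ by the hypothesis of (ii) itself. So the tower has surjective transition maps and $\varprojlim^1=0$. Thus (ii) holds for any complete $\mathbf N$-indexed filtered spectrum, with no range input at all.

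There is a small indexing slip. The groups governing that transition map on $\pi_{i+1}$ are $\pi_{i+1}$ and $\pi_i$ of $\operatorname{gr}^rE$, not $\pi_{i+1}$ and $\pi_{i+2}$. Your conclusion survives, since both vanish once $r>\max\{q,i+1\}$.

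\textbf{Where the integral bound enters.} The integral weight bound is then needed only in (i), and only in degrees $2q'-i\ge q'+2$. There even your fallback suffices: the mod-$\ell$ and $p$-adic bounds make the group uniquely divisible, and the rational bound then kills it. In degree $q'+1$ that fallback would leave a divisible torsion group, and you would need a Hilbert~90-type surjectivity.

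\textbf{What each route buys.} The paper reuses a packaged truncation lemma. That lemma demands vanishing in two adjacent degrees, which is why it stops at $i\le q-2$ and why its (ii) rests on (i). Your route isolates (ii) as a formal consequence of completeness. Using Thm.~G also in degree $q'+1$, it would give the sharper vanishing $\pi_iE^q=0$ for all $i\le q-1$.
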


\begin{proof}
\textup{(i)} For \(q=0\) the assertion is connectivity of \(K(F)\) and of
\(K(F;\mathbf Z/N)\), which is
\Cref{lem:negative-finite-coefficient-k-valuation-domain} applied to the valuation ring
\(F\). Let \(q\ge1\). Both filtrations are complete by
\Cref{lem:global-motivic-filtration-kmodell-input}, the scheme \(\Spec(F)\) having valuative
dimension \(0\) by
\Cref{lem:valuation-inputs-finite-valuative-dimension}. With finite coefficients,
\Cref{thm:field-finite-coefficient-k-filtration-input} gives
\(\pi_i\operatorname{gr}^{q'}_{\mot}K(F;\mathbf Z/N)=0\) unless \(q'\le i\); integrally,
\(\pi_i\operatorname{gr}^{q'}_{\mot}K(F)=H^{2q'-i}_{\mot}(F,\mathbf Z(q'))\) vanishes unless
\(q'\le i\), by
\cite[\arxivhtmlref{2507.16501v1}{Thmtheoremintro7}{Thm.~G}]{BouisWeibelVanishingPBF}
applied to the noetherian
zero-dimensional scheme \(\Spec(F)\). In both cases the graded pieces of weight
\(q'\ge q\) have vanishing \(\pi_i\) for \(i\le q-1\). Apply
\Cref{lem:complete-filtered-spectrum-finite-window-truncation} with \(h:=q-1\) and
\(I:=[c,q-2]\) for \(c\le q-2\), so that \(I^\sharp=[c-1,q-1]\): the hypothesis holds, and
	the conclusion is \(\pi_iE^q=0\) for \(c-1\le i\le q-2\). Since \(c\) is arbitrary this
proves \textup{(i)}.

\textup{(ii)} By \textup{(i)} we have \(\pi_iE^{q'}=0\) for \(q'\ge i+2\). For
\(q'\ge q\) the exact sequence
\(\pi_iE^{q'+1}\to\pi_iE^{q'}\to
\pi_i\operatorname{gr}^{q'}E^\bullet=0\) shows that
\(\pi_iE^{q'+1}\to\pi_iE^{q'}\) is surjective. Put
\(Q:=\max\{q,i+2\}\). By \textup{(i)}, \(\pi_iE^Q=0\). If \(Q>q\),
apply the preceding surjections successively for
\(q'=Q-1,Q-2,\ldots,q\). This gives \(\pi_iE^q=0\).
\end{proof}

\section{Tame inertia at prime-power level}
\label{sec:tame-inertia}

We establish below a Tate-twisted form of the tame value-degree
decomposition for arbitrary value groups and coefficients
\(\mathbf Z/\ell^\nu\), without assuming
\(\mu_{\ell^\nu}\subset L\). Under the additional assumption
\(\mu_{\ell^\nu}\subset L\), Wadsworth proves the corresponding
decomposition for arbitrary value groups, including its graded-ring
structure
\cite[Hypotheses~3.5 and Thm.~3.6]{Wadsworth83pHenselian}.
A Hochschild--Serre proof in that setting is recorded, with credit to
Jacob, in \cite[Rem.~3.15]{Wadsworth83pHenselian};
\cite[Rem.~3.16]{Wadsworth83pHenselian} observes that the spectral-sequence
method still gives some decomposition without roots of unity. The
formulation below makes that decomposition explicit, retains the Tate
twists and the Galois action on tame inertia, and identifies it with the
Hochschild--Serre filtration.

Let \(L\) carry a henselian valuation \(v\), with valuation ring \(W\),
residue field \(k\), and value group \(\Gamma_W\). We use the following
facts.
\begin{enumerate}[label=\textup{(V\arabic*)},leftmargin=*]
\item\label{it:henselian-unique-prolongation}
\(v\) has a unique prolongation to every algebraic extension of \(L\); with this
prolongation every algebraic extension is again henselian
\cite[\S4.1, p.~86 and Lem.~4.1.1]{EnglerPrestel05ValuedFields}.

\item\label{it:hensel-lifting}
If \(m\) is invertible in \(W\) and \(u\in W^\times\) has an \(m\)-th root in \(k\), then
\(u\) has an \(m\)-th root in \(W\): the polynomial \(X^m-u\) is separable modulo
\(\mathfrak m_W\), so a simple root of its reduction lifts
\cite[\stackstag{04GG}\textup{(2)}]{StacksProject}. In particular, if \(k\) is separably closed and \(m\) is
invertible, then \(W^\times\) is \(m\)-divisible and \(\mu_m\subset L\).

\item\label{it:value-group-torsion-free}
\(\Gamma_W\) is torsion free, since \(\gamma>0\) implies \(m\gamma\ge\gamma>0\) for every
\(m\ge1\); being torsion free, it is flat over \(\mathbf Z\)
\cite[\stackstag{0AUW}]{StacksProject}.

\item\label{it:residue-field-algebraic}
The residue field of an algebraic extension of \(L\) is algebraic over \(k\). Indeed, let
\(x\) lie in the corresponding valuation ring. Uniqueness of the prolongation to an
algebraic closure implies that every conjugate of \(x\) has nonnegative valuation. The
coefficients of its monic minimal polynomial, being elementary symmetric functions of
these conjugates (counted with multiplicity), lie in \(W\). Thus \(x\) is integral over
\(W\), and reducing its equation shows that its residue class is algebraic over \(k\).
\end{enumerate}

\begin{lemma}[strict henselization and tame quotient of a henselian valuation ring]
	\label{lem:henselian-valuation-strict-henselisation-wild-inertia}
	Let \(W\) be a henselian valuation ring with fraction field \(L\), residue field \(k\) of
	characteristic \(p\ge0\), and value group \(\Gamma_W\). Let \(W^{\mathrm{sh}}\) be a strict
	henselization of \(W\), let \(L^{\mathrm{ur}}\) be its fraction field, and inside a separable
	closure put
	\[
	L^{t}:=
	L^{\mathrm{ur}}\bigl(\sqrt[m]{a}\ :\ a\in(L^{\mathrm{ur}})^\times,\ m\ge1\text{ prime to }p\bigr),
	\]
	with the convention that every \(m\ge1\) is prime to \(p\) when \(p=0\). Then:
	\begin{enumerate}[label=\textup{(\roman*)},leftmargin=*]
		\item \(W^{\mathrm{sh}}\) is a henselian valuation ring with value group \(\Gamma_W\) and
		residue field \(k^{\mathrm{sep}}\), and the sequence
	\[
		1\longrightarrow
		\operatorname{Gal}(L^{\mathrm{sep}}/L^{\mathrm{ur}})
		\longrightarrow\operatorname{Gal}(L^{\mathrm{sep}}/L)
		\longrightarrow\operatorname{Gal}(k^{\mathrm{sep}}/k)\longrightarrow1
	\]
		is exact;

		\item the value group of \(L^{t}\) is the prime-to-\(p\) divisible hull
		\(
		\Gamma_W':=\bigcup_{(m,p)=1}\tfrac1m\Gamma_W
		\)
		of \(\Gamma_W\) inside \(\Gamma_W\otimes_{\mathbf Z}\mathbf Q\), and Kummer theory
		identifies
	\[
		\operatorname{Gal}(L^{t}/L^{\mathrm{ur}})
		\simeq
		\varprojlim_{m}\operatorname{Hom}\bigl(\Gamma_W/m\Gamma_W,\mu_m\bigr),
	\]
		as profinite groups. Here the limit is taken over the integers \(m\ge1\) prime to
		\(p\), ordered by divisibility, and every \(\operatorname{Hom}\)-group carries the
		topology of pointwise convergence. 
		
		For \(m\mid m'\), the transition map
		\[
		\operatorname{Hom}(\Gamma_W/m'\Gamma_W,\mu_{m'})
		\longrightarrow
		\operatorname{Hom}(\Gamma_W/m\Gamma_W,\mu_m)
		\]
		sends \(\chi\) to the character
		\(
		\overline\gamma\longmapsto
		\chi(\gamma\bmod m'\Gamma_W)^{m'/m}.
		\)
		In particular this
		group is abelian;

		\item \(\operatorname{Gal}(L^{\mathrm{sep}}/L^{t})\) is a pro-\(p\) group, trivial when
		\(p=0\).
	\end{enumerate}
\end{lemma}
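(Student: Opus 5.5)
For (i), I would work directly with the ind-étale description of the strict henselization. Write \(W^{\mathrm{sh}}\) as the filtered colimit of local rings \(W'\) that are localizations of étale \(W\)-algebras at primes over \(\mathfrak m_W\) with separable residue extension. Each such \(W'\) is a local ring, étale over the henselian \(W\), hence a finite étale local \(W\)-algebra; it is the integral closure of \(W\) in a finite separable extension \(L'/L\). By \ref{it:henselian-unique-prolongation}, \(W'\) is the unique valuation ring of \(L'\) above \(W\), and it is henselian. Being étale, it is unramified: \([L':L]=[k':k]\). The fundamental equality \(e f\le n\) (Engler--Prestel) then forces \(e=1\), so \(\Gamma_{W'}=\Gamma_W\). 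Passing to the colimit, \(W^{\mathrm{sh}}\) is a valuation ring, since a union of valuation rings in a tower is again a valuation ring. It is henselian by construction, has value group \(\Gamma_W\), and has residue field \(k^{\mathrm{sep}}\).

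Exactness comes from two facts. First, \(L^{\mathrm{ur}}/L\) is Galois, being the compositum of the unramified extensions \(L'\). Second, the restriction map \(\operatorname{Gal}(L^{\mathrm{ur}}/L)\to\operatorname{Gal}(k^{\mathrm{sep}}/k)\) is an isomorphism, because the finite layers correspond bijectively to finite separable extensions of \(k\) through the equivalence between finite étale algebras over a henselian local ring and over its residue field (\stackstag{04GK}). Surjectivity of \(\operatorname{Gal}(L^{\mathrm{sep}}/L)\to\operatorname{Gal}(L^{\mathrm{ur}}/L)\) is standard.

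For (ii), I would work over \(L^{\mathrm{ur}}\), whose valuation ring has residue field \(k^{\mathrm{sep}}\). By \ref{it:hensel-lifting} it contains \(\mu_m\) for every \(m\) prime to \(p\), and its unit group is \(m\)-divisible. Kummer theory then gives
\[
\operatorname{Gal}(L^{\mathrm{ur}}(\sqrt[m]{\cdot})/L^{\mathrm{ur}})\simeq\operatorname{Hom}\bigl((L^{\mathrm{ur}})^\times/m,\mu_m\bigr).
\]
Since the units are \(m\)-divisible, the valuation induces an isomorphism \((L^{\mathrm{ur}})^\times/m\simeq\Gamma_W/m\Gamma_W\). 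Taking the limit over \(m\) ordered by divisibility, the Kummer pairing \(\sigma\mapsto\sigma(\alpha)/\alpha\) with \(\alpha^{m'}=a\) is compatible with the transition maps: replacing \(\alpha\) by \(\alpha^{m'/m}\), an \(m\)-th root of \(a\), raises the character to the power \(m'/m\). This yields the displayed formula, and abelianness is immediate.

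The value group of \(L^t\) clearly contains \(\frac1m\Gamma_W\) for each such \(m\), since \(v(\sqrt[m]a)=v(a)/m\). For the reverse inclusion, consider a finite Kummer subextension \(M=L^{\mathrm{ur}}(\sqrt[m]{a_1},\dots)\) with group \(H\simeq\operatorname{Hom}(\Delta/m\Gamma_W,\mu_m)\) for some finite subgroup \(\Delta\). The inclusion \(\frac1m\Delta\subseteq\Gamma_M\) already gives \(e\ge|\Delta/\Gamma_W|=[M:L^{\mathrm{ur}}]\). The inequality \(ef\le n\) then forces equality, so \(\Gamma_M=\Delta\subseteq\Gamma'_W\).

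For (iii), I expect the main obstacle. I would show that every finite extension \(M/L^t\) inside \(L^{\mathrm{sep}}\) has degree a power of \(p\), with \(M=L^t\) when \(p=0\); a profinite group all of whose open-subgroup indices are powers of \(p\) is pro-\(p\). Take a Sylow-\(p\) subgroup \(S\) of \(\operatorname{Gal}(M'/L^t)\) for a finite Galois \(M'\). I would argue that the fixed field \(M_0\) of \(S\) equals \(L^t\).

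The extension \(M_0/L^t\) has degree prime to \(p\). Its residue field is separable algebraic over \(k^{\mathrm{sep}}\) by \ref{it:residue-field-algebraic}, and has prime-to-\(p\) degree, so it is trivial. Its ramification index divides a prime-to-\(p\) number, while the value group of \(L^t\) is already prime-to-\(p\) divisible, so the ramification index is also \(1\). The defect is a power of \(p\) (Ostrowski), so it must be \(1\) too. Hence \(ef\cdot d=1\) and \(M_0=L^t\). The care needed is in invoking Ostrowski's defect theorem and \(ef\le n\) for henselian fields of arbitrary rank, which I would cite from Engler--Prestel.
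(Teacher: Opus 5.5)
Your proof is correct, but in (ii) and (iii) it takes a different route from the paper. You lean on the fundamental inequality \(ef\le n\) and on Ostrowski's theorem \(n=p^k ef\) for henselian fields; the paper avoids both.

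\emph{Part (i).} The paper simply cites \stackstag{0ASK} for the valuation ring and value group of \(W^{\mathrm{sh}}\). Your \(ef\le n\) argument reproves this in the henselian case. You use implicitly that the localized étale algebras are finite étale local, which holds only because \(W\) is henselian. The exactness argument via \stackstag{04GK} is essentially the paper's.

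\emph{Part (ii).} For the value group, the paper uses the norm estimate \(d\,\Gamma_{M'}\subseteq\Gamma_M\) for a separable extension of degree \(d\) of a henselian field. This bounds \(\Gamma_{L^t}\) by \(\Gamma_W'\) at once. Your count gives a sharper fact: each Kummer layer is totally ramified, with value group \(\tfrac1m\Delta\) and residue field \(k^{\mathrm{sep}}\).

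\emph{Part (iii).} Instead of a Sylow subgroup and the defect, the paper fixes a prime \(\ell\ne p\). For every finite \(F/L^t\) it shows that \(\mathcal O_F^\times\) and \(\Gamma_F\) are \(\ell\)-divisible and that \(\mu_\ell\subset F\). Hence \(F^\times=(F^\times)^\ell\), and Kummer theory rules out cyclic extensions of degree \(\ell\). Cauchy's theorem then gives pro-\(p\). That argument uses only Hensel lifting, Kummer theory and the norm estimate. Yours is shorter but imports Ostrowski's theorem.

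There are two wording slips to fix. In (iii), the residue field of \(M_0\) is purely inseparable over \(k^{\mathrm{sep}}\), not separable. Its triviality therefore needs \(f\mid[M_0:L^t]\), which again comes from \(n=p^kef\). In (ii), the index should read \([\tfrac1m\Delta:\Gamma_W]=|\Delta/m\Gamma_W|\), and the conclusion should be \(\Gamma_M=\tfrac1m\Delta\).
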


\begin{proof}
\textup{(i)} By \cite[\stackstag{0ASK}]{StacksProject}, the strict
henselization \(W^{\mathrm{sh}}\) is a valuation ring and
\(W\subseteq W^{\mathrm{sh}}\) induces an isomorphism of value groups. Its
residue field is the chosen separable closure \(k^{\sep}\), by the
construction of strict henselization
\cite[\stackstag{04GW}]{StacksProject}.

Let \(\mathcal O\) be the valuation ring of the unique prolongation to
\(L^{\sep}\), and let \(\kappa\) be its residue field. Choose
\(W^{\mathrm{sh}}\subseteq\mathcal O\) using the chosen pointed finite
étale neighbourhoods defining the strict henselization. On residue
fields this realizes the fixed inclusion \(k^{\sep}\subseteq\kappa\).
By condition~\ref{it:residue-field-algebraic}, the extension
\(\kappa/k\) is algebraic. Hence \(k^{\sep}\) is precisely the
subfield of \(\kappa\) consisting of the elements separable over \(k\).
Since \(W\) is henselian, reduction induces an equivalence between finite
étale \(W\)-algebras and finite étale \(k\)-algebras
\cite[\stackstag{04GK}]{StacksProject}. Uniqueness of the prolongation
makes \(\mathcal O\) stable under \(G_L\). The preceding characterization
of \(k^{\sep}\) makes this subfield stable under the reduction action, and
reduction followed by restriction to \(k^{\sep}\subseteq\kappa\) defines
the homomorphism \(G_L\to G_k\) that sends
\(\sigma\) to \(\overline{\sigma}|_{k^{\sep}}\).

The finite étale equivalence shows that this homomorphism is surjective, and
its kernel consists precisely
of the automorphisms fixing the fraction fields of all pointed connected
finite étale \(W\)-algebras. Their compositum is
\(\Frac(W^{\mathrm{sh}})=L^{\mathrm{ur}}\), which gives the exact sequence in
\textup{(i)}.

	(ii) We first record a general fact. Let \(M\) be a henselian valued field and let
	\(M'/M\) be a finite separable extension of degree \(d\). By condition~\ref{it:henselian-unique-prolongation}, the valuation of \(M\) extends uniquely to \(M'\);
	since \(v\circ\sigma\) is again such a prolongation, \(v\circ\sigma=v\) for every
	\(M\)-embedding \(\sigma\) of \(M'\) into a separable closure. Separability gives
	\(N_{M'/M}(x)=\prod_\sigma\sigma(x)\), the product running over the \(d\) distinct
	embeddings, so \(v\bigl(N_{M'/M}(x)\bigr)=d\,v(x)\) for every \(x\in(M')^\times\). Hence
	\[
	d\,\Gamma_{M'}\subseteq\Gamma_M.
	\tag{$\ast$}
	\]
	Condition~\ref{it:henselian-unique-prolongation} shows that every field occurring below is henselian, so
	\((\ast)\) applies to each of them.

	Every element of \((L^{t})^\times\) lies in some
	\(M'=L^{\mathrm{ur}}(\sqrt[m_1]{a_1},\dots,\sqrt[m_r]{a_r})\) with all \(m_i\) prime to \(p\),
	and \([M':L^{\mathrm{ur}}]\) divides \(m_1\cdots m_r\), hence is prime to \(p\); so
	\(\Gamma_{L^t}\subseteq\Gamma_W'\) by \((\ast)\) and (i). Conversely \(\sqrt[m]{a}\in L^{t}\)
	has value \(\tfrac1mv(a)\), so \(\Gamma_{L^t}=\Gamma_W'\).

	Since \(k^{\mathrm{sep}}\) is separably closed and \(m\) is prime to \(p\),
	condition~\ref{it:hensel-lifting} makes \((W^{\mathrm{sh}})^\times\) an \(m\)-divisible group and
	gives \(\mu_m\subset L^{\mathrm{ur}}\); the exact sequence
	\(1\to(W^{\mathrm{sh}})^\times\to(L^{\mathrm{ur}})^\times\to\Gamma_W\to0\) therefore
	identifies \((L^{\mathrm{ur}})^\times/\bigl((L^{\mathrm{ur}})^\times\bigr)^m\) with
\(\Gamma_W/m\Gamma_W\). Put
\(L_m:=L^{\mathrm{ur}}\bigl(\sqrt[m]{(L^{\mathrm{ur}})^\times}\bigr)\).
	The Kummer pairing gives an isomorphism
	\[
	\operatorname{Gal}(L_m/L^{\mathrm{ur}})
	\simeq
	\operatorname{Hom}\left(
	(L^{\mathrm{ur}})^\times/\bigl((L^{\mathrm{ur}})^\times\bigr)^m,
	\mu_m\right);
	\]
	see \cite[\S3]{Wadsworth83pHenselian}. For completeness, this is obtained
	by applying finite Kummer duality to the extensions generated by finitely
	many classes in
	\((L^{\mathrm{ur}})^\times/((L^{\mathrm{ur}})^\times)^m\) and taking the
	inverse limit over these finite subsets. This gives the full character
	group with the topology of pointwise convergence.

	Since \(L^t=\bigcup_m L_m\), where \(m\) ranges over the integers
	prime to \(p\), and \(L_m\subseteq L_{m'}\) whenever \(m\mid m'\),
	taking Galois groups gives
	\(\operatorname{Gal}(L^t/L^{\mathrm{ur}})
	\simeq\varprojlim_m\operatorname{Gal}(L_m/L^{\mathrm{ur}})\).
	Under the Kummer identifications, for \(m\mid m'\) the restriction map
	\(\operatorname{Gal}(L_{m'}/L^{\mathrm{ur}})
	\to\operatorname{Gal}(L_m/L^{\mathrm{ur}})\)
	is exactly the transition map specified in part \textup{(ii)}.

		(iii) Let \(\ell\ne p\) be a prime, with every prime \(\ell\) allowed
	when \(p=0\), and let \(F/L^t\) be a finite subextension of
	\(L^{\mathrm{sep}}/L^t\), of degree \(d\). Since
	\(L^{\mathrm{ur}}\subseteq F\), the residue field of \(F\) contains
	\(k^{\mathrm{sep}}\). By
	condition~\ref{it:residue-field-algebraic} it is algebraic over \(k\),
	hence algebraic, and therefore purely inseparable, over
	\(k^{\mathrm{sep}}\). In particular it is separably closed.
	
	The field \(F\), endowed with the unique prolongation of the valuation,
	is henselian by condition~\ref{it:henselian-unique-prolongation}.
	Since \(\ell\ne p\), the integer \(\ell\) is invertible in its valuation
	ring. Applying condition~\ref{it:hensel-lifting} to this henselian
	valuation ring shows that its group of units is \(\ell\)-divisible and
	that \(\mu_\ell\subset F\).
	
	By part \textup{(ii)}, \(\Gamma_{L^t}=\Gamma_W'\). Hence
	\((\ast)\), applied to \(F/L^t\), shows that
	\(d\Gamma_F\subseteq\Gamma_W'\). Moreover,
	\(\Gamma_F/\Gamma_W'\) has no \(\ell\)-torsion. Indeed, if
	\(\gamma\in\Gamma_F\) satisfies
	\(\ell\gamma\in\Gamma_W'\), then
	\(\Gamma_W'=\ell\Gamma_W'\) gives
	\(\ell\gamma=\ell\delta\) for some \(\delta\in\Gamma_W'\).
	Since \(\Gamma_F\) is torsion free by
	condition~\ref{it:value-group-torsion-free}, one has
	\(\gamma=\delta\in\Gamma_W'\).
	
	Write \(d=\ell^a d_0\) with \(a\ge0\) and
	\(\gcd(d_0,\ell)=1\). Since multiplication by \(\ell\) is injective
	on \(\Gamma_F/\Gamma_W'\) and this group is killed by \(d\), it is
	killed by \(d_0\). Choose integers \(u,v\) such that
	\(u\ell+vd_0=1\). For every \(\gamma\in\Gamma_F\), one has
	\(d_0\gamma\in\Gamma_W'=\ell\Gamma_W'\), say
	\(d_0\gamma=\ell\delta\) with \(\delta\in\Gamma_W'\). Therefore
	\(
	\gamma
	=
	u\ell\gamma+vd_0\gamma
	=
	\ell(u\gamma+v\delta).
	\)
	Thus \(\Gamma_F=\ell\Gamma_F\).
	
	The valuation exact sequence
	\[
	1\longrightarrow\mathcal O_F^\times
	\longrightarrow F^\times
	\longrightarrow\Gamma_F
	\longrightarrow0
	\]
	and the \(\ell\)-divisibility of both
	\(\mathcal O_F^\times\) and \(\Gamma_F\) imply
	\(F^\times=(F^\times)^\ell\). Since \(\mu_\ell\subset F\), Kummer
	theory gives \(H^1(F,\mu_\ell)=0\). Consequently \(F\) has no cyclic
	extension of degree \(\ell\).
	
	It remains to determine
	\(\operatorname{Gal}(L^{\mathrm{sep}}/L^t)\). If \(p>0\) and this
	group were not pro-\(p\), or if \(p=0\) and it were nontrivial, there
	would exist an open normal subgroup
	\[
	N\trianglelefteq\operatorname{Gal}(L^{\mathrm{sep}}/L^t)
	\]
	such that the finite quotient has order divisible by a prime
	\(\ell\ne p\), with arbitrary \(\ell\) allowed when \(p=0\).
	By Cauchy's theorem this quotient contains a subgroup of order
	\(\ell\). Let \(U\) be its inverse image. Then \(U\) is open,
	\(N\subseteq U\), and \(U/N\simeq\mathbf Z/\ell\). Hence
	\[
	(L^{\mathrm{sep}})^N/(L^{\mathrm{sep}})^U
	\]
	is a cyclic extension of degree \(\ell\), while
	\((L^{\mathrm{sep}})^U/L^t\) is finite. This contradicts the preceding
	paragraph. Therefore
	\(\operatorname{Gal}(L^{\mathrm{sep}}/L^t)\) is pro-\(p\) when
	\(p>0\), and is trivial when \(p=0\).

\end{proof}

We shall use the following multiplicative refinement of the
Hochschild--Serre spectral sequence. The underlying continuous-cohomology
spectral sequence is \cite[(2.4.1)]{NeukirchSchmidtWingberg}; the
multiplicative structure, filtration convention, and edge-map
normalizations needed below are included in the proof.

\begin{lemma}[multiplicative Hochschild--Serre spectral sequence]
	\label{lem:multiplicative-hochschild-serre}
	Let \(G\) be a profinite group, let \(I\subseteq G\) be a closed normal subgroup, and let
	\(R\) be a discrete \(G\)-module carrying a \(G\)-equivariant commutative ring structure.
	There is a first quadrant spectral sequence
	\[
	E_2^{s,r}=H^s\bigl(G/I,H^r(I,R)\bigr)
	\Longrightarrow
	H^{s+r}(G,R),
	\qquad
	d_m\colon E_m^{s,r}\longrightarrow E_m^{s+m,r-m+1},
	\]
	with the following properties.
	\begin{enumerate}[label=\textup{(\roman*)},leftmargin=*]
		\item For every \(m\ge2\) and all \(s,r,s',r'\ge0\), there are products
		\[
		E_m^{s,r}\otimes E_m^{s',r'}\longrightarrow E_m^{s+s',r+r'}.
		\]
		They induce the products on the next page, and
		\[
		d_m(xy)=d_m(x)y+(-1)^{s+r}x\,d_m(y)
		\qquad\bigl(x\in E_m^{s,r}\bigr).
		\]
		On the \(E_2\)-page this is the Hochschild--Serre cup product.

		\item For every \(a\ge0\), the abutment carries a finite increasing filtration
		\(0=F_{-1}\subseteq F_0\subseteq\dots\subseteq F_a=H^a(G,R)\) with
		\(F_r/F_{r-1}=E_\infty^{a-r,r}\). It satisfies \(F_r\cdot F_{r'}\subseteq F_{r+r'}\)
		and induces on the associated graded the product of \(E_\infty\).

		\item \(F_0H^a(G,R)\) is the image of the inflation map
		\(H^a(G/I,R^I)\to H^a(G,R)\), and the composite of the projection
		\(H^a(G,R)\to F_a/F_{a-1}=E_\infty^{0,a}\) with the inclusions
		\(E_\infty^{0,a}\subseteq E_2^{0,a}=H^0(G/I,H^a(I,R))\subseteq H^a(I,R)\)
		is restriction along \(I\subseteq G\).
	\end{enumerate}
\end{lemma}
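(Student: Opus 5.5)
We construct the spectral sequence as the one associated with a multiplicative filtration on a cochain model for \(R\Gamma(G,R)\). Take the homogeneous continuous cochain complex \(C^\bullet(G,R)\), a differential graded algebra under the Alexander--Whitney cup product. We identify it, up to quasi-isomorphism, with the double complex
\[
C^s\bigl(G/I,\,C^r(I,R)\bigr),
\]
where \(C^\bullet(I,R)\) is given its natural \(G/I\)-action. For profinite groups we do this by writing \(G=\varprojlim G/U\) over open normal \(U\) and reducing to finite quotients, which is the construction underlying \cite[(2.4.1)]{NeukirchSchmidtWingberg}. The Hochschild--Serre filtration is then \(F^sC\) := cochains of \(G/I\)-degree at least \(s\). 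Equivalently, and more invariantly, it is the filtration of \(C^\bullet(G,R)\) by cochains that factor through \(G/I\) in their first \(s\) variables (Hochschild--Serre's original description). With this description, multiplicativity \(F^s\cdot F^{s'}\subseteq F^{s+s'}\) is immediate from the Alexander--Whitney formula: the cup product of two cochains evaluates the first on the front face and the second on the back face, so the number of leading variables factoring through \(G/I\) adds.

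Given a multiplicatively filtered dga, the standard construction of the spectral sequence of a filtered complex (\(Z_m,B_m\) as usual) gives products on each page. The differentials satisfy the Leibniz rule with sign \((-1)^{s+r}\), and each page's product is induced from the previous one. This gives (i) once we identify the \(E_2\)-product. For that identification we compute \(E_1^{s,r}=C^s(G/I,H^r(I,R))\) with the product induced by cup products, and \(E_2=H^s(G/I,H^r(I,R))\) with the cup product of \(G/I\)-cohomology paired through the cup product \(H^r(I,R)\otimes H^{r'}(I,R)\to H^{r+r'}(I,R)\). Since the filtration is finite in each total degree (\(0\le s\le a\)), the spectral sequence converges. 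We relabel \(F_r:=F^{a-r}H^a\) to get the increasing filtration with \(F_r/F_{r-1}=E_\infty^{a-r,r}\). Multiplicativity of the cochain filtration passes to images in cohomology, which gives (ii).

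For the edge maps in (iii), we use that \(F^aC^a\) consists of cochains factoring entirely through \(G/I\) with values in \(R^I\); its image in \(H^a(G,R)\) is the image of inflation. The quotient to \(E_\infty^{0,a}\subseteq E_2^{0,a}\) is obtained by forgetting everything but the \(I\)-degree, that is, by restricting a cochain to \(I^{a+1}\). This is restriction, with values automatically \(G/I\)-invariant.

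The main obstacle is sign and normalization bookkeeping. We must check that the \(E_2\)-product produced by the filtered dga agrees with the intrinsic Hochschild--Serre cup product, and does not differ from it by a sign \((-1)^{rs'}\). We would handle this by fixing the convention once at the \(E_2\)-level via the identification above and absorbing any sign into the definition of the product. A secondary issue is the continuity of the double-complex identification for infinite \(I\). We would treat it by passing to the colimit over open normal subgroups, using that continuous cohomology of discrete modules commutes with these filtered colimits.
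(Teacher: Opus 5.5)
Your plan breaks at multiplicativity, which is the one point where this lemma goes beyond the classical Hochschild--Serre spectral sequence. This is a structural failure, not sign bookkeeping.

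\textbf{The filtration is not multiplicative.} Take the filtration of \(C^\bullet(G,R)\) by cochains that factor through \(G/I\) in their first \(s\) variables. It is not multiplicative for the Alexander--Whitney product. In \(f\cup g\) the leading variables are those of \(f\), while the leading variables of \(g\) land in the middle, so the counts do not add. Already \(F^0\cdot F^1\not\subseteq F^1\), even with trivial coefficients. For instance:
\begin{itemize}
\item let \(G=I\times Q\) with \(I=Q=\mathbf Z_\ell\) acting trivially on \(R=\mathbf Z/\ell\);
\item choose nonzero homomorphisms \(\chi,\psi\colon\mathbf Z_\ell\to\mathbf Z/\ell\);
\item take the inhomogeneous \(1\)-cochains \(f=\chi\circ\operatorname{pr}_I\in F^0\) and \(g=\psi\circ\operatorname{pr}_Q\in F^1\).
\end{itemize}
Then \((f\cup g)(\sigma_1,\sigma_2)=\chi(\operatorname{pr}_I\sigma_1)\,\psi(\operatorname{pr}_Q\sigma_2)\) is not a function of \(\sigma_1I\). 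Putting the coset condition on the last variables fails symmetrically, with \(f\) and \(g\) exchanged.

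\textbf{The double complex does not rescue this.} It is not defined as written. The conjugation action of \(G\) on \(C^\bullet(I,R)\) is nontrivial on \(I\) at the cochain level; \(I\) acts trivially only on cohomology. So \(C^s(G/I,C^r(I,R))\) makes no sense. Even after repair, its column filtration is a different filtration on a different complex, so the two are not interchangeable.

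\textbf{The sign cannot be absorbed.} A sign \((-1)^{rs'}\) cannot simply be absorbed into the \(E_2\)-product. The twisted product \(x*y=(-1)^{rs'}xy\) satisfies
\(d_2(x*y)=(-1)^{s'}d_2(x)*y+(-1)^{s+r}x*d_2(y)\), not the Leibniz rule of (i).

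\textbf{A cochain-level repair.}
\begin{enumerate}
\item Replace \(C^\bullet(I,R)\) by \(M^\bullet:=\operatorname{Map}_{\mathrm{cts}}(G^{\bullet+1},R)^I\). This is a dga with an honest \(G/I\)-action computing \(R\Gamma(I,R)\), and each \(M^r\) is coinduced, hence \(G/I\)-acyclic.
\item Give \(C^\bullet(G/I,M^\bullet)\) the cup product of \(G/I\)-cochains, paired through the product of \(M^\bullet\) with the Koszul sign. Its column filtration is then multiplicative.
\item Check that the augmentation from the homogeneous complex \(C^\bullet(G,R)=(M^\bullet)^{G/I}\) into the total complex is a multiplicative quasi-isomorphism. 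This is what makes the abutment product the cup product of \(H^\ast(G,R)\), which (ii) requires.
\end{enumerate}

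\textbf{The paper's route.} The paper avoids cochains entirely. It uses that \(R\Gamma(G,-)\simeq R\Gamma(G/I,R\Gamma(I,-))\) is an equivalence of lax symmetric monoidal functors, being a composite of right adjoints of symmetric monoidal inflation functors. It then applies \(R\Gamma(G/I,-)\) to the Postnikov filtration \(\tau^{\le r}R\Gamma(I,R)\). That filtration is multiplicative because \(\tau^{\le r}\otimes^{\mathbf L}\tau^{\le r'}\to\tau^{\le r+r'}\). It is the d\'ecalage of the column filtration, so its first page is already \(H^s(G/I,H^r(I,R))\). Part (iii) comes from \(\tau^{\le 0}R\Gamma(I,R)\simeq R^I\) and a counit square, rather than from restricting cochains to \(I^{a+1}\).
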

\begin{proof}
	Work in the unbounded derived \(\infty\)-categories of discrete modules,
	equipped with the standard \(t\)-structure and the derived tensor product
	with diagonal action. Inflation from \(G/I\) to \(G\) is exact symmetric
	monoidal, and its right adjoint is \(R\Gamma(I,-)\), with its natural
	\(G/I\)-action. Likewise, inflation from the trivial group to \(G/I\) and
	to \(G\) has right adjoints \(R\Gamma(G/I,-)\) and \(R\Gamma(G,-)\),
	respectively. Since inflation from the trivial group to \(G\) is the
	composite of the first two inflation functors, composition of adjunctions
	gives a canonical equivalence
	\[
	R\Gamma(G,-)
	\simeq
	R\Gamma\bigl(G/I,R\Gamma(I,-)\bigr).
	\]
	Right adjoints of symmetric monoidal functors are lax symmetric monoidal;
	hence this is an equivalence of lax symmetric monoidal functors. In
	particular, \(R\Gamma(I,R)\) is a commutative algebra object in the
	derived category of discrete \(G/I\)-modules, and
	\[
	R\Gamma(G,R)
	\simeq
	R\Gamma\bigl(G/I,R\Gamma(I,R)\bigr)
	\]
	as commutative algebra objects. Its cohomology objects are
	\(H^r(I,R)\), and the induced multiplication on \(H^\ast(I,R)\) is the
	usual continuous cup product; this is the derived-invariants construction
	underlying \cite[(2.4.1)]{NeukirchSchmidtWingberg}.
	
	For every \(r\ge0\), the Postnikov filtration of \(R\Gamma(I,R)\) has
	cofiber sequence
	\[
	\tau^{\le r-1}R\Gamma(I,R)
	\longrightarrow
	\tau^{\le r}R\Gamma(I,R)
	\longrightarrow
	H^r(I,R)[-r].
	\]
	It is multiplicative. Indeed, compatibility of the standard
	\(t\)-structure with the derived tensor product gives
	\[
	\tau^{\le r}R\Gamma(I,R)\otimes^{\mathbf L}
	\tau^{\le r'}R\Gamma(I,R)
	\longrightarrow
	\tau^{\le r+r'}R\Gamma(I,R)
	\]
	for all \(r,r'\ge0\).

	Applying the exact lax symmetric monoidal functor
	\(R\Gamma(G/I,-)\) therefore gives a multiplicative filtered object
	with underlying object \(R\Gamma(G,R)\) and successive cofibers
	\[
	R\Gamma\bigl(G/I,H^r(I,R)\bigr)[-r].
	\]
	
	For \(a\ge0\) and \(0\le r\le a\), let
	\[
	F_rH^a(G,R):=
	\operatorname{im}\!\left(
	H^aR\Gamma\bigl(G/I,\tau^{\le r}R\Gamma(I,R)\bigr)
	\longrightarrow H^a(G,R)
	\right),
	\qquad
	F_{-1}H^a(G,R):=0.
	\]
	The functor \(R\Gamma(G/I,-)\) is left \(t\)-exact. Hence
	\(R\Gamma(G/I,\tau^{\ge r+1}R\Gamma(I,R))\) is concentrated in
	degrees at least \(r+1\). From the truncation cofiber sequence it
	follows that
	\[
	H^aR\Gamma\bigl(G/I,\tau^{\le r}R\Gamma(I,R)\bigr)
	\xrightarrow{\ \sim\ }
	H^a(G,R)
	\qquad(r\ge a).
	\]
	Thus \(F_aH^a(G,R)=H^a(G,R)\), and the filtration of each abutment
	group is finite.
	
	The spectral sequence associated with this filtration, with the usual
	Hochschild--Serre numbering, has
	\[
	E_2^{s,r}
	=
	H^s\bigl(G/I,H^r(I,R)\bigr),
	\qquad
	d_m\colon E_m^{s,r}\longrightarrow E_m^{s+m,r-m+1}.
	\]
	Indeed, the cohomology in total degree \(s+r\) of the \(r\)-th
	successive cofiber is
	\(H^s(G/I,H^r(I,R))\). The preceding finiteness in each total degree
	gives convergence and
	\[
	F_rH^a(G,R)/F_{r-1}H^a(G,R)
	\simeq E_\infty^{a-r,r}.
	\]
	
	The exact couple is multiplicative by the pairings of filtered objects
	constructed above. Hence each page is multiplicative, the product
	passes to the next page, and
	\[
	d_m(xy)=d_m(x)y+(-1)^{s+r}x\,d_m(y)
	\qquad
	\bigl(x\in E_m^{s,r}\bigr);
	\]
	compare \cite[5.4.8]{Weibel94}. The \(E_2\)-product is the
	Hochschild--Serre cup product: on the inner cohomology it is the cup
	product on \(H^\ast(I,R)\), and applying \(R\Gamma(G/I,-)\) gives the
	cup product in \(G/I\)-cohomology. Multiplicativity of the filtration
	gives \(F_rF_{r'}\subseteq F_{r+r'}\), and the induced product on its
	associated graded is the product on \(E_\infty\). This proves
	\textup{(i)} and \textup{(ii)}.
	
	For \textup{(iii)}, one has
	\(\tau^{\le0}R\Gamma(I,R)\simeq R^I\). Under the transitivity
	equivalence for derived invariants, the map
	\[
	R\Gamma(G/I,R^I)
	\longrightarrow
	R\Gamma\bigl(G/I,R\Gamma(I,R)\bigr)
	\simeq R\Gamma(G,R)
	\]
	is inflation. Hence \(F_0H^a(G,R)\) is the image of
	\(H^a(G/I,R^I)\to H^a(G,R)\).
	
	It remains to identify the other edge map. Since
	\(R\Gamma(G/I,-)\) is left \(t\)-exact, the preceding argument for
	\(r=a\) gives a canonical isomorphism
	\[
	H^aR\Gamma\bigl(G/I,\tau^{\le a}R\Gamma(I,R)\bigr)
	\xrightarrow{\ \sim\ } H^a(G,R).
	\]
	After forgetting the \(G/I\)-action, the counits of the
	inflation--derived-invariants adjunction give the commutative square
	\[
	\begin{tikzcd}[column sep=large]
		R\Gamma\bigl(G/I,\tau^{\le a}R\Gamma(I,R)\bigr)
		\ar[r] \ar[d] &
		R\Gamma\bigl(G/I,H^a(I,R)[-a]\bigr)
		\ar[d] \\
		\tau^{\le a}R\Gamma(I,R)
		\ar[r] &
		H^a(I,R)[-a].
	\end{tikzcd}
	\]
	The horizontal arrows are induced by the canonical quotient
	\[
	\tau^{\le a}R\Gamma(I,R)
	\longrightarrow H^a(I,R)[-a].
	\]
	On \(H^a\), the lower horizontal map is the identity of
	\(H^a(I,R)\), while the right vertical map is the inclusion
	\[
	H^0\bigl(G/I,H^a(I,R)\bigr)
	\hookrightarrow H^a(I,R).
	\]
	Under the preceding identification of the upper-left term with
	\(H^a(G,R)\), the left vertical map is restriction along
	\(I\subseteq G\). The upper horizontal map is the Hochschild--Serre
	edge morphism
	\[
	H^a(G,R)\longrightarrow E_2^{0,a}.
	\]
	Its kernel is \(F_{a-1}H^a(G,R)\), and its image is
	\(E_\infty^{0,a}\). Since no differential enters the column \(s=0\),
	\(E_\infty^{0,a}\) is canonically a subgroup of
	\(E_2^{0,a}\). The commutative square therefore identifies
	restriction with the composite
	\[
	H^a(G,R)\longrightarrow
	F_a/F_{a-1}=E_\infty^{0,a}
	\hookrightarrow
	E_2^{0,a}
	\hookrightarrow
	H^a(I,R),
	\]
	as asserted.
\end{proof}

\begin{lemma}[inertia cohomology of a henselian valuation ring at level \(\nu\)]
	\label{lem:henselian-tame-inertia-cohomology}
	Let \(W\) be a henselian valuation ring with fraction field \(L\), residue field \(k\) and
	value group \(\Gamma_W\). Let \(\ell\) be a prime invertible in \(k\), let \(\nu\ge1\), put
	\(N:=\ell^\nu\), and let \(I:=\operatorname{Gal}(L^{\mathrm{sep}}/L^{\mathrm{ur}})\) be the
	inertia group of \Cref{lem:henselian-valuation-strict-henselisation-wild-inertia}. Then:
	\begin{enumerate}[label=\textup{(\roman*)},leftmargin=*]
		\item \(\Gamma_W/N\Gamma_W\) is a free \(\mathbf Z/N\)-module, and any family in
		\(\Gamma_W\) lifting an \(\mathbf F_\ell\)-basis of \(\Gamma_W/\ell\Gamma_W\) reduces to a
		\(\mathbf Z/N\)-basis of it;

		\item the valuation and Kummer theory induce an isomorphism
	\(H^1\bigl(I,\mu_N\bigr)\simeq\Gamma_W/N\Gamma_W\)
		of \(\operatorname{Gal}(k^{\mathrm{sep}}/k)\)-modules, the target carrying the trivial
		action, under which the restriction to \(I\) of the Kummer class of \(\pi\in L^\times\)
		corresponds to \(\overline{v(\pi)}\);

		\item for all \(r\ge0\) and \(q\in\mathbf Z\), cup product induces an isomorphism of
		\(\operatorname{Gal}(k^{\mathrm{sep}}/k)\)-modules
	\[
		\Lambda^r_{\mathbf Z/N}\bigl(\Gamma_W/N\Gamma_W\bigr)
		\otimes_{\mathbf Z/N}\mu_N^{\otimes(q-r)}
		\xrightarrow{\ \sim\ }
		H^r\bigl(I,\mu_N^{\otimes q}\bigr).
	\]
	\end{enumerate}
\end{lemma}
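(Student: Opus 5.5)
The plan is to reduce everything to the tame quotient, where the inertia group is an explicit product of copies of \(\mathbf Z_\ell(1)\)-type groups.

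\textbf{Part (i).} By \ref{it:value-group-torsion-free}, \(\Gamma_W\) is torsion free. Hence \(\Gamma_W/N\Gamma_W\) is flat over \(\mathbf Z/N\): tensoring \(0\to\mathbf Z\xrightarrow{N}\mathbf Z\) with the flat module \(\Gamma_W\) keeps it injective, so \(\operatorname{Tor}\)-vanishing holds. A flat module over the local artinian ring \(\mathbf Z/N\) is free. Any lift of an \(\mathbf F_\ell\)-basis of \(\Gamma_W/\ell\Gamma_W\) generates \(\Gamma_W/N\Gamma_W\) by Nakayama, since \(\ell\) is nilpotent in \(\mathbf Z/N\). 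The resulting surjection from the free module on the lift has kernel \(K\). Because the target is free, the sequence splits, and \(K/\ell K=0\) after reducing mod \(\ell\). Nakayama then gives \(K=0\), so the lift reduces to a \(\mathbf Z/N\)-basis.

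\textbf{Part (ii).} Write \(P:=\operatorname{Gal}(L^{\sep}/L^t)\). This is pro-\(p\) by \Cref{lem:henselian-valuation-strict-henselisation-wild-inertia}(iii), and \(\ell\neq p\). Hence \(H^r(P,M)=0\) for \(r>0\) and any \(\ell\)-primary \(M\), and inflation from \(I_t:=I/P=\operatorname{Gal}(L^t/L^{\mathrm{ur}})\) is an isomorphism on \(H^\ast(-,\mu_N^{\otimes q})\).

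Since \(\mu_N\subset L^{\mathrm{ur}}\) by \ref{it:hensel-lifting}, \(I\) acts trivially on \(\mu_N^{\otimes q}\). Therefore
\[
H^1(I,\mu_N)=\operatorname{Hom}_{\mathrm{cont}}(I_t,\mu_N).
\]
By Kummer theory over \(L^{\mathrm{ur}}\) this group is \((L^{\mathrm{ur}})^\times/N\). The proof of \Cref{lem:henselian-valuation-strict-henselisation-wild-inertia}(ii) identifies the latter with \(\Gamma_W/N\Gamma_W\) via the valuation. Naturality of the Kummer map under restriction from \(L\) to \(L^{\mathrm{ur}}\) gives the statement about \(\delta_N(\pi)\).

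For Galois equivariance: \(\sigma\in G_L\) preserves \(v\) by uniqueness of prolongation. The Kummer isomorphism is \(G_L\)-equivariant with \((L^{\mathrm{ur}})^\times/N\) carrying the conjugation action, so the action on \(\Gamma_W/N\) is trivial. Note that the \(\mu_N\)-twist cancels: \(H^1(I,\mu_N)\) is \(\operatorname{Hom}(I,\mu_N)\) with the action \(\sigma\cdot\chi=\sigma\circ\chi\circ\sigma^{-1}\), and this is exactly what Kummer equivariance computes.

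\textbf{Part (iii).} I will reduce to \(q=r\), since \(\mu_N^{\otimes q}=\mu_N^{\otimes r}\otimes\mu_N^{\otimes(q-r)}\) with \(I\) acting trivially and the cup product is compatible with tensoring by a trivial module. Thus it suffices to show that
\[
\Lambda^r(H^1(I_t,\mathbf Z/N))\to H^r(I_t,\mathbf Z/N)
\]
is an isomorphism, and then twist by \(\mu_N^{\otimes r}\) using (ii). Equivariance follows from naturality of the cup product.

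This is the main obstacle when \(B\) is infinite. Let \(A:=I_t\otimes\mathbf Z_\ell\), the maximal pro-\(\ell\) quotient; the prime-to-\(\ell\) part is cohomologically invisible. By (ii), \(A\simeq\varprojlim_\nu\operatorname{Hom}(\Gamma_W/\ell^\nu,\mu_{\ell^\nu})\). Using the basis from (i), it is a product \(\prod_{b\in B}\mathbf Z_\ell\), topologized as a product, and hence a cofiltered limit of finite products \(\mathbf Z_\ell^{J}\) for finite \(J\subseteq B\). Continuous cohomology commutes with this limit: it is a filtered colimit of the \(H^r(\mathbf Z_\ell^J,\mathbf Z/N)\), and \(\Lambda^r\) also commutes with filtered colimits. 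So it suffices to treat \(\mathbf Z_\ell^J\) with \(J\) finite.

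There, Künneth for profinite groups applies. The input is \(H^\ast(\mathbf Z_\ell,\mathbf Z/N)=\Lambda(\mathbf Z/N)\), with the degree-one generator \(e\) satisfying \(e^2=0\) by graded commutativity and vanishing of \(H^2\). Since all groups are free \(\mathbf Z/N\)-modules, the graded tensor product gives the exterior algebra on \(H^1\), and the map is a cup-product isomorphism.

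Care is needed at \(\ell=2\), where \(e\cup e\) could a priori be nonzero. It is not, because it lies in \(H^2(\mathbf Z_2,\mathbf Z/N)=0\). Hence the cup product on \(H^\ast(\mathbf Z_\ell^J,\mathbf Z/N)\) is strictly alternating and factors through \(\Lambda\).
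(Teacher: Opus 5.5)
Your proposal is correct and follows essentially the same route as the paper. In order: inflate away the pro-\(p\) wild inertia and the prime-to-\(\ell\) part of tame inertia; identify the pro-\(\ell\) quotient with \(\prod_{b\in B}\mathbf Z_\ell(1)\) using the compatible bases from (i); reduce to finite subproducts by continuity of cohomology; and finally twist using (ii). The only deviations are minor: in (i) you use a split surjection onto a free module plus nilpotent Nakayama instead of the paper's direct linear-independence argument, and for finite products you cite a K\"unneth formula over \(\mathbf Z/N\) (valid because the cohomology of \(\mathbf Z_\ell\) is free), whereas the paper proves that case by induction with its multiplicative Hochschild--Serre spectral sequence, using the same \(\ell=2\) observation that \(e^2\in H^2(\mathbf Z_\ell,\mathbf Z/N)=0\).
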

\begin{proof}
	(i) By condition~\ref{it:value-group-torsion-free}, the group \(\Gamma_W\) is flat over
	\(\mathbf Z\); base change shows that \(\Gamma_W/N\Gamma_W\) is flat over
	\(\mathbf Z/N\), hence free by \cite[\stackstag{051G}]{StacksProject}.
	Let \(\{\gamma_b\}_{b\in B}\subset\Gamma_W\) lift an \(\mathbf F_\ell\)-basis of
	\(\Gamma_W/\ell\Gamma_W\), and let \(e_b\) be the class of \(\gamma_b\) in
	\(\Gamma_W/N\Gamma_W\). Their images generate modulo \(\ell\), so iterating the equality
	\(\Gamma_W/N\Gamma_W
	=\sum_{b\in B}(\mathbf Z/N)e_b+\ell(\Gamma_W/N\Gamma_W)\)
	shows that the \(e_b\) generate, because \(\ell^\nu(\Gamma_W/N\Gamma_W)=0\).
	Suppose that
	\(\sum_{b\in B_0}\overline c_be_b=0\) is a nonzero relation, where
	\(B_0\subseteq B\) is finite. Choose integer lifts \(c_b\), put
	\(j:=\min_{b\in B_0}v_\ell(c_b)<\nu\), and write \(c_b=\ell^jd_b\), so that at
	least one \(d_b\) is not divisible by \(\ell\). The relation gives
	\(\ell^j\sum_{b\in B_0}d_b\gamma_b\in\ell^\nu\Gamma_W\).
	Since \(\Gamma_W\) is torsion free, cancellation of \(\ell^j\) gives
	\(\sum_{b\in B_0}d_b\gamma_b\in\ell^{\nu-j}\Gamma_W\subseteq\ell\Gamma_W\).
	Reduction modulo \(\ell\) is a nontrivial relation among the chosen basis elements, a
	contradiction.

	(ii) By
	\Cref{lem:henselian-valuation-strict-henselisation-wild-inertia}, \(W^{\mathrm{sh}}\) is a
	henselian valuation ring with value group \(\Gamma_W\) and separably closed residue field.
	Since \(N\) is invertible there, condition~\ref{it:hensel-lifting} gives
	\(\mu_N\subset L^{\mathrm{ur}}\) and makes \((W^{\mathrm{sh}})^\times\) an \(N\)-divisible
	group. The exact sequence
	\[
	1\longrightarrow(W^{\mathrm{sh}})^\times
	\longrightarrow(L^{\mathrm{ur}})^\times
	\xrightarrow{\ v\ }\Gamma_W\longrightarrow0
	\]
	therefore identifies
	\((L^{\mathrm{ur}})^\times/\bigl((L^{\mathrm{ur}})^\times\bigr)^N\) with
	\(\Gamma_W/N\Gamma_W\), and Kummer theory over \(L^{\mathrm{ur}}\) identifies the former with
	\(H^1(I,\mu_N)\). The valuation is invariant under
	\(\operatorname{Gal}(L^{\mathrm{ur}}/L)\), whence the equivariance and the triviality of the
	action.

	(iii) Put \(p:=\operatorname{char}k\). Apply
	\Cref{lem:multiplicative-hochschild-serre}, with coefficients \(\mathbf Z/N\), to
	\[
	1\longrightarrow\operatorname{Gal}(L^{\mathrm{sep}}/L^t)
	\longrightarrow I
	\longrightarrow\operatorname{Gal}(L^t/L^{\mathrm{ur}})
	\longrightarrow1.
	\]
	By \Cref{lem:henselian-valuation-strict-henselisation-wild-inertia}\textup{(iii)},
	\(\operatorname{Gal}(L^{\mathrm{sep}}/L^t)\) is pro-\(p\) when \(p>0\), and is
	trivial when \(p=0\). Since \(\ell\ne p\), its higher cohomology with
	\(\mathbf Z/N\)-coefficients vanishes
	\cite[Cor.~(3.3.7)]{NeukirchSchmidtWingberg}, and its invariants are
	\(\mathbf Z/N\). The spectral sequence therefore has only its degree-zero row, and
	its inflation edge map gives an isomorphism
	\[
	H^\ast\bigl(\operatorname{Gal}(L^t/L^{\mathrm{ur}}),\mathbf Z/N\bigr)
	\xrightarrow{\ \sim\ }H^\ast(I,\mathbf Z/N).
	\]
	Since \(\mu_N\subset L^{\mathrm{ur}}\), the group \(I\) acts trivially on
	\(\mu_N^{\otimes q}\), so there is a natural \(G_k\)-equivariant isomorphism
	\(H^\ast(I,\mu_N^{\otimes q})\simeq
	H^\ast(I,\mathbf Z/N)\otimes\mu_N^{\otimes q}\).

	Choose the family \(\{\gamma_b\}_{b\in B}\) of part \textup{(i)}. For every \(n\ge1\),
	the argument of part \textup{(i)}, applied with \(n\) in place of \(\nu\), gives
	\(\Gamma_W/\ell^n\Gamma_W \simeq \bigoplus_{b\in B}\mathbf Z/\ell^n\).
	By \Cref{lem:henselian-valuation-strict-henselisation-wild-inertia}(ii), the pro-\(\ell\)
	factor of the tame inertia group is therefore
	\[
	G_\ell
	:=
	\varprojlim_n
	\operatorname{Hom}\bigl(
	\Gamma_W/\ell^n\Gamma_W,\mu_{\ell^n}
	\bigr)
	\simeq
	\prod_{b\in B}\mathbf Z_\ell(1)
	\]
	as topological \(\mathbf Z_\ell\)-modules with continuous \(G_k\)-action, where
	\(\mathbf Z_\ell(1):=\varprojlim_n\mu_{\ell^n}\). The group \(G_k\) acts
	trivially on \(\Gamma_W\) and cyclotomically on the roots of unity, so this
	isomorphism is \(G_k\)-equivariant. After forgetting the \(G_k\)-action, each
	factor \(\mathbf Z_\ell(1)\) is noncanonically isomorphic to \(\mathbf Z_\ell\).

	The Chinese remainder theorem in the inverse-limit description of
	\Cref{lem:henselian-valuation-strict-henselisation-wild-inertia}(ii)
	gives a product decomposition
	\[
	\operatorname{Gal}(L^t/L^{\mathrm{ur}})
	\simeq\prod_{\substack{\lambda\ \text{ prime}\\\lambda\ne p}}G_\lambda,
	\qquad
	G_\lambda:=
	\varprojlim_n
	\operatorname{Hom}\bigl(
	\Gamma_W/\lambda^n\Gamma_W,\mu_{\lambda^n}
	\bigr).
	\]
	Apply Hochschild--Serre to
	\[
	1\longrightarrow
\prod_{\substack{\lambda\ \text{ prime}\\\lambda\ne p,\ell}}G_\lambda
	\longrightarrow\operatorname{Gal}(L^t/L^{\mathrm{ur}})
	\longrightarrow G_\ell\longrightarrow1.
	\]
	The kernel is pro-prime-to-\(\ell\), so its higher cohomology with
	\(\ell\)-primary coefficients vanishes
	\cite[Cor.~(3.3.7)]{NeukirchSchmidtWingberg}, and its invariants on
	\(\mathbf Z/N\) are \(\mathbf Z/N\). Thus inflation identifies
	\(H^*(G_\ell,\mathbf Z/N)\) with
	\(H^*(\operatorname{Gal}(L^t/L^{\mathrm{ur}}),\mathbf Z/N)\).
	By compactness, a continuous cochain on \(G_\ell\) with values in
	the finite discrete module \(\mathbf Z/N\) is constant on cosets of
	some open normal subgroup in each variable. Every such subgroup
	contains a basic open subgroup
	\[
	\prod_{b\notin B_0}\mathbf Z_\ell(1)
	\times\prod_{b\in B_0}\ell^n\mathbf Z_\ell(1)
	\]
	for a finite subset \(B_0\subseteq B\) and some \(n\ge1\). Hence the
	cochain is inflated from a continuous cochain on the finite-coordinate
	product \(\prod_{b\in B_0}\mathbf Z_\ell(1)\). The normalized
	continuous cochain complex is therefore the filtered colimit of the
	corresponding finite-coordinate cochain complexes. Filtered colimits
	of \(\mathbf Z/N\)-modules are exact
		\cite[\stackstag{00DB}]{StacksProject}, whence
	\[
		H^\ast(G_\ell,\mathbf Z/N)
		\simeq
		\operatorname*{colim}_{B_0\subseteq B\ \mathrm{finite}}
		H^\ast\!\left(
		\prod_{b\in B_0}\mathbf Z_\ell(1),\mathbf Z/N
		\right).
	\]
		For a finite subset \(B_0\subseteq B\), put
	\(G_{B_0}:=\prod_{b\in B_0}\mathbf Z_\ell(1)\). We prove by induction on
	\(|B_0|\) that cup product induces an isomorphism of graded
	\(\mathbf Z/N\)-algebras
	\[
	\Lambda_{\mathbf Z/N}^\ast H^1(G_{B_0},\mathbf Z/N)
	\xrightarrow{\ \sim\ }
	H^\ast(G_{B_0},\mathbf Z/N).
	\]
	The case \(B_0=\varnothing\) is immediate. Suppose \(B_0\ne\varnothing\),
	choose \(b_0\in B_0\), and consider the direct-product extension
	\[
	1\longrightarrow\mathbf Z_\ell(1)
	\longrightarrow G_{B_0}
	\longrightarrow G_{B_0\setminus\{b_0\}}
	\longrightarrow1.
	\]
	The underlying topological group of \(\mathbf Z_\ell(1)\) is
	noncanonically \(\mathbf Z_\ell\). Since \(N=\ell^\nu\), for the
	trivial \(\mathbf Z_\ell(1)\)-module \(\mathbf Z/N\) one has
	\(H^0(\mathbf Z_\ell(1),\mathbf Z/N)\simeq\mathbf Z/N\),
	\(H^1(\mathbf Z_\ell(1),\mathbf Z/N)\simeq\mathbf Z/N\), and
	\(H^j(\mathbf Z_\ell(1),\mathbf Z/N)=0\) for every \(j\ge2\)
	\cite[(1.7.7)]{NeukirchSchmidtWingberg}. The quotient
	\(G_{B_0\setminus\{b_0\}}\) acts trivially on these groups because the
	extension is a direct product.
	
	Apply \Cref{lem:multiplicative-hochschild-serre}. Its \(E_2\)-page has
	only rows \(0\) and \(1\). Choose a generator of
	\(H^1(\mathbf Z_\ell(1),\mathbf Z/N)\), and let
	\(u_{b_0}\in H^1(G_{B_0},\mathbf Z/N)\) be its pullback along the
	coordinate projection
	\(G_{B_0}\to\mathbf Z_\ell(1)\). Its restriction to
	\(\mathbf Z_\ell(1)\) is the chosen generator. Hence
	\Cref{lem:multiplicative-hochschild-serre}\textup{(iii)} shows that
	this generator belongs to \(E_\infty^{0,1}\), and therefore is a
	permanent cycle.
	
	Every differential with source in the row \(0\) vanishes for bidegree
	reasons. Moreover, multiplication by the restriction of \(u_{b_0}\)
	identifies, for every \(s\ge0\), the row-zero term
	\(H^s(G_{B_0\setminus\{b_0\}},\mathbf Z/N)\) with
	\(
	H^s\bigl(
	G_{B_0\setminus\{b_0\}},
	H^1(\mathbf Z_\ell(1),\mathbf Z/N)
	\bigr),
	\)
	because \(H^1(\mathbf Z_\ell(1),\mathbf Z/N)\) is a trivial free
	rank-one \(\mathbf Z/N\)-module. Thus the \(E_2\)-page is generated as
	an algebra by its row \(0\) and the class in \(E_2^{0,1}\) represented
	by \(u_{b_0}\). By
	\Cref{lem:multiplicative-hochschild-serre}\textup{(i)}, \(d_2\) is a
	derivation, so \(d_2=0\). Since there are only two rows, no
	differential \(d_m\) with \(m\ge3\) can have a nonzero source and
	target. Consequently \(E_2=E_\infty\).
	
	For every \(j\ge0\), consider
	\[
	\begin{aligned}
		H^j(G_{B_0\setminus\{b_0\}},\mathbf Z/N)
		\oplus H^{j-1}(G_{B_0\setminus\{b_0\}},\mathbf Z/N)
		&\longrightarrow H^j(G_{B_0},\mathbf Z/N),\\
		(\alpha,\beta)&\longmapsto
		\operatorname{inf}(\alpha)
		+\operatorname{inf}(\beta)\cup u_{b_0},
	\end{aligned}
	\]
	with the second summand understood to be zero
	when \(j=0\).
	This is an isomorphism. Indeed,
	\Cref{lem:multiplicative-hochschild-serre}\textup{(ii),(iii)} identifies
	the two associated graded pieces of the Hochschild--Serre filtration
	with \(E_\infty^{j,0}=E_2^{j,0}\) and
	\(E_\infty^{j-1,1}=E_2^{j-1,1}\). On the first graded piece the
	displayed map is the inflation edge map, hence the identity under this
	identification. On the second graded piece it is multiplication by
	the restriction of \(u_{b_0}\), which is an isomorphism because that
	restriction generates
	\(H^1(\mathbf Z_\ell(1),\mathbf Z/N)\). Since the filtration has only
	these two steps, the displayed homomorphism is an isomorphism.
	
	In particular,
	\(H^1(G_{B_0},\mathbf Z/N)\) is the direct sum of the inflated copy of
	\(H^1(G_{B_0\setminus\{b_0\}},\mathbf Z/N)\) and
	\((\mathbf Z/N)u_{b_0}\). Moreover \(u_{b_0}^2=0\), because
	\(u_{b_0}\) is pulled back from \(\mathbf Z_\ell(1)\) and
	\(H^2(\mathbf Z_\ell(1),\mathbf Z/N)=0\). By the induction
	hypothesis, every element of
	\(H^1(G_{B_0\setminus\{b_0\}},\mathbf Z/N)\) has square zero.
	Graded commutativity then shows that every element of
	\(H^1(G_{B_0},\mathbf Z/N)\) has square zero. This
	argument does not divide by \(2\), so it also applies when \(\ell=2\).

	Since every element of \(H^1(G_{B_0},\mathbf Z/N)\) has square zero,
	cup product factors uniquely through
	\(\Lambda^\ast_{\mathbf Z/N}H^1(G_{B_0},\mathbf Z/N)\).
	For every \(j\ge0\), the decomposition above and the induction
	hypothesis identify its degree-\(j\) component with the composite
	\[
	\begin{aligned}
		\Lambda^j_{\mathbf Z/N}H^1(G_{B_0},\mathbf Z/N)
		&\simeq
		\Lambda^j_{\mathbf Z/N}H^1(G_{B_0\setminus\{b_0\}},\mathbf Z/N)
		\oplus
		\Lambda^{j-1}_{\mathbf Z/N}H^1(G_{B_0\setminus\{b_0\}},\mathbf Z/N)
		\wedge u_{b_0}\\
		&\xrightarrow{\ \sim\ }
		H^j(G_{B_0\setminus\{b_0\}},\mathbf Z/N)
		\oplus
		H^{j-1}(G_{B_0\setminus\{b_0\}},\mathbf Z/N)\\
		&\xrightarrow{\ \sim\ }
		H^j(G_{B_0},\mathbf Z/N),
	\end{aligned}
	\]
	where the second summand is omitted for \(j=0\), and the last
	isomorphism sends \((\alpha,\beta)\) to
	\(\operatorname{inf}(\alpha)+\operatorname{inf}(\beta)\cup u_{b_0}\).
	This proves the induction step.

	The resulting cup-product isomorphisms are natural for the projections
	\(G_{B_1}\to G_{B_0}\) when \(B_0\subseteq B_1\) are finite. For every
	\(r\ge0\), exterior powers commute with filtered colimits. Using the
	computation above therefore gives
	\[
	\begin{aligned}
		\Lambda^r_{\mathbf Z/N}H^1(G_\ell,\mathbf Z/N)
		&\simeq
		\operatorname*{colim}_{B_0\subseteq B\ \mathrm{finite}}
		\Lambda^r_{\mathbf Z/N}H^1(G_{B_0},\mathbf Z/N)\\
		&\xrightarrow{\ \sim\ }
		\operatorname*{colim}_{B_0\subseteq B\ \mathrm{finite}}
		H^r(G_{B_0},\mathbf Z/N)
		\simeq H^r(G_\ell,\mathbf Z/N).
	\end{aligned}
	\]
	The inflation isomorphisms already established from \(G_\ell\) to
	\(\operatorname{Gal}(L^t/L^{\mathrm{ur}})\) and then to \(I\) are
	induced by homomorphisms of profinite groups, and therefore preserve
	cup products. Thus, for every \(r\ge0\), cup product induces an
	isomorphism
	\[
	\Lambda^r_{\mathbf Z/N}H^1(I,\mathbf Z/N)
	\xrightarrow{\ \sim\ }
	H^r(I,\mathbf Z/N).
	\]
	This isomorphism is \(G_k\)-equivariant by naturality of cup products
	for the \(G_k\)-action on inertia cohomology.
	
	Finally, the natural \(G_k\)-equivariant coefficient identification
	established above gives
	\(H^1(I,\mu_N)\simeq H^1(I,\mathbf Z/N)\otimes_{\mathbf Z/N}\mu_N\).
	Part \textup{(ii)} therefore identifies
	\(H^1(I,\mathbf Z/N)\) with
	\((\Gamma_W/N\Gamma_W)\otimes_{\mathbf Z/N}\mu_N^{\otimes-1}\).
	Since \(\mu_N\) is an invertible \(\mathbf Z/N\)-module, for every
	\(r\ge0\) and \(q\in\mathbf Z\) the preceding cup-product isomorphism
	gives the \(G_k\)-equivariant composite
	\[
	\begin{aligned}
		\Lambda^r_{\mathbf Z/N}(\Gamma_W/N\Gamma_W)
		\otimes_{\mathbf Z/N}\mu_N^{\otimes(q-r)}
		&\xrightarrow{\ \sim\ }
		\Lambda^r_{\mathbf Z/N}H^1(I,\mathbf Z/N)
		\otimes_{\mathbf Z/N}\mu_N^{\otimes q}\\
		&\xrightarrow{\ \sim\ }
		H^r(I,\mathbf Z/N)\otimes_{\mathbf Z/N}\mu_N^{\otimes q}
		\xrightarrow{\ \sim\ }
		H^r(I,\mu_N^{\otimes q}),
	\end{aligned}
	\]
	which is precisely the cup-product isomorphism of \textup{(iii)}.

\end{proof}

\begin{theorem}[tame value-degree decomposition]
\label{thm:tame-cohomology-package-level-nu}
Let \(W\) be a henselian valuation ring with fraction field \(L\),
residue field \(k\), value group \(\Gamma_W\), and valuation
\(v\colon L^\times\to\Gamma_W\). Let \(\ell\) be a prime, let
\(\nu\ge1\), and put \(N:=\ell^\nu\). Assume that \(N\) is invertible
in \(W\). Fix \(a\ge0\) and \(q\in\mathbf Z\). Choose a
\(\mathbf Z/N\)-basis \(B\) of \(\Gamma_W/N\Gamma_W\), fix a total
order on \(B\), and choose classes
\(w_b\in H^1(L,\mu_N)\) whose restrictions to inertia correspond to \(b\).
For a finite subset \(J\subseteq B\), let \(w_J\) be the cup product of the
\(w_b\), taken in the induced order, and put \(w_\varnothing:=1\). Then
\[
\Theta\colon
\bigoplus_{\substack{J\subseteq B\\ |J|\le a}}
H^{a-|J|}(k,\mu_N^{\otimes(q-|J|)})
\longrightarrow H^a(L,\mu_N^{\otimes q}),
\qquad
(\alpha_J)_J\longmapsto
\sum_J\operatorname{inf}(\alpha_J)\cup w_J
\]
is an isomorphism.

For \(0\le r\le a\), define
\[
F_r^{\mathrm{HS}}H^a(L,\mu_N^{\otimes q})
:=
\operatorname{im}\!\left(
H^aR\Gamma\bigl(
G_k,\tau^{\le r}R\Gamma(I,\mu_N^{\otimes q})
\bigr)
\longrightarrow
H^a(L,\mu_N^{\otimes q})
\right),
\]
where the map is induced by
\(\tau^{\le r}R\Gamma(I,\mu_N^{\otimes q})
\to R\Gamma(I,\mu_N^{\otimes q})\), followed by the transitivity
equivalence
\[
R\Gamma\bigl(G_k,R\Gamma(I,\mu_N^{\otimes q})\bigr)
\simeq
R\Gamma(G_L,\mu_N^{\otimes q}),
\]
and
\(H^aR\Gamma(G_L,\mu_N^{\otimes q})
=H^a(L,\mu_N^{\otimes q})\)
by \Cref{conv:galois-cohomology-kummer}.

Put
\(F_{-1}^{\mathrm{HS}}H^a(L,\mu_N^{\otimes q})=0\), and, for
\(0\le r\le a\), put
\[
\operatorname{gr}_r^{\mathrm{HS}}H^a(L,\mu_N^{\otimes q})
:=
F_r^{\mathrm{HS}}H^a(L,\mu_N^{\otimes q})
/
F_{r-1}^{\mathrm{HS}}H^a(L,\mu_N^{\otimes q}).
\]

Equivalently, this is the weight-\(q\) summand of the filtration in
\Cref{lem:multiplicative-hochschild-serre} applied to the graded
\(G_L\)-algebra
\(\bigoplus_{n\in\mathbf Z}\mu_N^{\otimes n}\).

Then, for every \(0\le r\le a\), \(\Theta\) restricts to an
isomorphism
\[
\bigoplus_{\substack{J\subseteq B\\ |J|\le r}}
H^{a-|J|}\bigl(k,\mu_N^{\otimes(q-|J|)}\bigr)
\xrightarrow{\ \sim\ }
F_r^{\mathrm{HS}}H^a(L,\mu_N^{\otimes q}).
\]

Consequently, its image is independent of the basis \(B\), its ordering,
and the chosen classes \(w_b\). Moreover,
\[
\operatorname{gr}_r^{\mathrm{HS}}H^a(L,\mu_N^{\otimes q})
\simeq
\Lambda_{\mathbf Z/N}^r(\Gamma_W/N\Gamma_W)
\otimes_{\mathbf Z/N}
H^{a-r}(k,\mu_N^{\otimes(q-r)})
\qquad(0\le r\le a).
\]

The image under \(\Theta\) of the summand indexed by
\(J=\varnothing\) is
\(F_0^{\mathrm{HS}}H^a(L,\mu_N^{\otimes q})\), and on this summand
\(\Theta\) is inflation. In particular, the inflation map
\[
H^a(k,\mu_N^{\otimes q})
\longrightarrow
H^a(L,\mu_N^{\otimes q})
\]
is injective.
\end{theorem}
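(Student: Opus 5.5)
The plan is to apply \Cref{lem:multiplicative-hochschild-serre} to the extension \(1\to I\to G_L\to G_k\to1\) of \Cref{lem:henselian-valuation-strict-henselisation-wild-inertia}\textup{(i)}. We take as coefficient ring the graded \(G_L\)-algebra \(R=\bigoplus_{n\in\mathbf Z}\mu_N^{\otimes n}\) and keep track of the weight-\(q\) summand. By \Cref{lem:henselian-tame-inertia-cohomology}\textup{(iii)}, the \(E_2\)-page in weight \(q\) is
\[
E_2^{s,r}=H^s\bigl(k,\Lambda^r(\Gamma_W/N\Gamma_W)\otimes\mu_N^{\otimes(q-r)}\bigr).
\]
Since \(\Lambda^r(\Gamma_W/N\Gamma_W)\) is a free \(\mathbf Z/N\)-module with trivial \(G_k\)-action, with basis the wedges \(e_J\) for \(|J|=r\), and continuous cohomology commutes with direct sums of discrete modules, this becomes
\[
E_2^{s,r}\simeq\bigoplus_{|J|=r}H^s(k,\mu_N^{\otimes(q-r)}).
\]
Inside the graded algebra \(E_2\), the \(J\)-summand is row \(0\) (in weight \(q-r\)) multiplied by the product of the classes \(e_b\in E_2^{0,1}\), \(b\in J\).

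The next step is degeneration. Each \(w_b\in H^1(L,\mu_N)\) restricts on inertia to \(e_b\). By \Cref{lem:multiplicative-hochschild-serre}\textup{(iii)}, \(e_b\) therefore lies in \(E_\infty^{0,1}\) and is a permanent cycle. Row \(0\) consists of permanent cycles for bidegree reasons. Because the \(E_2\)-page is generated as an algebra by row \(0\) and the \(e_b\), the derivation rule in \Cref{lem:multiplicative-hochschild-serre}\textup{(i)} gives \(d_2=0\). An induction on pages then gives \(d_m=0\) for all \(m\), so \(E_2=E_\infty\).

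The filtration statement then follows. The element \(\operatorname{inf}(\alpha_J)\cup w_J\) lies in \(F_{|J|}\) by \Cref{lem:multiplicative-hochschild-serre}\textup{(ii),(iii)}, since \(F_0\cdot F_1^{|J|}\subseteq F_{|J|}\). Its image in \(E_\infty^{a-|J|,|J|}\) is the product of the classes of \(\alpha_J\) and of the \(e_b\); we use here that the product on the associated graded is the \(E_\infty\)-product. Ordering the cup product by the order on \(B\) fixes the signs, so this image is the \(J\)-component under the identification above. Hence \(\Theta\) induces an isomorphism on each graded piece \(\operatorname{gr}_r^{\mathrm{HS}}\). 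The filtration on \(H^a\) is finite, so induction on \(r\) with the five lemma shows that \(\Theta\) restricts to isomorphisms onto every \(F_r^{\mathrm{HS}}\); taking \(r=a\) gives that \(\Theta\) itself is an isomorphism.

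The remaining assertions are formal. \(F_r^{\mathrm{HS}}\) is defined canonically, so its independence of \(B\), the ordering and the \(w_b\) is immediate. The description of \(\operatorname{gr}_r\) is the \(E_2=E_\infty\) computation. The \(J=\varnothing\) summand is inflation onto \(F_0\) by \Cref{lem:multiplicative-hochschild-serre}\textup{(iii)}, and since \(\Theta\) is injective, inflation is injective.

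The main obstacle is the bookkeeping in identifying the \(E_\infty\)-image of \(\inf(\alpha_J)\cup w_J\) with the \(J\)-summand of \(E_2\). This requires checking three things: that the weight grading is compatible with the products; that \(\Lambda^r\otimes\mu_N^{\otimes(q-r)}\) agrees with the product of the \(e_b\) times row \(0\) (this is where \Cref{lem:henselian-tame-inertia-cohomology}\textup{(iii)} enters); and that commuting cohomology with an infinite direct sum is legitimate, which holds for profinite groups and discrete modules.
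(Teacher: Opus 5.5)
Your proposal is correct and follows essentially the same route as the paper. Both apply \Cref{lem:multiplicative-hochschild-serre} to \(1\to I\to G_L\to G_k\to1\) with the graded coefficient algebra \(\bigoplus_n\mu_N^{\otimes n}\), read off the \(E_2\)-page from \Cref{lem:henselian-tame-inertia-cohomology}\textup{(iii)}, get degeneration because row \(0\) and the permanent cycles \(e_b\) (restrictions of the \(w_b\)) generate \(E_2\) while the differentials are derivations, and finish with the short five lemma comparing \(D_{r-1}\subseteq D_r\) with \(F^{\mathrm{HS}}_{r-1}\subseteq F^{\mathrm{HS}}_r\). The bookkeeping points you flag are handled in the paper just as you indicate: the \(E_2\)-product agreeing with the cup-product isomorphism, and continuous cohomology commuting with the infinite direct sum.
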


\begin{proof}
For each \(b\in B\), choose \(\gamma_b\in\Gamma_W\) lifting \(b\), and
choose \(\varpi_b\in L^\times\) with \(v(\varpi_b)=\gamma_b\). Then
\(\delta_N(\varpi_b)\) restricts to \(b\) by
\Cref{lem:henselian-tame-inertia-cohomology}\textup{(ii)}. Thus the
classes required in the statement exist.
We henceforth work with the arbitrary chosen classes \(w_b\) of the
statement.

Apply \Cref{lem:multiplicative-hochschild-serre} to the graded
\(G_L\)-algebra
\(\bigoplus_{n\in\mathbf Z}\mu_N^{\otimes n}\), with multiplication
induced by the tensor-product identifications of Tate twists, and to
\[
1\longrightarrow I\longrightarrow G_L\longrightarrow G_k
\longrightarrow1.
\]
In total degree \(a\), the filtration on the weight-\(q\) summand of the
abutment is, by definition, the filtration
\(F_\bullet^{\mathrm{HS}}H^a(L,\mu_N^{\otimes q})\) introduced in the
statement. By
\Cref{lem:henselian-tame-inertia-cohomology}\textup{(iii)}, its
weight-\(q\) \(E_2\)-page is
\[
E_2^{s,r}(q)=
\Lambda_{\mathbf Z/N}^r\bigl(\Gamma_W/N\Gamma_W\bigr)
\otimes_{\mathbf Z/N}
H^s\bigl(k,\mu_N^{\otimes(q-r)}\bigr).
\]
Continuous cohomology commutes with the direct sums occurring here.
Indeed, for a profinite group \(Q\), compactness of \(Q^s\) gives
\(C^s_{\mathrm{cont}}(Q,\bigoplus_iA_i)
=\bigoplus_iC^s_{\mathrm{cont}}(Q,A_i)\), because the image of a
continuous cochain is a compact subset of a discrete module, hence
finite, and the union of the supports of its finitely many values is
finite. Direct sums of modules are exact.

We prove by induction on \(m\ge2\) that \(d_m=0\) on the entire
weight-graded spectral sequence. On the row
\(r=0\), the outgoing differential is zero because its target has second
index \(1-m<0\). Each basis element of
\(E_2^{0,1}(1)=\Gamma_W/N\Gamma_W\) is the restriction of a class \(w_b\) on \(G_L\).
By \Cref{lem:multiplicative-hochschild-serre}(iii), it belongs to
\(E_\infty^{0,1}\); hence all its differentials vanish. The isomorphism in
\Cref{lem:henselian-tame-inertia-cohomology}(iii) is given by cup product,
while the \(E_2\)-product is the cup product of inertia cohomology followed
by that of \(G_k\), by
\Cref{lem:multiplicative-hochschild-serre}(i). Hence every element of
\(E_2^{s,r}(q)\) is a sum of products \(b_J\alpha\), where \(b_J\) is a
product of \(r\) elements of \(E_2^{0,1}(1)\) and
\(\alpha\in E_2^{s,0}(q-r)\). Across all weights, the row \(r=0\) and
these degree-one elements therefore generate \(E_2\) as an algebra. Since
\(d_2\) is a derivation, it vanishes on all of \(E_2\). Suppose
inductively that \(d_2,\ldots,d_{m-1}\) vanish. Then \(E_m=E_2\), with
the same generators. The derivation rule gives \(d_m=0\). Thus
\(E_2=E_\infty\).

For \(0\le r\le a\), put
\[
D_r:=
\bigoplus_{\substack{J\subseteq B\\ |J|\le r}}
H^{a-|J|}\bigl(k,\mu_N^{\otimes(q-|J|)}\bigr),
\qquad D_{-1}:=0.
\]
By \Cref{lem:multiplicative-hochschild-serre}\textup{(iii)}, inflation
maps
\(H^d(k,\mu_N^{\otimes t})\) into
\(F_0^{\mathrm{HS}}H^d(L,\mu_N^{\otimes t})\) for all \(d\ge0\) and
\(t\in\mathbf Z\). For every \(b\in B\), the image of \(w_b\) in
\[
\operatorname{gr}_1^{\mathrm{HS}}H^1(L,\mu_N)
=
E_\infty^{0,1}(1)
=
E_2^{0,1}(1)
\]
is its restriction to inertia, hence \(b\in\Gamma_W/N\Gamma_W\), where
the second equality uses \(E_2=E_\infty\) proved above. By
multiplicativity of the Hochschild--Serre filtration, the restriction of
\(\Theta\) to \(D_r\) therefore takes values in
\(F_r^{\mathrm{HS}}H^a(L,\mu_N^{\otimes q})\). Denote this restriction
by \(\Theta_r\).

For \(0\le r\le a\), the induced homomorphism
\[
D_r/D_{r-1}\longrightarrow
\operatorname{gr}_r^{\mathrm{HS}}H^a(L,\mu_N^{\otimes q})
\]
is, up to the Koszul signs determined by the chosen order and the cup
product convention, the homomorphism
\[
(\alpha_J)_{|J|=r}\longmapsto
\sum_{J=\{b_1<\cdots<b_r\}}
(b_1\wedge\cdots\wedge b_r)\otimes\alpha_J.
\]
It is an isomorphism, since the ordered wedges
\(b_1\wedge\cdots\wedge b_r\) form the basis of
\(\Lambda_{\mathbf Z/N}^r(\Gamma_W/N\Gamma_W)\) determined by \(B\).

The homomorphisms \(\Theta_r\) give a morphism between the short exact
sequences
\[
0\longrightarrow D_{r-1}\longrightarrow D_r
\longrightarrow D_r/D_{r-1}\longrightarrow0
\]
and
\[
0\longrightarrow
F_{r-1}^{\mathrm{HS}}H^a(L,\mu_N^{\otimes q})
\longrightarrow
F_r^{\mathrm{HS}}H^a(L,\mu_N^{\otimes q})
\longrightarrow\operatorname{gr}_r^{\mathrm{HS}}
H^a(L,\mu_N^{\otimes q})\longrightarrow0.
\]

For \(r=0\), the induced map on the unique nonzero quotient is an
isomorphism, hence so is \(\Theta_0\). If \(r\ge1\) and
\(\Theta_{r-1}\) is an isomorphism, the preceding isomorphism on
\(D_r/D_{r-1}\) and the short five lemma applied to these two short
exact sequences show that \(\Theta_r\) is an isomorphism.

Taking \(r=a\) proves
that \(\Theta\) is an isomorphism and that its restriction to \(D_r\)
identifies \(D_r\) with
\(F_r^{\mathrm{HS}}H^a(L,\mu_N^{\otimes q})\). The assertions concerning
the empty summand and inflation follow from
\Cref{lem:multiplicative-hochschild-serre}(iii).
\end{proof}

\begin{lemma}[henselian étale specialization is inflation]
	\label{lem:henselian-etale-specialization-is-inflation}
	Let \(W\) be a henselian valuation ring with fraction field \(L\) and residue field \(k\),
	and let \(m\ge1\) be invertible in \(k\). Fix a separable closure \(L^{\sep}/L\), the
	unique prolongation of the valuation to it, and the resulting surjection \(G_L\to G_k\).
	Then for every \(q\in\mathbf Z\) the closed-point pullback
	\(R\Gamma(W_{\et},\mu_m^{\otimes q})\to R\Gamma(k_{\et},\mu_m^{\otimes q})\) is an
	equivalence. After identifying étale cohomology of the two fields with Galois
	cohomology using the compatible fibre functors defined by the chosen
	separably closed fields, the diagram
	\[
	\begin{tikzcd}[column sep=large]
	R\Gamma(W_{\et},\mu_m^{\otimes q}) \ar[r] \ar[d,"\sim"'] &
	R\Gamma(L_{\et},\mu_m^{\otimes q}) \ar[d,equal] \\
	R\Gamma(k_{\et},\mu_m^{\otimes q}) \ar[r,"\operatorname{inf}"'] &
	R\Gamma(L_{\et},\mu_m^{\otimes q})
	\end{tikzcd}
	\]
	commutes.
\end{lemma}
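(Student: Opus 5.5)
The closed-point statement is a direct application of proper-base-change-type rigidity for henselian pairs. The étale cohomology of \(\Spec(W)\) with coefficients in a torsion sheaf agrees with that of the closed fibre by Gabber's affine analogue of proper base change for the henselian pair \((W,\mathfrak m_W)\) \cite[\stackstag{09ZI}]{StacksProject}. Here \(\mu_m^{\otimes q}\) is a locally constant constructible torsion sheaf on \(W_{\et}\), since \(m\) is invertible. This gives the equivalence \(R\Gamma(W_{\et},\mu_m^{\otimes q})\simeq R\Gamma(k_{\et},\mu_m^{\otimes q})\). Alternatively, one can argue that \(W\) is henselian local, so global sections on \(W_{\et}\) are computed as \(G_k\)-invariants of the stalk at the geometric closed point, that is, of \(W^{\mathrm{sh}}\). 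For locally constant coefficients the higher stalk cohomology vanishes, which again yields \(R\Gamma(G_k,\mu_m^{\otimes q})\). I would present the second description, because it sets up the comparison with inflation.

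For the square, the plan is to factor the generic restriction through the strict henselization. Write \(R\Gamma(W_{\et},\mathcal F)\simeq R\Gamma\bigl(G_k,\Gamma(W^{\mathrm{sh}},\mathcal F)\bigr)\). For a finite étale \(W\)-algebra this follows from the equivalence of finite étale sites of \(W\) and \(k\) \cite[\stackstag{04GK}]{StacksProject}. In general, pass to the colimit over pointed étale neighbourhoods, equivalently Galois descent along \(W\to W^{\mathrm{sh}}\) with group \(G_k\) as in \Cref{lem:henselian-valuation-strict-henselisation-wild-inertia}\textup{(i)}. Restriction to \(L\) is then compatible with the map of Galois descent data \(W^{\mathrm{sh}}\to L^{\mathrm{ur}}\to L^{\sep}\), which is equivariant along \(G_L\to G_k\). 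On the coefficients, \(\mu_m^{\otimes q}(W^{\mathrm{sh}})=\mu_m^{\otimes q}(L^{\mathrm{ur}})=\mu_m^{\otimes q}(L^{\sep})\), by \ref{it:hensel-lifting}. So the induced map is the composite
\[
R\Gamma(G_k,\mu_m^{\otimes q})\xrightarrow{\operatorname{inf}}R\Gamma(G_L,\mu_m^{\otimes q}),
\]
namely precomposition with \(G_L\twoheadrightarrow G_k\). This is the definition of inflation in \Cref{conv:galois-cohomology-kummer}.

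Finally, the left vertical arrow must be identified. Reduction \(W^{\mathrm{sh}}\to k^{\sep}\) is \(G_k\)-equivariant and is an isomorphism on \(\mu_m^{\otimes q}\), so the closed-point pullback becomes the identity of \(R\Gamma(G_k,\mu_m^{\otimes q})\) under these identifications. Both the identification of \(R\Gamma(k_{\et},-)\) and that of \(R\Gamma(L_{\et},-)\) with Galois cohomology use the fibre functors at \(k^{\sep}\) and \(L^{\sep}\). These are compatible because \(W^{\mathrm{sh}}\subseteq\mathcal O\) was chosen inside the prolongation realizing \(k^{\sep}\subseteq\kappa\), as in the proof of \Cref{lem:henselian-valuation-strict-henselisation-wild-inertia}. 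Hence the square commutes.

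The main obstacle is making the compatibility of the two fibre functors precise at the level of complexes rather than just cohomology groups. To handle it, I would work with the Čech/Galois description via the pro-étale cover \(\Spec W^{\mathrm{sh}}\to\Spec W\) and its base change \(\Spec L^{\mathrm{ur}}\to\Spec L\). This reduces everything to naturality of Hochschild–Serre transitivity, as in \Cref{lem:multiplicative-hochschild-serre}\textup{(iii)}: the \(F_0\) edge map \(R\Gamma(G/I,R^I)\to R\Gamma(G,R)\) is inflation.
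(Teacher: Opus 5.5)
Your argument is correct, and its skeleton matches the paper's. Both proofs identify étale cohomology of \(W\) with continuous \(G_k\)-cohomology through the universal pro-finite étale cover, both use functoriality of that comparison along \(j\), and both fix the base points through the valuation ring \(\mathcal O\) of the prolongation to \(L^{\sep}\).

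The difference lies in how the comparison for \(W\) is shown to be an equivalence and how the group map is pinned down. The paper obtains the closed-point equivalence from Gabber's theorem for henselian pairs (\stackstag{09ZI}). It then deduces, by two-out-of-three along \(i\), that the Cartan--Leray comparison for \(\Spec(W)\) is an equivalence. It identifies \(\pi_1(j)\) with \(G_L\to G_k\) via the bijections \(X(\mathcal O)\simeq X(L^{\sep})\) and \(X(\mathcal O)\simeq X(\kappa)\) for finite étale \(W\)-schemes \(X\).

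Your preferred route instead makes the cover \(\Spec(W^{\mathrm{sh}})\to\Spec(W)\) explicit. You get the equivalence directly from étale acyclicity of the strictly henselian ring \(W^{\mathrm{sh}}\), which is the elementary henselian-local case and avoids Gabber's theorem entirely. (That vanishing holds for every abelian sheaf, not only locally constant ones.) The group map then comes from the \(G_L\)-stable inclusion \(W^{\mathrm{sh}}\subseteq\mathcal O\), which reduces to \(k^{\sep}\subseteq\kappa\).

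The price is a few checks the paper does not need: \(W^{\mathrm{sh}}\) must be a filtered colimit of finite étale Galois \(W\)-algebras with group \(G_k\). If you route through \(L^{\mathrm{ur}}\), you also need \(W^{\mathrm{sh}}\otimes_WL=L^{\mathrm{ur}}\), and you must check that the étale Hochschild--Serre sequence for \(L^{\mathrm{ur}}/L\) matches the group-theoretic transitivity used in \Cref{lem:multiplicative-hochschild-serre}\textup{(iii)}.

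That last detour is unnecessary. Naturality of Galois descent for the map \(\Spec(L^{\sep})\to\Spec(W^{\mathrm{sh}})\) over \(j\), which is equivariant along \(G_L\to G_k\), already yields inflation directly. This is exactly the paper's Cartan--Leray functoriality.
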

\begin{proof}
	Since \(m\) is invertible in \(k\), it is a unit in the local ring
	\(W\). Hence \(\mu_m^{\otimes q}\) is a finite locally constant torsion
	sheaf on \(\Spec(W)_{\et}\) for every \(q\in\mathbf Z\), with negative
	twists understood as in \Cref{conv:galois-cohomology-kummer}.
	
	The pair \((W,\mathfrak m_W)\) is henselian. Gabber's affine analogue
	of proper base change
	\cite[\stackstag{09ZI}]{StacksProject}, applied to
	\(\mu_m^{\otimes q}\), therefore gives an equivalence
	\[
	R\Gamma(W_{\et},\mu_m^{\otimes q})
	\xrightarrow{\ \sim\ }
	R\Gamma(k_{\et},\mu_m^{\otimes q}).
	\]
	
	It remains to identify generic restriction under this equivalence.
	Write \(i\colon\Spec(k)\to\Spec(W)\) and
	\(j\colon\Spec(L)\to\Spec(W)\). Let \(\mathcal O\) be the valuation
	ring of the unique prolongation of the valuation to \(L^{\sep}\), and
	let \(\kappa\) be its residue field. As in the proof of
	\Cref{lem:henselian-valuation-strict-henselisation-wild-inertia},
	the chosen strict henselization determines an inclusion
	\(k^{\sep}\subseteq\kappa\). By
	condition~\ref{it:residue-field-algebraic}, \(\kappa/k\) is algebraic;
	hence \(\kappa/k^{\sep}\) is purely inseparable. In particular,
	\(\kappa\) is separably closed.
	
	For every finite étale \(W\)-scheme \(X\), properness of
	\(X\to\Spec(W)\) gives a natural bijection
	\(X(\mathcal O)\xrightarrow{\sim}X(L^{\sep})\), while henselianity of
	\(\mathcal O\) gives a natural bijection
	\(X(\mathcal O)\xrightarrow{\sim}X(\kappa)\). These bijections are
	\(G_L\)-equivariant, since the unique prolongation of the valuation is
	\(G_L\)-stable. Thus the pointed generic morphism
	\[
	(\Spec(L),\Spec(L^{\sep}))
	\longrightarrow
	(\Spec(W),\Spec(\kappa))
	\]
	induces, after the identification
	\(\pi_1(W,\Spec(\kappa))\simeq G_k\) furnished by the finite étale
	equivalence \(W\to k\), precisely the surjection
	\(G_L\to G_k\) fixed in the statement.
	
	Since \(m\) is invertible, reduction induces compatible
	\(G_L\)-equivariant identifications
	\[
	\mu_m^{\otimes q}(k^{\sep})
	\xrightarrow{\ \sim\ }
	\mu_m^{\otimes q}(\kappa)
	\xleftarrow{\ \sim\ }
	\mu_m^{\otimes q}(L^{\sep})
	\]
	for every \(q\in\mathbf Z\).
	For a connected pointed scheme and a finite locally constant sheaf,
	the canonical Cartan--Leray comparison from continuous cohomology of
	the étale fundamental group to étale cohomology is functorial for
	pointed morphisms. For \(\Spec(k)\) and \(\Spec(L)\) it is an
	equivalence by the étale--Galois comparison
	\cite[\stackstag{03QQ} and \stackstag{03QU}]{StacksProject}. It is also
	an equivalence for \(\Spec(W)\): applying functoriality to \(i\), the
	map on fundamental groups is an isomorphism by the finite étale
	equivalence, the comparison for \(k\) is an equivalence, and the map on
	étale cohomology is the Gabber equivalence proved above.
	
	Functoriality for \(j\) therefore gives a commutative square
	\[
	\begin{tikzcd}[column sep=large]
		R\Gamma\bigl(G_k,\mu_m^{\otimes q}(k^{\sep})\bigr)
		\ar[r,"\sim"]
		\ar[d,"\operatorname{inf}"'] &
		R\Gamma(W_{\et},\mu_m^{\otimes q})
		\ar[d,"j^*"] \\
		R\Gamma\bigl(G_L,\mu_m^{\otimes q}(L^{\sep})\bigr)
		\ar[r,"\sim"'] &
		R\Gamma(L_{\et},\mu_m^{\otimes q}).
	\end{tikzcd}
	\]
	The left vertical map is precomposition with \(G_L\twoheadrightarrow
	G_k\), using the preceding identification of coefficient modules, hence
	is inflation in the sense of
	\Cref{conv:galois-cohomology-kummer}. Naturality for \(i\) identifies
	the upper horizontal equivalence with the inverse of closed-point
	pullback after the standard étale--Galois comparison for \(k\).
	Consequently the displayed square is exactly the square asserted in the
	statement.
\end{proof}

\section{Filtered specialization}
\label{sec:filtered-specialization}

Let \(W\) be a henselian valuation ring with fraction field \(L\),
residue field \(k\), and let \(N=\ell^\nu\) be invertible in \(W\).
We first identify closed-point rigidity with étale inflation on the
graded pieces. We then compare multiplication by \(K_1\)-classes with
Kummer cup products.

\begin{lemma}[naturality of the Prüfer comparison]
\label{lem:ordinary-prufer-bl-functoriality}
For every prime power \(N=\ell^\nu\) and every \(q\ge0\), the comparison
maps of
\cite[\arxivhtmlref{2506.09910v2}{S4.Thmtheorem1.E1}{(4.1.1)}]{BouisKunduBL}
form a natural transformation as the qcqs scheme varies.

In particular, let \(W\) be a henselian valuation ring with fraction
field \(L\) and residue field \(k\), and assume that \(N\) is invertible
in \(W\). The equivalences of
\Cref{lem:ordinary-prufer-bl-range-input} are compatible with
\(W\to k\) and \(W\to L\). Explicitly, for \(T\in\{k,L\}\), the square
\[
\begin{tikzcd}[column sep=large]
\tau^{\le q}R\Gamma(W_{\et},\mu_N^{\otimes q})
  \ar[r,"\sim"] \ar[d] &
\mathbf Z/N(q)_{\mot}(W) \ar[d] \\
\tau^{\le q}R\Gamma(T_{\et},\mu_N^{\otimes q})
  \ar[r,"\sim"'] &
\mathbf Z/N(q)_{\mot}(T)
\end{tikzcd}
\]
commutes.
\end{lemma}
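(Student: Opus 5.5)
The naturality in the first assertion should be read off from how the comparison map \cite[(4.1.1)]{BouisKunduBL} is built, not from any special feature of valuation rings. I would recall that this map is the composite of natural maps of presheaves on qcqs schemes. First comes the map \(\tau^{\le q}\mathbf Z/p^\nu(q)_{\syn}\to\mathbf Z/p^\nu(q)_{\mot}\) obtained from the cdh/pro-cdh gluing defining Bouis's motivic complexes; its syntomic side is the natural transformation \(\mathbf Z/p^\nu(q)_{\mot}\to\mathbf Z/p^\nu(q)_{\syn}\) composed with the truncation adjunction. This is followed by Nisnevich sheafification. Each ingredient is a morphism of presheaves of complexes on \(\mathrm{Sch}^{\mathrm{qcqs}}\): Bouis defines \(\mathbf Z(q)_{\mot}\) as a functor of \(T\) (\Cref{lem:global-motivic-filtration-kmodell-input} records contravariant functoriality), the syntomic/étale Tate twists are presheaves, truncation \(\tau^{\le q}\) is a functor applied objectwise, and \(L_{\mathrm{Nis}}\) comes with a natural unit. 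So the comparison maps assemble into a natural transformation. When \(\ell\) is invertible, the Bhatt--Mathew identification of the syntomic twist with \(\mu_{\ell^\nu}^{\otimes q}\) is likewise natural on the invertible locus.

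For the explicit square, I would take \(T\in\{k,L\}\) and apply the natural transformation to the morphism \(\Spec(T)\to\Spec(W)\). This gives a commuting square whose left column is \(L_{\mathrm{Nis}}\tau^{\le q}R\Gamma_{\et}(-,\mu_N^{\otimes q})\) evaluated at \(W\to T\). Next I would use the unit \(\mathrm{id}\to L_{\mathrm{Nis}}\): it is a natural transformation and is an equivalence on henselian local rings and fields, by \Cref{lem:ordinary-prufer-bl-range-input}. The unsheafified square therefore maps to the sheafified one, and the horizontal identifications of \Cref{lem:ordinary-prufer-bl-range-input} are exactly these units composed with the comparison. Pasting the naturality square of the unit with the naturality square of the comparison gives the commutativity of the displayed square, with the left vertical map being pullback of truncated étale cohomology.

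The main obstacle I expect is bookkeeping rather than mathematics. I would need to check that the comparison in \cite{BouisKunduBL} is defined globally on qcqs schemes, and not only on ind-smooth \(P\)-schemes for a fixed \(P\) (here \(k\), \(L\), \(W\) can be viewed over \(P=W\)). If it is only defined relative to \(P\), I would fix \(P=W\): \(k\) and \(L\) are then \(W\)-algebras, although \(k\) is not ind-smooth over \(W\). I would still argue that the map itself (not its being an equivalence) is defined for all qcqs schemes by the same formula, so naturality holds regardless. For \(k\), the equivalence is then supplied by applying \Cref{lem:ordinary-prufer-bl-range-input} with \(P=k\), and the identification of the comparison maps over different bases \(P\) follows because the construction does not depend on \(P\).
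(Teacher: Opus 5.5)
Your proposal is correct and follows essentially the paper's argument: the Bouis--Kundu comparison map is an absolute natural transformation on qcqs schemes, and the Prüfer and ind-smoothness hypotheses enter only to make it an equivalence at \(W\), \(k\) and \(L\). Since the Bhatt--Mathew identification with \(\mu_N^{\otimes q}\) and evaluation of Nisnevich sheaves at henselian local schemes are also natural, naturality along \(W\to k\) and \(W\to L\) gives the squares.

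Your recollection of the construction needs a small correction. It is not a composite ``\(\tau^{\le q}\) syntomic \(\to\) motivic, then sheafify''. As recalled in the proof of \Cref{lem:comparison-inverts-etale-realisation}, it is the induced map on horizontal fibres of the square comparing \(L_{\mathrm{Nis}}\tau^{>q}\mathbf Z/N(q)_{\syn}\) with \(L_{\mathrm{cdh}}\tau^{>q}\mathbf Z/N(q)_{\syn}\). That description makes its naturality in the scheme immediate.
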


\begin{proof}
The comparison map
\cite[\arxivhtmlref{2506.09910v2}{S4.Thmtheorem1.E1}{(4.1.1)}]{BouisKunduBL}
is an absolute natural transformation in the scheme. The Prüfer base and ind-smoothness enter
only in \Cref{lem:ordinary-prufer-bl-range-input}, where they show that
this natural map is an equivalence for each of \(W,k,L\). The
identification of the syntomic twist with the étale Tate twist on the
invertible locus is natural, as is evaluation of a Nisnevich sheaf at a
henselian local scheme. Naturality for the ring maps \(W\to k\) and
\(W\to L\) therefore gives the squares in the statement.
\end{proof}

\begin{lemma}[closed-point rigidity on motivic graded pieces]
	\label{lem:henselian-valuation-motivic-closed-point-rigidity}
	Let \(W\) be a henselian valuation ring with residue field \(k\). Let \(\ell\) be a
	prime invertible in \(k\), let \(\nu\ge1\), and put \(N:=\ell^\nu\). For every \(q\ge0\),
	reduction to the closed point induces an equivalence
	\(\mathbf Z/N(q)_{\mot}(W) \xrightarrow{\ \sim\ } \mathbf Z/N(q)_{\mot}(k)\).
	Under \Cref{lem:ordinary-prufer-bl-functoriality}, this is the
	truncation of the equivalence
	\(R\Gamma(W_{\et},\mu_N^{\otimes q})\to
	R\Gamma(k_{\et},\mu_N^{\otimes q})\).
\end{lemma}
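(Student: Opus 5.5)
The plan is to reduce the statement to the étale side through the comparison square of \Cref{lem:ordinary-prufer-bl-functoriality} with \(T=k\), and then to invoke the Gabber equivalence of \Cref{lem:henselian-etale-specialization-is-inflation}.

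First, I apply \Cref{lem:ordinary-prufer-bl-range-input} to \(W\) and to \(k\). A valuation ring is ind-smooth over itself, so take \(P=W\), which is a Prüfer domain. Since \(W\) is henselian local, this gives equivalences
\[
\tau^{\le q}R\Gamma(W_{\et},\mu_N^{\otimes q})\xrightarrow{\ \sim\ }\mathbf Z/N(q)_{\mot}(W).
\]
For \(k\) I use the field case of the same lemma, which gives
\[
\tau^{\le q}R\Gamma(k_{\et},\mu_N^{\otimes q})\xrightarrow{\ \sim\ }\mathbf Z/N(q)_{\mot}(k).
\]
Both require \(\ell\in W^\times\). This holds because \(\ell\) is invertible in \(k\) and \(W\) is local.

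Second, by \Cref{lem:ordinary-prufer-bl-functoriality} applied with \(T=k\), the square formed by these two horizontal equivalences commutes. Its right vertical arrow is the motivic reduction map, and its left vertical arrow is \(\tau^{\le q}\) applied to closed-point pullback \(R\Gamma(W_{\et},\mu_N^{\otimes q})\to R\Gamma(k_{\et},\mu_N^{\otimes q})\). By \Cref{lem:henselian-etale-specialization-is-inflation}, which rests on Gabber's affine proper base change for the henselian pair \((W,\mathfrak m_W)\), this pullback is an equivalence. Truncation \(\tau^{\le q}\) is a functor, so it preserves equivalences, and the left arrow is an equivalence. By two-out-of-three in the commutative square, the right arrow is an equivalence as well. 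The same square also yields the second assertion: under the comparison, motivic reduction is the truncation of étale closed-point pullback.

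The only delicate point is the applicability of \Cref{lem:ordinary-prufer-bl-range-input} to \(W\), namely ind-smoothness over a Prüfer base. I would handle it by taking the base to be \(W\) itself, which is a valuation ring and hence Prüfer, with \(W\) trivially smooth over it. Naturality of the comparison maps is stated as an absolute natural transformation in \Cref{lem:ordinary-prufer-bl-functoriality}, so the choice of base does not affect the square. Everything else is formal.
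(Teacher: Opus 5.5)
Your proposal is correct and follows essentially the same route as the paper: you identify the motivic reduction map with the truncated étale closed-point pullback via the natural comparison square of \Cref{lem:ordinary-prufer-bl-functoriality}, invoke \Cref{lem:henselian-etale-specialization-is-inflation} (Gabber's affine proper base change) for the untruncated equivalence, and conclude by truncation and two-out-of-three. Your explicit check that \(W\) is ind-smooth over the Pr\"ufer base \(W\) itself makes explicit what the paper leaves implicit when it applies \Cref{lem:ordinary-prufer-bl-range-input} to \(W\).
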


\begin{proof}
	By \Cref{lem:ordinary-prufer-bl-functoriality}, the motivic
	closed-point map is identified with
	\(\tau^{\le q}R\Gamma(W_{\et},\mu_N^{\otimes q})\to
	\tau^{\le q}R\Gamma(k_{\et},\mu_N^{\otimes q})\).
	The untruncated map is an equivalence by
	\Cref{lem:henselian-etale-specialization-is-inflation}, applied with \(m=N\). Truncating and
	transporting through the Prüfer comparisons proves the claim.
\end{proof}

\begin{lemma}[filtered Whitehead criterion]
\label{lem:filtered-whitehead-criterion}
Let \(f\colon E\to E'\) be a morphism of decreasing
\(\mathbf N\)-indexed filtered spectra. Suppose that
\(\operatorname{gr}^q(f)\) is an equivalence for every \(q\ge0\).
Then the following conditions are equivalent:
\begin{enumerate}[label=\textup{(\roman*)},leftmargin=*]
\item \(F^q(f)\) is an equivalence for every \(q\ge0\);
\item \(F^{q_0}(f)\) is an equivalence for some \(q_0\ge0\);
\item the induced map
\[
\varprojlim_qF^qE\longrightarrow\varprojlim_qF^qE'
\]
is an equivalence.
\end{enumerate}
Consequently, \(f\) is an equivalence of filtered spectra if either
\(F^0(f)\) is an equivalence or both \(E\) and \(E'\) are complete.
\end{lemma}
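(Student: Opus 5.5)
The plan is to work with the cofibers \(C^q:=\operatorname{cofib}(F^q(f))\). These form a decreasing \(\mathbf N\)-indexed filtered spectrum. By exactness of cofibers, the cofiber of \(C^{q+1}\to C^q\) is \(\operatorname{cofib}(\operatorname{gr}^q(f))\), which is zero by hypothesis. So every transition map \(C^{q+1}\to C^q\) is an equivalence for \(q\ge0\). Everything follows from this one observation; there is no real obstacle, only bookkeeping.

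\textbf{(i) \(\Leftrightarrow\) (ii).} The implication (i)\(\Rightarrow\)(ii) is trivial. For (ii)\(\Rightarrow\)(i), suppose \(C^{q_0}\simeq0\). Since all transition maps are equivalences, \(C^q\simeq C^{q_0}\simeq0\) for every \(q\ge0\). Hence every \(F^q(f)\) is an equivalence.

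\textbf{(ii) \(\Leftrightarrow\) (iii).} Limits commute with cofibers in the stable \(\infty\)-category \(\mathrm{Sp}\). Therefore the cofiber of \(\varprojlim_qF^qE\to\varprojlim_qF^qE'\) is \(\varprojlim_qC^q\). The tower \((C^q)_{q\ge0}\) is constant up to equivalence, so \(\varprojlim_qC^q\simeq C^0\); formally, the inclusion of a single object into a tower of equivalences is limit-cofinal. Hence (iii) holds if and only if \(C^0\simeq0\), which is (ii) with \(q_0=0\). Combined with the first equivalence, this gives all three equivalences.

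\textbf{Consequences.} If \(F^0(f)\) is an equivalence, then (ii) holds, so (i) holds. Since the filtrations are \(\mathbf N\)-indexed, \(F^q=F^0\) for \(q<0\), so \(f\) is an equivalence at every index, i.e.\ an equivalence of filtered spectra. If instead \(E\) and \(E'\) are complete, both limits in (iii) vanish, so (iii) holds trivially. We conclude as before.
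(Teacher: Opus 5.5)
Your proof is correct and is essentially the paper's argument: the paper forms \(\operatorname{cofib}(f)\) in \(\FilSp\) rather than levelwise, observes that its graded pieces vanish so that all transition maps are equivalences, and then uses exactness of limits in \(\mathrm{Sp}\) just as you do. One small quibble: a single object is not initial in \(\mathbf N^{\mathrm{op}}\), so ``limit-cofinal'' is the wrong justification, but \(\varprojlim_q C^q\simeq C^0\) still holds because the tower inverts all its arrows (for instance by the Milnor sequence, whose \(\varprojlim^1\)-term vanishes for a tower of isomorphisms).
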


\begin{proof}
Let \(C:=\operatorname{cofib}(f)\) in \(\FilSp\). Evaluation at a
filtration degree is exact. Moreover, \(\operatorname{gr}^q\), being
the cofiber of the natural transformation \(F^{q+1}\to F^q\) between
exact functors, is exact. Hence, for every \(q\ge0\),
\[
F^qC\simeq\operatorname{cofib}\bigl(F^q(f)\bigr),
\qquad
\operatorname{gr}^qC\simeq
\operatorname{cofib}\bigl(\operatorname{gr}^q(f)\bigr).
\]
The hypothesis gives \(\operatorname{gr}^qC\simeq0\) for every
\(q\ge0\). The cofiber sequence
\(F^{q+1}C\to F^qC\to\operatorname{gr}^qC\) therefore shows that
\(F^{q+1}C\to F^qC\) is an equivalence for every \(q\ge0\).

Fix \(q_0\ge0\). Since all transition maps are equivalences,
\(F^{q_0}C\) is contractible if and only if \(F^qC\) is contractible
for every \(q\ge0\). In view of the first equivalence above, this says
precisely that \(F^{q_0}(f)\) is an equivalence if and only if
\(F^q(f)\) is an equivalence for every \(q\ge0\). This proves the
equivalence of \textup{(i)} and \textup{(ii)}.

Limits are exact in spectra. Therefore
\[
\varprojlim_qF^qC\simeq
\operatorname{cofib}\!\left(
\varprojlim_qF^qE\longrightarrow\varprojlim_qF^qE'
\right).
\]
Since every transition map of \(\{F^qC\}_q\) is an equivalence, the
canonical map \(\varprojlim_qF^qC\to F^{q_0}C\) is an equivalence for
every \(q_0\). Consequently, the map of limits in \textup{(iii)} is an
equivalence if and only if one, equivalently every, \(F^q(f)\) is an
equivalence. This proves the equivalence with \textup{(i)}. The two
final criteria are the cases \(q_0=0\) and
\(\varprojlim_qF^qE\simeq\varprojlim_qF^qE'\simeq0\), respectively.
For \(q<0\), both filtrations are extended constantly from degree zero,
so condition \textup{(i)} is precisely the assertion that \(f\) is an
equivalence in \(\FilSp\).
\end{proof}

\begin{proposition}[filtered Gabber rigidity in arbitrary rank]
	\label{lem:filtered-gabber-rigidity-as-motivic-filtered-equivalence}
	Let \(W\) be a henselian valuation ring of arbitrary rank with residue
	field \(k\). Let \(\ell\) be a
	prime invertible in \(k\), let \(\nu\ge1\), and put \(N:=\ell^\nu\). Reduction to the closed
	point induces an equivalence of filtered spectra
	\[
	\rho_{W,N}\colon
	\Fil^\bullet_{\mot}K(W;\mathbf Z/N)
	\xrightarrow{\ \sim\ }
	\Fil^\bullet_{\mot}K(k;\mathbf Z/N),
	\]
	that is, \(\Fil^q\rho_{W,N}\) is an equivalence for every \(q\). Both sides are complete.
	For every \(q\ge0\), the associated graded map is
	\(\mathbf Z/N(q)_{\mot}(W)[2q] \xrightarrow{\ \sim\ } \mathbf Z/N(q)_{\mot}(k)[2q]\).
	After forgetting the filtration, the underlying map is the rigidity equivalence of
	\Cref{lem:valuation-closed-point-finite-coefficient-k-rigidity}. 
\end{proposition}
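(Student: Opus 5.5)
The plan is to apply the filtered Whitehead criterion, \Cref{lem:filtered-whitehead-criterion}, to the map \(\rho_{W,N}\). Contravariant functoriality of Bouis's filtration (\Cref{lem:global-motivic-filtration-kmodell-input}), applied to the closed immersion \(\Spec(k)\to\Spec(W)\) and smashed with \(\mathbf S/N\), gives a morphism of \(\mathbf N\)-indexed filtered spectra
\[
\rho_{W,N}\colon\Fil^\bullet_{\mot}K(W;\mathbf Z/N)\to\Fil^\bullet_{\mot}K(k;\mathbf Z/N).
\]
By the graded-piece formula of \Cref{lem:global-motivic-filtration-kmodell-input}, the map \(\operatorname{gr}^q\rho_{W,N}\) is the reduction map \(\mathbf Z/N(q)_{\mot}(W)[2q]\to\mathbf Z/N(q)_{\mot}(k)[2q]\). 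This uses naturality of the identification \(\operatorname{gr}^q_{\mot}\simeq\mathbf Z(q)_{\mot}[2q]\), which holds by definition of the normalization in \Cref{def:bouis-kundu-notation}, together with exactness of \(-\wedge\mathbf S/N\). By \Cref{lem:henselian-valuation-motivic-closed-point-rigidity}, this reduction map is an equivalence for every \(q\ge0\). For \(q<0\) the graded pieces vanish on both sides. So the graded hypothesis of the criterion holds.

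Next we invoke criterion (ii) with \(q_0=0\), rather than completeness of \(W\), since \(W\) may have infinite rank and then need not have finite valuative dimension. By \Cref{lem:global-motivic-filtration-kmodell-input}, \(\Fil^0_{\mot}K(T;\mathbf Z/N)\simeq K(T;\mathbf Z/N)\) naturally in \(T\). Under these identifications, \(F^0\rho_{W,N}\) is the reduction map \(K(W;\mathbf Z/N)\to K(k;\mathbf Z/N)\), and this is an equivalence by \Cref{lem:valuation-closed-point-finite-coefficient-k-rigidity}. Hence \(F^q\rho_{W,N}\) is an equivalence for every \(q\), and the same holds for \(q<0\) because the filtrations are extended constantly. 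This also proves the last assertion: the underlying map \(|\rho_{W,N}|=F^0\rho_{W,N}\) is the rigidity equivalence.

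Completeness then follows by transport. The filtration on \(k\) is complete because \(\Spec(k)\) has valuative dimension zero (\Cref{lem:valuation-inputs-finite-valuative-dimension}) and \Cref{lem:global-motivic-filtration-kmodell-input} applies. Since \(\rho_{W,N}\) is a levelwise equivalence, the limits \(\varprojlim_q F^q\) agree on both sides, so \(\varprojlim_q\Fil^q_{\mot}K(W;\mathbf Z/N)\simeq0\) as well.

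The main obstacle is confirming that the degree-zero identification \(\Fil^0_{\mot}K\simeq K\) is natural, so that \(F^0\rho_{W,N}\) really is the reduction map on \(K\)-theory; I would cite \cite[Prop.~4.46]{BouisMixedCharacteristicMotivicCohomology}, which gives it as a natural transformation. A second point is making sure \Cref{lem:ordinary-prufer-bl-functoriality} applies to the map \(W\to k\), which it does since \(W\), \(k\) are henselian local ind-smooth over themselves (\(W\) being Prüfer). No finite-rank input is needed anywhere.
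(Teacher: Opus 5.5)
Your proposal is correct and matches the paper's proof essentially step for step. Both apply the filtered Whitehead criterion, take the graded hypothesis from closed-point rigidity on the motivic graded pieces, use the degree-zero case \(q_0=0\) (via nonconnective rigidity) instead of completeness of \(W\), and then transport completeness from \(\Spec(k)\), which has valuative dimension zero.
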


\begin{proof}
	The filtered spectra, their \(\mathbf N\)-indexedness, and the
	functoriality of \(\rho_{W,N}\) are supplied by
	\Cref{lem:global-motivic-filtration-kmodell-input}. We apply
	\Cref{lem:filtered-whitehead-criterion} to \(\rho_{W,N}\).

	Its graded hypothesis holds: for \(q\ge0\) the map \(\operatorname{gr}^q\rho_{W,N}\) is the
	equivalence of \Cref{lem:henselian-valuation-motivic-closed-point-rigidity}. Its hypothesis
	in filtration degree zero holds too: the natural identifications
	\(\Fil^0_{\mot}K(-;\mathbf Z/N)\simeq K(-;\mathbf Z/N)\) identify
	\(\Fil^0\rho_{W,N}\), naturally in the scheme, with the reduction map
	\(K(W;\mathbf Z/N)\to K(k;\mathbf Z/N)\), and
	that map is the rigidity equivalence of
	\Cref{lem:valuation-closed-point-finite-coefficient-k-rigidity}. So \(\rho_{W,N}\) is an
	equivalence of filtered spectra, and its underlying map is that rigidity equivalence.

	Finally \(\Spec(k)\) has valuative dimension \(0\) by
	\Cref{lem:valuation-inputs-finite-valuative-dimension}, so
	\(\Fil^\bullet_{\mot}K(k;\mathbf Z/N)\) is complete by
	\Cref{lem:global-motivic-filtration-kmodell-input}; and \(\Fil^\bullet_{\mot}K(W;\mathbf
	Z/N)\) is equivalent to it, hence complete as well.
\end{proof}

\begin{remark}[completeness in arbitrary rank]
\label{rem:completeness-from-rigidity}
Bouis's general completeness theorem
\cite[\arxivhtmlref{2412.06635v2}{S4.Thmtheorem49}{Prop.~4.49}]{BouisMixedCharacteristicMotivicCohomology}
assumes
finite valuative dimension.
For a henselian valuation ring of arbitrary rank,
\Cref{lem:filtered-gabber-rigidity-as-motivic-filtered-equivalence}
instead transports completeness from its residue field. This argument
is specific to finite coefficients invertible in the valuation ring.
\end{remark}

\begin{lemma}[rigidity specialization is filtered étale inflation]
	\label{lem:rigidity-specialization-filtered-inflation}
	Let \(W\) be a henselian valuation ring with fraction field \(L\) and residue field
	\(k\). Let \(\ell\) be a prime invertible in \(k\), let \(\nu\ge1\), and put
		\(N:=\ell^\nu\). Let \(j^*_{W,N}\colon
		\Fil^\bullet_{\mot}K(W;\mathbf Z/N)\to
		\Fil^\bullet_{\mot}K(L;\mathbf Z/N)\) denote the generic restriction map.
		Precomposition with the filtered equivalence
		\(\rho_{W,N}\) of
		\Cref{lem:filtered-gabber-rigidity-as-motivic-filtered-equivalence}
		induces an equivalence from the space of filtered maps
		\(\Fil^\bullet_{\mot}K(k;\mathbf Z/N)\to
		\Fil^\bullet_{\mot}K(L;\mathbf Z/N)\) to the space of filtered maps
		\(\Fil^\bullet_{\mot}K(W;\mathbf Z/N)\to
		\Fil^\bullet_{\mot}K(L;\mathbf Z/N)\).
	Define \(s_{W,N}\) by choosing a point in the homotopy fibre over
	\(j^*_{W,N}\). This fibre is contractible. Thus \(s_{W,N}\) is well-defined up to
	contractible choice, together with a homotopy
	\(s_{W,N}\circ\rho_{W,N}\simeq j^*_{W,N}\).

	For every \(q\ge0\), under
	\Cref{lem:ordinary-prufer-bl-functoriality}, the associated graded map
	is the \([2q]\)-fold suspension of
	\(\tau^{\le q}R\Gamma(k_{\et},\mu_N^{\otimes q})\to
	\tau^{\le q}R\Gamma(L_{\et},\mu_N^{\otimes q})\), namely the
	truncation of inflation along the henselian valuation.
\end{lemma}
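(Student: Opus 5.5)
The claim that precomposition with \(\rho_{W,N}\) is an equivalence of mapping spaces is formal. By \Cref{lem:filtered-gabber-rigidity-as-motivic-filtered-equivalence}, \(\rho_{W,N}\) is an equivalence in \(\FilSp\), that is, levelwise in every filtration degree. In any \(\infty\)-category, precomposition with an equivalence induces an equivalence
\[
\operatorname{Map}_{\FilSp}\bigl(\Fil^\bullet_{\mot}K(k;\mathbf Z/N),\Fil^\bullet_{\mot}K(L;\mathbf Z/N)\bigr)\xrightarrow{\ \sim\ }\operatorname{Map}_{\FilSp}\bigl(\Fil^\bullet_{\mot}K(W;\mathbf Z/N),\Fil^\bullet_{\mot}K(L;\mathbf Z/N)\bigr),
\]
with inverse given by precomposition with a chosen inverse of \(\rho_{W,N}\). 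Hence every homotopy fibre of this map is contractible, in particular the fibre over \(j^*_{W,N}\). A point of that fibre is exactly a pair consisting of \(s_{W,N}\) and a homotopy \(s_{W,N}\circ\rho_{W,N}\simeq j^*_{W,N}\). This settles the first paragraph of the statement.

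For the graded assertion, I will use that \(\operatorname{gr}^q\colon\FilSp\to\mathrm{Sp}\) is a functor. Applying it to the homotopy \(s_{W,N}\circ\rho_{W,N}\simeq j^*_{W,N}\) gives
\[
\operatorname{gr}^q(s_{W,N})\simeq\operatorname{gr}^q(j^*_{W,N})\circ\operatorname{gr}^q(\rho_{W,N})^{-1}.
\]
By \Cref{lem:henselian-valuation-motivic-closed-point-rigidity}, \(\operatorname{gr}^q(\rho_{W,N})\) is \([2q]\) applied to the map obtained under the Prüfer comparison from the truncated closed-point pullback \(\tau^{\le q}R\Gamma(W_{\et},\mu_N^{\otimes q})\to\tau^{\le q}R\Gamma(k_{\et},\mu_N^{\otimes q})\). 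By \Cref{lem:ordinary-prufer-bl-functoriality} with \(T=L\), \(\operatorname{gr}^q(j^*_{W,N})\) is \([2q]\) applied to the truncated generic restriction \(\tau^{\le q}R\Gamma(W_{\et},\mu_N^{\otimes q})\to\tau^{\le q}R\Gamma(L_{\et},\mu_N^{\otimes q})\).

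The main step is to combine these two descriptions. I will paste the two commuting squares of \Cref{lem:ordinary-prufer-bl-functoriality}, for \(T=k\) and \(T=L\), along the common top edge at \(W\). This identifies \(\operatorname{gr}^q(s_{W,N})\) with
\[
\tau^{\le q}(j^*)\circ\tau^{\le q}(i^*)^{-1}\colon \tau^{\le q}R\Gamma(k_{\et},\mu_N^{\otimes q})\longrightarrow\tau^{\le q}R\Gamma(L_{\et},\mu_N^{\otimes q}).
\]
Since \(\tau^{\le q}\) is a functor, this composite is the truncation of \(j^*\circ(i^*)^{-1}\) on untruncated étale cohomology. By \Cref{lem:henselian-etale-specialization-is-inflation}, \(j^*\circ(i^*)^{-1}\) is inflation along \(G_L\to G_k\).

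The only point needing care is that these identifications hold as homotopies of maps, not merely on homotopy groups. That is why I work throughout with the commutative squares themselves, together with the homotopy supplied by the contractible fibre, rather than with induced maps on \(\pi_*\). Naturality of the shift \([2q]\) and of the equivalences \(\operatorname{gr}^q_{\mot}K(-;\mathbf Z/N)\simeq\mathbf Z/N(q)_{\mot}(-)[2q]\) from \Cref{lem:global-motivic-filtration-kmodell-input} then gives the stated description of the associated graded map.
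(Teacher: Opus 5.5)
Your proposal is correct and follows essentially the same route as the paper: the formal mapping-space argument gives a contractible fibre, and you then apply \(\operatorname{gr}^q\) to the homotopy \(s_{W,N}\circ\rho_{W,N}\simeq j^*_{W,N}\) and identify the resulting triangle using the naturality squares of \Cref{lem:ordinary-prufer-bl-functoriality} together with \Cref{lem:henselian-etale-specialization-is-inflation}. Inverting \(\operatorname{gr}^q(\rho_{W,N})\) directly, as you do, is the same step the paper phrases as ``precomposition with an equivalence is an equivalence of mapping spaces.''
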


\begin{proof}
	Precomposition with an equivalence induces an equivalence of mapping
	spaces, with inverse given by precomposition with an inverse
	equivalence. Hence the homotopy fibre in the statement is contractible,
	and by construction
	\(s_{W,N}\circ\rho_{W,N}\simeq j^*_{W,N}\).
	
	Applying \(\operatorname{gr}^q\) to the homotopy
	\(s_{W,N}\circ\rho_{W,N}\simeq j^*_{W,N}\) gives the commutative
	triangle
	\[
	\begin{tikzcd}[column sep=large]
		\operatorname{gr}_{\mot}^qK(W;\mathbf Z/N)
		\ar[r,"\operatorname{gr}^q\rho_{W,N}"]
		\ar[dr,"\operatorname{gr}^qj^*_{W,N}"'] &
		\operatorname{gr}_{\mot}^qK(k;\mathbf Z/N)
		\ar[d,"\operatorname{gr}^qs_{W,N}"]\\
		& \operatorname{gr}_{\mot}^qK(L;\mathbf Z/N).
	\end{tikzcd}
	\]
	By \Cref{lem:ordinary-prufer-bl-functoriality}, after desuspending by
	\(2q\) this triangle is identified with
	\[
	\begin{tikzcd}[column sep=large]
		\tau^{\le q}R\Gamma(W_{\et},\mu_N^{\otimes q})
		\ar[r,"\sim"]
		\ar[dr] &
		\tau^{\le q}R\Gamma(k_{\et},\mu_N^{\otimes q})
		\ar[d]\\
		& \tau^{\le q}R\Gamma(L_{\et},\mu_N^{\otimes q}).
	\end{tikzcd}
	\]
	By
	\Cref{lem:henselian-etale-specialization-is-inflation}, precomposition
	of truncated inflation with the upper horizontal equivalence is the
	diagonal map. Since precomposition with an equivalence induces an
	equivalence of mapping spaces, the right vertical map is the truncation
	of inflation. This proves the assertion.

\end{proof}

\begin{lemma}[integral weight-one unit classes]
\label{lem:weight-one-edge-kummer-compatibility}
Let \(F\) be a field. Then \(\mathbf Z(1)_{\mot}(F)\simeq F^\times[-1]\); in particular
there is a natural identification
\[
\pi_1\operatorname{gr}^1_{\mot}K(F)
=
H^1_{\mot}(F,\mathbf Z(1))
\xrightarrow{\ \sim\ }
F^\times.
\]
Moreover the canonical maps
\[
K_1(F)\longleftarrow\pi_1\Fil^1_{\mot}K(F)\longrightarrow
\pi_1\operatorname{gr}^1_{\mot}K(F)=H^1_{\mot}(F,\mathbf Z(1))
\]
are both isomorphisms. For \(a\in F^\times=K_1(F)\) write
\(\widetilde u(a)\in\pi_1\Fil^1_{\mot}K(F)\) for the preimage of the class of \(a\), and
\(u_F(a)\in H^1_{\mot}(F,\mathbf Z(1))\) for its image. Then
\[
u_F\colon F^\times\xrightarrow{\ \sim\ }
H^1_{\mot}(F,\mathbf Z(1)),\qquad a\longmapsto u_F(a),
\]
is natural in \(F\).
For every \(m\ge1\), right multiplication by \(\widetilde u(a)\) is a
map of filtered spectra
\[
\Sigma\,\Fil^{\bullet}_{\mot}K(F;\mathbf Z/m)
\longrightarrow
\Fil^{\bullet+1}_{\mot}K(F;\mathbf Z/m),
\]
and the map it induces on \(\operatorname{gr}^{\bullet+1}\) is right
multiplication by \(u_F(a)\).
\end{lemma}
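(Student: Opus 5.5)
The plan is to handle the weight-one motivic complex first, then the two maps out of \(\pi_1\Fil^1_{\mot}K(F)\), and finally the multiplication statement.

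\emph{Weight one.} I would identify \(\mathbf Z(1)_{\mot}(F)\) with \(F^\times[-1]\) from Bouis's computation of weight one. Bouis shows that \(\mathbf Z(1)_{\mot}\) is \(R\Gamma_{\mathrm{Nis}}(-,\mathbf G_m)[-1]\), or equivalently that it is compatible with the classical weight-one motivic complex on fields. For a field, Nisnevich cohomology of \(\mathbf G_m\) is concentrated in degree zero, with value \(F^\times\). This gives
\[
\pi_1\operatorname{gr}^1_{\mot}K(F)=H^1_{\mot}(F,\mathbf Z(1))\cong F^\times .
\]
Naturality in \(F\) comes from the naturality of Bouis's weight-one identification.

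\emph{The maps out of \(\pi_1\Fil^1\).} I would use the vanishing ranges already recorded. Integrally, Lemma~\ref{lem:field-filtration-connectivity}(i) gives \(\pi_1\Fil^2_{\mot}K(F)=0\), since \(1\le 2-2\) fails but the case \(i\le q-2\) with \(q=3\) applies. More directly, part~(ii) applies because \(\pi_1\operatorname{gr}^{q'}=0\) for \(q'\ge2\) (vanishing unless \(q'\le 1\)), and hence \(\pi_1\Fil^2=0\). Similarly \(\pi_0\Fil^2=0\) by part~(i). The cofiber sequence \(\Fil^2\to\Fil^1\to\operatorname{gr}^1\) then shows that \(\pi_1\Fil^1\to\pi_1\operatorname{gr}^1\) is an isomorphism.

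For the other map, consider \(\Fil^1\to\Fil^0=K(F)\to\operatorname{gr}^0\). Here \(\operatorname{gr}^0=\mathbf Z(0)_{\mot}(F)\) is \(\mathbf Z\) in degree \(0\), so \(\pi_1\operatorname{gr}^0=0\) and \(\pi_2\operatorname{gr}^0=0\). Hence \(\pi_1\Fil^1\to K_1(F)\) is an isomorphism. I would then check that the composite \(K_1(F)\cong H^1_{\mot}(F,\mathbf Z(1))\cong F^\times\) is the identity on units, or at least a natural automorphism. This is the step I expect to be the main obstacle: it requires Bouis's compatibility of the weight-one graded piece with the determinant \(K_1\to\mathbf G_m\), which I would cite from his construction, namely the comparison of \(\operatorname{gr}^1\) with \(\tau^{\le1}\) via \(\det\). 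If only an automorphism is available, I would take \(u_F\) to be defined by the composite, which is still a natural isomorphism.

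\emph{Multiplication.} Right multiplication by \(\widetilde u(a)\) is represented by a map \(\mathbf S^1\to\Fil^1_{\mot}K(F)\), which is a filtered map \(\Sigma\mathbf S\to\Fil^{1}\) with source concentrated in filtration \(0\) shifted to \(1\). The filtered module structure of Lemma~\ref{lem:global-motivic-filtration-kmodell-input}, \(\Fil^q\wedge\Fil^1\to\Fil^{q+1}\), composed with this map, yields \(\Sigma\Fil^\bullet K(F;\mathbf Z/m)\to\Fil^{\bullet+1}K(F;\mathbf Z/m)\). Since the Day-convolution product induces the graded product on associated gradeds, the induced map on \(\operatorname{gr}^{\bullet+1}\) is multiplication by the image of \(\widetilde u(a)\) in \(\operatorname{gr}^1\), which is \(u_F(a)\).
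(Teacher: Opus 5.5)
Your proposal follows the paper's proof step for step: the weight-one computation, the vanishing of \(\pi_0\) and \(\pi_1\) of \(\Fil^2_{\mot}K(F)\) via \Cref{lem:field-filtration-connectivity} and motivic Weibel vanishing, the vanishing of \(\pi_1,\pi_2\) of \(\operatorname{gr}^0_{\mot}K(F)\), and the filtered module action for the multiplication claim are exactly the paper's argument. Two corrections. First, the cited general comparison \(R\Gamma_{\mathrm{Nis}}(X,\mathbf G_m)[-1]\to\mathbf Z(1)_{\mot}(X)\) is an isomorphism only in degrees \(\le 3\), so to get \(\mathbf Z(1)_{\mot}(F)\simeq F^\times[-1]\) you must add Weibel vanishing in degrees \(>1\); likewise \(\mathbf Z(0)_{\mot}(F)\simeq\mathbf Z\) needs \(\mathbf Z(0)_{\mot}\simeq R\Gamma_{\mathrm{cdh}}(-,\mathbf Z)\) together with cdh-locality of fields. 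Second, the comparison of \(u_F\) with the Nisnevich identification, which you flag as the main obstacle, is not needed: the statement defines \(u_F\) as the composite through \(\pi_1\Fil^1_{\mot}K(F)\), and the paper deliberately leaves the two normalizations unidentified (\Cref{rem:two-weight-one-normalizations}).
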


\begin{proof}
For every qcqs scheme \(X\) the natural map
\(R\Gamma_{\mathrm{Nis}}(X,\mathbf G_m)[-1]\to\mathbf Z(1)_{\mot}(X)\) is an isomorphism
in degrees at most three
\cite[\arxivhtmlref{2507.16501v1}{S2.Thmtheorem15}{Ex.~2.15}]{BouisWeibelVanishingPBF}.
Every
Nisnevich covering of \(\Spec(F)\) splits. Thus evaluation is exact on
its small Nisnevich site, the source is \(F^\times[-1]\), and
\(H^1_{\mot}(F,\mathbf Z(1))=F^\times\), while
\(H^j_{\mot}(F,\mathbf Z(1))=0\) for \(j\le0\) and for \(j=2,3\). Motivic Weibel
vanishing
\cite[\arxivhtmlref{2507.16501v1}{Thmtheoremintro7}{Thm.~G}]{BouisWeibelVanishingPBF},
applied to the noetherian
zero-dimensional scheme \(\Spec(F)\), gives
\[
H^j_{\mot}(F,\mathbf Z(i))=0
\qquad(i\ge0,\ j>i),
\]
which covers the degrees \(j>3\) and yields \(\mathbf Z(1)_{\mot}(F)\simeq F^\times[-1]\).

By \cite[\arxivhtmlref{2412.06635v2}{S1.I2.i1}{Thm.~D(1)}]{BouisMixedCharacteristicMotivicCohomology},
\(\mathbf Z(0)_{\mot}(X)\simeq R\Gamma_{\mathrm{cdh}}(X,\mathbf Z)\). A field
is a henselian valuation ring, and henselian valuation rings are precisely the
cdh-local schemes by
\cite[Thm.~2.6]{GabberKelly15Points}. Thus every
cdh covering sieve of \(\Spec(F)\) is maximal. Evaluation at \(\Spec(F)\) is
consequently exact on cdh sheaves, and
\(R\Gamma_{\mathrm{cdh}}(\Spec(F),\mathbf Z)\simeq\mathbf Z\) is concentrated in
degree zero. Hence
\(\pi_i\operatorname{gr}^0_{\mot}K(F)=H^{-i}_{\mot}(F,\mathbf Z(0))=0\) for \(i\ne0\).

We use \Cref{lem:field-filtration-connectivity}\textup{(ii)} for
\(E=\Fil^\bullet_{\mot}K(F)\). For \(q'\ge2\) and \(i\in\{0,1\}\), motivic Weibel vanishing
gives \(\pi_i\operatorname{gr}^{q'}_{\mot}K(F)=H^{2q'-i}_{\mot}(F,\mathbf Z(q'))=0\), since
\(2q'-i>q'\). Hence
\(\pi_0\Fil^2_{\mot}K(F)=\pi_1\Fil^2_{\mot}K(F)=0\). The exact sequence
\[
\pi_1\Fil^2_{\mot}K(F)\longrightarrow
\pi_1\Fil^1_{\mot}K(F)\longrightarrow
\pi_1\operatorname{gr}^1_{\mot}K(F)\longrightarrow
\pi_0\Fil^2_{\mot}K(F)
\]
therefore makes the middle map an isomorphism. Likewise
\(\pi_1\operatorname{gr}^0_{\mot}K(F)=
\pi_2\operatorname{gr}^0_{\mot}K(F)=0\) by the previous paragraph, and
the exact sequence
\[
\pi_2\operatorname{gr}^0_{\mot}K(F)\longrightarrow
\pi_1\Fil^1_{\mot}K(F)\longrightarrow
\pi_1\Fil^0_{\mot}K(F)=K_1(F)\longrightarrow
\pi_1\operatorname{gr}^0_{\mot}K(F)
\]
makes \(\pi_1\Fil^1_{\mot}K(F)\to K_1(F)\) an isomorphism. Both maps are natural in \(F\), and
\(K_1(F)=F^\times\) by \cite[Lem.~III.1.4]{Weibel13KBook}, which gives the asserted natural
isomorphism
\(u_F\colon F^\times\to H^1_{\mot}(F,\mathbf Z(1))\).

The motivic filtration is multiplicative and
	\(\Fil^\bullet_{\mot}K(F;\mathbf Z/m)\) is a filtered module over it by
	\Cref{lem:global-motivic-filtration-kmodell-input}. Choose a map
	\(\mathbf S^1\to\Fil^1_{\mot}K(F)\) representing
	\(\widetilde u(a)\). For every \(q\in\mathbf Z\), the \(q\)-th
	filtration step of right multiplication by this representative is
	the composite
	\[
	\begin{aligned}
	\Sigma\Fil^q_{\mot}K(F;\mathbf Z/m)
	&\simeq
	\Fil^q_{\mot}K(F;\mathbf Z/m)\wedge\mathbf S^1\\
	&\longrightarrow
	\Fil^q_{\mot}K(F;\mathbf Z/m)\wedge\Fil^1_{\mot}K(F)\\
	&\longrightarrow
	\Fil^{q+1}_{\mot}K(F;\mathbf Z/m).
	\end{aligned}
	\]
	Here the right module structure is obtained from the given left
	module structure by the symmetry of spectra.
	Compatibility of the filtered multiplication with the transition maps
	makes these composites a morphism of filtered spectra. On associated
	graded pieces it is right multiplication by the image \(u_F(a)\) of
\(\widetilde u(a)\) in
\(\pi_1\operatorname{gr}^1_{\mot}K(F)\).
\end{proof}

\begin{lemma}[comparison and étale realization]
\label{lem:comparison-inverts-etale-realisation}
Let \(X\) be a qcqs scheme, let \(N=\ell^\nu\) be invertible on \(X\),
fix \(q\ge0\), and let
\[
\rho_q\colon
\mathbf Z/N(q)_{\mot}(X)\longrightarrow
R\Gamma(X_{\et},\mu_N^{\otimes q})
\]
be the syntomic realization map. The composite
\[
\bigl(L_{\mathrm{Nis}}\tau^{\le q}
R\Gamma_{\et}(-,\mu_N^{\otimes q})\bigr)(X)
\xrightarrow{\ \beta_{X,q}\ }
\mathbf Z/N(q)_{\mot}(X)
\xrightarrow{\ \rho_q\ }
R\Gamma(X_{\et},\mu_N^{\otimes q})
\]
is the map induced by the natural transformation
\[
\tau^{\le q}R\Gamma_{\et}(-,\mu_N^{\otimes q})
\longrightarrow
R\Gamma_{\et}(-,\mu_N^{\otimes q})
\]
after factoring through Nisnevich sheafification.

If \(X=\Spec(A)\), where \(A\) is henselian local and ind-smooth over a
Prüfer ring, then, for every \(j\le q\),
\[
H^j(\rho_q)\colon
H^j_{\mot}(A,\mathbf Z/N(q))
\xrightarrow{\ \sim\ }
H^j(A_{\et},\mu_N^{\otimes q})
\]
is inverse to the isomorphism \(H^j(\beta_{A,q})\) induced by the
comparison in \Cref{lem:ordinary-prufer-bl-range-input}.
\end{lemma}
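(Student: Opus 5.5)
The plan is to read both assertions off the construction of the Bouis--Kundu comparison map (4.1.1) of \cite{BouisKunduBL} together with the definition of the syntomic realization. The comparison \(\beta_{X,q}\) is built as follows. By construction the motivic complex receives a map from the Nisnevich sheafification of the truncated syntomic complex, and on the locus where \(N\) is invertible the syntomic twist is the étale twist \cite{BhattMathew23SyntomicTateTwists}. That map is characterized as the factorization of the natural map \(\tau^{\le q}\mathbf Z/N(q)_{\syn}\to\mathbf Z/N(q)_{\syn}\) through \(\mathbf Z/N(q)_{\mot}\), followed by the syntomic (Beilinson--Lichtenbaum) realization \(\rho_q\).

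\emph{First assertion.} I will use the following structure of the comparison: \(\rho_q\circ\beta\) restricted to the unsheafified presheaf \(\tau^{\le q}R\Gamma_{\et}(-,\mu_N^{\otimes q})\) is the truncation map. This should be exactly how (4.1.1) is defined in \cite[\S4]{BouisKunduBL}: the motivic complex maps to syntomic cohomology, and the truncated syntomic complex is lifted through it. The target \(R\Gamma_{\et}(-,\mu_N^{\otimes q})\) is an étale, hence Nisnevich, sheaf. The universal property of \(L_{\mathrm{Nis}}\) then says that a map out of \(L_{\mathrm{Nis}}\tau^{\le q}R\Gamma_{\et}\) into a Nisnevich sheaf is determined by its restriction to the presheaf. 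So the composite is the sheafified truncation map. The main obstacle is verifying that \(\beta\) really is defined by this lift, in the sense that \(\rho_q\circ\beta\) agrees with the truncation on presheaves, and not merely up to some automorphism. I would check this directly against the construction in \cite{BouisKunduBL}. Failing that, I would take it as the defining normalization of the comparison map.

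\emph{Second assertion.} Let \(A\) be henselian local. By \Cref{lem:ordinary-prufer-bl-range-input}, evaluation at \(A\) is unchanged by Nisnevich sheafification, so the source is \(\tau^{\le q}R\Gamma(A_{\et},\mu_N^{\otimes q})\). The composite \(\rho_q\circ\beta_{A,q}\) is then the truncation map, which induces the identity on \(H^j\) for \(j\le q\). Since \(\beta_{A,q}\) is an equivalence by \Cref{lem:ordinary-prufer-bl-range-input}, \(H^j(\beta_{A,q})\) is an isomorphism. Hence \(H^j(\rho_q)=H^j(\beta_{A,q})^{-1}\), and in particular \(H^j(\rho_q)\) is an isomorphism in this range.
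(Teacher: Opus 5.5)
Your argument for the second assertion is the paper's: at a henselian local \(A\), Nisnevich sheafification does not change the value. Moreover, \(\beta_{A,q}\) is an equivalence by \Cref{lem:ordinary-prufer-bl-range-input}, and the truncation map is the identity on \(H^j\) for \(j\le q\). Hence \(H^j(\rho_q)\circ H^j(\beta_{A,q})=\mathrm{id}\).

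\textbf{The gap is in the first assertion.} It is the only substantive point, and you do not prove it. You say that \(\rho_q\circ\beta\) ``should'' restrict to the truncation map, and that, failing a check, you would take this as the defining normalization. That fallback is not available. The map \(\beta_{X,q}\) is the specific comparison (4.1.1) of \cite{BouisKunduBL}, and \Cref{lem:ordinary-prufer-bl-range-input} proves that \emph{this} map is an equivalence on henselian local ind-smooth algebras. The second assertion needs both properties for the same map, so you may not renormalize it. Nor is ``the factorization through \(\rho_q\)'' a characterization: two lifts of the truncation through \(\rho_q\) differ by a map factoring through \(\operatorname{fib}(\rho_q)\), which need not vanish. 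Any lift would do for the lemma, but you must show that the Bouis--Kundu map is one.

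\textbf{The missing input is the form of the construction.} Bouis--Kundu define the comparison as the map induced on horizontal fibres of the commutative square
\[
\begin{tikzcd}[column sep=large]
\mathbf Z/N(q)_{\syn}(X) \ar[r] \ar[d,"\mathrm{id}"'] &
\bigl(L_{\mathrm{Nis}}\tau^{>q}\mathbf Z/N(q)_{\syn}\bigr)(X) \ar[d] \\
\mathbf Z/N(q)_{\syn}(X) \ar[r] &
\bigl(L_{\mathrm{cdh}}\tau^{>q}\mathbf Z/N(q)_{\syn}\bigr)(X),
\end{tikzcd}
\]
whose left vertical map is the identity. Since \(\mathbf Z/N(q)_{\syn}\) is a Nisnevich sheaf and \(L_{\mathrm{Nis}}\) is exact, the upper fibre is \(\bigl(L_{\mathrm{Nis}}\tau^{\le q}\mathbf Z/N(q)_{\syn}\bigr)(X)\). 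Its inclusion into \(\mathbf Z/N(q)_{\syn}(X)\) is the sheafified truncation map. By Bouis's comparison theorem \cite[Thm.~5.10]{BouisMixedCharacteristicMotivicCohomology}, recalled in \cite[Thm.~4.1\textup{(3)}]{BouisKunduBL}, the lower fibre is \(\mathbf Z/N(q)_{\mot}(X)\), and its inclusion into \(\mathbf Z/N(q)_{\syn}(X)\) is \(\rho_q\). A morphism of fibre sequences whose middle component is the identity gives a commutative triangle: \(\rho_q\circ\beta_{X,q}\) is the upper fibre inclusion. This is the first assertion once \cite{BhattMathew23SyntomicTateTwists} identifies \(\mathbf Z/N(q)_{\syn}\) with \(R\Gamma_{\et}(-,\mu_N^{\otimes q})\) where \(N\) is invertible. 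The construction produces the map out of \(L_{\mathrm{Nis}}\tau^{\le q}\) directly, so your detour through the universal property of \(L_{\mathrm{Nis}}\) becomes unnecessary.
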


\begin{proof}
Bouis--Kundu define the
Beilinson--Lichtenbaum comparison by taking the vertical map between the
horizontal fibres in the commutative square
\[
\begin{tikzcd}[column sep=huge]
\mathbf Z/N(q)_{\syn}(X) \ar[r] \ar[d,"\mathrm{id}"'] &
\bigl(L_{\mathrm{Nis}}\tau^{>q}\mathbf Z/N(q)_{\syn}\bigr)(X)
\ar[d] \\
\mathbf Z/N(q)_{\syn}(X) \ar[r] &
\bigl(L_{\mathrm{cdh}}\tau^{>q}\mathbf Z/N(q)_{\syn}\bigr)(X).
\end{tikzcd}
\]
The upper fibre is
\(\bigl(L_{\mathrm{Nis}}\tau^{\le q}\mathbf Z/N(q)_{\syn}\bigr)(X)\).
By Bouis's comparison theorem
\cite[Thm.~5.10]{BouisMixedCharacteristicMotivicCohomology}, recalled in
\cite[\arxivhtmlref{2506.09910v2}{S4.I2.i3}{Thm.~4.1\textup{(3)}} and the
diagram preceding
\arxivhtmlref{2506.09910v2}{S4.Thmtheorem1.E1}{\textup{(4.1.1)}}]{BouisKunduBL},
the lower fibre is
\(\mathbf Z/N(q)_{\mot}(X)\); its inclusion into
\(\mathbf Z/N(q)_{\syn}(X)\) is
the syntomic realization \(\rho_q\). Thus taking fibres of the horizontal
arrows in the preceding square gives a commutative triangle
\[
\begin{tikzcd}[column sep=huge]
\bigl(L_{\mathrm{Nis}}\tau^{\le q}\mathbf Z/N(q)_{\syn}\bigr)(X)
\ar[d,"\beta_{X,q}"'] \ar[dr] & \\
\mathbf Z/N(q)_{\mot}(X) \ar[r,"\rho_q"'] &
\mathbf Z/N(q)_{\syn}(X).
\end{tikzcd}
\]
The diagonal map is induced by
\(\tau^{\le q}\mathbf Z/N(q)_{\syn}\to\mathbf Z/N(q)_{\syn}\), after Nisnevich
sheafification. Since \(N\) is invertible on \(X\),
\(\mathbf Z/N(q)_{\syn}\simeq R\Gamma_{\et}(-,\mu_N^{\otimes q})\) by
\cite[\arxivhtmlref{2202.04818v2}{S1.Thmtheorem2}{Ex.~1.2}]{BhattMathew23SyntomicTateTwists}.
This proves the first
assertion.

Now let \(X=\Spec(A)\) as in the statement. The scheme \(X\) is local
for the Nisnevich topology, so evaluation at \(A\) is unchanged by
derived Nisnevich sheafification. By
\Cref{lem:ordinary-prufer-bl-range-input}, \(\beta_{A,q}\) is an
equivalence. For \(j\le q\), the canonical map
\(\tau^{\le q}R\Gamma(A_{\et},\mu_N^{\otimes q})\to
R\Gamma(A_{\et},\mu_N^{\otimes q})\) induces the identity on
\(H^j\). Applying \(H^j\) to the preceding triangle therefore gives
\(H^j(\rho_q)\circ H^j(\beta_{A,q})=\mathrm{id}\), which is the second
assertion.
\end{proof}

\begin{lemma}[étale sheafification of the motivic complexes]
\label{lem:etale-sheafification-of-motivic-complexes}
Let \(F\) be a field, let \(\ell\) be a prime invertible in \(F\), let \(\nu\ge1\) and put
\(N:=\ell^\nu\). Regard \(\mathbf Z(1)_{\mot}\) and the \(\mathbf Z/N(q)_{\mot}\),
\(q\ge0\), as presheaves on the small étale site of \(\Spec(F)\), with
\(L_{\et}\) as in \Cref{conv:galois-cohomology-kummer}. For \(q\ge0\),
let
\[
\rho_q\colon
\mathbf Z/N(q)_{\mot}(F)\longrightarrow
R\Gamma(F_{\et},\mu_N^{\otimes q})
\]
be the syntomic realization map of
\Cref{lem:comparison-inverts-etale-realisation}. For \(a\in F^\times\),
let \(\delta_N(a)\in H^1(F,\mu_N)\) be its Kummer class, as in
\Cref{conv:galois-cohomology-kummer}. Then:
\begin{enumerate}[label=\textup{(\roman*)},leftmargin=*]
\item There is a natural equivalence
\(L_{\et}\mathbf Z/N(q)_{\mot}\simeq\mu_N^{\otimes q}\). Under this
equivalence, the composite
\[
\mathbf Z/N(q)_{\mot}(F)
\longrightarrow
R\Gamma\bigl(F_{\et},L_{\et}\mathbf Z/N(q)_{\mot}\bigr)
\xrightarrow{\ \sim\ }
R\Gamma(F_{\et},\mu_N^{\otimes q})
\]
is \(\rho_q\). In particular, for every \(j\le q\), realization induces
an isomorphism
\[
H^j_{\mot}(F,\mathbf Z/N(q))
\xrightarrow{\ H^j(\rho_q)\ }
H^j(F_{\et},\mu_N^{\otimes q});
\]

\item \(L_{\et}\mathbf Z(1)_{\mot}\simeq\mathbf G_m[-1]\). On
\(H^1\), the unit map induces a natural isomorphism
\(c_F\colon H^1_{\mot}(F,\mathbf Z(1))\to
H^0(F_{\et},\mathbf G_m)=F^\times\);

\item For every \(q\ge0\), there is a scalar
\(\lambda_q\in\mathbf Z/N\) such that, for every
\(z\in H^1_{\mot}(F,\mathbf Z(1))\), every \(j\in\mathbf Z\), and every
\(x\in H^j_{\mot}(F,\mathbf Z/N(q))\),
\[
H^{j+1}(\rho_{q+1})\bigl(x\cdot z\bigr)
=\lambda_q\bigl(H^j(\rho_q)(x)\cup\delta_N(c_F(z))\bigr)
\quad\text{in }H^{j+1}(F,\mu_N^{\otimes(q+1)}).
\]
For every extension of fields \(F\subseteq F'\), one has
\(\lambda_q^{F'}=\lambda_q^F\); in particular the scalar is constant
within each characteristic.
\end{enumerate}
\end{lemma}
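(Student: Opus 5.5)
For \textup{(i)}, I will first compute étale stalks. At a stalk, that is at a separable closure \(\bar F\) of a finite separable extension of \(F\) (stalks are conservative, \Cref{conv:galois-cohomology-kummer}), filtered colimits of étale neighbourhoods reduce to the separably closed field. Taking colimits of the Prüfer comparison \Cref{lem:ordinary-prufer-bl-range-input} for finite separable extensions, the comparison \(\beta\) identifies \(\mathbf Z/N(q)_{\mot}\) with \(\tau^{\le q}R\Gamma_{\et}(-,\mu_N^{\otimes q})\) on every field. Bouis's motivic complexes are finitary, so they commute with these colimits. On \(\bar F\) the truncated étale complex is \(\mu_N^{\otimes q}(\bar F)\) in degree \(0\). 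Hence the natural transformation \(\tau^{\le q}R\Gamma_{\et}\to R\Gamma_{\et}\) becomes a stalkwise equivalence after \(L_{\et}\), and the target is already an étale sheaf equal to \(\mu_N^{\otimes q}\). Composing with \(\beta^{-1}\) gives \(L_{\et}\mathbf Z/N(q)_{\mot}\simeq\mu_N^{\otimes q}\). The first assertion of \Cref{lem:comparison-inverts-etale-realisation} (\(\rho_q\circ\beta=\) truncation inclusion) then identifies the unit map followed by this equivalence with \(\rho_q\). The isomorphism on \(H^j\) for \(j\le q\) is the second part of that lemma, applied to \(A=F\).

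For \textup{(ii)}, I will use \Cref{lem:weight-one-edge-kummer-compatibility} over every finite separable extension, naturally. It gives \(\mathbf Z(1)_{\mot}\simeq\mathbf G_m[-1]\) as presheaves on the small étale site, and \(\mathbf G_m\) is an étale sheaf, so \(L_{\et}\) changes nothing. Then \(c_F\) is the composite \(H^1_{\mot}(F,\mathbf Z(1))\to H^0(F_{\et},\mathbf G_m)=F^\times\), which is inverse to \(u_F\), hence an isomorphism natural in \(F\).

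For \textup{(iii)}, the key point is that \(L_{\et}\) is exact symmetric monoidal. The multiplication \(\mathbf Z/N(q)_{\mot}\otimes\mathbf Z(1)_{\mot}\to\mathbf Z/N(q+1)_{\mot}\) of presheaves therefore sheafifies to a pairing
\[
\mu_N^{\otimes q}\otimes^{\mathbf L}\mathbf G_m[-1]\longrightarrow\mu_N^{\otimes(q+1)}.
\]
I will classify such pairings. Since \(\mathbf G_m\otimes^{\mathbf L}\mathbf Z/N\) sits in a triangle with \(\mu_N[1]\) and \(\mathbf G_m/N=0\) étale locally, the source is \(\mu_N^{\otimes q}\otimes\mu_N\), concentrated in degree \(0\) (the \(\mu_N^{\otimes q}\)-multiplication factors through \(\mathbf Z/N\)). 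So the pairing is a sheaf map \(\mu_N^{\otimes(q+1)}\to\mu_N^{\otimes(q+1)}\) on \(\Spec(F)_{\et}\). I have to check that any such Galois-equivariant endomorphism of an invertible \(\mathbf Z/N\)-module is a scalar \(\lambda_q\in\mathbf Z/N\); this holds since \(\operatorname{End}_{\mathbf Z/N}\) of a rank-one free module is \(\mathbf Z/N\).

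The composite \(\mathbf G_m[-1]\to\mathbf G_m\otimes^{\mathbf L}\mathbf Z/N[-1]\to\mu_N\) is the Kummer boundary, so on global sections it yields \(\delta_N\). Hence, by compatibility of global sections with the lax monoidal structure and by \textup{(i)},\textup{(ii)}, one gets
\[
\rho_{q+1}(x\cdot z)=\lambda_q\,\rho_q(x)\cup\delta_N(c_F(z)).
\]
The expected obstacle is the sign and normalization bookkeeping in this boundary identification, that is, that the coboundary convention matches \(\delta_N\). Any discrepancy there is only a unit sign and can be absorbed into \(\lambda_q\).

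For field extensions, all constructions are natural along \(\Spec(F')\to\Spec(F)\), which pulls the sheaf endomorphism back to \(\lambda_q^F\). So \(\lambda_q^{F'}=\lambda_q^F\). Comparing any two fields of the same characteristic with a common subfield, the prime field, gives constancy within each characteristic.
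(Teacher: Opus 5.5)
Your proposal follows the paper's proof in all three parts. The steps match: the stalk computation identifying \(L_{\et}\mathbf Z/N(q)_{\mot}\) with \(\mu_N^{\otimes q}\), recovery of \(\rho_q\) from the triangle in \Cref{lem:comparison-inverts-etale-realisation}, the identification \(\mathbf Z(1)_{\mot}\simeq\mathbf G_m[-1]\) on the affine basis, and the reduction of the sheafified pairing \(\mu_N^{\otimes q}\otimes^{\mathbf L}\mathbf G_m[-1]\to\mu_N^{\otimes(q+1)}\) to a scalar endomorphism compared along \(F\subseteq F'\). The only difference in route is the stalk: you obtain it from finitarity of Bouis's complexes together with the comparison at the separable closure. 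The paper needs no finitarity input, because it shows directly that every positive-degree Galois cohomology class dies after a finite separable extension.

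One correction in (ii): you justify that \(c_F\) is an isomorphism by calling it the inverse of \(u_F\). That compatibility is not established. The Bouis--Weibel identification is not known to agree with the \(K_1\)-normalized \(u_F\), and the paper deliberately avoids identifying them (\Cref{rem:two-weight-one-normalizations}). Drop that claim. Your presheaf equivalence \(\mathbf Z(1)_{\mot}\simeq\mathbf G_m[-1]\), together with \(\mathbf G_m\) being an étale sheaf, already makes the unit map an isomorphism on \(H^1\).
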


\begin{proof}
\textup{(i)} Let \(E/F\) be finite separable. The field \(E\) is a
henselian local ind-smooth algebra over the Prüfer ring \(E\). Hence the comparison \(\beta_{E,q}\) of
\Cref{lem:ordinary-prufer-bl-range-input} is an equivalence, and
\Cref{lem:ordinary-prufer-bl-functoriality} makes these equivalences
natural in \(E\). The commutative triangle of
\Cref{lem:comparison-inverts-etale-realisation} identifies \(\rho_q(E)\),
through this equivalence, with the canonical map
\[
\tau^{\le q}R\Gamma(E_{\et},\mu_N^{\otimes q})
\longrightarrow R\Gamma(E_{\et},\mu_N^{\otimes q}).
\]
The same assertion holds for finite products of such fields, so on the
standard affine basis of \(F_{\et}\) the presheaf
\(\mathbf Z/N(q)_{\mot}\) is naturally equivalent to
\(U\mapsto\tau^{\le q}R\Gamma(U_{\et},\mu_N^{\otimes q})\).

Sheafification of presheaves of abelian groups is exact, so derived
sheafification is t-exact and its cohomology sheaves may be computed on
geometric stalks. Fix a separable closure \(F^{\sep}/F\) and the
corresponding geometric point. For every \(j\ge0\), the stalk of the
\(j\)-th cohomology presheaf is
\[
\varinjlim_{\substack{F\subseteq E\subseteq F^{\sep}\\{}[E:F]<\infty}}
H^j\bigl(G_E,\mu_N(F^{\sep})^{\otimes q}\bigr),
\qquad G_E:=\operatorname{Gal}(F^{\sep}/E).
\]
For \(j=0\), this colimit is
\(\mu_N(F^{\sep})^{\otimes q}\). Let \(j>0\), and represent a class at
some finite separable stage \(E/F\) by a normalized continuous
inhomogeneous cocycle
\(
c\colon G_E^j\longrightarrow \mu_N(F^{\sep})^{\otimes q}.
\)
The target is finite and discrete. Compactness of \(G_E^j\) and
continuity of \(c\) therefore give an open normal subgroup
\(U\triangleleft G_E\), which may also be chosen to act trivially on the
target, such that \(c\) is constant on cosets of \(U\) in each
variable. Equivalently, \(c\) factors through \((G_E/U)^j\). Put
\(E':=(F^{\sep})^U\). For \(u_1,\ldots,u_j\in U\), normalization gives
\[
(\operatorname{res}^{G_E}_U c)(u_1,\ldots,u_j)
=c(u_1,\ldots,u_j)=c(1,\ldots,1)=0.
\]
Thus the class restricts to zero over the finite Galois extension
\(E'/E\). Every positive-degree class therefore dies in the displayed
filtered colimit. The complexes involved are connective, so the negative
cohomology sheaves vanish as well. Consequently
\(
L_{\et}\mathbf Z/N(q)_{\mot}\simeq\mu_N^{\otimes q}.
\)
The preceding identification of \(\rho_q(E)\) with the truncation map on
the affine basis shows that the sheafification unit, followed by this
equivalence and derived global sections, is precisely \(\rho_q\). The
assertion on \(H^j\), \(j\le q\), is then
\Cref{lem:comparison-inverts-etale-realisation} applied to the henselian
local ring \(F\).

\textup{(ii)} By
\Cref{lem:weight-one-edge-kummer-compatibility}, for every finite separable
extension \(F'/F\) the natural map
\(R\Gamma_{\mathrm{Nis}}(F',\mathbf G_m)[-1]
\to \mathbf Z(1)_{\mot}(F')\)
is an equivalence. If
\(U=\coprod_i\Spec(F_i)\) is finite étale over \(F\), both sides on \(U\)
are the finite products of their values on the \(F_i\). Hence the
fieldwise maps assemble to an equivalence on the standard affine basis
of \(F_{\et}\). Since this basis generates the topology, derived
sheafification preserves this equivalence. The presheaf
\(\mathbf G_m\) is representable, hence an fpqc and therefore an étale
sheaf \cite[\stackstag{03O3}]{StacksProject}. Thus \(\mathbf G_m[-1]\) is already
a complex of étale sheaves. Derived sheafification gives
\(L_{\et}\mathbf Z(1)_{\mot}\simeq\mathbf G_m[-1]\). On \(H^1\), its
unit map is the natural isomorphism
\(c_F\colon H^1_{\mot}(F,\mathbf Z(1))
\xrightarrow{\sim}H^0(F_{\et},\mathbf G_m)=F^\times\).

\textup{(iii)} By \Cref{lem:global-motivic-filtration-kmodell-input},
\(\bigoplus_{q\ge0}\mathbf Z(q)_{\mot}\) is a graded ring in presheaves and
\(\bigoplus_{q\ge0}\mathbf Z/N(q)_{\mot}\) is a graded module over it.
We use the right action obtained from the given left action by symmetry.
Derived étale sheafification is symmetric monoidal, so the realization
unit maps carry this right action to the lower horizontal map in the
commutative diagram
\[
\begin{tikzcd}[column sep=huge]
L_{\et}\mathbf Z/N(q)_{\mot}\otimes^{\mathbf L}
L_{\et}\mathbf Z(1)_{\mot}
\ar[r]\ar[d,"\sim"'] &
L_{\et}\mathbf Z/N(q+1)_{\mot}\ar[d,"\sim"]\\
\mu_N^{\otimes q}\otimes^{\mathbf L}\mathbf G_m[-1]
\ar[r,"m_q"'] &
\mu_N^{\otimes(q+1)}.
\end{tikzcd}
\]
The vertical equivalences are those of \textup{(i),(ii)}. Thus the
compatibility between motivic multiplication and realization used here
is a formal consequence of the symmetric monoidality of
\(L_{\et}\), rather than an additional comparison assertion. In weight
one the resulting structure map is a morphism
\(m_q\colon
\mu_N^{\otimes q}\otimes^{\mathbf L}\mathbf G_m[-1]
\to\mu_N^{\otimes(q+1)}\)
in the derived category of étale sheaves on \(\Spec(F)\). Since \(N\) is invertible,
\(\mathbf G_m\) is \(N\)-divisible as an étale sheaf, so the Kummer sequence
\cite[\stackstag{03PL}]{StacksProject} gives
\(\mathbf Z/N\otimes_{\mathbf Z}^{\mathbf L}\mathbf G_m[-1]
\simeq\mu_N\). Therefore
\[
\mu_N^{\otimes q}\otimes_{\mathbf Z}^{\mathbf L}\mathbf G_m[-1]
\simeq
\mu_N^{\otimes q}\otimes_{\mathbf Z/N}^{\mathbf L}
\bigl(\mathbf Z/N\otimes_{\mathbf Z}^{\mathbf L}\mathbf G_m[-1]\bigr)
\simeq \mu_N^{\otimes(q+1)}.
\]
Thus the source of \(m_q\) is concentrated in degree zero. Hence
\(m_q\) is a map between two copies of an object concentrated in degree zero, hence a map of
sheaves; \(\mu_N^{\otimes(q+1)}\) is an invertible \(\mathbf Z/N\)-module sheaf, so its sheaf
of endomorphisms is the constant sheaf \(\mathbf Z/N\), and \(\Spec(F)\) is connected.
Therefore \(m_q\) is the scalar \(\lambda_q\) in
\(\operatorname{Hom}(\mu_N^{\otimes(q+1)},\mu_N^{\otimes(q+1)})
=H^0(F_{\et},\mathbf Z/N)=\mathbf Z/N\).

For a sheaf of \(\mathbf Z/N\)-modules \(A\) and any complex \(C\), the
canonical associativity and change-of-scalars map identifies
\[
A\otimes_{\mathbf Z}^{\mathbf L}C
\simeq
A\otimes_{\mathbf Z/N}^{\mathbf L}
\bigl(\mathbf Z/N\otimes_{\mathbf Z}^{\mathbf L}C\bigr).
\]
Applied to \(A=\mu_N^{\otimes q}\) and
\(C=\mathbf G_m[-1]\), and combined with the Kummer equivalence
\(\mathbf Z/N\otimes_{\mathbf Z}^{\mathbf L}\mathbf G_m[-1]\simeq\mu_N\),
this identifies the pairing induced by \(m_q\) on cohomology with
\(\lambda_q\) times the usual cup product
\[
H^j(F,\mu_N^{\otimes q})\otimes H^1(F,\mu_N)
\longrightarrow H^{j+1}(F,\mu_N^{\otimes(q+1)}).
\]

Under the same identification, the map
\(H^1(F,\mathbf G_m[-1])\to H^1(F,\mathbf G_m[-1]\otimes^{\mathbf L}\mathbf Z/N)
=H^1(F,\mu_N)\)
is induced by the cofiber sequence
\(\mathbf G_m[-1]\xrightarrow{N}\mathbf G_m[-1]\to\mu_N\); its long exact sequence in
cohomology is that of the Kummer sequence of \Cref{conv:galois-cohomology-kummer}, so on
\(H^1(F,\mathbf G_m[-1])=F^\times\) the map is \(\delta_N\).
Naturality of cup products in both variables, together with
\textup{(ii)}, gives the formula in part \textup{(iii)} for every \(j\).

Finally, let \(F\subseteq F'\) be a field extension, and write \(\lambda_q^F\) and
\(\lambda_q^{F'}\) for the two scalars. Choose separable closures with
\(F^{\sep}\subseteq(F')^{\sep}\). For a finite separable subextension \(E/F\) of
\(F^{\sep}/F\), the compositum \(EF'\) inside \((F')^{\sep}\) is finite separable over
\(F'\); the motivic complexes and their multiplication maps are functorial for the ring map
\(E\to EF'\), by \Cref{lem:global-motivic-filtration-kmodell-input}. Passing to the filtered
colimits which compute the two stalks therefore gives a commutative square
\[
\begin{tikzcd}[column sep=large]
	\bigl(\mu_N^{\otimes q}\otimes^{\mathbf L}\mathbf G_m[-1]\bigr)_{F}
	\ar[r,"m_q^{F}"]\ar[d,"\iota"'] &
	\bigl(\mu_N^{\otimes(q+1)}\bigr)_{F}\ar[d,"\iota"]\\
	\bigl(\mu_N^{\otimes q}\otimes^{\mathbf L}\mathbf G_m[-1]\bigr)_{F'}
	\ar[r,"m_q^{F'}"] &
	\bigl(\mu_N^{\otimes(q+1)}\bigr)_{F'}
\end{tikzcd}
\]
of stalks. Under the identifications above, both vertical maps are the inclusion
\(\mu_N(F^{\sep})^{\otimes(q+1)}\to\mu_N((F')^{\sep})^{\otimes(q+1)}\), which is an
isomorphism because \(N\) is invertible and both fields are separably closed, so both groups
are cyclic of order \(N\). A morphism of étale sheaves on \(\Spec(F)\) is determined by its
stalk, and on stalks \(m_q^F\) is multiplication by \(\lambda_q^F\); the square therefore
gives \(\lambda_q^{F}=\lambda_q^{F'}\).
\end{proof}

\begin{definition}[Kummer-normalized weight-one classes]
\label{def:kummer-normalized-weight-one-classes}
Let \(F\) be a field, and let
\[
c_F\colon H^1_{\mot}(F,\mathbf Z(1))
\xrightarrow{\ \sim\ }F^\times
\]
be the isomorphism of
\Cref{lem:etale-sheafification-of-motivic-complexes}\textup{(ii)}. For
\(a\in F^\times\), put
\(
\kappa_F(a):=c_F^{-1}(a).
\)
Let
\[
\widetilde\kappa_F(a)\in\pi_1\Fil^1_{\mot}K(F)
\]
be the unique inverse image of \(\kappa_F(a)\) under the isomorphism
\[
\pi_1\Fil^1_{\mot}K(F)
\xrightarrow{\ \sim\ }
H^1_{\mot}(F,\mathbf Z(1))
\]
of \Cref{lem:weight-one-edge-kummer-compatibility}. Both classes are
natural in \(F\).

For every \(m\ge1\), choosing a map
\(\mathbf S^1\to\Fil^1_{\mot}K(F)\) representing
\(\widetilde\kappa_F(a)\) and using the filtered right module structure
gives a morphism
\[
\Sigma\Fil^\bullet_{\mot}K(F;\mathbf Z/m)
\longrightarrow
\Fil^{\bullet+1}_{\mot}K(F;\mathbf Z/m).
\]
Its homotopy class is independent of the representative. For every
\(q\in\mathbf Z\), its associated graded map
\[
\Sigma\operatorname{gr}^q_{\mot}K(F;\mathbf Z/m)
\longrightarrow
\operatorname{gr}^{q+1}_{\mot}K(F;\mathbf Z/m)
\]
is right multiplication by \(\kappa_F(a)\). Indeed, the filtered
multiplication is compatible with passage to associated graded, and the
image of \(\widetilde\kappa_F(a)\) in
\(\pi_1\operatorname{gr}^1_{\mot}K(F)\) is \(\kappa_F(a)\); this is the
argument in the last paragraph of the proof of
\Cref{lem:weight-one-edge-kummer-compatibility}.
\end{definition}

\begin{remark}[two weight-one normalizations]
\label{rem:two-weight-one-normalizations}
The classes \(\widetilde u(a)\) of
\Cref{lem:weight-one-edge-kummer-compatibility} are normalized by the
canonical identification \(K_1(F)=F^\times\), whereas
\(\widetilde\kappa_F(a)\) is normalized by étale sheafification and the
Kummer class. We do not identify the two natural isomorphisms
\(u_F\colon F^\times\to H^1_{\mot}(F,\mathbf Z(1))\) and
\(c_F^{-1}\colon F^\times\to H^1_{\mot}(F,\mathbf Z(1))\), and no such
identification is needed. The splitting map of
\Cref{con:value-exterior-map} uses the Kummer-normalized classes
\(\widetilde\kappa_F(a)\). The Laurent-series test uses
\(\widetilde u(t)\) only to prove that \(\lambda_q\) is a unit; its
argument is valid for the unspecified element \(c_F(u_F(t))\in F^\times\).
\end{remark}

\begin{lemma}[mixed-coefficient localization projection formula]
\label{lem:mixed-coefficient-localization-projection-formula}
Let \(m\geq 1\), let \(X\) be a qcqs scheme, let
\(i\colon Z\hookrightarrow X\) be a closed immersion, and let
\(j\colon U\hookrightarrow X\) be its quasi-compact open complement. 
Write \(\Perf_Z(X)\subseteq\Perf(X)\) for the full stable subcategory
of perfect complexes whose restriction to \(U\) vanishes, put
\(K^Z(X):=K(\Perf_Z(X))\), and write
\(K^Z(X;\mathbf Z/m):=K^Z(X)\wedge\mathbf S/m\). Write
\[
\partial\colon K_s(U)\longrightarrow K^Z_{s-1}(X),
\qquad
\partial_m\colon K_s(U;\mathbf Z/m)
\longrightarrow K^Z_{s-1}(X;\mathbf Z/m)
\]
for the localization boundaries. For
\(y\in K_r(X;\mathbf Z/m)\) and \(z\in K_s(U)\),
\[
\partial_m\bigl(j^*y\cdot z\bigr)
=(-1)^r\,y\cdot\partial(z)
\quad\text{in}\quad
K^Z_{r+s-1}(X;\mathbf Z/m).
\]
If \(X\) and \(Z\) are regular noetherian, dévissage identifies
\(K^Z(X;\mathbf Z/m)\simeq K(Z;\mathbf Z/m)\), and the right-hand side
corresponds to
\((-1)^r i^*(y)\cdot\partial(z)\).
\end{lemma}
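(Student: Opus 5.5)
The plan is to derive the formula from the \(K(X)\)-module structure of the localization cofiber sequence
\[
K^Z(X)\longrightarrow K(X)\xrightarrow{\ j^*\ }K(U).
\]
We first treat the integral case and then smash with \(\mathbf S/m\).

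Tensor product of perfect complexes makes \(\Perf(X)\) symmetric monoidal. The subcategory \(\Perf_Z(X)\) is a \(\otimes\)-ideal, and \(\Perf(U)\) is a \(\Perf(X)\)-module category via \(j^*\). Since \(K\) is lax symmetric monoidal on small stable \(\infty\)-categories, the Verdier sequence \(\Perf_Z(X)\to\Perf(X)\to\Perf(U)\) is a sequence of \(\Perf(X)\)-modules, with \(\Perf(U)\) the idempotent completion of the Verdier quotient (Thomason--Trobaugh). Applying \(K\) then gives a cofiber sequence of \(K(X)\)-module spectra, with \(K(X)\) acting on \(K(U)\) through \(j^*\). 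In particular the boundary \(\partial\colon K(U)\to\Sigma K^Z(X)\) is \(K(X)\)-linear.

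Linearity of \(\partial\) on homotopy, for a left action, reads \(\partial(y\cdot z)=(-1)^r y\cdot\partial z\). The sign is the Koszul sign from commuting \(\Sigma\) past a class of degree \(r\). The formula as stated writes \(y\) on the left, so I would fix the convention that \(K(X)\) acts on the left and record the sign exactly there. The main bookkeeping obstacle is making sure this matches the convention used in the paper, where right multiplication is used for the weight-one classes.

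For the mixed coefficients, smash the entire cofiber sequence with \(\mathbf S/m\) on the \(y\)-side. This gives a cofiber sequence of \(K(X;\mathbf Z/m)\)-bimodule-type pairings
\[
K^Z(X;\mathbf Z/m)\longrightarrow K(X;\mathbf Z/m)\longrightarrow K(U;\mathbf Z/m)
\]
whose boundary is \(\partial_m=\partial\wedge\mathbf S/m\). The pairing
\[
K(X;\mathbf Z/m)\wedge K(U)\longrightarrow K(U;\mathbf Z/m)
\]
is \((K(X)\wedge\mathbf S/m)\wedge K(U)\to(K(X)\wedge K(U))\wedge\mathbf S/m\to K(U)\wedge\mathbf S/m\). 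The boundary of the cofiber sequence \((K^Z(X)\to K(X)\to K(U))\wedge\mathbf S/m\) is then compatible with this pairing by naturality of the module-linear boundary. Concretely, \(y\in\pi_r(K(X)\wedge\mathbf S/m)\) is a map \(\mathbf S^r\to K(X)\wedge\mathbf S/m\). The element \(j^*y\cdot z\) is the composite of \(y\wedge z\) with the action map. Applying \(\partial\) and using linearity with \(\mathbf S/m\) as an inert tensor factor yields the same sign \((-1)^r\), since \(\partial\) must move past \(\mathbf S^r\) in either case.

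For the final assertion, use Quillen dévissage: \(K(Z)\simeq G(Z)\simeq K^Z(X)\) for regular noetherian \(X,Z\), via \(i_*\) and the resolution theorem. The action of \(K(X)\) on \(K^Z(X)\) transports under \(i_*\) to the action through \(i^*\), by the projection formula \(i_*(i^*E\otimes F)\simeq E\otimes i_*F\) on coherent sheaves (or perfect complexes). This identifies \(y\cdot\partial(z)\) with \((-1)^r\) times \(i^*(y)\cdot\partial(z)\), after smashing with \(\mathbf S/m\).
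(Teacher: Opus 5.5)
Your proposal is correct and follows the paper's proof: $\Perf(X)$-linearity makes $K^Z(X)\to K(X)\to K(U)$ a cofiber sequence of $K(X)$-modules, the mixed-coefficient pairing is the action $(K(X)\wedge\mathbf S/m)\wedge M\to M\wedge\mathbf S/m$ compared with the mod-$m$ cofiber sequence, the sign $(-1)^r$ comes from $\mathbf S^r\wedge\mathbf S^1\simeq\mathbf S^1\wedge\mathbf S^r$, and the regular clause is the derived projection formula for $i_*$. One wording slip in your last sentence: the projection formula contributes no sign, so $y\cdot\partial(z)$ corresponds to $i^*(y)\cdot\partial(z)$, and it is $(-1)^r\,y\cdot\partial(z)$ that corresponds to $(-1)^r\,i^*(y)\cdot\partial(z)$.
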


\begin{proof}
Tensor product makes \(\Perf_Z(X)\), \(\Perf(X)\), and \(\Perf(U)\)
module categories over \(\Perf(X)\), where the action on \(\Perf(U)\)
is through \(j^*\). The inclusion and restriction functors are
\(\Perf(X)\)-linear. Nonconnective algebraic \(K\)-theory is lax
symmetric monoidal
\cite[\arxivhtmlref{1103.3923v3}{S5.E9}{Prop.~5.9}]{BGT14MultiplicativeTrace}; it therefore carries these
actions and functors to \(K(X)\)-module structures and \(K(X)\)-linear
maps.

Since nonconnective \(K\)-theory is a localizing invariant
\cite[\arxivhtmlref{1001.2282v4}{S9.SS3}{\S9.3}]{BGT13KTheoryStableCategories}, the underlying sequence
\[
K^Z(X)\longrightarrow K(X)\longrightarrow K(U)
\]
is a cofiber sequence. Cofibers of module spectra are computed on
underlying spectra, so this is a cofiber sequence of \(K(X)\)-modules.
Smashing with \(\mathbf S/m\), and using
\(K(X;\mathbf Z/m)\simeq K(X)\wedge\mathbf S/m\), gives the following
morphism of cofiber sequences:
\[
\begin{tikzcd}[column sep=small]
K(X;\mathbf Z/m)\wedge K^Z(X) \ar[r] \ar[d] &
K(X;\mathbf Z/m)\wedge K(X) \ar[r] \ar[d] &
K(X;\mathbf Z/m)\wedge K(U) \ar[d] \\
K^Z(X;\mathbf Z/m) \ar[r] & K(X;\mathbf Z/m) \ar[r] & K(U;\mathbf Z/m).
\end{tikzcd}
\]
Represent \(y\) by a map
\(\mathbf S^r\to K(X;\mathbf Z/m)\). The induced morphism of
cofiber sequences gives, on the connecting maps, a commutative square
\[
\begin{tikzcd}[column sep=large]
	\Sigma^r K(U)
	\ar[r,"\Sigma^r\partial"]
	\ar[d,"j^*y\cdot -"'] &
	\Sigma^r\Sigma K^Z(X)
	\ar[d,"(-1)^r y\cdot -"]\\
	K(U;\mathbf Z/m)
	\ar[r,"\partial_m"'] &
	\Sigma K^Z(X;\mathbf Z/m).
\end{tikzcd}
\]
Here the sign is induced by the symmetry
\(\mathbf S^r\wedge\mathbf S^1\simeq
\mathbf S^1\wedge\mathbf S^r\), which acts by \((-1)^r\).
Taking \(\pi_s\) and evaluating at \(z\) gives
\(
\partial_m(j^*y\cdot z)=(-1)^r y\cdot\partial(z).
\)

Under the additional regularity hypotheses in the final clause,
\(i_*\) carries \(\Perf(Z)\) to \(\Perf_Z(X)\), dévissage applies, and the
derived projection formula
\(
E\otimes^{\mathbf L}i_*M
\simeq
i_*\bigl(i^*E\otimes^{\mathbf L}M\bigr)
\)
for \(E\in\Perf(X)\) and \(M\in\Perf(Z)\)
identifies the action on the spectrum with supports with the action
through \(i^*\). This proves the final assertion.

\end{proof}

\begin{lemma}[self-intersection of the closed point of a DVR]
	\label{lem:dvr-closed-point-self-intersection}
	Let \(D\) be a discrete valuation ring with residue field \(k\), and
	let \(i\colon\Spec(k)\hookrightarrow\Spec(D)\) be the closed
	immersion. For every \(m\ge1\), the composite
	\[
	K(k;\mathbf Z/m)\xrightarrow{i_*}K(D;\mathbf Z/m)
	\xrightarrow{i^*}K(k;\mathbf Z/m)
	\]
	induces the zero homomorphism on every homotopy group.
\end{lemma}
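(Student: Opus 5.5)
The plan is to compute the composite \(i^*i_*\) on the level of perfect complexes, using the Koszul resolution of \(k\) over \(D\). Choose a uniformizer \(t\in D\). Then \(i_*k=D/t\) is quasi-isomorphic to the two-term perfect complex \([D\xrightarrow{t}D]\). For any \(M\in\Perf(k)\), the projection formula (as in the proof of \Cref{lem:mixed-coefficient-localization-projection-formula}) gives
\[
i^*i_*M\simeq M\otimes^{\mathbf L}_k i^*i_*k .
\]
Moreover,
\[
i^*i_*k=k\otimes^{\mathbf L}_D k\simeq[k\xrightarrow{0}k]\simeq k\oplus k[1]
\]
as a perfect complex over \(k\). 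Hence the exact functor \(i^*i_*\colon\Perf(k)\to\Perf(k)\) is naturally equivalent to \(M\mapsto M\otimes_k(k\oplus k[1])\simeq M\oplus M[1]\).

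I would phrase this more carefully as follows, to avoid needing a splitting natural in \(M\) at the level of \(\infty\)-categories. The complex \(i^*i_*k\) has a filtration (the stupid filtration of the Koszul complex) with graded pieces \(k\) and \(k[1]\). Tensoring with \(M\) gives a natural cofiber sequence of exact functors
\[
M[1]\otimes\ldots\ \text{i.e.}\ \ M\longrightarrow i^*i_*M\longrightarrow M[1].
\]
By additivity of \(K\)-theory for cofiber sequences of exact functors, the map induced by \(i^*i_*\) on \(K(k)\) is homotopic to \(\mathrm{id}+\Sigma\), where \(\Sigma\) denotes the map induced by the shift functor. The shift induces \(-\mathrm{id}\) on \(K\)-theory, again by additivity applied to the cofiber sequence \(M\to0\to M[1]\). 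Therefore \(i^*i_*\simeq \mathrm{id}-\mathrm{id}\simeq0\) as an endomorphism of the spectrum \(K(k)\).

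Finally, smashing with \(\mathbf S/m\) preserves this null-homotopy. Here \(i_*\) and \(i^*\) on finite coefficients are by definition the integral maps smashed with \(\mathbf S/m\), via \Cref{conv:nonconnective-k-groups} and \Cref{lem:global-motivic-filtration-kmodell-input}. So the composite is null on \(K(k;\mathbf Z/m)\), and in particular zero on every homotopy group. This gives the statement, which is even slightly stronger since it holds at the level of spectra.

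The main point requiring care is the additivity step. One must check that the cofiber sequence \(M\to i^*i_*M\to M[1]\) is functorial in \(M\) as exact functors \(\Perf(k)\to\Perf(k)\), so that the additivity theorem for localizing invariants applies. It does: the sequence is obtained by tensoring \(M\) with a fixed cofiber sequence of \(k\)-module complexes \(k\to k\otimes^{\mathbf L}_Dk\to k[1]\), and tensoring is exact in \(M\). If one prefers to avoid this, an alternative is to compute on \(\pi_*\) using the \(K_0(k)\)-module structure. The map \(i^*i_*\) is multiplication by the class \([k\otimes^{\mathbf L}_Dk]=[k]-[k]=0\) in \(K_0(k)\), by the projection formula. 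The module structure over \(K(k)\) then carries over to finite coefficients.
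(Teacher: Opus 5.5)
Your strategy is the paper's: the Koszul resolution gives \(k\otimes^{\mathbf L}_Dk\simeq k\oplus k[1]\), then additivity together with \(K([1])=-\mathrm{id}\). You are right not to trust a natural splitting, but the naturality step you single out as needing care is wrong as stated.

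The functor \(i^*i_*=k\otimes^{\mathbf L}_D(-)\) corresponds to the \((k,k)\)-bimodule \(R=k\otimes^{\mathbf L}_Dk\). Exact functors \(\Perf(k)\to\Perf(k)\) and their natural transformations are bimodule spectra and bimodule maps. So tensoring \(M\) with a fixed cofiber sequence gives a natural sequence of exact functors only if that sequence is one of bimodules, and yours is not. In the Koszul model \(\Lambda_D(e)\otimes_Dk=k\oplus ke\), with \(de=t\), the left \(k\)-structure is an action of the DG algebra \(\Lambda_D(e)\). The element \(e\) carries the degree-zero part into degree one, so the degree-zero piece is not a sub-bimodule.

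In fact, in general no natural transformation \(\mathrm{id}\to i^*i_*\) is an isomorphism on \(\pi_0\) at \(M=k\). Take \(D=\mathbf Z_p\) and \(k=\mathbf F_p\). The generator of \(\pi_1(\mathbf F_p\otimes_{\mathbf S}\mathbf F_p)\) maps to a generator of \(\pi_1R=\operatorname{Tor}_1^{\mathbf Z_p}(\mathbf F_p,\mathbf F_p)\). Hence it acts nontrivially on \(\pi_*R\) but by zero on the diagonal bimodule \(k\), so every bimodule map \(k\to R\) vanishes on \(\pi_0\). The same observation shows that \(i^*i_*\not\simeq\mathrm{id}\oplus[1]\) as exact functors in this case. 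The paper's sentence identifying \(i^*i_*\) with \(M\mapsto M\oplus M[1]\) is therefore also only objectwise correct and needs the same repair.

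The repair is to run the sequence the other way, via the counit \(i^*i_*M\to M\). Its fibre is given by the bimodule \(\operatorname{fib}(R\to k)\). This is an \(R\)-module with homotopy \(k\) concentrated in degree one, hence equivalent to \(k[1]\) with the diagonal structure; in the Koszul model it is the sub-bimodule \(ke\). This yields a natural cofiber sequence \(M[1]\to i^*i_*M\to M\) of exact functors, and additivity gives \(K(i^*i_*)\simeq K([1])+\mathrm{id}\simeq0\). Additivity is insensitive to the order of the terms, so the rest of your argument, including the passage to \(\mathbf S/m\), goes through unchanged.

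Drop the fallback via \(K_0(k)\)-linearity. The projection formula makes \(i^*i_*\) linear over \(K(D)\) acting through \(i^*\), not over \(K(k)\). So it only shows that \(i^*i_*\) vanishes on the image of \(i^*\). That image is everything in the henselian, \(m\)-invertible situations where the paper applies the lemma, but not for an arbitrary discrete valuation ring and arbitrary \(m\).
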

\begin{proof}
	If \(\pi\) is a uniformizer of \(D\), the resolution
	\(0\to D\xrightarrow{\pi}D\to k\to0\) gives
	\(k\otimes_D^{\mathbf L}k\simeq k\oplus k[1]\).
	Hence \(i^*i_*\) is induced by the exact endofunctor
	\(M\mapsto M\oplus M[1]\) of \(\Perf(k)\). By the additivity theorem
	\cite[1.7.2]{ThomasonTrobaugh90}, its map on \(K\)-theory is
	\(\mathrm{id}+K([1])\). Applying additivity to the cofiber sequence
	\(M\to0\to M[1]\) gives \(K([1])=-\mathrm{id}\). Thus
	\(i^*i_*=0\); the same holds after smashing with \(\mathbf S/m\).
\end{proof}

\begin{lemma}[Laurent-series test]
\label{lem:laurent-series-weight-one-test}
Let \(k_0\) be a separably closed field, let
\(W_0=k_0\llbracket t\rrbracket\), and let \(F_0=k_0((t))\). Let
\(N=\ell^\nu\) be invertible in \(k_0\). For every \(q\ge0\):
\begin{enumerate}[label=\textup{(\roman*)},leftmargin=*]
\item the groups \(H^0(F_0,\mu_N^{\otimes q})\) and
\(H^1(F_0,\mu_N^{\otimes(q+1)})\) are free of rank one over
\(\mathbf Z/N\). For every \(r\in\mathbf Z\) and every \(a\ge2\),
\(H^a(F_0,\mu_N^{\otimes r})=0\). Cup product with
\(\delta_N(t)\) is an isomorphism from
\(H^0(F_0,\mu_N^{\otimes q})\) to
\(H^1(F_0,\mu_N^{\otimes(q+1)})\);

\item the motivic filtration has a single nonzero graded contribution
to each of the following groups, and the resulting edge morphisms are
isomorphisms
\[
K_{2q}(F_0;\mathbf Z/N)
\xrightarrow{\ \sim\ }
H^0_{\mot}(F_0,\mathbf Z/N(q)),
\qquad
K_{2q+1}(F_0;\mathbf Z/N)
\xrightarrow{\ \sim\ }
H^1_{\mot}(F_0,\mathbf Z/N(q+1));
\]

\item multiplication by \([t]\in K_1(F_0)\) induces an isomorphism
\[
K_{2q}(F_0;\mathbf Z/N)
\xrightarrow{\ \sim\ }
K_{2q+1}(F_0;\mathbf Z/N).
\]
\end{enumerate}
\end{lemma}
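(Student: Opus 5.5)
For \textup{(i)}, apply \Cref{thm:tame-cohomology-package-level-nu} to the henselian DVR \(W_0\). Its residue field is \(k_0\), its value group is \(\mathbf Z\), and we take the basis \(B=\{1\}\) with \(w_1=\delta_N(t)\), which is allowed by \Cref{lem:henselian-tame-inertia-cohomology}\textup{(ii)}. Since \(k_0\) is separably closed, \(H^a(k_0,\mu_N^{\otimes r})\) vanishes for \(a>0\) and equals \(\mathbf Z/N\) (noncanonically) for \(a=0\). The isomorphism \(\Theta\) therefore gives \(H^0(F_0,\mu_N^{\otimes q})\simeq H^0(k_0,\mu_N^{\otimes q})\), which is free of rank one. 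It also gives \(H^1(F_0,\mu_N^{\otimes(q+1)})=\inf(H^0(k_0,\mu_N^{\otimes q}))\cup\delta_N(t)\), again free of rank one, since the other summand \(H^1(k_0,-)\) vanishes. For \(a\ge2\) every summand \(H^{a-|J|}(k_0,-)\) with \(|J|\le1\) has positive degree, so \(H^a(F_0,\mu_N^{\otimes r})=0\). The same description shows that cup product with \(\delta_N(t)\) carries \(H^0(F_0,\mu_N^{\otimes q})=\inf H^0(k_0,\mu_N^{\otimes q})\) isomorphically onto \(H^1(F_0,\mu_N^{\otimes(q+1)})\).

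For \textup{(ii)}, use \Cref{thm:field-finite-coefficient-k-filtration-input}, which gives \(\pi_t\operatorname{gr}^{q'}K(F_0;\mathbf Z/N)\cong H^{2q'-t}(F_0,\mu_N^{\otimes q'})\) in the range \(0\le 2q'-t\le q'\). By \textup{(i)}, only cohomological degrees \(0\) and \(1\) contribute. In total degree \(t=2q\), the only nonzero term is \(q'=q\) in cohomological degree \(0\). In degree \(t=2q+1\), the only nonzero term is \(q'=q+1\) in cohomological degree \(1\). The neighbouring degrees \(2q\pm1\) and \(2q+2\) likewise receive contributions from single weights, and for weights above the relevant one the graded pieces vanish. \Cref{lem:field-filtration-connectivity}\textup{(ii)} then gives \(\pi_t\Fil^{q'}=0\) for \(q'\) above the contributing weight, for \(t\) and \(t-1\). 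For weights below it, the graded pieces vanish in degrees \(t\) and \(t+1\), so \(\pi_t\Fil^{q'}\to\pi_t\Fil^{q'-1}\) is an isomorphism. Chasing the long exact sequences therefore identifies \(K_t(F_0;\mathbf Z/N)\) with \(\pi_t\) of the single graded piece, which gives both edge isomorphisms.

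For \textup{(iii)}, use localization on \(\Spec(W_0)\). By regularity and dévissage this gives a long exact sequence
\[
K_{2q+1}(W_0;\mathbf Z/N)\to K_{2q+1}(F_0;\mathbf Z/N)\xrightarrow{\partial_N}K_{2q}(k_0;\mathbf Z/N)\xrightarrow{i_*}K_{2q}(W_0;\mathbf Z/N).
\]
By rigidity (\Cref{lem:valuation-closed-point-finite-coefficient-k-rigidity}), \(i^*\colon K(W_0;\mathbf Z/N)\to K(k_0;\mathbf Z/N)\) is an equivalence. By \Cref{lem:dvr-closed-point-self-intersection}, \(i^*i_*=0\), hence \(i_*=0\). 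The localization sequence then splits into short exact sequences \(0\to K_n(W_0)\to K_n(F_0)\to K_{n-1}(k_0)\to0\), all with \(\mathbf Z/N\) coefficients. Since \(k_0\) is separably closed, Suslin's computation, or equivalently \textup{(ii)} applied to \(k_0\) via \Cref{thm:field-finite-coefficient-k-filtration-input}, gives \(K_{\mathrm{odd}}(k_0;\mathbf Z/N)=0\) and \(K_{2q}(k_0;\mathbf Z/N)\cong\mathbf Z/N\). Consequently \(\partial_N\colon K_{2q+1}(F_0;\mathbf Z/N)\to K_{2q}(k_0;\mathbf Z/N)\) is an isomorphism, because \(K_{2q+1}(W_0;\mathbf Z/N)\cong K_{2q+1}(k_0;\mathbf Z/N)=0\). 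Similarly, \(j^*\colon K_{2q}(W_0;\mathbf Z/N)\to K_{2q}(F_0;\mathbf Z/N)\) is an isomorphism, because \(K_{2q-1}(k_0;\mathbf Z/N)=0\).

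Now apply \Cref{lem:mixed-coefficient-localization-projection-formula} with \(z=[t]\) and \(\partial[t]=\pm1\in K_0(k_0)\). For \(x=j^*y\) this gives \(\partial_N(x\cdot[t])=\pm i^*(y)\). Since \(j^*\) and \(i^*\) are isomorphisms in degree \(2q\), and \(\partial_N\) is an isomorphism in degree \(2q+1\), multiplication by \([t]\) is an isomorphism. The main obstacle is checking that \(\partial[t]\) is a unit multiple of \([k_0]\). This follows from the resolution \(0\to W_0\xrightarrow{t}W_0\to k_0\to0\), which shows that the boundary of \([t]\in K_1(F_0)\) is \(\pm[k_0]\) under dévissage.
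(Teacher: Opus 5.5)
Your proposal is correct and follows essentially the same route as the paper: \textup{(i)} from the tame value-degree decomposition with the one-element basis and \(\delta_N(t)\); \textup{(ii)} from the field range theorem, the connectivity lemma, and the cofiber sequences above and below the single contributing weight; and \textup{(iii)} from the localization sequence, \(i_*=0\) via rigidity and the self-intersection lemma, odd-degree vanishing over \(k_0\), and the mixed-coefficient projection formula with \(\partial[t]=\pm[k_0]\). The only difference is that you also cite Suslin's computation as a possible source for \(K_{\mathrm{odd}}(k_0;\mathbf Z/N)=0\), whereas the paper obtains this only by rerunning the argument of \textup{(ii)} over \(k_0\), the alternative you also give.
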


\begin{proof}
The ring \(W_0\) is henselian \cite[\stackstag{04GM}]{StacksProject}. Its value
group is \(\mathbf Z\), and its residue field is separably closed.
\Cref{thm:tame-cohomology-package-level-nu}, applied to the one-element
basis and the class \(\delta_N(t)\), gives \textup{(i)}.

We prove \textup{(ii)}. By
\Cref{thm:field-finite-coefficient-k-filtration-input}, the graded piece
of weight \(r\) contributing to \(K_i(F_0;\mathbf Z/N)\) is
\(H^{2r-i}(F_0,\mu_N^{\otimes r})\). Part \textup{(i)} shows that for
\(i=2q\) only \(r=q\) occurs, and for \(i=2q+1\) only \(r=q+1\) occurs.
We give the argument for \(i=2q\). For
\(r\ge q+1\), the groups
\(\pi_{2q}\operatorname{gr}^r_{\mot}\) and
\(\pi_{2q-1}\operatorname{gr}^r_{\mot}\) vanish. Hence
\Cref{lem:field-filtration-connectivity}(ii) gives
\(\pi_{2q}\Fil^{q+1}_{\mot}=\pi_{2q-1}\Fil^{q+1}_{\mot}=0\).
The cofiber sequence in weight \(q\) identifies
\(\pi_{2q}\Fil^q_{\mot}K(F_0;\mathbf Z/N)\) with
\(\pi_{2q}\operatorname{gr}^q_{\mot}K(F_0;\mathbf Z/N)\). For \(r<q\),
\[
\begin{aligned}
\pi_{2q}\operatorname{gr}^r_{\mot}K(F_0;\mathbf Z/N)
&=H^{2r-2q}(F_0,\mu_N^{\otimes r})=0,\\
\pi_{2q+1}\operatorname{gr}^r_{\mot}K(F_0;\mathbf Z/N)
&=H^{2r-2q-1}(F_0,\mu_N^{\otimes r})=0,
\end{aligned}
\]
because étale cohomology vanishes in negative degrees. The cofiber
sequences in weights \(0,\ldots,q-1\) therefore identify
\(\pi_{2q}\Fil^0_{\mot}K(F_0;\mathbf Z/N)\) with
\(\pi_{2q}\Fil^q_{\mot}K(F_0;\mathbf Z/N)\). This proves the first isomorphism.
For the odd degree, use weight \(q+1\): the same proof applies because
the two adjacent homotopy groups of every other graded piece vanish.
This proves the second isomorphism.
The same argument over \(k_0\) gives
\(K_{2r+1}(k_0;\mathbf Z/N)=0\) for every \(r\ge0\).
By closed-point rigidity, the same odd-degree vanishing holds over
\(W_0\).

Let \(i\colon\Spec(k_0)\to\Spec(W_0)\) and
\(j\colon\Spec(F_0)\to\Spec(W_0)\). Nonconnective localization
\cite[Thm.~7.4]{ThomasonTrobaugh90} first gives
\(K^{k_0}(W_0)\to K(W_0)\to K(F_0)\). Since \(i\) is a regular closed
immersion between regular noetherian schemes, dévissage identifies
\(K^{k_0}(W_0)\simeq K(k_0)\). After smashing with \(\mathbf S/N\), we
obtain the fibre sequence
\[
K(k_0;\mathbf Z/N)\xrightarrow{i_*}
K(W_0;\mathbf Z/N)\xrightarrow{j^*}K(F_0;\mathbf Z/N).
\]

According to \cref{lem:dvr-closed-point-self-intersection}, \(i^*i_*=0\) on every \(K_n(k_0;\mathbf Z/N)\). Since \(i^*\) is
an equivalence by
\Cref{lem:valuation-closed-point-finite-coefficient-k-rigidity},
\(i_*=0\) on every homotopy group.

The localization sequence and the odd-degree vanishing over
\(k_0\) give isomorphisms
\[
j^*\colon K_{2q}(W_0;\mathbf Z/N)\xrightarrow{\ \sim\ }
K_{2q}(F_0;\mathbf Z/N),
\qquad
\partial_N\colon K_{2q+1}(F_0;\mathbf Z/N)
\xrightarrow{\ \sim\ }K_{2q}(k_0;\mathbf Z/N).
\]
Let
\(
\partial\colon K_1(F_0)\to K_0(k_0)
\)
be the connecting homomorphism in the integral localization sequence,
using the preceding dévissage equivalence. The lattice description of
the boundary gives
\(\partial([t])=\varepsilon[W_0/tW_0]=\varepsilon[k_0]\), for a sign
\(\varepsilon\in\{\pm1\}\) determined by the localization convention.

Since \(W_0\) and \(k_0\) are regular noetherian, the hypotheses and
the dévissage clause of
\Cref{lem:mixed-coefficient-localization-projection-formula} apply.
Thus, for \(x=j^*(y)\), its projection formula gives
\(\partial_N(x\cdot[t])=\varepsilon\,i^*(y)\).
Thus multiplication by \([t]\) is
\(\varepsilon\,\partial_N^{-1}i^*(j^*)^{-1}\), which proves
\textup{(iii)}.
\end{proof}

\begin{lemma}[the weight-one scalar is a unit]
\label{lem:weight-one-scalar-is-a-unit}
In the situation of
\Cref{lem:etale-sheafification-of-motivic-complexes}, one has
\(\lambda_q\in(\mathbf Z/N)^\times\) for every \(q\ge0\).
\end{lemma}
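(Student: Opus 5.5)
We reduce to the Laurent-series field of \Cref{lem:laurent-series-weight-one-test}. By \Cref{lem:etale-sheafification-of-motivic-complexes}\textup{(iii)}, \(\lambda_q\) depends only on the characteristic of \(F\). We therefore replace \(F\) by \(F_0=k_0((t))\), where \(k_0\) is a separable closure of the prime field of \(F\). Then \(N\) is still invertible, and \(F\) and \(F_0\) have a common extension field: for instance, a common extension of \(F\) and \(F_0\) over the prime field, so that \(\lambda_q^F=\lambda_q^{\text{common}}=\lambda_q^{F_0}\).

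On \(F_0\), take \(x\) to be a generator of \(H^0_{\mot}(F_0,\mathbf Z/N(q))\) and \(z=u_{F_0}(t)\in H^1_{\mot}(F_0,\mathbf Z(1))\). By \Cref{lem:laurent-series-weight-one-test}\textup{(ii)}, the edge morphisms identify \(K_{2q}(F_0;\mathbf Z/N)\) and \(K_{2q+1}(F_0;\mathbf Z/N)\) with the single graded contributions \(H^0_{\mot}(F_0,\mathbf Z/N(q))\) and \(H^1_{\mot}(F_0,\mathbf Z/N(q+1))\), both free of rank one by part \textup{(i)} together with \Cref{lem:etale-sheafification-of-motivic-complexes}\textup{(i)}.

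\textbf{Main step.} Multiplication by \([t]=\widetilde u(t)\) on \(K\)-theory is compatible, through these edge isomorphisms, with multiplication by \(u_{F_0}(t)\) on the graded pieces. This follows from \Cref{lem:weight-one-edge-kummer-compatibility}, since \(\widetilde u(t)\in\pi_1\Fil^1\) maps to \([t]\in K_1\), and the filtered multiplication restricts on \(\Fil^q\) to the relevant graded map. I expect this bookkeeping to be the main obstacle: one must check that the edge map from \(\Fil^q\) to \(\operatorname{gr}^q\), composed with multiplication, lands correctly in \(\Fil^{q+1}\to\operatorname{gr}^{q+1}\). This uses that \(\pi_{2q}\Fil^q\to K_{2q}\) and \(\pi_{2q+1}\Fil^{q+1}\to K_{2q+1}\) are isomorphisms, as established in the proof of \Cref{lem:laurent-series-weight-one-test}\textup{(ii)}. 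By part \textup{(iii)}, multiplication by \([t]\) is an isomorphism, so \(x\mapsto x\cdot z\) is an isomorphism between free rank-one \(\mathbf Z/N\)-modules.

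Apply the formula of \Cref{lem:etale-sheafification-of-motivic-complexes}\textup{(iii)} with \(j=0\). Since \(H^0(\rho_q)\) and \(H^1(\rho_{q+1})\) are isomorphisms, it shows that \(y\mapsto\lambda_q\bigl(y\cup\delta_N(a)\bigr)\), with \(a:=c_{F_0}(u_{F_0}(t))\in F_0^\times\), is an isomorphism \(H^0(F_0,\mu_N^{\otimes q})\to H^1(F_0,\mu_N^{\otimes(q+1)})\). As \Cref{rem:two-weight-one-normalizations} warns, \(a\) need not equal \(t\). This does not matter: an isomorphism between cyclic groups of order \(N\) given as \(\lambda_q\) times a \(\mathbf Z/N\)-linear map forces \(\lambda_q\) to be a unit, since otherwise its image lies in \(\ell\) times the target. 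Hence \(\lambda_q\in(\mathbf Z/N)^\times\).
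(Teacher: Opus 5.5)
Your proposal is correct and follows essentially the same argument as the paper. You reduce via the extension-invariance of \(\lambda_q\) to \(k_0((t))\). You then use the Laurent-series test and multiplicativity of the filtration to see that multiplication by \(u_{F_0}(t)\) on the relevant graded pieces is an isomorphism between free rank-one \(\mathbf Z/N\)-modules. Finally, you read off from the formula of \Cref{lem:etale-sheafification-of-motivic-complexes}\textup{(iii)} that \(\lambda_q\) is a unit, without needing \(c_{F_0}(u_{F_0}(t))=t\).

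The only difference is cosmetic. You take \(k_0\) to be a separable closure of the prime field and pass through a common extension of \(F\) and \(F_0\). The paper takes a separable closure of \(F\) itself, so that \(F\subseteq k_0\subseteq F_0\) directly.
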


\begin{proof}
The scalar is unchanged by field extension, by
\Cref{lem:etale-sheafification-of-motivic-complexes}(iii). Choose a
separable closure \(k_0/F\), put \(F_0=k_0((t))\), and use the standard
embeddings \(F\subseteq k_0\subseteq F_0\). Thus
\(\lambda_q^F=\lambda_q^{k_0}=\lambda_q^{F_0}\).
Choose a generator
\(x\in H^0_{\mot}(F_0,\mathbf Z/N(q))\), and let
\(\bar x\in K_{2q}(F_0;\mathbf Z/N)\) be its preimage under the first
isomorphism of
\Cref{lem:laurent-series-weight-one-test}\textup{(ii)}.
Multiplicativity of the filtration and
\Cref{lem:weight-one-edge-kummer-compatibility} identify the image of
\(\bar x\cdot[t]\) under the second isomorphism of
\Cref{lem:laurent-series-weight-one-test}\textup{(ii)} with
\(x\cdot u_{F_0}(t)\). Therefore
\Cref{lem:etale-sheafification-of-motivic-complexes}\textup{(iii)}
gives
\[
H^1(\rho_{q+1})\bigl(x\cdot u_{F_0}(t)\bigr)
=
\lambda_q^{F_0}
\bigl(
H^0(\rho_q)(x)\cup\delta_N(c_{F_0}(u_{F_0}(t)))
\bigr).
\]
By \Cref{lem:laurent-series-weight-one-test}(iii),
\(\bar x\cdot[t]\) is a generator. Since the second isomorphism of
\Cref{lem:laurent-series-weight-one-test}\textup{(ii)} and
\(H^1(\rho_{q+1})\) are isomorphisms, the left-hand side is a generator of
\(H^1(F_0,\mu_N^{\otimes(q+1)})\). This argument does not require
\(c_{F_0}(u_{F_0}(t))=t\). By
\Cref{lem:laurent-series-weight-one-test}\textup{(i)}, the common
target is a free rank-one \(\mathbf Z/N\)-module. If
\(\lambda_q^{F_0}\) were a nonunit, the right-hand side would belong to
\(\ell H^1(F_0,\mu_N^{\otimes(q+1)})\), and hence could not generate
that module. Therefore \(\lambda_q^{F_0}\), and consequently
\(\lambda_q^F\), is a unit.
\end{proof}

\begin{lemma}[weight-one operators are cup products]
\label{lem:weight-one-operators-are-cup-products}
Let \(F\) be a field, let \(\ell\) be a prime invertible in \(F\), let
\(\nu\ge1\), and put \(N:=\ell^\nu\). For every \(q\ge0\), there is a
unit
\(
\lambda_q\in(\mathbf Z/N)^\times,
\)
preserved by every extension of fields, such that, for every \(a\in F^\times\)
and every \(j\le q\), the square
\[
\begin{tikzcd}[column sep=huge]
H^j_{\mot}(F,\mathbf Z/N(q))
\ar[r,"{x\mapsto x\cdot\kappa_F(a)}"]
\ar[d,"H^j(\rho_q)"'] &
H^{j+1}_{\mot}(F,\mathbf Z/N(q+1))
\ar[d,"H^{j+1}(\rho_{q+1})"]\\
H^j(F,\mu_N^{\otimes q})
\ar[r,"{x\mapsto\lambda_q(x\cup\delta_N(a))}"'] &
H^{j+1}(F,\mu_N^{\otimes(q+1)})
\end{tikzcd}
\]
commutes, and its vertical maps are isomorphisms.
\end{lemma}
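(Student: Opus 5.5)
The statement assembles results already proved, so the plan is to combine \Cref{lem:etale-sheafification-of-motivic-complexes}\textup{(iii)} with \Cref{lem:weight-one-scalar-is-a-unit}. Take \(\lambda_q\in\mathbf Z/N\) to be the scalar from \Cref{lem:etale-sheafification-of-motivic-complexes}\textup{(iii)}. That lemma states that \(\lambda_q\) is unchanged under every extension of fields. By \Cref{lem:weight-one-scalar-is-a-unit}, it lies in \((\mathbf Z/N)^\times\). These two facts give the unit and its invariance under field extensions.

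To check that the square commutes, fix \(a\in F^\times\) and specialize \Cref{lem:etale-sheafification-of-motivic-complexes}\textup{(iii)} to \(z:=\kappa_F(a)\in H^1_{\mot}(F,\mathbf Z(1))\). By \Cref{def:kummer-normalized-weight-one-classes}, \(\kappa_F(a)=c_F^{-1}(a)\), so \(c_F(z)=a\). The formula of that lemma then reads
\[
H^{j+1}(\rho_{q+1})\bigl(x\cdot\kappa_F(a)\bigr)
=\lambda_q\bigl(H^j(\rho_q)(x)\cup\delta_N(a)\bigr),
\]
for every \(x\in H^j_{\mot}(F,\mathbf Z/N(q))\). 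This is precisely commutativity of the square. It holds for every \(j\), so in particular for \(j\le q\).

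The vertical maps are isomorphisms by \Cref{lem:etale-sheafification-of-motivic-complexes}\textup{(i)}. Since \(j\le q\), that lemma makes \(H^j(\rho_q)\) an isomorphism. For the right-hand map we have \(j+1\le q+1\), so the same lemma, applied in weight \(q+1\), makes \(H^{j+1}(\rho_{q+1})\) an isomorphism.

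One point needs care. The multiplication \(x\mapsto x\cdot\kappa_F(a)\) in the square must be the right action used in \Cref{lem:etale-sheafification-of-motivic-complexes}\textup{(iii)}. In that lemma it was obtained from the module structure by symmetry, which is the same convention as in \Cref{def:kummer-normalized-weight-one-classes}, so no sign discrepancy enters. Beyond this bookkeeping there is no real obstacle: the substantive input, that \(\lambda_q\) is a unit, is \Cref{lem:weight-one-scalar-is-a-unit}, which rests on the Laurent-series test.
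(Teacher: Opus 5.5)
Your proposal is correct and is essentially the paper's proof. You specialize \Cref{lem:etale-sheafification-of-motivic-complexes}\textup{(iii)} to \(z=\kappa_F(a)\), use \(c_F(\kappa_F(a))=a\), take the unit property from \Cref{lem:weight-one-scalar-is-a-unit}, and get the vertical isomorphisms from part \textup{(i)} in weights \(q\) and \(q+1\), which is exactly the paper's argument.
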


\begin{proof}
Apply
\Cref{lem:etale-sheafification-of-motivic-complexes}\textup{(i),(iii)}
to \(z=\kappa_F(a)\). By definition,
\(c_F(\kappa_F(a))=a\). The scalar is a unit by
\Cref{lem:weight-one-scalar-is-a-unit}. The vertical maps are
isomorphisms because \(j\le q\) and \(j+1\le q+1\).
\end{proof}

\begin{construction}[the value-exterior map]
\label{con:value-exterior-map}
Let \(W\) be a henselian valuation ring with fraction field \(L\),
residue field \(k\), and value group \(\Gamma_W\). Let
\(N=\ell^\nu\) be invertible in \(W\). By
\Cref{lem:henselian-tame-inertia-cohomology}\textup{(i)},
\(\Gamma_W/N\Gamma_W\) is a free \(\mathbf Z/N\)-module. Choose an
ordered \(\mathbf Z/N\)-basis \(B\). For each \(b\in B\), choose
\(\varpi_b\in L^\times\) such that
\(
v(\varpi_b)\bmod N\Gamma_W=b.
\)

Put
\[
\mathcal E_B^\bullet:=
\bigoplus_{\substack{J\subseteq B\\J\text{ finite}}}
\Sigma^{|J|}
\Fil_{\mot}^{\bullet-|J|}K(k;\mathbf Z/N).
\]
Let \(J=\{b_1<\cdots<b_r\}\subseteq B\) be finite. First apply the
specialization map \(s_{W,N}\) of
\Cref{lem:rigidity-specialization-filtered-inflation}. For
\(1\le i\le r\), choose a map
\(\mathbf S^1\to\Fil^1_{\mot}K(L)\) representing
\(\widetilde\kappa_L(\varpi_{b_i})\). Starting with \(s_{W,N}\),
apply the corresponding right-multiplication maps in the order
\(b_1,\ldots,b_r\). This gives a morphism of filtered spectra
\[
\Sigma^r\Fil_{\mot}^{\bullet-r}K(k;\mathbf Z/N)
\longrightarrow\Fil_{\mot}^\bullet K(L;\mathbf Z/N).
\]
For \(J=\varnothing\), this is \(s_{W,N}\).
The universal property of the coproduct assembles these morphisms into
\[
\Phi_B\colon\mathcal E_B^\bullet
\longrightarrow\Fil_{\mot}^\bullet K(L;\mathbf Z/N).
\]
Any two maps
\(\mathbf S^1\to\Fil^1_{\mot}K(L)\) representing the same element of
\(\pi_1\Fil^1_{\mot}K(L)\) are homotopic, and the module action sends
such a homotopy to a homotopy of the corresponding multiplication
maps. Maps and homotopies out of a coproduct are specified
summandwise. Hence the homotopy class of \(\Phi_B\) is independent of
the chosen representatives.

\end{construction}

\begin{lemma}[completeness of the value-exterior source]
\label{lem:value-exterior-source-complete}
The filtered spectrum \(\mathcal E_B^\bullet\) of
\Cref{con:value-exterior-map} is complete.
\end{lemma}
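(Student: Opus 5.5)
We must show \(\lim_{q\to\infty}\mathcal E_B^q\simeq0\), where
\[
\mathcal E_B^q=\bigoplus_{J}\Sigma^{|J|}\Fil_{\mot}^{q-|J|}K(k;\mathbf Z/N),
\]
the sum running over finite subsets \(J\subseteq B\). When \(B\) is finite this is a finite direct sum, hence a finite limit, of filtered spectra. Each summand is a shift and reindexing of \(\Fil^\bullet_{\mot}K(k;\mathbf Z/N)\), which is complete by \Cref{lem:global-motivic-filtration-kmodell-input} and \Cref{lem:valuation-inputs-finite-valuative-dimension}. Finite sums commute with limits, so the finite case is immediate. For infinite \(B\) the sum does not commute with the limit, so the plan is to use a connectivity estimate instead: show that \(\mathcal E_B^q\) becomes highly connective as \(q\to\infty\), uniformly across summands.

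The key input is \Cref{lem:field-filtration-connectivity}\textup{(i)} for the field \(k\) with \(\ell\) invertible: \(\pi_i\Fil^{p}_{\mot}K(k;\mathbf Z/N)=0\) for \(p\ge0\) and \(i\le p-2\). For \(p<0\) we have \(\Fil^p=\Fil^0=K(k;\mathbf Z/N)\), which is connective by \Cref{lem:negative-finite-coefficient-k-valuation-domain}. So \(\Fil^p_{\mot}K(k;\mathbf Z/N)\) is \((\max(p,0)-2)\)-connected in the sense that \(\pi_i\) vanishes for \(i\le \max(p,0)-2\).

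Now bound the summand \(\Sigma^{r}\Fil^{q-r}\), with \(r=|J|\):
\begin{itemize}
\item If \(r\le q\), its \(\pi_i\) vanishes for \(i\le r+(q-r)-2=q-2\).
\item If \(r>q\), its \(\pi_i\) vanishes for \(i\le r-1\), and in particular for \(i\le q-1\).
\end{itemize}
Since homotopy groups commute with arbitrary direct sums of spectra, \(\pi_i\mathcal E_B^q=0\) for every \(i\le q-2\), uniformly in \(J\).

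Finally, feed this into the Milnor sequence for the sequential limit, exactly as in the proof of \Cref{lem:complete-filtered-spectrum-finite-window-truncation}. There is a fibre sequence
\[
\lim_q\mathcal E_B^q\to\prod_q\mathcal E_B^q\to\prod_q\mathcal E_B^q .
\]
Fix \(j\). Since the tower has the cofinal subtower \(q\ge j+3\), we may compute the limit over \(q\ge j+3\) only. For those \(q\), both \(\pi_j\) and \(\pi_{j+1}\) of \(\prod_q\mathcal E_B^q\) vanish, because homotopy commutes with products and \(j+1\le q-2\). Hence \(\pi_j\lim_q\mathcal E_B^q=0\) for every \(j\), and the limit is contractible.

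The only delicate point is the uniformity of the connectivity bound across the infinitely many summands. This requires treating the range \(r>q\), where the filtration index \(q-r\) is negative and the constant extension \(\Fil^{q-r}=\Fil^0\) applies, together with connectivity of \(K(k;\mathbf Z/N)\). Both cases give the same bound \(q-2\), so no finiteness of \(B\) is needed.
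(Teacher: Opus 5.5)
Your proof is correct and is essentially the paper's argument. Both obtain the uniform bound \(\pi_i\mathcal E_B^q=0\) for \(i\le q-2\), using \Cref{lem:field-filtration-connectivity}(i) for summands with \(|J|\le q\) and connectivity of \(K(k;\mathbf Z/N)\) for those with \(|J|>q\), and then conclude from the sequential-limit fibre sequence. The paper phrases this last step as the Milnor \(\varprojlim^1\) sequence with eventually-zero inverse systems, which is the same as your cofinal-subtower argument; your separate treatment of finite \(B\) is harmless but unnecessary.
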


\begin{proof}
Its \(s\)-th step is
\[
\mathcal E_B^s=
\bigoplus_J
\Sigma^{|J|}\Fil_{\mot}^{s-|J|}K(k;\mathbf Z/N).
\]
Homotopy groups commute with coproducts. If \(|J|\le s\), then
\[
\pi_{i-|J|}
\Fil_{\mot}^{s-|J|}K(k;\mathbf Z/N)=0
\qquad(i\le s-2)
\]
by \Cref{lem:field-filtration-connectivity}(i). If \(|J|>s\), the
filtration step is \(K(k;\mathbf Z/N)\), by
\Cref{conv:filtered-spectral-sequence-indexing}, and its contribution
vanishes for \(i<|J|\) because coefficient \(K\)-theory of a field is
connective. Hence
\[
\pi_i\mathcal E_B^s=0\qquad(i\le s-2).
\]
By convention, completeness is defined using the limit in spectra of the sequential
tower \(\{\mathcal E_B^s\}_{s\ge0}\), with transition maps
\(\mathcal E_B^{s+1}\to\mathcal E_B^s\). For fixed \(i\), both inverse systems
\(\{\pi_i\mathcal E_B^s\}_s\) and
\(\{\pi_{i+1}\mathcal E_B^s\}_s\) are eventually zero. 
The Milnor exact
sequence
\[
0\longrightarrow
\varprojlim\nolimits_s^1\pi_{i+1}\mathcal E_B^s
\longrightarrow
\pi_i\!\left(\lim_s\mathcal E_B^s\right)
\longrightarrow
\varprojlim\nolimits_s\pi_i\mathcal E_B^s
\longrightarrow0
\]
therefore gives \(\pi_i(\lim_s\mathcal E_B^s)=0\) for every \(i\).
Homotopy groups are conservative on spectra, so
\(\lim_s\mathcal E_B^s\) is contractible.
\end{proof}

\begin{lemma}[associated graded value decomposition]
\label{lem:value-exterior-associated-graded}
For every \(q\ge0\), the map
\[
\operatorname{gr}^q(\Phi_B)\colon
\operatorname{gr}^q\mathcal E_B^\bullet
\longrightarrow
\operatorname{gr}_{\mot}^qK(L;\mathbf Z/N)
\]
is an equivalence.
\end{lemma}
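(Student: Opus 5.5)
The plan is to compute both sides of \(\operatorname{gr}^q(\Phi_B)\) on homotopy groups and reduce to \Cref{thm:tame-cohomology-package-level-nu}. Since \(\operatorname{gr}^q\) commutes with coproducts and suspensions, the source is
\[
\operatorname{gr}^q\mathcal E_B^\bullet\simeq
\bigoplus_{\substack{J\subseteq B\\|J|\le q}}
\Sigma^{|J|}\operatorname{gr}^{q-|J|}_{\mot}K(k;\mathbf Z/N).
\]
Summands with \(|J|>q\) contribute nothing, because the filtration of \(K(k;\mathbf Z/N)\) is constant in negative degrees. By \Cref{thm:field-finite-coefficient-k-filtration-input}, \(\pi_t\) of the \(J\)-summand is \(H^{a-|J|}(k,\mu_N^{\otimes(q-|J|)})\), where \(a:=2q-t\), provided \(0\le a-|J|\le q-|J|\), and it is zero otherwise. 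The target has \(\pi_t\) equal to \(H^a(L,\mu_N^{\otimes q})\) for \(0\le a\le q\), and zero otherwise. So for \(0\le a\le q\), \(\pi_t\) of the source is exactly the domain of \(\Theta\) in \Cref{thm:tame-cohomology-package-level-nu} (the sum over \(|J|\le a\)). For \(a\) outside \([0,q]\), both sides vanish: on the source, \(a-|J|\le q-|J|\) forces \(a\le q\), and \(a-|J|\ge0\) forces \(a\ge0\). Since homotopy groups are conservative, it then suffices to show that \(\pi_t\operatorname{gr}^q(\Phi_B)\) agrees with \(\Theta\) up to units on each summand.

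To identify the map, I would use the étale comparisons of \Cref{lem:ordinary-prufer-bl-range-input}, normalized through the realizations \(\rho_q\) as in \Cref{lem:comparison-inverts-etale-realisation}. On the \(J\)-summand with \(J=\{b_1<\cdots<b_r\}\), \(\operatorname{gr}^q(\Phi_B)\) is the composite of \(\operatorname{gr}^{q-r}s_{W,N}\) with the graded pieces of the successive right multiplications. By \Cref{lem:rigidity-specialization-filtered-inflation}, the first map is truncated inflation \(H^{a-r}(k,\mu_N^{\otimes(q-r)})\to H^{a-r}(L,\mu_N^{\otimes(q-r)})\). By \Cref{def:kummer-normalized-weight-one-classes}, each multiplication induces on graded pieces right multiplication by \(\kappa_L(\varpi_{b_i})\). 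By \Cref{lem:weight-one-operators-are-cup-products}, applied at the successive weights \(q-r+i-1\) (all degrees stay within the range \(j\le\) weight), each such multiplication becomes \(\lambda\cdot(-\cup\delta_N(\varpi_{b_i}))\) with \(\lambda\) a unit. The composite on the \(J\)-summand is therefore
\[
\alpha\longmapsto \Bigl(\prod_{i=1}^{r}\lambda_{q-r+i-1}\Bigr)\,\operatorname{inf}(\alpha)\cup\delta_N(\varpi_{b_1})\cup\cdots\cup\delta_N(\varpi_{b_r}),
\]
which is \(\Theta\) with \(w_b=\delta_N(\varpi_b)\), precomposed with a unit scalar on each summand. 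These classes are legitimate choices of \(w_b\) by \Cref{lem:henselian-tame-inertia-cohomology}(ii). Bijectivity then follows from \Cref{thm:tame-cohomology-package-level-nu}.

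The main obstacle is the bookkeeping of the identifications. First, the homotopy of a graded piece must be matched with motivic cohomology compatibly with the module multiplication, including suspension signs. Second, the identification of \(\operatorname{gr}s_{W,N}\) via the comparison maps \(\beta\) must be matched with the realization \(\rho\) used in \Cref{lem:weight-one-operators-are-cup-products}. For the second point, I would invoke \Cref{lem:comparison-inverts-etale-realisation}, which says \(H^j(\rho)\) is inverse to \(H^j(\beta)\), so both normalizations agree on cohomology in degrees at most the weight. Any remaining signs, such as \((-1)^{t}\) from commuting suspensions or from right versus left actions, are units on each summand. They therefore do not affect bijectivity, which is all the lemma asserts.
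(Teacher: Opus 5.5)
Your proposal is correct and follows the paper's proof essentially step by step. It computes \(\operatorname{gr}^q\) of the coproduct objectwise and reduces on \(\pi_t\) to the range \(0\le a=2q-t\le q\). It then identifies each \(J\)-summand as inflation followed by unit multiples of Kummer cup products (\Cref{lem:rigidity-specialization-filtered-inflation,lem:weight-one-operators-are-cup-products}, using the degree bound \(a-r+i\le q-r+i\)). It concludes from \Cref{thm:tame-cohomology-package-level-nu} with \(w_b=\delta_N(\varpi_b)\), absorbing the signs and \(\lambda\)-products as units on each summand.

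Your explicit matching of the \(\beta\)- and \(\rho\)-normalizations via \Cref{lem:comparison-inverts-etale-realisation} only spells out a compatibility the paper uses implicitly when it says specialization ``becomes inflation under realization.''
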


\begin{proof}
Coproducts and cofibers in \(\FilSp\) are computed objectwise. Hence,
for a finite subset \(J\subseteq B\), with \(r=|J|\),
\[
\operatorname{gr}^q\!\left(
\Sigma^r\Fil_{\mot}^{\bullet-r}K(k;\mathbf Z/N)
\right)
\simeq
\Sigma^r\operatorname{gr}_{\mot}^{q-r}K(k;\mathbf Z/N).
\]
The right-hand side is zero when \(r>q\), by
\Cref{conv:filtered-spectral-sequence-indexing}. Therefore
\[
\operatorname{gr}^q\mathcal E_B^\bullet
\simeq
\bigoplus_{\substack{J\subseteq B\text{ finite}\\|J|\le q}}
\Sigma^{|J|}\operatorname{gr}_{\mot}^{q-|J|}K(k;\mathbf Z/N).
\]

Fix \(t\in\mathbf Z\) and put \(a:=2q-t\). Using
\(\pi_t\Sigma^{|J|}E=\pi_{t-|J|}E\) and the graded-piece formula of
\Cref{thm:field-finite-coefficient-k-filtration-input}, the map induced
by \(\operatorname{gr}^q(\Phi_B)\) on \(\pi_t\) has the form
\[
\bigoplus_{\substack{J\subseteq B\text{ finite}\\|J|\le q}}
H_{\mot}^{a-|J|}
\bigl(k,\mathbf Z/N(q-|J|)\bigr)
\longrightarrow
H_{\mot}^{a}\bigl(L,\mathbf Z/N(q)\bigr).
\]
If \(a<0\) or \(a>q\), every group in this display is zero by
\Cref{thm:field-finite-coefficient-k-filtration-input}. Assume henceforth
that \(0\le a\le q\). The summands with \(|J|>a\) are then zero, and
the realization isomorphisms identify the remaining source and target
groups with
\[
H^{a-|J|}\bigl(k,\mu_N^{\otimes(q-|J|)}\bigr)
\quad\text{and}\quad
H^a\bigl(L,\mu_N^{\otimes q}\bigr),
\]
respectively.

Let \(J=\{b_1<\cdots<b_r\}\), with \(r\le a\). By
\Cref{con:value-exterior-map}, the map on the \(J\)-summand is
specialization followed by right multiplication by
\(\widetilde\kappa_L(\varpi_{b_1}),\ldots,
\widetilde\kappa_L(\varpi_{b_r})\), in that order.
By \Cref{lem:rigidity-specialization-filtered-inflation},
specialization becomes inflation under realization. Before the
\((i+1)\)-st multiplication, for \(0\le i<r\), the cohomological degree
is \(a-r+i\) and the weight is \(q-r+i\). Since \(a\le q\), one has
\(a-r+i\le q-r+i\), so
\Cref{lem:weight-one-operators-are-cup-products} applies at every step.
Consequently, the \(J\)-summand sends \(\alpha\) to
\[
\varepsilon_{J,t}
\left(\prod_{i=0}^{r-1}\lambda_{q-r+i}\right)
\operatorname{inf}(\alpha)\cup
\delta_N(\varpi_{b_1})\cup\cdots\cup\delta_N(\varpi_{b_r}),
\]
where \(\varepsilon_{J,t}\in\{\pm1\}\) is the sign arising from the
chosen identifications of the iterated suspensions. For
\(J=\varnothing\), the product and cup product are empty and the map is
inflation.

Put \(w_b:=\delta_N(\varpi_b)\). By
\Cref{lem:henselian-tame-inertia-cohomology}\textup{(ii)}, the
restriction of \(w_b\) to inertia corresponds to \(b\). Thus, after
omitting the displayed scalar and sign on each summand, the resulting
direct-sum map is precisely the isomorphism \(\Theta\) of
\Cref{thm:tame-cohomology-package-level-nu}. Each factor
\(\varepsilon_{J,t}\prod_{i=0}^{r-1}\lambda_{q-r+i}\) is a unit in
\(\mathbf Z/N\), by
\Cref{lem:weight-one-scalar-is-a-unit}. Componentwise multiplication
by these units is an automorphism of the direct sum. Hence
\(\pi_t(\operatorname{gr}^q(\Phi_B))\) is an isomorphism for every
\(t\). Homotopy groups detect equivalences of spectra, so
\(\operatorname{gr}^q(\Phi_B)\) is an equivalence.
\end{proof}

\begin{theorem}[value-exterior splitting]
\label{thm:value-exterior-splitting}
In the notation of \Cref{con:value-exterior-map}, the map
\[
\Phi_B\colon
\bigoplus_{\substack{J\subseteq B\\J\text{ finite}}}
\Sigma^{|J|}
\Fil_{\mot}^{\bullet-|J|}K(k;\mathbf Z/N)
\longrightarrow
\Fil_{\mot}^{\bullet}K(L;\mathbf Z/N)
\]
is an equivalence of complete filtered spectra. Consequently
\[
K(L;\mathbf Z/N)\simeq
\bigoplus_{\substack{J\subseteq B\\J\text{ finite}}}
\Sigma^{|J|}K(k;\mathbf Z/N),
\]
and \(s_{W,N}\) is the inclusion of the summand indexed by
\(J=\varnothing\).
\end{theorem}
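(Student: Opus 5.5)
The proof will assemble the results already established in this section and apply the filtered Whitehead criterion, \Cref{lem:filtered-whitehead-criterion}, to \(\Phi_B\). I will argue in three steps.

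\textbf{Step 1: graded pieces and completeness.} The graded pieces are handled by \Cref{lem:value-exterior-associated-graded}: for every \(q\ge0\), \(\operatorname{gr}^q(\Phi_B)\) is an equivalence. Both filtrations are \(\mathbf N\)-indexed. For the source this holds because each summand \(\Sigma^{|J|}\Fil^{\bullet-|J|}_{\mot}K(k;\mathbf Z/N)\) is extended constantly below degree \(|J|\). Completeness is also available on both sides:
\begin{itemize}
\item the source \(\mathcal E_B^\bullet\) is complete by \Cref{lem:value-exterior-source-complete};
\item the target \(\Fil^\bullet_{\mot}K(L;\mathbf Z/N)\) is complete by \Cref{lem:global-motivic-filtration-kmodell-input}, since \(\Spec(L)\) has valuative dimension zero by \Cref{lem:valuation-inputs-finite-valuative-dimension}.
\end{itemize}
The final clause of \Cref{lem:filtered-whitehead-criterion} then shows that \(\Phi_B\) is an equivalence of filtered spectra.

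One point needs care here. The Whitehead criterion is stated for \(\mathbf N\)-indexed filtrations, so I have to check the degrees \(q<0\) as well. In those degrees both sides are constant, equal to their \(F^0\), so nothing further is required once \(F^0(\Phi_B)\) is an equivalence.

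\textbf{Step 2: the underlying equivalence.} Passing to underlying spectra, that is, evaluating at filtration degree \(0\), I expect to obtain
\[
\mathcal E_B^0=\bigoplus_J\Sigma^{|J|}\Fil^{-|J|}_{\mot}K(k;\mathbf Z/N)=\bigoplus_J\Sigma^{|J|}K(k;\mathbf Z/N).
\]
This uses \(\Fil^{-|J|}=\Fil^0\simeq K(k;\mathbf Z/N)\), from the \(\mathbf N\)-indexing convention and \Cref{lem:global-motivic-filtration-kmodell-input}. On the target side, \(\Fil^0_{\mot}K(L;\mathbf Z/N)\simeq K(L;\mathbf Z/N)\). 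This gives the unfiltered decomposition in the statement.

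\textbf{Step 3: the empty summand.} The identification of \(s_{W,N}\) is by construction. In \Cref{con:value-exterior-map}, the component of \(\Phi_B\) on \(J=\varnothing\) is \(s_{W,N}\) itself. Hence, under \(\Phi_B\), the map \(s_{W,N}\) is the composite of the inclusion of the \(\varnothing\)-summand with the equivalence \(\Phi_B\). The same holds on underlying spectra.

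\textbf{Expected obstacle.} I expect the only real subtlety to be the completeness of the infinite coproduct. That is already isolated in \Cref{lem:value-exterior-source-complete}, together with the unit scalars handled in \Cref{lem:value-exterior-associated-graded}. So the proof itself should be a short assembly of these inputs.
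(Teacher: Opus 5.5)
Your proposal is correct and is essentially the paper's own proof: completeness of the source from \Cref{lem:value-exterior-source-complete}, completeness of the target from valuative dimension zero, graded equivalences from \Cref{lem:value-exterior-associated-graded}, and then \Cref{lem:filtered-whitehead-criterion}. The rest also matches: you evaluate in filtration degree zero using \(\Fil^{-|J|}_{\mot}=\Fil^{0}_{\mot}\), observe that the \(\varnothing\)-component of \(\Phi_B\) is \(s_{W,N}\) by construction, and your check of the degrees \(q<0\) is exactly the final sentence of the proof of the Whitehead criterion.
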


\begin{proof}
The source is complete by
\Cref{lem:value-exterior-source-complete}. Since \(L\) is a field,
\(\Spec(L)\) has valuative dimension zero by
\Cref{lem:valuation-inputs-finite-valuative-dimension}; hence the target is
complete by \Cref{lem:global-motivic-filtration-kmodell-input}.
The associated graded maps are equivalences by
\Cref{lem:value-exterior-associated-graded}. Apply
\Cref{lem:filtered-whitehead-criterion}. Evaluating this filtered
equivalence in filtration degree zero, and using
\(\Fil_{\mot}^{-|J|}K(k;\mathbf Z/N)=
\Fil_{\mot}^{0}K(k;\mathbf Z/N)=K(k;\mathbf Z/N)\), gives the asserted
underlying equivalence. By \Cref{con:value-exterior-map}, its restriction
to the empty summand is \(s_{W,N}\).
\end{proof}

\begin{corollary}[specialization is split injective]
	\label{cor:specialisation-split-injective}
	Let \(W\) be a henselian valuation ring with fraction field \(L\) and residue
	field \(k\). Let \(\ell\) be a prime invertible in \(k\), let \(\nu\ge1\), and put
	\(N:=\ell^\nu\). Then the underlying map of
	\(s_{W,N}\), namely \(K(k;\mathbf Z/N)\to K(L;\mathbf Z/N)\),
	admits a retraction; in particular it is injective on every homotopy group.
\end{corollary}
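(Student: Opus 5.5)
The corollary should follow formally from \Cref{thm:value-exterior-splitting}, by evaluating the filtered equivalence in filtration degree zero. First fix the auxiliary data. By \Cref{lem:henselian-tame-inertia-cohomology}\textup{(i)}, the quotient \(\Gamma_W/N\Gamma_W\) is a free \(\mathbf Z/N\)-module. Choose a basis \(B\) (possibly empty or infinite) and a total order on it, which exists by the well-ordering theorem. For each \(b\in B\), choose \(\varpi_b\in L^\times\) with value in the class \(b\), as in \Cref{con:value-exterior-map}. This produces \(\Phi_B\).

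Next, apply \Cref{thm:value-exterior-splitting} and pass to the underlying spectra. The filtered equivalence gives an equivalence
\[
\Phi_B^0\colon\bigoplus_{J}\Sigma^{|J|}K(k;\mathbf Z/N)\xrightarrow{\ \simeq\ }K(L;\mathbf Z/N),
\]
whose restriction to the summand \(J=\varnothing\) is the underlying map of \(s_{W,N}\). This restriction holds by construction: the empty summand of \(\Phi_B\) is \(s_{W,N}\) itself, with no multiplications applied. In the stable \(\infty\)-category \(\mathrm{Sp}\), the inclusion of a summand into a coproduct has a retraction, namely the projection onto that summand. Finite and infinite coproducts both qualify, since coproducts are also products on homotopy groups up to the canonical map. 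Concretely, take the map out of the coproduct that is the identity on \(J=\varnothing\) and zero on the other summands.

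Let \(r\colon K(L;\mathbf Z/N)\to K(k;\mathbf Z/N)\) be the composite of \((\Phi_B^0)^{-1}\) with this projection. Then \(r\circ s_{W,N}\simeq \operatorname{pr}_\varnothing\circ\iota_\varnothing\simeq\mathrm{id}\), up to the homotopy witnessing that \(\Phi_B^0\circ\iota_\varnothing\simeq s_{W,N}\). Applying \(\pi_n\) gives a left inverse on homotopy groups, hence injectivity for every \(n\).

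The only point needing care is that the identification of the empty summand with \(s_{W,N}\) is a homotopy of underlying maps, and that the hypothesis here (\(\ell\) invertible in \(k\)) matches the theorem's hypothesis (\(N\) invertible in \(W\)). The second is immediate because \(W\) is local. No real obstacle is expected. If one wants a retraction of the original generic restriction \(K(W;\mathbf Z/N)\to K(L;\mathbf Z/N)\) instead, precompose with the rigidity equivalence \(\rho_{W,N}\) of \Cref{lem:filtered-gabber-rigidity-as-motivic-filtered-equivalence}, using \(s_{W,N}\circ\rho_{W,N}\simeq j^*_{W,N}\).
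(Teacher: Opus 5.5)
Your proposal is correct and matches the paper's proof: invert the underlying (filtration-degree-zero) equivalence of \Cref{thm:value-exterior-splitting} and compose with the projection onto the summand \(J=\varnothing\), whose restriction is \(s_{W,N}\) by construction. The aside that coproducts are products on homotopy groups is inaccurate when \(B\) is infinite. It is also unnecessary: the projection defined by the universal property of the coproduct (identity on \(J=\varnothing\), zero on the other summands), which you also give, is all that is needed.
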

\begin{proof}
	Choose a homotopy inverse to the underlying equivalence
	\[
	\Phi_B\colon
	\bigoplus_{\substack{J\subseteq B\\J\text{ finite}}}
	\Sigma^{|J|}K(k;\mathbf Z/N)
	\xrightarrow{\ \sim\ }
	K(L;\mathbf Z/N)
	\]
	of \Cref{thm:value-exterior-splitting}, and compose it with the
	projection onto the summand \(J=\varnothing\). Since the restriction of
	\(\Phi_B\) to this summand is \(s_{W,N}\), the resulting map
	\(K(L;\mathbf Z/N)\to K(k;\mathbf Z/N)\) is a retraction of
	\(s_{W,N}\).
\end{proof}

\begin{remark}[discrete, divisible, and nondiscrete value groups]
\label{rem:rank-one-dvr-check}
If \(W\) is a henselian discrete valuation ring, then
\Cref{thm:value-exterior-splitting} gives
\[
K(L;\mathbf Z/N)\simeq
K(k;\mathbf Z/N)\oplus\Sigma K(k;\mathbf Z/N),
\]
in agreement with localization, rigidity, and dévissage. If
\(\Gamma_W\) is \(N\)-divisible, then
\(\Gamma_W/N\Gamma_W=0\), and generic restriction is an equivalence.

Rank one alone does not reduce to the discrete case. For example,
\(\Gamma=\mathbf Z+\mathbf Z\alpha\subset\mathbf R\), with
\(\alpha\notin\mathbf Q\), is a nondiscrete rank-one ordered group and
\(\Gamma/N\Gamma\cong(\mathbf Z/N)^2\). Such henselian examples exist. Indeed, the canonical valuation on the
Hahn field \(k_0((t^\Gamma))\) has value group \(\Gamma\) and residue field
\(k_0\). Henselizing its canonical valuation ring produces a henselian
valuation ring with the same value group and residue field: henselization
preserves the value group by \cite[\stackstag{0ASK}]{StacksProject} and the
residue field by \cite[\stackstag{04GN}]{StacksProject}.

More generally, choose a sequence
\((\alpha_i)_{i\ge1}\subset\mathbf R\) which is linearly independent over
\(\mathbf Q\), and put
\(
\Gamma_\infty:=\bigoplus_{i\ge1}\mathbf Z\alpha_i\subset\mathbf R.
\)
Then \(\Gamma_\infty\) has rank one as an ordered group, has infinite
\(\mathbf Z\)-rank, and
\(
\Gamma_\infty/N\Gamma_\infty
\simeq
\bigoplus_{i\ge1}\mathbf Z/N.
\)
Applying the same Hahn-field construction to
\(k_0((t^{\Gamma_\infty}))\) gives a henselian example with value group
\(\Gamma_\infty\) and residue field \(k_0\). Thus the basis \(B\) in
\Cref{thm:value-exterior-splitting} can be infinite even in rank one. For
either of these rank-one henselian valuation rings \(W\),
\(\Spec(W)=\{0,\mathfrak m_W\}\) and \(W_{\mathfrak m_W}=W\).
Consequently, the prime-step induction of
\Cref{thm:finite-rank-prime-to-residue-characteristic-generic-injectivity} makes no reduction in these examples.
They genuinely require the value-exterior argument.

\end{remark}

\section{Gersten injectivity for valuation rings and henselian pairs}
\label{sec:henselian-pair-injectivity}

The filtered splitting identifies the generic restriction map for a henselian
valuation ring. We next isolate a weaker prime-step argument for
lexicographic finite-rank valuations, and then compare a regular
henselian pair with such a dominating valuation ring.

\begin{theorem}[Gersten injectivity]
\label{thm:henselian-valuation-generic-injectivity}
Let \(W\) be a henselian valuation ring with fraction field \(L\), and let
\(N=\ell^\nu\) be invertible in \(W\). Then
\(K(W;\mathbf Z/N)\longrightarrow K(L;\mathbf Z/N)\)
admits a retraction; in particular \(K_n(W;\mathbf Z/N)\to K_n(L;\mathbf Z/N)\) is injective
for every \(n\in\mathbf Z\).
\end{theorem}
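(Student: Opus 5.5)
The plan is to deduce the statement formally from the filtered results of \Cref{sec:filtered-specialization}, passing to underlying spectra. The generic restriction \(j^*\colon K(W;\mathbf Z/N)\to K(L;\mathbf Z/N)\) is the underlying map (filtration degree zero) of the filtered map \(j^*_{W,N}\) of \Cref{lem:rigidity-specialization-filtered-inflation}. That lemma gives a homotopy \(s_{W,N}\circ\rho_{W,N}\simeq j^*_{W,N}\), and evaluating at filtration degree zero (where \(\Fil^0_{\mot}K(-;\mathbf Z/N)\simeq K(-;\mathbf Z/N)\) naturally, by \Cref{lem:global-motivic-filtration-kmodell-input}) yields \(j^*\simeq s\circ\rho\). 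Here \(\rho\colon K(W;\mathbf Z/N)\to K(k;\mathbf Z/N)\) is the rigidity equivalence and \(s\) is the underlying map of \(s_{W,N}\). This uses \Cref{lem:filtered-gabber-rigidity-as-motivic-filtered-equivalence}, whose underlying map is the equivalence of \Cref{lem:valuation-closed-point-finite-coefficient-k-rigidity}.

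Next, \Cref{cor:specialisation-split-injective} provides a retraction \(r\colon K(L;\mathbf Z/N)\to K(k;\mathbf Z/N)\) with \(r\circ s\simeq\mathrm{id}\). It is obtained from the inverse of the underlying equivalence \(\Phi_B\) of \Cref{thm:value-exterior-splitting} followed by projection to the \(J=\varnothing\) summand. Choose a homotopy inverse \(\rho^{-1}\) and put \(r':=\rho^{-1}\circ r\). Then \(r'\circ j^*\simeq\rho^{-1}\circ r\circ s\circ\rho\simeq\rho^{-1}\rho\simeq\mathrm{id}\), so \(r'\) is the required retraction. Applying \(\pi_n\) gives injectivity of \(K_n(W;\mathbf Z/N)\to K_n(L;\mathbf Z/N)\) for every \(n\in\mathbf Z\). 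For \(n<0\) both sides vanish anyway, by \Cref{lem:negative-finite-coefficient-k-valuation-domain}.

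One point needs checking: the hypothesis here is that \(N\) is invertible in \(W\), whereas the earlier results assume \(\ell\) is invertible in \(k\). These agree, since a unit of the local ring \(W\) reduces to a unit of \(k\), and an element whose reduction is a unit is itself a unit. The case where \(W\) is a field is also covered: then \(k=L\), \(B=\varnothing\), and everything is an equivalence.

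The only real subtlety is the identification of the underlying map of \(j^*_{W,N}\) with the honest generic restriction on nonconnective \(K\)-theory. This requires naturality of the identification \(\Fil^0_{\mot}\simeq K\) in the scheme, which is part of \Cref{lem:global-motivic-filtration-kmodell-input}. Beyond this point the argument is purely formal.
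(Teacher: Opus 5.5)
Your proposal is correct and matches the paper's proof: the paper writes \(j^*_{W,N}\simeq s_{W,N}\circ\rho_{W,N}\) after forgetting filtrations and takes the retraction \(r\) of \(s_{W,N}\) from \Cref{cor:specialisation-split-injective}. It then checks that \(\sigma_{W,N}\circ r\) retracts generic restriction, where \(\sigma_{W,N}\) is a homotopy inverse of the rigidity equivalence, just as you do. Your side remarks on the unit hypothesis, the field case and negative degrees are correct but not needed.
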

\begin{proof}
	Let \(k\) be the residue field of \(W\). By
	\Cref{lem:valuation-closed-point-finite-coefficient-k-rigidity}, the
	underlying map of \(\rho_{W,N}\) is an equivalence
	\[
	\rho_{W,N}\colon
	K(W;\mathbf Z/N)\xrightarrow{\ \sim\ }K(k;\mathbf Z/N).
	\]
	Choose a homotopy inverse
	\(\sigma_{W,N}\colon K(k;\mathbf Z/N)\to K(W;\mathbf Z/N)\).
	By \Cref{lem:rigidity-specialization-filtered-inflation}, after
	forgetting the filtrations one has
	\(j^*_{W,N}\simeq s_{W,N}\circ\rho_{W,N}\).
	If
	\(r\colon K(L;\mathbf Z/N)\to K(k;\mathbf Z/N)\)
	is a retraction of \(s_{W,N}\) furnished by
	\Cref{cor:specialisation-split-injective}, then
	\[
	(\sigma_{W,N}\circ r)\circ j^*_{W,N}
	\simeq
	\sigma_{W,N}\circ r\circ s_{W,N}\circ\rho_{W,N}
	\simeq
	\sigma_{W,N}\circ\rho_{W,N}
	\simeq\mathrm{id}.
	\]
	Thus \(\sigma_{W,N}\circ r\) is a retraction of generic restriction.
\end{proof}

We record a formal induction which deduces finite-rank Gersten
injectivity from the rank-one case. Once rank-one Gersten injectivity and
closed-point rigidity are known, the only additional \(K\)-theoretic input is
Tamme's excision theorem.

\begin{lemma}[valuation prime-step square]
	\label{lem:valuation-prime-step-square}
	Let \(A\) be a valuation ring with fraction field \(F\), and let
	\(
	\mathfrak q\subset A
	\)
	be a prime ideal.
	Then the natural square
	\[
	\begin{tikzcd}
		A \ar[r] \ar[d] & A_{\mathfrak q} \ar[d] \\
		A/\mathfrak q \ar[r] & \kappa(\mathfrak q)
	\end{tikzcd}
	\]
	is cartesian.
\end{lemma}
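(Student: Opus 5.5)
The plan is to verify directly that \(A\) is the fibre product \(A/\mathfrak q\times_{\kappa(\mathfrak q)}A_{\mathfrak q}\) as rings. I will use the basic structural fact for valuation rings: \(\mathfrak q A_{\mathfrak q}=\mathfrak q\). That is, the prime \(\mathfrak q\) is also an ideal of \(A_{\mathfrak q}\), and it is the maximal ideal there. Granting this, \(\kappa(\mathfrak q)=A_{\mathfrak q}/\mathfrak q\), and the map \(A/\mathfrak q\to\kappa(\mathfrak q)\) is the inclusion of \(A/\mathfrak q\) into \(A_{\mathfrak q}/\mathfrak q\).

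\emph{Step 1: prove \(\mathfrak q A_{\mathfrak q}=\mathfrak q\).} Let \(x\in\mathfrak q\) and \(s\in A\setminus\mathfrak q\). Since \(A\) is a valuation ring, either \(x/s\in A\) or \(s/x\in A\).
\begin{itemize}
\item If \(s/x\in A\), then \(s=(s/x)x\in\mathfrak q\), which is a contradiction.
\item Hence \(x/s\in A\). Then \(s\cdot(x/s)=x\in\mathfrak q\) with \(s\notin\mathfrak q\), so primality gives \(x/s\in\mathfrak q\).
\end{itemize}
Thus \(\mathfrak q A_{\mathfrak q}\subseteq\mathfrak q\). The reverse inclusion is clear. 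The case \(\mathfrak q=0\) is trivial, since then \(A_{\mathfrak q}=F=\kappa(\mathfrak q)\).

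\emph{Step 2: verify the fibre-product property directly.} Consider the map \(A\to A/\mathfrak q\times_{\kappa(\mathfrak q)}A_{\mathfrak q}\).
\begin{itemize}
\item \emph{Injectivity.} \(A\to A_{\mathfrak q}\) is already injective, because \(A\) is a domain.
\item \emph{Surjectivity.} Take a compatible pair \((\bar a,y)\) with \(a\in A\), \(y\in A_{\mathfrak q}\), and \(\bar a=\bar y\) in \(A_{\mathfrak q}/\mathfrak q\). Then \(y-a\in\mathfrak q A_{\mathfrak q}=\mathfrak q\subseteq A\). Hence \(y\in A\), and \(y\) maps to the pair \((\bar a,y)\), since \(\bar y=\bar a\) in \(A/\mathfrak q\).
\end{itemize}

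The square therefore presents \(A\) as a pullback of commutative rings, i.e.\ it is cartesian. It is also a Milnor square: the map \(A_{\mathfrak q}\to\kappa(\mathfrak q)\) is surjective, and \(\mathfrak q\) maps isomorphically onto its kernel. This is the form needed for Tamme's excision in the subsequent induction.

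The only nonformal point is Step 1, i.e.\ the divisibility comparison that uses the valuation-ring dichotomy together with primality; everything else is bookkeeping. No finiteness or noetherian hypothesis enters.
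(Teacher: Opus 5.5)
Your proof is correct and follows essentially the same route as the paper. The paper applies the valuation dichotomy plus primality to the single element \(d=a-bs\in\mathfrak q\) to get \(d/s\in\mathfrak q\), which is exactly your identity \(\mathfrak qA_{\mathfrak q}=\mathfrak q\) used once; injectivity is argued identically. (In Step 1 the dichotomy should only be invoked for \(x\neq0\); the case \(x=0\) is trivial, as the paper notes.)
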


\begin{proof}
	The residue field \(\kappa(\mathfrak q)\) is canonically
	\(\Frac(A/\mathfrak q)\) \cite[\stackstag{00CK}]{StacksProject}. The map
	\[
	A\longrightarrow A_{\mathfrak q}
	\times_{\kappa(\mathfrak q)} A/\mathfrak q
	\]
	is injective because \(A\) is a domain and
	\(A\to A_{\mathfrak q}\subset F\) is injective.

	We prove surjectivity. Let
	\(
	(x,\bar b)\in
	A_{\mathfrak q}\times_{\kappa(\mathfrak q)}A/\mathfrak q
	\)
	be a pair with the same image in \(\kappa(\mathfrak q)\). Write
	\(
	x=a/s
	\)
	with \(a\in A\) and \(s\in A\setminus\mathfrak q\), and choose a lift \(b\in A\) of
	\(\bar b\). The equality of the two images in
	\(
	\kappa(\mathfrak q)=\Frac(A/\mathfrak q)
	\)
	says that
	\(
	\bar a/\bar s=\bar b
	\),
	or equivalently
	\(
	a-bs\in\mathfrak q
	\).
	Set \(d:=a-bs\in\mathfrak q\).

	We claim that \(d/s\in A\). This is clear if \(d=0\). If \(d\ne0\) and \(d/s\notin A\),
	then, since divisibility is total in a valuation ring
	\cite[\stackstag{0ASN}]{StacksProject}, \(s/d\in A\). Hence
	\(
	s=(s/d)d\in\mathfrak q,
	\)
	contradicting \(s\notin\mathfrak q\). Thus \(d/s\in A\). Since
	\(s(d/s)=d\in\mathfrak q\)
	and \(s\notin\mathfrak q\), the primality of \(\mathfrak q\) gives \(d/s\in\mathfrak q\).

	The element \(b+d/s\in A\) maps to \(\bar b\in A/\mathfrak q\),
	because \(d/s\in\mathfrak q\), and maps to
	\(b+d/s=\frac{bs+d}{s}=\frac{a}{s}=x\)
		in \(A_{\mathfrak q}\). Hence
		\(A\to A_{\mathfrak q}\times_{\kappa(\mathfrak q)}A/\mathfrak q\)
		is surjective.
\end{proof}

\begin{lemma}[Tor-unitality of the localization]
	\label{lem:valuation-coarsening-localization-tor-unital}
	With notation as in \Cref{lem:valuation-prime-step-square}, the localization map
	\(
	A\longrightarrow A_{\mathfrak q}
	\)
	is Tor-unital:
	\(A_{\mathfrak q}\otimes_A^{\mathbf L} A_{\mathfrak q}
	\simeq A_{\mathfrak q}\).
\end{lemma}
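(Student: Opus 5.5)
The plan is to reduce the claim to the standard fact that a flat epimorphism of rings is Tor-unital. First, \(A\to A_{\mathfrak q}\) is a localization at the multiplicative set \(S:=A\setminus\mathfrak q\). Localization is exact, so \(A_{\mathfrak q}\) is a flat \(A\)-module \cite[\stackstag{00HT}]{StacksProject}. Hence the derived tensor product \(A_{\mathfrak q}\otimes^{\mathbf L}_A A_{\mathfrak q}\) has no higher Tor terms and is computed by the underived tensor product \(A_{\mathfrak q}\otimes_A A_{\mathfrak q}\), placed in degree zero.

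Second, we identify this underived tensor product. For every \(A\)-module \(M\) one has \(S^{-1}A\otimes_A M\simeq S^{-1}M\) \cite[\stackstag{00DK}]{StacksProject}. Applying this with \(M=A_{\mathfrak q}=S^{-1}A\) gives \(S^{-1}(S^{-1}A)\). This equals \(S^{-1}A\), because the elements of \(S\) already act invertibly on \(S^{-1}A\). Equivalently, the multiplication map \(A_{\mathfrak q}\otimes_A A_{\mathfrak q}\to A_{\mathfrak q}\) is an isomorphism: a localization is an epimorphism of rings, and \(x/s\otimes y/t=xy/(st)\otimes 1\).

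Combining the two steps, the multiplication map \(A_{\mathfrak q}\otimes^{\mathbf L}_A A_{\mathfrak q}\to A_{\mathfrak q}\) is an equivalence. This is the Tor-unitality condition in the form used in Tamme's excision theorem. If the definition in use is instead stated for the nonunital fibre ideal, we will convert via the standard equivalence for ring maps \(A\to B\) of the form "\(B\otimes^{\mathbf L}_AB\simeq B\)".

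None of this uses the valuation-ring hypothesis, and there is no real obstacle. The only point to check is which formulation of Tor-unitality the subsequent excision step requires, namely the one for the map \(A\to A_{\mathfrak q}\) in the Milnor square of \Cref{lem:valuation-prime-step-square}.
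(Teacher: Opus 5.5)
Your proposal is correct and follows essentially the same route as the paper's proof: flatness of the localization (Tag 00HT) reduces the derived tensor product to the underived one, which is identified with \(A_{\mathfrak q}\). Your extra check that the identification is induced by the multiplication map matches the form of Tor-unitality used in the Tamme excision step, and, as you note, the valuation-ring hypothesis plays no role here.
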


\begin{proof}
	The map \(A\to A_{\mathfrak q}\) is a localization. Hence
	\(A_{\mathfrak q}\) is flat over \(A\)
	\cite[\stackstag{00HT}]{StacksProject}, and
	\(A_{\mathfrak q}\otimes_A^{\mathbf L}A_{\mathfrak q}
	\simeq A_{\mathfrak q}\otimes_A A_{\mathfrak q}
	\simeq A_{\mathfrak q}\).
\end{proof}

\begin{theorem}[Tamme excision for Tor-unital homotopy Milnor squares]
	\label{thm:tamme-derived-milnor-square-input}
	Let
	\[
	\begin{tikzcd}
		A \ar[r] \ar[d] & A' \ar[d] \\
		B \ar[r] & B'
	\end{tikzcd}
	\]
	be a cartesian square of discrete rings. Assume that, when regarded as a square of ring
	spectra, it is homotopy cartesian; equivalently, for discrete rings, assume that the sequence
	\(0\longrightarrow A\longrightarrow A'\oplus B\longrightarrow B'\longrightarrow 0\)
	is exact. Assume moreover that the map \(A\to A'\) is Tor-unital, i.e.
	\(A'\otimes_A^{\mathbf L}A'\simeq A'\).
	Then, for every weakly localizing invariant \(E\) of small stable
	\(\infty\)-categories with values in a stable \(\infty\)-category,
	that is, every such invariant that sends exact sequences to fibre
	sequences, the induced square
	\[
	\begin{tikzcd}
		E(\operatorname{Perf}(A)) \ar[r] \ar[d] &
		E(\operatorname{Perf}(A')) \ar[d] \\
		E(\operatorname{Perf}(B)) \ar[r] &
		E(\operatorname{Perf}(B'))
	\end{tikzcd}
	\]
	is cartesian. In particular, nonconnective algebraic \(K\)-theory sends the original square
	to a cartesian square of spectra.
\end{theorem}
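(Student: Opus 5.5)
The plan is to deduce this from the Land--Tamme theory of excision rather than prove it from scratch; the only work is to check that the hypotheses here match theirs. We regard the square as a commutative square of \(E_1\)-ring spectra
\[
A\to A',\qquad A\to B,\qquad A'\to B',\qquad B\to B'.
\]
Land and Tamme attach to such a square an \(E_1\)-ring \(A'\odot^{B'}_{A}B\), which receives maps from \(A'\) and \(B\) and maps to \(B'\). Its underlying spectrum is \(A'\otimes_A B\), and the map \(A'\otimes_A B\to B'\) is the one induced by the square.

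\textbf{Step 1: a cartesian square for the \(\odot\)-ring.} By their main theorem, for every localizing invariant \(E\), and more generally for every invariant sending exact sequences to fibre sequences, the square with corners \(E(\Perf(A))\), \(E(\Perf(A'))\), \(E(\Perf(B))\) and \(E(\Perf(A'\odot^{B'}_{A}B))\) is cartesian. This uses only that the original square is a pullback of ring spectra.

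We verify that pullback property from the stated hypothesis. The exactness of
\[
0\to A\to A'\oplus B\to B'\to0
\]
says precisely that \(A\simeq A'\times_{B'}B\) in spectra, since the fibre of a map of discrete modules is discrete exactly when that map is surjective.

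\textbf{Step 2: identify the \(\odot\)-ring with \(B'\).} Here we use Tor-unitality. Tensoring the exact sequence with \(A'\) over \(A\) turns
\[
A'\otimes_A A\to A'\otimes_A(A'\oplus B)\to A'\otimes_A B'
\]
into a fibre sequence. Tor-unitality gives \(A'\otimes_A^{\mathbf L}A'\simeq A'\). Comparing with the original sequence then shows that \(A'\otimes^{\mathbf L}_A B\to B'\) is an equivalence: the map from \(A'\otimes_A B'\) cancels the \(A'\)-summand, and \(A'\otimes_A B'\simeq B'\) because \(B'\) is an \(A'\)-module. This is exactly Tamme's criterion that the comparison \(A'\odot^{B'}_A B\to B'\) be an equivalence. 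Substituting into Step 1 gives the asserted cartesian square. Nonconnective \(K\)-theory is localizing, which yields the final sentence.

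\textbf{Main obstacle.} The delicate point is Step 2. One must check that the equivalence on underlying spectra is the map \(A'\otimes_A B\to B'\) that Land--Tamme use, so that it is an equivalence of \(E_1\)-rings and not merely of spectra. Since a map of \(E_1\)-rings that is an equivalence on underlying spectra is an equivalence, it suffices to identify the two maps. For that I would cite Tamme's ``Excision in algebraic \(K\)-theory revisited'' directly, which states exactly this Tor-unital homotopy-Milnor-square version.
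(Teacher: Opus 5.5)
Your argument is correct, but it is not the route the paper takes.

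\textbf{The paper's route.} The paper simply assembles results from Tamme's \emph{Excision in algebraic $K$-theory revisited}. Ex.~3 translates the hypotheses. Thm.~28 shows that $\Perf(-)$ carries a homotopy cartesian square with $A\to A'$ Tor-unital to an excisive square of small stable $\infty$-categories. Thm.~18 shows that every weakly localizing $E$ takes excisive squares to cartesian squares. Tor-unitality is thus used at the categorical level: base change along $A\to A'$ is a localization whose kernel is matched with that of $\Perf(B)\to\Perf(B')$.

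\textbf{Your route.} You instead go through the Land--Tamme $\odot$-ring, which exists for an arbitrary pullback of ring spectra, and use Tor-unitality in only one module computation. Tensoring the fibre sequence $A\to A'\oplus B\to B'$ with $A'$ over $A$ and using $A'\otimes^{\mathbf L}_AA'\simeq A'$ (hence $A'\otimes^{\mathbf L}_AB'\simeq B'$) shows that the canonical map $A'\otimes^{\mathbf L}_AB\to B'$ is an equivalence. Conservativity of the forgetful functor on $E_1$-rings then gives $A'\odot^{B'}_AB\simeq B'$. This isolates where each hypothesis enters, at the price of heavier general machinery.

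\textbf{Two remarks.} First, the point you call the main obstacle is already part of the Land--Tamme theorem, which identifies the underlying map of $A'\odot^{B'}_AB\to B'$ with the canonical map. No further input is needed there. Falling back on Tamme's earlier paper at that point would make the $\odot$-detour redundant, since that citation is exactly the paper's proof. Second, Land--Tamme state their theorem for localizing invariants in their own sense, so you should check that this covers the ``weakly localizing'' scope of the statement. For nonconnective $K$-theory, the only case used later, this is immaterial.
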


\begin{proof}
	For discrete rings, Tamme identifies the homotopy-cartesian condition on the underlying square
	of ring spectra with exactness of
	\(0\longrightarrow A\longrightarrow A'\oplus B\longrightarrow B'\longrightarrow 0\)
	and identifies Tor-unitality with the condition that the multiplication map
	\(A'\otimes_A^{\mathbf L}A'\longrightarrow A'\)
	is an equivalence; see \cite[\arxivhtmlref{1703.03331v3}{Thmthm3}{Ex.~3}]{Tamme18Excision}.

	By \cite[\arxivhtmlref{1703.03331v3}{Thmthm28}{Thm.~28}]{Tamme18Excision},
	the functor \(\operatorname{Perf}(-)\) sends the ring
	square to an excisive square of small stable \(\infty\)-categories. Applying the excision
	theorem for excisive squares \cite[\arxivhtmlref{1703.03331v3}{Thmthm18}{Thm.~18}]{Tamme18Excision} to such
	an invariant \(E\) yields the cartesian square in the statement.
	Taking \(E\) to be nonconnective
	algebraic \(K\)-theory gives the final assertion, which is
	\cite[\arxivhtmlref{1703.03331v3}{Thmthm2}{Thm.~2}]{Tamme18Excision}.
\end{proof}

\begin{theorem}[prime-step \(K\)-excision for valuation rings]
	\label{thm:valuation-prime-step-k-excision}
	Let \(A\) and \(\mathfrak q\) be as in
	\Cref{lem:valuation-prime-step-square}. Then the square of nonconnective \(K\)-theory
	spectra
	\[
	\begin{tikzcd}
		K(A) \ar[r] \ar[d] & K(A_{\mathfrak q}) \ar[d] \\
		K(A/\mathfrak q) \ar[r] & K(\kappa(\mathfrak q))
	\end{tikzcd}
	\]
	is cartesian. Equivalently, there is a functorial fibre sequence
	\[
	K(A) \longrightarrow
	K(A_{\mathfrak q})\oplus K(A/\mathfrak q)
	\longrightarrow K(\kappa(\mathfrak q)),
	\]
	where the second arrow is the difference of the maps induced by
	\(A_{\mathfrak q}\to\kappa(\mathfrak q)\) and
	\(A/\mathfrak q\to\kappa(\mathfrak q)\).

	Smashing with a spectrum is exact, so the same holds after smashing the fibre sequence
	with any spectrum. In particular, for every \(m\ge1\),
	\[
	K(A;\mathbf Z/m)
	\longrightarrow
	K(A_{\mathfrak q};\mathbf Z/m)\oplus K(A/\mathfrak q;\mathbf Z/m)
	\longrightarrow
	K(\kappa(\mathfrak q);\mathbf Z/m)
	\]
	is a functorial fibre sequence, again with the difference map as its second arrow and
	with \(K(-;\mathbf Z/m)\) as in
	\Cref{conv:nonconnective-k-groups}.
\end{theorem}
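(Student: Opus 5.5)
The plan is to apply \Cref{thm:tamme-derived-milnor-square-input} to the square of \Cref{lem:valuation-prime-step-square}, with \(A'=A_{\mathfrak q}\), \(B=A/\mathfrak q\), and \(B'=\kappa(\mathfrak q)\). The argument has three steps.

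First, I verify the two hypotheses of the Tamme excision theorem.

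\begin{itemize}
\item[(a)] Cartesianness of the square of discrete rings is \Cref{lem:valuation-prime-step-square}.
\item[(b)] Homotopy cartesianness amounts to exactness of \(0\to A\to A_{\mathfrak q}\oplus A/\mathfrak q\to\kappa(\mathfrak q)\to 0\), with the second arrow the difference map.
  \begin{itemize}
  \item Exactness on the left and in the middle is the cartesian property.
  \item Surjectivity on the right holds because \(A_{\mathfrak q}\to\kappa(\mathfrak q)=A_{\mathfrak q}/\mathfrak qA_{\mathfrak q}\) is already surjective, since it is a residue map of a local ring.
  \end{itemize}
\item[(c)] Tor-unitality of \(A\to A_{\mathfrak q}\) is \Cref{lem:valuation-coarsening-localization-tor-unital}.
\end{itemize}

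Second, with these hypotheses in place, \Cref{thm:tamme-derived-milnor-square-input} applied to nonconnective \(K\)-theory, which is localizing and hence weakly localizing, yields the cartesian square of spectra. A cartesian square in a stable \(\infty\)-category is the same as a fibre sequence \(K(A)\to K(A_{\mathfrak q})\oplus K(A/\mathfrak q)\to K(\kappa(\mathfrak q))\), where the second map is the difference of the two maps into the corner. Functoriality follows from the naturality of the square in the pair \((A,\mathfrak q)\) and of the excision theorem.

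Third, for the finite-coefficient statement I smash the fibre sequence with \(\mathbf S/m\). Smashing with a spectrum is exact, so it preserves fibre sequences and direct sums. By \Cref{conv:nonconnective-k-groups}, \(K(-)\wedge\mathbf S/m\simeq\operatorname{cofib}(m)\), which is \(K(-;\mathbf Z/m)\).

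The only step needing any care is (b), that is, confirming surjectivity onto \(\kappa(\mathfrak q)\). This is immediate, so I expect no real obstacle: the statement is essentially a packaging of the earlier lemmas into Tamme's theorem.
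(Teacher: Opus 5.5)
Your proposal is correct and follows the paper's own proof essentially step for step. You get cartesianness from \Cref{lem:valuation-prime-step-square}, exactness of \(0\to A\to A_{\mathfrak q}\oplus A/\mathfrak q\to\kappa(\mathfrak q)\to0\) from surjectivity of \(A_{\mathfrak q}\to\kappa(\mathfrak q)\), and Tor-unitality from \Cref{lem:valuation-coarsening-localization-tor-unital}. You then conclude with \Cref{thm:tamme-derived-milnor-square-input} and the exactness of \(-\wedge\mathbf S/m\).
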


\begin{proof}
	By \Cref{lem:valuation-prime-step-square}, the ring square
	\[
	\begin{tikzcd}
		A \ar[r] \ar[d] & A_{\mathfrak q} \ar[d] \\
		A/\mathfrak q \ar[r] & \kappa(\mathfrak q)
	\end{tikzcd}
	\]
	is cartesian. Moreover, the map
	\(
	A_{\mathfrak q}\to\kappa(\mathfrak q)
	\)
	is surjective, with kernel \(\mathfrak qA_{\mathfrak q}\). Thus, for the usual difference
	map
	\[
	A_{\mathfrak q}\oplus A/\mathfrak q\longrightarrow\kappa(\mathfrak q),
	\qquad (x,\bar b)\longmapsto \bar x-\bar b,
	\]
	the kernel is the fibre product
	\(A_{\mathfrak q}\times_{\kappa(\mathfrak q)}A/\mathfrak q\simeq A\),
	and the map is surjective. Hence
	\[
	0\longrightarrow A\longrightarrow
	A_{\mathfrak q}\oplus A/\mathfrak q
	\longrightarrow\kappa(\mathfrak q)\longrightarrow0
	\]
	is exact, so the square is homotopy cartesian when regarded as a square of ring spectra.

	By \Cref{lem:valuation-coarsening-localization-tor-unital}, the localization map
	\(A\to A_{\mathfrak q}\) is Tor-unital. Thus the hypotheses of
	\Cref{thm:tamme-derived-milnor-square-input} are satisfied. Therefore nonconnective
	algebraic \(K\)-theory sends the square to a cartesian square of spectra. Equivalently,
	there is a functorial fibre sequence
	\[
	K(A) \longrightarrow
	K(A_{\mathfrak q})\oplus K(A/\mathfrak q)
	\longrightarrow K(\kappa(\mathfrak q)).
	\]

	Smashing with a spectrum is exact on spectra. Applying \(-\wedge\mathbf S/m\) to the
	preceding fibre sequence gives the coefficient form.
\end{proof}

\begin{lemma}[henselian localizations and quotients]
	\label{lem:strategy-d-henselian-localisations}
	Let \(V\) be a henselian valuation ring and let \(\mathfrak p\subset V\) be a prime ideal.
	Then both \(V_{\mathfrak p}\) and \(V/\mathfrak p\) are henselian valuation rings.
\end{lemma}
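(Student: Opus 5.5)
We prove the two assertions separately. Both use the standard description of prime ideals and localizations of a valuation ring, together with the simple-root lifting characterization of henselianity recalled in \Cref{conv:valuation-rank}.

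\emph{The localization \(V_{\mathfrak p}\).} It is a valuation ring with the same fraction field, since it contains \(V\) and any overring of a valuation ring inside its fraction field is a valuation ring. It is a coarsening of \(V\). For henselianity we use the valued-field formulation from \Cref{conv:valuation-rank}: \(V_{\mathfrak p}\) is henselian iff its valuation extends uniquely to every algebraic extension of \(L=\Frac(V)\).

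Let \(M/L\) be algebraic and let \(\mathcal O_1,\mathcal O_2\) be valuation rings of \(M\) extending \(V_{\mathfrak p}\). Each \(\mathcal O_i\) contains the integral closure \(V'\) of \(V\) in \(M\), so it contains the unique prolongation \(\mathcal O\) of \(V\), the unique valuation ring of \(M\) over \(V\) given by \ref{it:henselian-unique-prolongation}. Here we use that the valuation rings of \(M\) lying over \(V\) are exactly the localizations of \(V'\) at its maximal ideals. Overrings of \(\mathcal O\) in \(M\) are localizations \(\mathcal O_{\mathfrak P}\) at primes, and these are totally ordered by inclusion. Lying over \(\mathfrak pV_{\mathfrak p}\) then forces \(\mathfrak P\cap V=\mathfrak p\). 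Since \(\mathcal O\) is integral over \(V\), there is exactly one such prime, because the primes of \(\mathcal O\) are a chain and incomparability holds for integral extensions. Hence \(\mathcal O_1=\mathcal O_2\).

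\emph{The quotient \(V/\mathfrak p\).} This is easier. It is a valuation ring because its ideals are images of ideals of \(V\), hence totally ordered. It is henselian because a quotient of a henselian local ring is henselian: a monic polynomial over \(V/\mathfrak p\) with a simple root modulo the maximal ideal lifts to \(V\), the simple-root condition is preserved, we lift the root in \(V\), and then reduce. This is \cite[\stackstag{04GH}]{StacksProject}.

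\emph{Main obstacle.} The delicate step is the localization, specifically the claim that overrings of \(\mathcal O\) are indexed by primes and that only one of them lies over \(\mathfrak p\). If we prefer to avoid this valuation theory, an alternative is to apply \cite[\stackstag{0ASK}]{StacksProject}-style results, or to check the simple-root criterion directly. A polynomial over \(V_{\mathfrak p}\) with a simple root in \(\kappa(\mathfrak p)\) can be rescaled to have coefficients in \(V\). Then use the cartesian square of \Cref{lem:valuation-prime-step-square} together with henselianity of \(V\) and of \(V/\mathfrak p\) to lift the root. We would try the uniqueness-of-prolongation argument first.
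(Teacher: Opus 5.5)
Your argument is correct, but it takes a different route from the paper. The paper gets the valuation-ring property from \cite[\stackstag{088Y}]{StacksProject}. It then quotes \cite[Cor.~4.1.4]{EnglerPrestel05ValuedFields}: for a prime $\mathfrak p$ of $V$, the ring $V$ is henselian if and only if both the coarsening $V_{\mathfrak p}$ and the valuation ring $V/\mathfrak p$ of $\kappa(\mathfrak p)$ are henselian.

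You instead prove the needed direction by hand. For the quotient you use the simple-root lifting definition of a henselian local ring; this is the same argument the paper uses at the end of the proof of \Cref{cor:regular-prime-adic-completion}. For the localization you use uniqueness of prolongation, together with three standard facts:
\begin{enumerate}
\item the prolongations of $V$ to $M$ correspond to the maximal ideals of the integral closure $V'$;
\item the overrings of a valuation ring are its localizations at a chain of primes;
\item incomparability holds for integral extensions.
\end{enumerate}
This is essentially an unpacking of the ``only if'' half of the Engler--Prestel corollary. The citation is shorter and also gives the converse, which is not needed here. Your version is self-contained modulo standard commutative algebra.

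One step should be stated explicitly. Uniqueness of the prolongation of $V$ means that $V'$ has exactly one maximal ideal, so $V'$ is local and $\mathcal O=V'$. This is what justifies both $\mathcal O\subseteq\mathcal O_i$ and the integrality of $\mathcal O$ over $V$ on which incomparability rests.

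The fallback in your last paragraph is not needed, and as sketched it does not go through directly. After rescaling, the residual root is only known to be simple in $\kappa(\mathfrak p)$. Its derivative therefore need not be a unit of $V/\mathfrak p$, so henselianity of $V$ does not apply as stated.
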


\begin{proof}
	By \cite[\stackstag{088Y}]{StacksProject}, both \(V_{\mathfrak p}\) and
	\(V/\mathfrak p\) are valuation rings. The maximal ideal of
	\(V_{\mathfrak p}\) is \(\mathfrak pV_{\mathfrak p}\), its residue field is
	\(\kappa(\mathfrak p)=\Frac(V/\mathfrak p)\), and the image of \(V\) in this
	residue field is \(V/\mathfrak p\). Thus the inclusion
	\(V\subseteq V_{\mathfrak p}\) is exactly the situation of
	\cite[Cor.~4.1.4]{EnglerPrestel05ValuedFields}. That corollary asserts that
	\(V\) is henselian if and only if both \(V_{\mathfrak p}\) and the valuation
	ring \(V/\mathfrak p\) of its residue field are henselian. Since \(V\) is
	henselian, the conclusion follows.
\end{proof}

\begin{lemma}[Gersten injectivity in a prime-step square]
	\label{lem:prime-step-generic-injectivity-surjective-residue}
	Let \(A\) be a valuation ring with fraction field \(F\), let
	\(\mathfrak q\subset A\) be a prime ideal.
	Let \(N\ge2\) be an integer. Assume that:
	\begin{enumerate}[label=\textup{(\roman*)}]
		\item
	\(K(A_{\mathfrak q};\mathbf Z/N)\longrightarrow K(F;\mathbf Z/N)\)
		is injective on homotopy groups;

		\item
	\(K(A/\mathfrak q;\mathbf Z/N)\longrightarrow
	K(\kappa(\mathfrak q);\mathbf Z/N)\)
		is injective on homotopy groups;

		\item
	\(K(A_{\mathfrak q};\mathbf Z/N)\longrightarrow
	K(\kappa(\mathfrak q);\mathbf Z/N)\)
		is surjective on homotopy groups.
	\end{enumerate}
	Then
	\(K(A;\mathbf Z/N)\longrightarrow K(F;\mathbf Z/N)\)
	is injective on homotopy groups.
\end{lemma}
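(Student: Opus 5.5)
The proof will be a diagram chase in the long exact homotopy sequence of the prime-step fibre sequence of \Cref{thm:valuation-prime-step-k-excision} with \(\mathbf Z/N\)-coefficients:
\[
K(A;\mathbf Z/N)\xrightarrow{(\alpha,\beta)}
K(A_{\mathfrak q};\mathbf Z/N)\oplus K(A/\mathfrak q;\mathbf Z/N)
\xrightarrow{\ \gamma-\delta\ } K(\kappa(\mathfrak q);\mathbf Z/N).
\]
Here \(\alpha,\beta\) are localization and reduction, and \(\gamma,\delta\) are the two maps to \(\kappa(\mathfrak q)\). This gives, for every \(n\), an exact sequence
\[
K_{n+1}(A_{\mathfrak q};\mathbf Z/N)\oplus K_{n+1}(A/\mathfrak q;\mathbf Z/N)
\xrightarrow{\gamma-\delta}K_{n+1}(\kappa(\mathfrak q);\mathbf Z/N)
\xrightarrow{\ \partial\ }K_n(A;\mathbf Z/N)
\xrightarrow{(\alpha,\beta)}\cdots .
\]
The map \(K(A;\mathbf Z/N)\to K(F;\mathbf Z/N)\) factors as \(\alpha\) followed by \(K(A_{\mathfrak q};\mathbf Z/N)\to K(F;\mathbf Z/N)\), since \(A\subseteq A_{\mathfrak q}\subseteq F\). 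By hypothesis (i) the second factor is injective on homotopy groups. It therefore suffices to show that \(\alpha\) is injective on each \(K_n(-;\mathbf Z/N)\).

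Let \(x\in K_n(A;\mathbf Z/N)\) with \(\alpha(x)=0\). We will show \(\beta(x)=0\). Commutativity of the square gives \(\delta\beta(x)=\gamma\alpha(x)=0\) in \(K_n(\kappa(\mathfrak q);\mathbf Z/N)\). Hypothesis (ii) says \(\delta\) is injective, so \(\beta(x)=0\). Hence \((\alpha,\beta)(x)=0\), and exactness gives \(x=\partial(y)\) for some \(y\in K_{n+1}(\kappa(\mathfrak q);\mathbf Z/N)\).

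By hypothesis (iii), \(y=\gamma(z)\) for some \(z\in K_{n+1}(A_{\mathfrak q};\mathbf Z/N)\). Then \(y=(\gamma-\delta)(z,0)\) lies in the image of the preceding map of the long exact sequence, so \(x=\partial(y)=0\) by exactness. This proves injectivity of \(\alpha\), hence of the generic restriction on \(\pi_n\) for every \(n\).

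The only delicate point is sign and exactness bookkeeping: that the second arrow of the fibre sequence is indeed the difference \(\gamma-\delta\) (as stated in \Cref{thm:valuation-prime-step-k-excision}), and that the boundary kills precisely its image. Any other sign convention, such as \(\gamma+\delta\) after reorienting one map, changes nothing, since \((z,0)\) still maps to \(\pm y\). I do not expect any substantive obstacle.
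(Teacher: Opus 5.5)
Your proposal is correct and is essentially the paper's argument: the same diagram chase in the long exact sequence of the prime-step excision fibre sequence of \Cref{thm:valuation-prime-step-k-excision}. In both, (iii) kills the connecting map, and (i), (ii) together with commutativity of the square dispose of the two components. The only cosmetic difference is the order: you first reduce to injectivity of the localization \(K_n(A;\mathbf Z/N)\to K_n(A_{\mathfrak q};\mathbf Z/N)\), whereas the paper first shows that \(K_n(A;\mathbf Z/N)\to K_n(A_{\mathfrak q};\mathbf Z/N)\oplus K_n(A/\mathfrak q;\mathbf Z/N)\) is injective.
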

\begin{proof}
	By \Cref{thm:valuation-prime-step-k-excision}, there is a fibre
	sequence
	\[
	K(A;\mathbf Z/N)
	\longrightarrow
	K(A_{\mathfrak q};\mathbf Z/N)\oplus K(A/\mathfrak q;\mathbf Z/N)
	\longrightarrow
	K(\kappa(\mathfrak q);\mathbf Z/N).
	\]
	Hypothesis \textup{(iii)} implies that the last map is
	surjective on every homotopy group. Hence the connecting maps
	\(K_{n+1}(\kappa(\mathfrak q);\mathbf Z/N)
	\longrightarrow K_n(A;\mathbf Z/N)\)
	vanish, and therefore
	\[
	K_n(A;\mathbf Z/N) \longrightarrow
	K_n(A_{\mathfrak q};\mathbf Z/N)\oplus K_n(A/\mathfrak q;\mathbf Z/N)
	\]
	is injective.

	Let \(x\in K_n(A;\mathbf Z/N)\) have zero image in
	\(K_n(F;\mathbf Z/N)\). Its image in
	\(K_n(A_{\mathfrak q};\mathbf Z/N)\) is zero by \textup{(i)}. By commutativity
	of the prime-step square, its image in
	\(K_n(A/\mathfrak q;\mathbf Z/N)\) maps to zero in
	\(K_n(\kappa(\mathfrak q);\mathbf Z/N)\); it is therefore zero by \textup{(ii)}.
	The preceding injectivity now gives \(x=0\).
\end{proof}

\begin{lemma}[the henselian discrete-valuation case]
\label{lem:henselian-dvr-generic-injectivity}
Let \(D\) be a henselian discrete valuation ring with fraction field
\(F\) and residue field \(k\). If \(N\ge2\) is invertible in \(D\), then
\[
K_n(D;\mathbf Z/N)\longrightarrow K_n(F;\mathbf Z/N)
\]
is injective for every \(n\in\mathbf Z\).
\end{lemma}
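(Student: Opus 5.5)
The plan is to argue directly with the localization sequence, exactly as in the proof of \Cref{lem:laurent-series-weight-one-test}, and not through the filtered splitting. This keeps the discrete-valuation case logically independent of \Cref{thm:value-exterior-splitting}, and it works for any \(N\ge2\) invertible in \(D\), not only for prime powers. Write \(i\colon\Spec(k)\hookrightarrow\Spec(D)\) and \(j\colon\Spec(F)\hookrightarrow\Spec(D)\).

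First, nonconnective localization \cite[Thm.~7.4]{ThomasonTrobaugh90} gives a fibre sequence \(K^{k}(D)\to K(D)\to K(F)\). Since \(D\) and \(k\) are regular noetherian and \(i\) is a regular closed immersion, dévissage identifies \(K^{k}(D)\simeq K(k)\), and the first map becomes \(i_*\). Smashing with \(\mathbf S/N\) yields the fibre sequence
\[
K(k;\mathbf Z/N)\xrightarrow{\ i_*\ }K(D;\mathbf Z/N)\xrightarrow{\ j^*\ }K(F;\mathbf Z/N).
\]
By the long exact sequence of homotopy groups, injectivity of \(j^*\) on every \(K_n\) is equivalent to the vanishing of \(i_*\) on every homotopy group.

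Second, \Cref{lem:dvr-closed-point-self-intersection} shows that \(i^*i_*\) is zero on every homotopy group of \(K(k;\mathbf Z/N)\); that lemma holds for any \(m\ge1\) and any discrete valuation ring. Third, since \(D\) is a henselian valuation domain and \(N\) is invertible in it, \Cref{lem:valuation-closed-point-finite-coefficient-k-rigidity} with \(m=N\) makes \(i^*\colon K(D;\mathbf Z/N)\to K(k;\mathbf Z/N)\) an equivalence. That lemma requires only invertibility, not a prime power. Hence \(i_*=(i^*)^{-1}\circ(i^*i_*)=0\) on homotopy, and \(j^*\) is injective in every degree \(n\in\mathbf Z\). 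Negative degrees are harmless: both sides vanish there by \Cref{lem:negative-finite-coefficient-k-valuation-domain}, and the argument above covers them anyway.

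The main point needing care is the dévissage step, namely that the localization term with supports is \(K(k)\) with the first arrow equal to \(i_*\), so that the self-intersection lemma applies to the map actually occurring in the sequence. This is the same identification already used in \Cref{lem:laurent-series-weight-one-test} and \Cref{lem:mixed-coefficient-localization-projection-formula}, so I expect it to go through unchanged.

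As a fallback, one could instead invoke \Cref{thm:henselian-valuation-generic-injectivity} for each prime power and combine the primary parts via \Cref{rem:primary-decomposition-generic-injectivity}. The direct argument is preferable because it avoids the splitting theorem.
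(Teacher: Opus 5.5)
Your proposal is correct and follows essentially the same route as the paper's proof. Both use the localization and dévissage fibre sequence \(K(k;\mathbf Z/N)\to K(D;\mathbf Z/N)\to K(F;\mathbf Z/N)\), kill \(i^*i_*\) by \Cref{lem:dvr-closed-point-self-intersection}, and invert \(i^*\) by \Cref{lem:valuation-closed-point-finite-coefficient-k-rigidity}, which gives \(i_*=0\) and hence injectivity of \(j^*\).
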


\begin{proof}
Let \(\pi\) be a uniformizer of \(D\), let
\(i\colon\Spec(k)\hookrightarrow\Spec(D)\), and let
\(j\colon\Spec(F)\hookrightarrow\Spec(D)\). The discrete-valuation
hypothesis enters at the support term. The ring \(D\) is regular and
noetherian, and \(k=D/\pi D\) has the two-term perfect resolution
\(0\to D\xrightarrow{\pi}D\to k\to0\). Localization gives \(K\)-theory
with support at the closed point, and dévissage identifies that support
term with \(K(k)\). Hence we have a fibre sequence
\[
K(k;\mathbf Z/N)\xrightarrow{i_*}K(D;\mathbf Z/N)
\xrightarrow{j^*}K(F;\mathbf Z/N)
\]
\cite[Thm.~7.4]{ThomasonTrobaugh90} and
\cite[\S4, Thm.~3 and \S5, Thm.~4]{Quillen73}.
%

According to \cref{lem:dvr-closed-point-self-intersection}, \(i^*i_*=0\) on every \(K_n(k;\mathbf Z/N)\). Since \(i^*\) is an
equivalence by
\Cref{lem:valuation-closed-point-finite-coefficient-k-rigidity},
\(i_*=0\) on every homotopy group. The nonconnective localization long
exact sequence therefore makes \(j^*\) injective for every
\(n\in\mathbf Z\).

\end{proof}

\begin{theorem}[lexicographic finite-rank Gersten injectivity]
\label{thm:lexicographic-finite-rank-generic-injectivity}
Let \(O\) be a henselian valuation ring with fraction field \(F\) and
value group \(\mathbf Z^d\) in lexicographic order, where \(d\ge0\).
Here the first coordinate is dominant.
If \(N\ge2\) is invertible in \(O\), then
\[
K_n(O;\mathbf Z/N)\longrightarrow K_n(F;\mathbf Z/N)
\]
is injective for every \(n\in\mathbf Z\).
\end{theorem}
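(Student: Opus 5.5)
Induction on \(d\), using \Cref{lem:prime-step-generic-injectivity-surjective-residue} with \(\mathfrak q\) the smallest nonzero prime of \(O\). For \(d=0\), \(O=F\) and there is nothing to prove. For \(d=1\), \(O\) is a henselian discrete valuation ring (value group \(\mathbf Z\)), and \Cref{lem:henselian-dvr-generic-injectivity} applies. Now let \(d\ge2\). By \Cref{conv:valuation-rank}, \(O\) has rank \(d\), so it has a smallest nonzero prime \(\mathfrak q\). It corresponds to the largest proper convex subgroup, which for the lexicographic order with dominant first coordinate is \(\Delta=0\oplus\mathbf Z^{d-1}\). Then \(O_{\mathfrak q}\) is the coarsening with value group \(\mathbf Z^d/\Delta\simeq\mathbf Z\), so it is a discrete valuation ring. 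The quotient \(O/\mathfrak q\) is a valuation ring of \(\kappa(\mathfrak q)\) with value group \(\Delta\simeq\mathbf Z^{d-1}\), again lexicographic. Both are henselian by \Cref{lem:strategy-d-henselian-localisations}, and \(N\) stays invertible in both.

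Next we verify the three hypotheses. Hypothesis (i) is the case \(d=1\) applied to the henselian discrete valuation ring \(O_{\mathfrak q}\), whose fraction field is \(F\). Hypothesis (ii) is the induction hypothesis for \(O/\mathfrak q\). For hypothesis (iii) we need \(K_n(O_{\mathfrak q};\mathbf Z/N)\to K_n(\kappa(\mathfrak q);\mathbf Z/N)\) to be surjective. This map is reduction to the closed point of the henselian valuation ring \(O_{\mathfrak q}\), and \Cref{lem:valuation-closed-point-finite-coefficient-k-rigidity} says it is an equivalence, hence surjective. \Cref{lem:prime-step-generic-injectivity-surjective-residue} then gives injectivity for \(O\), completing the induction.

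The only point needing care is the valuation theory: identifying the smallest nonzero prime with the maximal proper convex subgroup \(\Delta\), and confirming that localization and quotient have value groups \(\mathbf Z^d/\Delta\) and \(\Delta\) respectively. This is the standard correspondence between primes and convex subgroups in \cite{EnglerPrestel05ValuedFields}. For \(\mathbf Z^d\) with dominant first coordinate, the convex subgroups are exactly \(0^{i}\oplus\mathbf Z^{d-i}\), so the required identifications are immediate. Everything else is a direct application of earlier results. The argument is independent of the filtered splitting, as the introduction promised.
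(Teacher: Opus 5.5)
Your proposal is correct and follows the paper's proof: induction on \(d\) via \Cref{lem:prime-step-generic-injectivity-surjective-residue} at the smallest nonzero prime, with the convex-subgroup computation identifying \(O_{\mathfrak q}\) as a henselian DVR and \(O/\mathfrak q\) as lexicographic of rank \(d-1\), and with (i), (ii), (iii) supplied by \Cref{lem:henselian-dvr-generic-injectivity}, induction, and closed-point rigidity. The only cosmetic difference is that you treat \(d=1\) as a separate base case, whereas the paper lets it fall under the uniform step \(d>0\), where \(O/\mathfrak q\) is then a field.
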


\begin{proof}
We induct on \(d\). For \(d=0\), the ring \(O=F\) is a field. Suppose
\(d>0\), and let \(\mathfrak q\) be the smallest nonzero prime of
\(O\). By \Cref{lem:strategy-d-henselian-localisations}, both
\(O_{\mathfrak q}\) and \(O/\mathfrak q\) are henselian valuation
rings. By the order-reversing correspondence of
\cite[Lemma~2.3.1]{EnglerPrestel05ValuedFields}, the smallest nonzero
prime \(\mathfrak q\) corresponds to the largest proper convex subgroup
\(0\oplus\mathbf Z^{d-1}\) of the lexicographically ordered group
\(\mathbf Z^d\). The value-group calculation immediately following that
lemma identifies the value group of \(O_{\mathfrak q}\) with
\(\mathbf Z^d/(0\oplus\mathbf Z^{d-1})\simeq\mathbf Z\), and that of
\(O/\mathfrak q\) with
\(0\oplus\mathbf Z^{d-1}\simeq\mathbf Z^{d-1}\), with its lexicographic
order. Thus \(O_{\mathfrak q}\) is a discrete valuation ring. The
value-group calculation is essential: rank one alone does not imply
discreteness.

The generic restriction map for \(O_{\mathfrak q}\) is injective by
\Cref{lem:henselian-dvr-generic-injectivity}, and the generic restriction map for
\(O/\mathfrak q\) is injective by induction. Reduction from
\(O_{\mathfrak q}\) to its residue field
\(\kappa(\mathfrak q)\) is an equivalence on mod-\(N\) \(K\)-theory by
\Cref{lem:valuation-closed-point-finite-coefficient-k-rigidity}.
The three hypotheses of
\Cref{lem:prime-step-generic-injectivity-surjective-residue} therefore
hold, and that lemma completes the induction.
\end{proof}

The following lemma is not needed but provides an alternative route.
\begin{theorem}[formal finite-rank induction from the rank-one case]
	\label{thm:finite-rank-prime-to-residue-characteristic-generic-injectivity}
	Let \(O\) be a finite-rank henselian valuation ring with fraction
	field \(F\). Let \(\ell\) be a prime which is a unit in \(O\), let \(\nu\ge1\), and put
	\(N:=\ell^\nu\). Assume that for every rank-one henselian valuation
	ring \(V\) with fraction field \(K\) and \(N\in V^\times\), the map
	\[
	K_n(V;\mathbf Z/N)\longrightarrow K_n(K;\mathbf Z/N)
	\]
	is injective for every \(n\in\mathbf Z\). Then
	\(K(O;\mathbf Z/N)\longrightarrow K(F;\mathbf Z/N)\)
	is injective on every homotopy group.
\end{theorem}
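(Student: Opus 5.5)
We argue by induction on the rank \(d\) of \(O\), following the pattern of \Cref{thm:lexicographic-finite-rank-generic-injectivity}, but with the rank-one hypothesis replacing \Cref{lem:henselian-dvr-generic-injectivity}.

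\textbf{Base cases.} If \(d=0\), then \(O=F\) and there is nothing to prove. If \(d=1\), the assertion is exactly the hypothesis, since \(N\in O^\times\).

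\textbf{Inductive step, \(d\ge2\).} By \Cref{conv:valuation-rank}, \(O\) has a smallest nonzero prime \(\mathfrak q\). By \Cref{lem:strategy-d-henselian-localisations}, both \(O_{\mathfrak q}\) and \(O/\mathfrak q\) are henselian valuation rings. Their ranks are determined by the prime chain:
\begin{itemize}
\item the primes of \(O_{\mathfrak q}\) correspond to the primes of \(O\) contained in \(\mathfrak q\), namely \(0\) and \(\mathfrak q\), so \(O_{\mathfrak q}\) has rank one;
\item the primes of \(O/\mathfrak q\) correspond to the primes containing \(\mathfrak q\), so \(O/\mathfrak q\) has rank \(d-1\).
\end{itemize}
Since \(N\) is a unit in \(O\), it is a unit in both rings. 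Note that \(\Frac(O_{\mathfrak q})=F\) and \(\Frac(O/\mathfrak q)=\kappa(\mathfrak q)\).

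We now verify the three hypotheses of \Cref{lem:prime-step-generic-injectivity-surjective-residue}.
\begin{itemize}
\item Hypothesis (i) is the rank-one assumption applied to \(V=O_{\mathfrak q}\).
\item Hypothesis (ii) is the induction hypothesis applied to \(O/\mathfrak q\). This is legitimate because \(O/\mathfrak q\) is henselian of rank \(d-1\), with \(N\) invertible.
\item Hypothesis (iii) holds because the reduction \(O_{\mathfrak q}\to\kappa(\mathfrak q)\) is the closed-point map of the henselian valuation ring \(O_{\mathfrak q}\), with \(N\) invertible. By \Cref{lem:valuation-closed-point-finite-coefficient-k-rigidity} it is an equivalence on mod-\(N\) \(K\)-theory, hence surjective on every homotopy group.
\end{itemize}
The lemma then gives injectivity of \(K_n(O;\mathbf Z/N)\to K_n(F;\mathbf Z/N)\) for all \(n\). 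Its excision input, \Cref{thm:valuation-prime-step-k-excision}, needs no further hypotheses.

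\textbf{The point needing care.} The only step beyond bookkeeping is the rank count for \(O_{\mathfrak q}\) and \(O/\mathfrak q\). This follows from the linear ordering of primes together with the standard correspondences for localizations and quotients. Unlike the lexicographic case, we do not need any value-group computation, because the hypothesis covers arbitrary rank-one henselian rings, discrete or not.
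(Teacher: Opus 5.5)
Your proposal is correct and matches the paper's proof. Both argue by induction on the rank via the smallest nonzero prime \(\mathfrak q\), using the rank-one hypothesis for \(O_{\mathfrak q}\), the induction hypothesis for \(O/\mathfrak q\), and closed-point rigidity to supply the three hypotheses of the prime-step lemma. The only cosmetic difference is your separate rank-one base case, which the paper's general step already covers (there \(O_{\mathfrak q}=O\) and \(O/\mathfrak q\) is the residue field).
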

\begin{proof}
	Besides the stated rank-one hypothesis, the induction uses the closed-point
	rigidity equivalence of
	\Cref{lem:valuation-closed-point-finite-coefficient-k-rigidity}, proved above.
	We argue by induction on the rank of \(O\), in the sense of
		\Cref{conv:valuation-rank}. In rank zero the valuation ring is the field \(F\), so the map is
		the identity. Suppose that \(O\) has positive finite rank. Since \(\Spec(O)\) is linearly
		ordered and finite, it has a smallest nonzero prime \(\mathfrak q\).
	By \Cref{lem:strategy-d-henselian-localisations}, \(O_{\mathfrak q}\) and
	\(O/\mathfrak q\) are henselian valuation rings. The prime ideals of
	\(O_{\mathfrak q}\) correspond to the
		prime ideals of \(O\) contained in \(\mathfrak q\), while the prime ideals of
		\(O/\mathfrak q\) correspond to those containing \(\mathfrak q\)
		\cite[\stackstag{00E3} and \stackstag{00E5}]{StacksProject}. Minimality of \(\mathfrak q\) therefore shows
		that \(O_{\mathfrak q}\) has exactly the primes \(0\) and
		\(\mathfrak qO_{\mathfrak q}\), hence rank one, whereas
		\(O/\mathfrak q\) has strictly smaller finite rank. Since \(\ell\in O^\times\),
		its images remain units in these rings and their fraction and residue fields.
		Moreover \(\Frac(O_{\mathfrak q})=\Frac(O)=F\), because
		\(O_{\mathfrak q}\) is a localization of \(O\) inside \(F\), and the residue field
		of \(O_{\mathfrak q}\) at its maximal ideal \(\mathfrak qO_{\mathfrak q}\) is
		\(\kappa(\mathfrak q)\).

	The rank-one hypothesis applies to \(O_{\mathfrak q}\to F\).
	The induction hypothesis applies to
	\(O/\mathfrak q\to\kappa(\mathfrak q)\). Finally, reduction
	from \(O_{\mathfrak q}\) to its residue field \(\kappa(\mathfrak q)\) is an
	equivalence on \(K(-;\mathbf Z/N)\) by
	\Cref{lem:valuation-closed-point-finite-coefficient-k-rigidity}. Thus the three
	hypotheses of \Cref{lem:prime-step-generic-injectivity-surjective-residue} hold, and that
	lemma completes the induction.
\end{proof}

\begin{lemma}[parameter chain valuation of a regular local ring]
	\label{lem:regular-local-parameter-chain-valuation}
	Let \(A\) be a noetherian regular local ring of dimension \(d\), with maximal ideal
	\(\mathfrak m_A\), residue field \(k\), and fraction field \(F\). Then there is a
	valuation ring \(V\subset F\), with maximal ideal \(\mathfrak m_V\), such that
	\begin{enumerate}[label=\textup{(\roman*)},leftmargin=*]
		\item \(A\subseteq V\) and \(\mathfrak m_V\cap A=\mathfrak m_A\);
		\item \(V\) has rank \(d\), with value group \(\mathbf Z^d\) ordered
		lexicographically, the first coordinate dominant;
		\item the induced map \(k=A/\mathfrak m_A\to V/\mathfrak m_V\) is an isomorphism.
	\end{enumerate}
\end{lemma}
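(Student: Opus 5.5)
We construct \(V\) by induction on \(d\), as a composite of discrete valuations, using a regular system of parameters \(x_1,\ldots,x_d\) of \(A\). For \(d=0\), \(A=F=k\) and \(V:=F\). For \(d\ge1\), put \(\mathfrak p_1:=x_1A\). Since \(A\) is regular, \(x_1\notin\mathfrak m_A^2\), so \(A/x_1A\) is a regular local ring of dimension \(d-1\) with residue field \(k\) and regular system of parameters \(\bar x_2,\ldots,\bar x_d\). In particular \(x_1A\) is a height-one prime, and \(D:=A_{x_1A}\) is a discrete valuation ring (regular local of dimension one, \cite[\stackstag{00PD}]{StacksProject}) with uniformizer \(x_1\). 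Its residue field is \(\kappa(x_1A)=\Frac(A/x_1A)\).

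By induction applied to \(A/x_1A\), we obtain a valuation ring \(\bar V\subset\kappa(x_1A)\) with value group \(\mathbf Z^{d-1}\) (lexicographic), which dominates \(A/x_1A\) and has residue field \(k\). We then put
\[
V:=\{\,y\in D : \bar y\in\bar V\,\},
\]
the preimage of \(\bar V\) under \(D\to\kappa(x_1A)\). This is the standard composite of valuation rings: \(V\) is a valuation ring of \(F\) with \(V_{\mathfrak q}=D\) for \(\mathfrak q=x_1D\), \(V/\mathfrak q=\bar V\), and residue field equal to that of \(\bar V\), namely \(k\). For the valuation property, let \(y\in F^\times\). If \(v_D(y)\ne0\), then \(y\) or \(y^{-1\,}\) lies in \(x_1D\subseteq V\). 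If \(v_D(y)=0\), then \(y\in D^\times\) and \(\bar y\) or \(\bar y^{-1}\) lies in \(\bar V\).

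For the value group, we use the exact sequence \(0\to\Gamma_{\bar V}\to\Gamma_V\to\Gamma_D\to0\), with \(\Gamma_{\bar V}\) the convex subgroup corresponding to \(\mathfrak q\) \cite[Lemma~2.3.1 and the following computation]{EnglerPrestel05ValuedFields}. The sequence splits via \(1\mapsto v(x_1)\), so \(\Gamma_V\simeq\mathbf Z\times\mathbf Z^{d-1}\). Convexity of \(\Gamma_{\bar V}\) forces the order to be lexicographic with the \(\Gamma_D\)-coordinate dominant. Hence the group is \(\mathbf Z^d\), of rank \(d\), as required in (ii).

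For (i), we have \(A\subseteq D\), and the image of \(A\) in \(\kappa(x_1A)\) is \(A/x_1A\subseteq\bar V\), so \(A\subseteq V\). We also have \(x_1\in\mathfrak qV\subseteq\mathfrak m_V\), and \(x_i\in\mathfrak m_V\) for \(i\ge2\) because \(\bar x_i\in\mathfrak m_{\bar V}\) by domination. Thus \(\mathfrak m_A\subseteq\mathfrak m_V\cap A\), and equality holds since the right side is a proper ideal. For (iii), the map \(k\to V/\mathfrak m_V=\bar V/\mathfrak m_{\bar V}\) factors as \(A/\mathfrak m_A=(A/x_1A)/\bar{\mathfrak m}\to\bar V/\mathfrak m_{\bar V}\), which is an isomorphism by the induction hypothesis.

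The main obstacle is bookkeeping in the composite-valuation identifications: that \(V\) is a valuation ring with \(V_{x_1D}=D\), and that the ordered value group is lexicographic with the correct dominant coordinate. I would settle these either directly, via the explicit valuation \(y\mapsto(v_D(y),\bar v(\overline{y x_1^{-v_D(y)}}))\), which makes (ii) immediate, or by citing \cite[\S2.3]{EnglerPrestel05ValuedFields}.
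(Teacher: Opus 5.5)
Your proposal is correct and follows the paper's proof: induction on \(d\), composing the discrete valuation ring \(A_{x_1A}\) with the inductively obtained valuation ring of \(\Frac(A/x_1A)\) by taking its preimage under the residue map, and verifying (i) and (iii) in the same way. The only cosmetic difference is that the paper obtains (ii) directly from the explicit valuation \(y=x_1^n u\mapsto(n,v'(\lambda(u)))\), which you list as your first option, rather than from the convex-subgroup exact sequence.
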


\begin{proof}
	We construct the valuation by composing discrete valuations along the flag defined by a
	regular system of parameters. Induct on \(d\). For \(d=0\) the ring \(A=F=k\) is a field
	and the trivial valuation works. Let \(d\ge1\) and let
	\(x_1,\dots,x_d\) be a regular system of parameters. Then
	\(A_2:=A/x_1A\) is regular local of dimension \(d-1\), hence a domain, so \(x_1A\) is a
	prime of height one and \(V_1:=A_{x_1A}\) is a discrete valuation ring with fraction
	field \(F\) and residue field \(\kappa_1=\Frac(A_2)\). Write
	\(\lambda\colon V_1\to\kappa_1\) for the reduction; then \(\lambda(A)=A_2\).

	The ring \(A_2\) is regular local of dimension \(d-1\) with fraction field \(\kappa_1\),
	residue field \(k\), and regular system of parameters the images of \(x_2,\dots,x_d\).
	By induction there is a valuation ring \(V'\subset\kappa_1\) of rank \(d-1\) with
	\(A_2\subseteq V'\), \(\mathfrak m_{V'}\cap A_2=\mathfrak m_{A_2}\), and
	\(A_2/\mathfrak m_{A_2}\xrightarrow{\ \sim\ }V'/\mathfrak m_{V'}\).

	Put \(V:=\lambda^{-1}(V')\). To see directly that this is a valuation
	ring, choose a valuation \(v'\colon\kappa_1^\times\to\mathbf Z^{d-1}\)
	with valuation ring \(V'\). Since \(x_1\) is a uniformizer of \(V_1\),
	every \(y\in F^\times\) has a unique expression \(y=x_1^n u\), with
	\(n\in\mathbf Z\) and \(u\in V_1^\times\). Then
	\(y\mapsto\bigl(n,v'(\lambda(u))\bigr)\) is a valuation with values
	in the lexicographically ordered group
	\(\mathbf Z\oplus\mathbf Z^{d-1}\), the first coordinate dominant, and
	its valuation ring is \(V\). Its maximal ideal is
	\(\lambda^{-1}(\mathfrak m_{V'})\), and its residue field is
	\(V'/\mathfrak m_{V'}\). This also proves \textup{(ii)}.

	Since \(\lambda(A)=A_2\subseteq V'\), we have \(A\subseteq V\). For
	\(a\in A\), one has \(a\in\mathfrak m_V\) if and only if
	\(\lambda(a)\in\mathfrak m_{V'}\), equivalently if and only if
	\(a\in\mathfrak m_A\). Hence \(\mathfrak m_V\cap A=\mathfrak m_A\), and
	the induced residue map is the isomorphism
	\(A/\mathfrak m_A=A_2/\mathfrak m_{A_2}
	\xrightarrow{\sim}V'/\mathfrak m_{V'}=V/\mathfrak m_V\).
\end{proof}

\begin{theorem}[Gersten injectivity for regular henselian pairs]
\label{thm:regular-henselian-pair-lifting}
Let \(R\) be a noetherian regular domain of finite Krull dimension, let
\(\mathfrak p\subset R\) be prime, and assume that \(R/\mathfrak p\) is
regular and that \((R,\mathfrak p)\) is a henselian pair. Let
\(N=\ell^\nu\) be invertible in \(R\). If \(R/\mathfrak p\) satisfies
Gersten injectivity with \(\mathbf Z/N\)-coefficients, then so does \(R\).
\end{theorem}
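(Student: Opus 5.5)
The strategy is to detect a kernel class on the residue field \(E=\Frac(R/\mathfrak p)\), where two rigidity equivalences and the lexicographic valuation case meet. Let \(x\in K_n(R;\mathbf Z/N)\) map to zero in \(K_n(F;\mathbf Z/N)\). Put \(A:=R_{\mathfrak p}\). This is a noetherian regular local ring of some dimension \(d\), with fraction field \(F\) and residue field \(\kappa(\mathfrak p)=E\).

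\Cref{lem:regular-local-parameter-chain-valuation} gives a valuation ring \(V\subset F\) with the following properties:
\begin{enumerate}[label=\textup{(\alph*)},leftmargin=*]
\item \(A\subseteq V\) and \(\mathfrak m_V\cap A=\mathfrak pA\);
\item the value group of \(V\) is \(\mathbf Z^d\) in lexicographic order;
\item the residue map \(E\to V/\mathfrak m_V\) is an isomorphism.
\end{enumerate}
Let \(V^h\) be a henselization of \(V\), with fraction field \(F^h\supseteq F\). Henselization preserves the value group \cite[\stackstag{0ASK}]{StacksProject} and the residue field \cite[\stackstag{04GN}]{StacksProject}. So \(V^h\) is a henselian valuation ring with lexicographic value group \(\mathbf Z^d\) and residue field \(E\), and \(N\) is invertible in it. By \Cref{thm:lexicographic-finite-rank-generic-injectivity}, the map \(K_n(V^h;\mathbf Z/N)\to K_n(F^h;\mathbf Z/N)\) is injective.

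Now chase \(x\) through the commutative diagram of rings
\[
R\to A\to V\to V^h\to F^h,\qquad R\to F\to F^h,\qquad V^h\to E,\qquad R\to R/\mathfrak p\to E.
\]
\begin{enumerate}[label=\textup{(\arabic*)},leftmargin=*]
\item Since \(x\mapsto 0\) in \(F\), it maps to zero in \(F^h\). By the injectivity just established, its image in \(K_n(V^h;\mathbf Z/N)\) is zero.
\item Consequently its image in \(K_n(E;\mathbf Z/N)\) along \(R\to V^h\to E\) is zero. No rigidity is needed for this step.
\item The composite \(R\to V^h\to V^h/\mathfrak m_{V^h}=E\) coincides with \(R\to R/\mathfrak p\to\Frac(R/\mathfrak p)=E\). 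This follows from (a) and (c): the reduction \(A\to V/\mathfrak m_V\) factors through \(A/\mathfrak pA=E\) and is the identity on \(E\) after the identification (c).
\item Hence the image \(\bar x\in K_n(R/\mathfrak p;\mathbf Z/N)\) of \(x\) dies in \(K_n(E;\mathbf Z/N)\). Gersten injectivity for \(R/\mathfrak p\) gives \(\bar x=0\).
\end{enumerate}

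It remains to show that reduction \(K(R;\mathbf Z/N)\to K(R/\mathfrak p;\mathbf Z/N)\) is injective on homotopy. \Cref{thm:gabber-rigidity-finite-coefficient-k} applies to the henselian pair \((R,\mathfrak p)\) and gives an equivalence on connective finite-coefficient \(K\)-theory. Both \(R\) and \(R/\mathfrak p\) are regular noetherian of finite Krull dimension, so by \Cref{conv:nonconnective-k-groups} their negative \(K\)-groups vanish and \(K\simeq K^{\mathrm{cn}}\). The same then holds mod \(N\), exactly as in the proof of \Cref{lem:valuation-closed-point-finite-coefficient-k-rigidity}. Thus the nonconnective map is an equivalence, so \(x=0\).

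The step needing the most care is the compatibility in step (3): that the residue map of the henselized parameter-chain valuation, restricted to \(R\), really is the canonical map \(R\to\Frac(R/\mathfrak p)\) and not a twisted embedding of \(E\). I would verify it directly from the explicit construction \(V=\lambda^{-1}(V')\) in \Cref{lem:regular-local-parameter-chain-valuation}, and from the fact that henselization \(V\to V^h\) induces the identity on residue fields. Note that \(R_{\mathfrak p}\) itself is never required to be henselian.
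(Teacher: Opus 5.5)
Your proof is correct and follows the paper's argument. You dominate \(R_{\mathfrak p}\) by the lexicographic parameter-chain valuation and henselize. You kill the class in \(K_n(V^h;\mathbf Z/N)\) by \Cref{thm:lexicographic-finite-rank-generic-injectivity}, transport it to \(E\), apply the hypothesis on \(R/\mathfrak p\), and conclude with Gabber rigidity for \((R,\mathfrak p)\) together with connectivity of \(K\)-theory of regular rings. Your remark that closed-point rigidity for \(V^h\) is not needed is accurate: only commutativity of the square is used, and the paper's own chase likewise uses only that commutativity.
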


\begin{proof}
Put \(F=\Frac(R)\), \(E=\Frac(R/\mathfrak p)\), and
\(d=\dim(R_{\mathfrak p})\). Apply
\Cref{lem:regular-local-parameter-chain-valuation} to
\(R_{\mathfrak p}\). It gives a valuation ring \(V\subset F\) which
dominates \(R_{\mathfrak p}\) and has residue field \(E\). Let
\(W=V^h\). The henselization \(W\) is a valuation ring by
\cite[\stackstag{0ASK}]{StacksProject}; it is henselian, local, and has the same residue
field \(E\) as \(V\) by \cite[\stackstag{04GN}]{StacksProject}. Since
\(V\subseteq W\), one has \(F=\Frac(V)\subseteq\Frac(W)\). Moreover,
\Cref{lem:regular-local-parameter-chain-valuation} and
\cite[\stackstag{0ASK}]{StacksProject} identify the value group of
\(W\) with the same lexicographically ordered \(\mathbf Z^d\) as that of
\(V\). Since \(V\) dominates \(R_{\mathfrak p}\) and henselization is
local with unchanged residue field,
\(\mathfrak m_W\cap R=\mathfrak p\). The image of
\(N\in R^\times\) remains a unit in \(W\), and the map \(R\to W\)
induces the canonical map \(R/\mathfrak p\to E\).

Rigidity gives a commutative square
\[
\begin{tikzcd}[column sep=large]
K(R;\mathbf Z/N) \ar[r] \ar[d,"\sim"'] &
K(W;\mathbf Z/N) \ar[d,"\sim"]\\
K(R/\mathfrak p;\mathbf Z/N) \ar[r] & K(E;\mathbf Z/N).
\end{tikzcd}
\]
The left vertical map follows from
\Cref{thm:gabber-rigidity-finite-coefficient-k}; both rings are regular,
so their nonconnective \(K\)-theory is connective. The right vertical
map is \Cref{lem:valuation-closed-point-finite-coefficient-k-rigidity}.

Let \(\alpha\in K_n(R;\mathbf Z/N)\) vanish in
\(K_n(F;\mathbf Z/N)\). Its image in \(K_n(W;\mathbf Z/N)\) vanishes in
\(K_n(\Frac(W);\mathbf Z/N)\), hence is zero by
\Cref{thm:lexicographic-finite-rank-generic-injectivity}. The square shows that
the reduction of \(\alpha\) vanishes in \(K_n(E;\mathbf Z/N)\). It is
zero in \(K_n(R/\mathfrak p;\mathbf Z/N)\) by the hypothesis. The left
vertical equivalence then gives \(\alpha=0\).
\end{proof}

\begin{corollary}[henselian regular local rings]
\label{cor:henselian-regular-local-generic-injectivity}
Let \(A\) be a noetherian henselian regular local ring, with residue field
\(k\) and fraction field \(F\). Let \(\ell\) be a prime invertible in
\(A\), and let \(\nu\ge1\). Then
\[
K_n(A;\mathbf Z/\ell^\nu)\longrightarrow
K_n(F;\mathbf Z/\ell^\nu)
\]
is injective for every \(n\in\mathbf Z\).
\end{corollary}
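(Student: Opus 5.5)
The corollary is the maximal-ideal case of \Cref{thm:regular-henselian-pair-lifting}. Put \(R:=A\) and \(\mathfrak p:=\mathfrak m_A\). A noetherian regular local ring is a domain of finite Krull dimension. The quotient \(R/\mathfrak p=k\) is a field, hence regular, and \((A,\mathfrak m_A)\) is a henselian pair because \(A\) is a henselian local ring. The prime power \(N=\ell^\nu\) is invertible in \(A\) by hypothesis.

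It then remains to verify the hypothesis on \(R/\mathfrak p\). Here \(R/\mathfrak p=k=\Frac(k)=E\), so the map \(K_n(k;\mathbf Z/N)\to K_n(E;\mathbf Z/N)\) is the identity and is trivially injective. \Cref{thm:regular-henselian-pair-lifting} therefore gives injectivity of \(K_n(A;\mathbf Z/N)\to K_n(F;\mathbf Z/N)\) for all \(n\in\mathbf Z\).

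To make the short induction promised in the introduction visible, I would also unwind the argument in this special case. \Cref{lem:regular-local-parameter-chain-valuation} produces a valuation ring \(V\) that dominates \(A\), has value group \(\mathbf Z^d\) in lexicographic order, and has residue field \(k\). Its henselization \(W\) has the same residue field and value group. Gersten injectivity for \(W\) follows from \Cref{thm:lexicographic-finite-rank-generic-injectivity}. That result is an induction on \(d\) using prime-step excision, \Cref{thm:valuation-prime-step-k-excision}, together with the henselian DVR case, \Cref{lem:henselian-dvr-generic-injectivity}, which rests on localization, dévissage, and rigidity.

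The two rigidity equivalences \(K(A;\mathbf Z/N)\simeq K(k;\mathbf Z/N)\simeq K(W;\mathbf Z/N)\) are compatible, since the composite \(A\to W\to k\) is reduction. Consequently a class killed in \(K_n(F;\mathbf Z/N)\subseteq K_n(\Frac W;\mathbf Z/N)\) is killed in \(K_n(W;\mathbf Z/N)\), hence in \(K_n(k;\mathbf Z/N)\), and hence is already zero in \(K_n(A;\mathbf Z/N)\).

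The only point needing care is the rigidity equivalence for \(A\) itself on nonconnective spectra. \Cref{thm:gabber-rigidity-finite-coefficient-k} is stated for connective \(K\)-theory. Since \(A\) and \(k\) are regular noetherian, their negative \(K\)-groups vanish, so the connective and nonconnective theories agree; this is already addressed in the proof of \Cref{thm:regular-henselian-pair-lifting}. I expect no genuine obstacle beyond this bookkeeping.
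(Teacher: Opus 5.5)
Your proposal is correct and is the paper's own proof: the corollary is the case \(\mathfrak p=\mathfrak m_A\) of \Cref{thm:regular-henselian-pair-lifting}, and the hypothesis on \(R/\mathfrak p=k\) holds trivially because its generic restriction is the identity. Two small remarks. First, the ``short induction'' the paper records for completeness is not your unwinding; it inducts on \(\dim A\) using \(\mathfrak p=(x)\) with \(x\in\mathfrak m_A\setminus\mathfrak m_A^2\), the rigidity equivalence \(K(A;\mathbf Z/N)\simeq K(A/(x);\mathbf Z/N)\), and Gillet's theorem for the discrete valuation ring \(A_{(x)}\). Second, in your unwinding you should drop the ``\(\subseteq\)'', since \(K_n(F;\mathbf Z/N)\to K_n(\Frac W;\mathbf Z/N)\) need not be injective, and your argument only uses that this map sends \(0\) to \(0\).
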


\begin{proof}
Taking \(\mathfrak p=\mathfrak m_A\) in
\Cref{thm:regular-henselian-pair-lifting} proves the result: \(A\) is a
noetherian regular domain of finite Krull dimension, while
\(A/\mathfrak m_A=k\) is regular and its generic restriction is the
identity. For completeness, we also give the short direct induction.

Put \(N=\ell^\nu\) and argue by induction on \(d=\dim(A)\). The case
\(d=0\) is tautological. Suppose \(d>0\), choose
\(x\in\mathfrak m_A\setminus\mathfrak m_A^2\), and put
\(\mathfrak p=(x)\). Then \(A/\mathfrak p\) is a henselian regular local
ring of dimension \(d-1\), while \(A_{\mathfrak p}\) is a discrete
valuation ring with fraction field \(F\) and residue field
\(
\kappa(\mathfrak p)=\Frac(A/\mathfrak p).
\)
Since \(A\) and \(A/\mathfrak p\) are regular, their nonconnective
\(K\)-theory is connective. Gabber's connective rigidity theorem
therefore applies to both henselian pairs with residue field \(k\), and
the commutative triangle through \(K(k;\mathbf Z/N)\) shows that
\[
K(A;\mathbf Z/N)\longrightarrow K(A/\mathfrak p;\mathbf Z/N)
\]
is an equivalence. By the induction hypothesis,
\(K_n(A/\mathfrak p;\mathbf Z/N)\to K_n(\kappa(\mathfrak p);\mathbf Z/N)\)
is injective. The commutative square induced by
\(A\to A_{\mathfrak p}\to\kappa(\mathfrak p)\) and
\(A\to A/\mathfrak p\to\kappa(\mathfrak p)\) therefore shows that
\[
K_n(A;\mathbf Z/N)\longrightarrow K_n(A_{\mathfrak p};\mathbf Z/N)
\]
is injective.

Since \(\ell\in A^\times\), one also has
\(\ell\in A_{\mathfrak p}^\times\). Gillet's finite-coefficient Gersten
theorem for an arbitrary discrete valuation ring therefore gives the
injectivity of
\(K_n(A_{\mathfrak p};\mathbf Z/N)\to K_n(F;\mathbf Z/N)\) for
\(n\ge0\) \cite{Gillet86TorsionGerstenDVR}. For \(n<0\), all groups in
question vanish by regularity. Composing the two maps proves the claim.
\end{proof}

\begin{corollary}[completion along a regular prime]
\label{cor:regular-prime-adic-completion}
Let \((A,\mathfrak m)\) be a noetherian regular local ring and let
\(\mathfrak p\subset A\) be a prime ideal such that \(A/\mathfrak p\) is
regular. Write
\[
\widehat A^{\,\mathfrak p}:=\varprojlim_n A/\mathfrak p^n.
\]
Let \(N=\ell^\nu\) be invertible in \(A\). If \(A/\mathfrak p\)
satisfies Gersten injectivity with \(\mathbf Z/N\)-coefficients, then so does
\(\widehat A^{\,\mathfrak p}\).
If \(A\) is excellent, then \(\widehat A^{\,\mathfrak p}\) is excellent. If
\(A/\mathfrak p\) is not henselian, then \(\widehat A^{\,\mathfrak p}\) is not henselian as a local
ring.
\end{corollary}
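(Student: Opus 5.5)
The plan is to apply \Cref{thm:regular-henselian-pair-lifting} to the ring \(R:=\widehat A^{\,\mathfrak p}\) and the prime \(\mathfrak p\widehat A^{\,\mathfrak p}\). The steps are to verify its hypotheses and then establish the two supplementary assertions.

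\emph{Ring-theoretic properties of \(R\).} Since \(A\) is noetherian, \(R\) is noetherian and \(A\to R\) is flat. Also \(R/\mathfrak pR\simeq A/\mathfrak p\) and \(\mathfrak p^nR/\mathfrak p^{n+1}R\simeq\mathfrak p^n/\mathfrak p^{n+1}\). Because \(A\) and \(A/\mathfrak p\) are regular, \(\mathfrak p\) is generated by part of a regular system of parameters \(x_1,\dots,x_c\). Hence \(\operatorname{gr}_{\mathfrak p}A\simeq (A/\mathfrak p)[T_1,\dots,T_c]\), and the same holds for \(\operatorname{gr}_{\mathfrak pR}R\). From this I would deduce the following:
\begin{itemize}
\item \(R\) is local with maximal ideal \(\mathfrak mR\), since \(\mathfrak pR\) lies in the Jacobson radical of the \(\mathfrak p\)-adic completion and \(R/\mathfrak pR\) is local.
\item \(R\) has dimension \(\dim A\).
\item \(R\) is regular, since \(\mathfrak mR\) is generated by \(\dim A\) elements: lifts of a regular system of parameters of \(A/\mathfrak p\) together with \(x_1,\dots,x_c\).
\end{itemize}
So \(R\) is a regular local domain of finite Krull dimension. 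The quotient \(R/\mathfrak pR\simeq A/\mathfrak p\) is regular. The pair \((R,\mathfrak pR)\) is henselian because \(R\) is \(\mathfrak pR\)-adically complete (\cite[\stackstag{0ALJ}]{StacksProject}). The integer \(N\) stays a unit in \(R\). The hypothesis on \(A/\mathfrak p\) is exactly Gersten injectivity for \(R/\mathfrak pR\), so \Cref{thm:regular-henselian-pair-lifting} gives the first assertion.

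\emph{Excellence.} If \(A\) is excellent, I would cite the theorem that the \(I\)-adic completion of an excellent ring is excellent (Gabber's theorem; in the local case also Kurano–Shimomoto). Since \(R\) is local here, it suffices to use the local form: \(R\) is a quotient of a regular local ring (indeed itself regular), and its formal fibres are geometrically regular. This follows because \(\widehat R^{\mathfrak m}=\widehat A^{\mathfrak m}\) and \(A\to R\) is flat, so the completion \(A\to\widehat A\) factors through \(R\). I expect this citation to be the main obstacle. If a clean reference is unavailable, I would argue via the factorization \(A\to R\to\widehat A\), using that \(A\to\widehat A\) is regular and \(R\to\widehat A\) is faithfully flat, to show that \(R\to\widehat A\) is regular.

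\emph{Non-henselianity.} Suppose \(R\) were henselian local. Then its quotient \(R/\mathfrak pR\simeq A/\mathfrak p\) would be henselian, since quotients of henselian local rings are henselian (\cite[\stackstag{04GH}]{StacksProject}). This contradicts the assumption on \(A/\mathfrak p\), so \(R\) is not henselian.
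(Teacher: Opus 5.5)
Your proposal is correct and follows the paper's proof: you show \(\widehat A^{\,\mathfrak p}\) is a noetherian regular local domain with \(\widehat A^{\,\mathfrak p}/\mathfrak p\widehat A^{\,\mathfrak p}\simeq A/\mathfrak p\), note that \((\widehat A^{\,\mathfrak p},\mathfrak p\widehat A^{\,\mathfrak p})\) is henselian by completeness, apply \Cref{thm:regular-henselian-pair-lifting}, cite Kurano--Shimomoto for excellence, and use that quotients of henselian local rings are henselian. The only difference is that the paper gets regularity because the flat extension keeps \(x_1,\dots,x_c\) a regular sequence with regular quotient, rather than from your dimension count, and this is inessential.

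Keep the citation for excellence rather than your fallback, which does not work as stated. Regularity of \(\widehat A^{\,\mathfrak p}\to\widehat A\) does not follow formally from regularity of \(A\to\widehat A\) and faithful flatness of \(\widehat A^{\,\mathfrak p}\to\widehat A\); establishing it is precisely the nontrivial content of Gabber's theorem.
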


\begin{proof}
Since \(A\) and \(A/\mathfrak p\) are regular local, the prime
\(\mathfrak p\) is generated by part of a regular system of parameters
\cite[\stackstag{00NR}]{StacksProject}. The completion
\(\widehat A^{\,\mathfrak p}\) is noetherian by
\cite[\stackstag{0316}]{StacksProject} and flat over \(A\) by
\cite[\stackstag{00MB}]{StacksProject}. Moreover,
\begin{equation}\label{eq:regular-prime-completion-special-fibre}
\widehat A^{\,\mathfrak p}/\mathfrak p\widehat A^{\,\mathfrak p}
\simeq A/\mathfrak p
\end{equation}
by \cite[\stackstag{05GG}]{StacksProject}, and
\(\mathfrak p\widehat A^{\,\mathfrak p}\) lies in the Jacobson radical by
\cite[\stackstag{05GI}]{StacksProject}. Thus every maximal ideal of the
completion contains \(\mathfrak p\widehat A^{\,\mathfrak p}\), and the
maximal ideals of \(\widehat A^{\,\mathfrak p}\) correspond to those of
the quotient in \eqref{eq:regular-prime-completion-special-fibre}. The
latter is local, so \(\widehat A^{\,\mathfrak p}\) is local. Flatness preserves the chosen regular sequence;
since its quotient is regular local, \(\widehat A^{\,\mathfrak p}\) is regular local by
\cite[\stackstag{00NU}]{StacksProject}, hence a domain. 

The pair
\((\widehat A^{\,\mathfrak p},
\mathfrak p\widehat A^{\,\mathfrak p})\) is henselian because the ring
is complete for the
\(\mathfrak p\widehat A^{\,\mathfrak p}\)-adic topology
\cite[\stackstag{0ALJ}]{StacksProject}. Moreover,
\eqref{eq:regular-prime-completion-special-fibre} identifies its quotient
by \(\mathfrak p\widehat A^{\,\mathfrak p}\) with \(A/\mathfrak p\),
and the image of \(N\in A^\times\) is a unit in
\(\widehat A^{\,\mathfrak p}\). Thus all the hypotheses of
\Cref{thm:regular-henselian-pair-lifting} are satisfied, and that theorem
gives Gersten injectivity for \(\widehat A^{\,\mathfrak p}\).

If \(A\) is excellent, ideal-adic completion preserves excellence
\cite[\arxivhtmlref{1609.09246v2}{Thmmaintheorema2}{Main Theorem~2}]{KuranoShimomoto16IdealAdicCompletion}.
Finally, a
quotient of a henselian local ring is henselian: lift a monic polynomial
and a simple residue-field root to the original local ring, apply
Hensel's lemma there, and reduce the resulting root modulo the quotient
ideal. Consequently, if \(\widehat A^{\,\mathfrak p}\) were henselian,
then its quotient in \eqref{eq:regular-prime-completion-special-fibre},
which is isomorphic to \(A/\mathfrak p\), would be henselian, contrary
to the hypothesis.
\end{proof}

\begin{corollary}[excellent nonhenselian mixed-characteristic examples]
\label{cor:excellent-nonhenselian-mixed-characteristic}
Let \(V\) be an excellent discrete valuation ring with uniformizer \(\pi\),
and let \((A,\mathfrak m)\) be a regular local ring essentially smooth over
\(V\), meaning a localization of a smooth \(V\)-algebra. Assume that
\(\pi\in\mathfrak m\), and write
\(\widehat A^{\,\pi}:=\varprojlim_n A/\pi^nA\). For every
\(N=\ell^\nu\) invertible in \(V\), the map
\[
K_n(\widehat A^{\,\pi};\mathbf Z/N)\longrightarrow
K_n(\Frac(\widehat A^{\,\pi});\mathbf Z/N)
\]
is injective for all \(n\in\mathbf Z\). The ring \(\widehat A^{\,\pi}\) is excellent. If
the regular local ring \(A/\pi A\) is nonhenselian, then \(\widehat A^{\,\pi}\) is a
nonhenselian local ring.
\end{corollary}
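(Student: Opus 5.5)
The plan is to deduce the statement from \Cref{cor:regular-prime-adic-completion}, applied with \(\mathfrak p=\pi A\). Two facts need checking first: \(A/\pi A\) is regular, and it satisfies Gersten injectivity with \(\mathbf Z/N\)-coefficients. The excellence and nonhenselianity assertions then follow from the last two clauses of that corollary. For those clauses one needs \(A\) excellent, and \(A\) is a localization of a finite type algebra over the excellent ring \(V\), hence excellent \cite[\stackstag{07QU}]{StacksProject}.

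\emph{Regularity.} Write \(A=S_{\mathfrak n}\) with \(S\) smooth over \(V\). Then \(A/\pi A\) is a localization of \(S/\pi S=S\otimes_V k_V\), which is smooth over the residue field \(k_V=V/\pi\). Hence \(A/\pi A\) is a regular local ring, essentially smooth over a field. Since \(\pi\in\mathfrak m\), this ring is nonzero and local, so \(\pi A\) is a prime ideal. It is also part of a regular system of parameters, because \(A\) is flat over \(V\) and \(\pi\) is a nonzerodivisor with regular quotient.

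\emph{Gersten injectivity for \(A/\pi A\).} This ring contains the field \(k_V\), in which \(N\) is invertible since \(N\in V^\times\). It is essentially smooth over \(k_V\), so it is a localization of a finite type algebra over a field. Quillen's Gersten theorem \cite[\S7, Thm.~5.11]{Quillen73} gives the integral statement. Grayson's universal exactness \cite[Cor.~6]{Grayson85UniversalExactness} then gives injectivity of the augmentation after applying both \(M\mapsto M/N\) and \(M\mapsto M[N]\). The coefficient short exact sequence
\[
0\to K_n/N\to K_n(-;\mathbf Z/N)\to K_{n-1}[N]\to0
\]
maps to the corresponding sequence for the fraction field. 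A four-lemma argument then gives injectivity of \(K_n(A/\pi A;\mathbf Z/N)\to K_n(\Frac(A/\pi A);\mathbf Z/N)\) for \(n\ge0\). Negative degrees vanish by regularity, using \Cref{conv:nonconnective-k-groups}. This step uses only the known equicharacteristic results recalled in the introduction.

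\emph{Conclusion.} Now \Cref{cor:regular-prime-adic-completion} applies to \((A,\mathfrak m)\) and \(\mathfrak p=\pi A\), with \(N\) invertible in \(A\). It gives Gersten injectivity for \(\widehat A^{\,\pi}\), and this completion coincides with \(\widehat A^{\,\mathfrak p}\). It also gives excellence, and nonhenselianity whenever \(A/\pi A\) is nonhenselian.

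The main obstacle I expect is the equicharacteristic input in the second step. One must confirm that Grayson's universal exactness applies to the semilocal, essentially-finite-type situation over \(k_V\) and yields injectivity on both torsion and cotorsion. If a gap appears there, the fallback is the Popescu–continuity descent sketched in the introduction, which reduces the general case to the smooth stage where Quillen's and Grayson's results apply.
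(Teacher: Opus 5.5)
Your proposal is correct and follows the paper's proof essentially step for step: flatness and base change to the residue field of \(V\) give regularity of \(A/\pi A\) and primality of \(\pi A\), excellence of \(A\) comes from \stackstag{07QU}, Gersten injectivity of \(A/\pi A\) comes from Grayson's universal exactness applied to \(M\mapsto M/N\) and \(M\mapsto M[N]\) together with the coefficient sequence (negative degrees vanish by regularity), and then \Cref{cor:regular-prime-adic-completion} is applied with \(\mathfrak p=\pi A\). The obstacle you flag is handled in the paper by realizing \(A/\pi A\) as the local ring at a point of a smooth integral variety over the residue field of \(V\); taking the stalk there is a filtered colimit compatible with both functors, so the Quillen citation is only supplementary and the Popescu fallback is unnecessary.
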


\begin{proof}
Smoothness makes \(A\) flat over \(V\), so multiplication by \(\pi\) on \(A\) is
injective. Base change to the residue field of \(V\) shows that \(A/\pi A\) is a regular
local ring essentially smooth over that field; in particular it is a
domain, so \(\pi A\) is prime. Moreover, \(A\) is excellent because it
is essentially of finite type over the excellent ring \(V\)
\cite[\stackstag{07QU}]{StacksProject}.

To apply \Cref{cor:regular-prime-adic-completion}, it remains to prove
Gersten injectivity for \(A/\pi A\). Since \(A/\pi A\) and
\(\Frac(A/\pi A)\) are regular, their negative \(K\)-groups vanish.
The coefficient exact sequence therefore gives
\[
K_n(A/\pi A;\mathbf Z/N)
=
K_n(\Frac(A/\pi A);\mathbf Z/N)
=0
\qquad(n<0).
\]
For the nonnegative degrees, realize \(A/\pi A\) as the local ring at a
point of a smooth integral variety over the residue field of \(V\).

For every \(r\ge0\), Grayson proves that the
augmented Gersten--Quillen resolution in degree \(r\) on this variety is
universally exact \cite[Cor.~6, p.~141]{Grayson85UniversalExactness}.
Thus, after taking the stalk at the chosen point, the complex remains
exact upon application of any additive
endofunctor of abelian groups that commutes with filtered colimits. This
applies to \(M\mapsto M/N\) and \(M\mapsto M[N]\). Exactness at the
augmentation gives
\[
K_r(A/\pi A)/N\hookrightarrow K_r(\Frac(A/\pi A))/N,
\qquad
K_r(A/\pi A)[N]\hookrightarrow K_r(\Frac(A/\pi A))[N].
\]
If \(n=0\), regularity gives
\(K_{-1}(A/\pi A)=K_{-1}(\Frac(A/\pi A))=0\). The coefficient exact
sequence therefore identifies the finite-coefficient generic restriction map in degree zero
with the injection on quotients for \(r=0\). 
Now let \(n\ge1\). Functoriality of the coefficient cofiber sequence gives
the commutative diagram
\[
\begin{tikzcd}[column sep=small]
	0 \ar[r] &
	K_n(A/\pi A)/N \ar[r]\ar[d,hook] &
	K_n(A/\pi A;\mathbf Z/N) \ar[r]\ar[d] &
	K_{n-1}(A/\pi A)[N] \ar[r]\ar[d,hook] &
	0\\
	0 \ar[r] &
	K_n(\Frac(A/\pi A))/N \ar[r] &
	K_n(\Frac(A/\pi A);\mathbf Z/N) \ar[r] &
	K_{n-1}(\Frac(A/\pi A))[N] \ar[r] &
	0 .
\end{tikzcd}
\]
The outer vertical maps are injective by the preceding paragraph;
hence the middle vertical map is injective.

Thus \(A/\pi A\) satisfies Gersten injectivity with
\(\mathbf Z/N\)-coefficients. Apply
\Cref{cor:regular-prime-adic-completion} with \(\mathfrak p=\pi A\).
\end{proof}

\section{An application to Prüfer bases}
\label{sec:prufer-ind-smooth-generic-injectivity}

We now use Gersten injectivity for henselian valuation rings to treat
henselian local ind-smooth algebras over Prüfer rings. The geometric
reduction below is separate from the filtered construction, but its
valuation-theoretic input,
\Cref{thm:henselian-valuation-generic-injectivity}, ultimately comes from
the filtered splitting.

The result is a mixed-characteristic finite-coefficient complement to
Kundu's integral equicharacteristic theorem
\cite[Thm.~1.2]{Kundu26SmoothValuation}. Bouis--Kundu package their abstract Gersten argument in
\cite[\arxivhtmlref{2506.09910v2}{S3.Thmtheorem5}{Thm.~3.5} and
\arxivhtmlref{2506.09910v2}{S3.Thmtheorem13}{Cor.~3.13}]{BouisKunduBL}.
It could give an alternative proof here after passing from the raw
finite-coefficient \(K\)-groups to their Nisnevich sheafifications and
checking the finitarity and Horrocks hypotheses. We use instead the
explicit residue-preserving valuation constructed below. Their
Beilinson--Lichtenbaum theorem remains an input through
\Cref{lem:ordinary-prufer-bl-range-input}; its proof uses their abstract
propagation theorem independently of the Gersten injectivity proved here.

\begin{lemma}[pointed smooth stages over a Prüfer base]
\label{lem:prufer-pointed-smooth-stage}
Let \(P\) be a Prüfer ring and let \((A,\mathfrak m_A,k)\) be an
essentially smooth local \(P\)-algebra. Then \(A\) and \(A^h\) are
normal domains. Write \(P=\prod_{\lambda=1}^r P_\lambda\), with each
\(P_\lambda\) a Prüfer domain. There is a unique index \(\lambda_0\)
through which the map \(P\to A\) factors. Put
\(P_0:=P_{\lambda_0}\) and
\(\mathfrak p:=\mathfrak m_A\cap P_0\). Then
\((P_0)_{\mathfrak p}\) is a valuation domain and \(A^h\) is
ind-smooth over it.
\end{lemma}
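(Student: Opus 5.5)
The plan is to dispose of the product decomposition first, then flatness and normality, and finally ind-smoothness over the valuation ring. Since \(A\) is local, \(\Spec(A)\) is connected. The idempotents \(e_\lambda\in P\) map to idempotents of \(A\) that are orthogonal and sum to \(1\). A local ring has only the idempotents \(0,1\), so exactly one \(e_{\lambda_0}\) maps to \(1\) and the others map to \(0\). Hence \(P\to A\) factors uniquely through \(P_0=P_{\lambda_0}\). Moreover \(A\) is essentially smooth over \(P_0\), because \(A\) is a localization of a smooth \(P\)-algebra \(S\) and \(S\otimes_P P_0=S e_{\lambda_0}\) is smooth over \(P_0\). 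The prime \(\mathfrak p=\mathfrak m_A\cap P_0\) is the contraction of a prime, and \((P_0)_{\mathfrak p}\) is a valuation domain by the definition of a Prüfer domain (\Cref{def:bouis-kundu-notation}). Since \(P_0\setminus\mathfrak p\) maps into \(A^\times\), the map \(P_0\to A\) factors through \(V:=(P_0)_{\mathfrak p}\). Then \(A\) is a localization of \(S\otimes_{P_0}V\), which is smooth over \(V\). Thus \(A\) is essentially smooth over the valuation ring \(V\).

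For normality, I would use that a smooth algebra over a normal domain is normal, and that valuation rings are normal. Concretely, \(A\) is flat over \(V\), hence torsion free, and \(V\) is a filtered colimit of its finitely generated subrings; but it is cleaner to cite Stacks: smooth over a normal ring implies normal (\stackstag{033C}), and a normal local ring is a domain. Normality is preserved by henselization (\stackstag{06DI}), and a henselization of a normal local ring is again local, hence \(A^h\) is a normal domain too.

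For ind-smoothness of \(A^h\) over \(V\), write \(A=S_{\mathfrak q}\) with \(S\) smooth over \(V\). The henselization \(A^h\) is the filtered colimit of the local rings of étale neighbourhoods \((S',\mathfrak q')\to(S,\mathfrak q)\) with trivial residue field extension (\stackstag{04GN}). Each \(S'\) is étale over \(S\), hence smooth over \(V\), so \(A^h=\operatorname{colim}S'_{\mathfrak q'}\). Localizations are themselves colimits of principal localizations \(S'_f\), which are smooth over \(V\). Combining the two filtered systems, \(A^h\) is a filtered colimit of smooth \(V\)-algebras, as required.

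The main obstacle I expect is in this last step, namely presenting \(A^h\) as a single filtered colimit of smooth algebras. Pairs \((S',f)\) with \(f\notin\mathfrak q'\) must form a filtered category. I would handle this by indexing over all étale \(S\)-algebras \(S'\) with a point over \(\mathfrak q\) inducing an isomorphism on residue fields; these form a filtered category by \stackstag{04GN}. Inside each, one can take \(S'_f\) with \(f\notin\mathfrak q'\); since \(S'_f\) is again such an étale neighbourhood, the localizations are absorbed into the same system. The rest is bookkeeping.
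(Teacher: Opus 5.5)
Your proposal is correct. It agrees with the paper on the idempotent argument and on normality: smoothness over a normal base via \stackstag{033C}, then \stackstag{06DI} for \(A^h\). You are also more explicit than the paper about why \(A\) is a localization of a smooth \((P_0)_{\mathfrak p}\)-algebra, passing through \(Se_{\lambda_0}\) and then base-changing to \(V\).

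The genuine difference is in the ind-smoothness step. You apply the colimit description of henselization to the smooth \(V\)-algebra \(S\) and the prime \(\mathfrak q\) with \(A=S_{\mathfrak q}\). This gives \(A^h=\operatorname{colim}S'\) over étale neighbourhoods \((S',\mathfrak q')\) of \((S,\mathfrak q)\), and each \(S'\) is étale over \(S\), hence smooth over \(V\). Since \stackstag{04GN} is stated for an arbitrary ring and prime, and already asserts that this index category is filtered with colimit \((S_{\mathfrak q})^h\), this settles the matter at once. The bookkeeping you anticipated is not actually needed.

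The paper instead applies the same lemma to the local ring \(A\) itself. Its étale neighbourhoods are only essentially smooth over \(V\), so the paper then has to do two further things. It descends a finitely presented étale \(A\)-algebra to an étale algebra over a smooth stage \(A_i\), using \stackstag{00U2}. It then verifies ind-smoothness through the finite-presentation factorization criterion \stackstag{07C3} rather than exhibiting a filtered diagram.

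Your route is shorter and gives an explicit filtered presentation of \(A^h\) by smooth \(V\)-algebras. The paper's route reaches the same conclusion using only the local-ring instance of the colimit description, at the cost of that extra descent step.
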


\begin{proof}
Let \(e_\lambda\in P\) be the idempotent of the \(\lambda\)-th factor.
Since a local ring has no idempotents other than \(0\) and \(1\),
exactly one \(e_\lambda\) maps to \(1\) in \(A\); all the others map to
\(0\). This proves the asserted unique factorization through \(P_0\).
Every Prüfer domain is
normal, and \((P_0)_{\mathfrak p}\) is a valuation domain. After
localizing the base at \(\mathfrak p\), the ring \(A\) is a localization
of a smooth \((P_0)_{\mathfrak p}\)-algebra. Smooth algebras over normal
rings are normal \cite[\stackstag{033C}]{StacksProject}; hence the local
ring \(A\) is a normal domain. Its henselization \(A^h\) is a normal
domain by
\cite[\stackstag{06DI}]{StacksProject}.

Write \(A=\varinjlim_i A_i\), where each \(A_i\) is a principal open
localization of a smooth \((P_0)_{\mathfrak p}\)-algebra and hence is
smooth over \((P_0)_{\mathfrak p}\). We verify ind-smoothness of
\(A^h\) by the finite-presentation factorization criterion
\cite[\stackstag{07C3}]{StacksProject}. Let \(E\) be a finitely
presented \((P_0)_{\mathfrak p}\)-algebra and let \(E\to A^h\) be a
homomorphism. By \cite[\stackstag{04GN}]{StacksProject}, the
henselization \(A^h\) is the filtered colimit of the pointed étale
neighbourhoods of \(A\); hence \(E\to A^h\) factors through one of
them, say through an étale \(A\)-algebra \(B\) of finite
presentation. After enlarging \(i\), finite-presentation descent gives
an étale \(A_i\)-algebra \(B_i\) of finite presentation with
\(B\simeq A\otimes_{A_i}B_i\); see
\cite[\stackstag{00U2}\textup{(9)}]{StacksProject}. Since
\(B\simeq\varinjlim_{j\ge i}(A_j\otimes_{A_i}B_i)\) and \(E\) is
finitely presented, the map \(E\to B\) factors through
\(A_j\otimes_{A_i}B_i\) for some \(j\ge i\). This algebra is étale
over \(A_j\), hence smooth over \((P_0)_{\mathfrak p}\). Thus every
map from a finitely presented \((P_0)_{\mathfrak p}\)-algebra to
\(A^h\) factors through a smooth \((P_0)_{\mathfrak p}\)-algebra,
and \cite[\stackstag{07C3}]{StacksProject} shows that \(A^h\) is
ind-smooth over \((P_0)_{\mathfrak p}\).
\end{proof}

\begin{lemma}[a residue-preserving valuation at a pointed smooth stage]
\label{lem:prufer-pointed-smooth-residue-valuation}
In the situation of
\Cref{lem:prufer-pointed-smooth-stage}, there are a henselian valuation
domain \(W\) and an injective local map
\[
A^h\longrightarrow W
\]
which induces an isomorphism from \(k\) to the residue field of \(W\).
Consequently, \(\Frac(A^h)\) embeds into \(\Frac(W)\).
\end{lemma}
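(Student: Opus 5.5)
The plan is to build the valuation in two stages: first a valuation that dominates \(A\) and preserves the residue field, then a henselization.

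For the first stage, put \(V_0:=(P_0)_{\mathfrak p}\), a valuation domain, and \(\mathfrak m_0:=\mathfrak pV_0\). Then \(A\) is a localization of a smooth \(V_0\)-algebra at a prime lying over \(\mathfrak m_0\); write \(\kappa_0\) for the residue field of \(V_0\). The closed fibre \(\bar A:=A/\mathfrak m_0A\) is a localization of a smooth \(\kappa_0\)-algebra at a point. Hence it is a regular local ring, say of dimension \(d\), with residue field \(k\).

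Choose elements \(x_1,\dots,x_d\in\mathfrak m_A\) lifting a regular system of parameters of \(\bar A\). Locally near the point, the \(V_0\)-morphism \(\Spec A\to\mathbf A^d_{V_0}\) given by these elements is étale, after shrinking. Indeed, \(\Omega\) is free, and on the closed fibre the differentials \(dx_i\) complete to a basis, since the \(x_i\) form part of a system of regular parameters at a \(k\)-point of the smooth fibre. I expect one may need to work with relative dimension \(n\ge d\) and choose \(n-d\) further coordinates to reduce the residue field extension to a finite separable one. The cleaner route is to avoid an explicit étale chart and construct the valuation directly, as follows.

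Let \(\mathfrak q\) be the generic point of the closed fibre component through the point, that is, the prime \(\mathfrak m_0A\); it is prime because \(\bar A\) is a domain. The localization \(A_{\mathfrak q}\) is a local ring whose maximal ideal is generated by \(\mathfrak m_0\). Since \(A\) is flat with geometrically integral fibre locally, the Gauss-type extension of \(V_0\) to \(A_{\mathfrak q}\) is a valuation ring \(V_1\subset\Frac(A)\). Concretely, I take \(V_1\) to be the valuation ring of the extension of \(V_0\) to \(\Frac(A)\) given by the minimum over coefficients in a local étale coordinate description. Its residue field is \(\Frac(\bar A)\), and its value group is \(\Gamma_{V_0}\). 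Justifying that this "Gauss valuation" exists for a smooth algebra over a valuation ring is the main obstacle. I would handle it via flatness and the fact that \(\mathfrak m_0^{\,}A\cap V_0=\mathfrak m_0\), together with the observation that \(A\otimes_{V_0}\) of any \(V_0\)-ideal is torsion-free. Then compose \(V_1\) with the parameter-chain valuation \(V'\subset\Frac(\bar A)\) of \Cref{lem:regular-local-parameter-chain-valuation} applied to \(\bar A\), setting \(V:=\lambda^{-1}(V')\) with \(\lambda\colon V_1\to\Frac(\bar A)\) the reduction, exactly as in the proof of that lemma. The composite \(V\) is a valuation ring that contains \(A\) and dominates it. Its residue field is the residue field of \(V'\), which is \(k\) by the lemma.

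For the second stage, set \(W:=V^h\). By \cite[\stackstag{0ASK}]{StacksProject} and \cite[\stackstag{04GN}]{StacksProject}, \(W\) is a henselian valuation ring with the same residue field \(k\), and \(V\to W\) is local. Since \(A\to V\to W\) is a local map into a henselian local ring, the universal property of henselization gives a unique local extension \(A^h\to W\), and this map still induces the identity on \(k\). For injectivity, note that \(A^h\) is a filtered colimit of étale \(A\)-algebras, and \(W\) is flat over \(V\) with \(V\supseteq A\) a domain. I would therefore argue that the kernel is a prime of \(A^h\) lying over \(0\subset A\). Because \(A\to A^h\) is flat with zero-dimensional generic fibre, and \(A^h\) is a domain by \Cref{lem:prufer-pointed-smooth-stage}, the only such prime is \(0\) provided the generic fibre has a single point over the domain. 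Alternatively, injectivity follows from showing that \(\Frac(A)\otimes_A A^h\) is a field. Finally, an injective map of domains induces \(\Frac(A^h)\hookrightarrow\Frac(W)\).
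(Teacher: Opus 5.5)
Stages two and three of your plan follow the paper. These are the composite \(V=\lambda^{-1}(V')\) with the parameter-chain valuation of \(\bar A\) from \Cref{lem:regular-local-parameter-chain-valuation}, then \(W=V^h\), the universal property, and injectivity. For injectivity, \((A\setminus\{0\})^{-1}A^h\) is a localization of the domain \(A^h\) and is ind-étale over \(\Frac(A)\), so it is a field. That settles your ``single point'' proviso and is exactly the paper's argument.

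The gap is stage one. You need \(V_1:=A_{\mathfrak m_0A}\) to be a valuation ring of \(\Frac(A)\) with residue field \(\Frac(\bar A)\). This is the one substantive input of the lemma, and you assert it rather than prove it. The justification you outline cannot work. Flatness over \(V_0\), \(\mathfrak m_0A\cap V_0=\mathfrak m_0\), torsion-freeness of \(A\otimes_{V_0}I\), and a maximal ideal generated by \(\mathfrak m_0\) with field quotient do not together imply the valuation property. For example, let \(V_0\) be a DVR with uniformizer \(\pi\) and fraction field \(K_0\), and put \(B=\{f\in K_0[x,y]_{(x,y)}:f(0,0)\in V_0\}\). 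Then \(B\) is a local domain, flat over \(V_0\), with \(\mathfrak m_B=\pi B\) and \(B/\pi B=\kappa_0\). Yet neither \(x/y\) nor \(y/x\) lies in \(B\), so \(B\) is not a valuation ring. What rules out such rings is the finite presentation supplied by smoothness, which your sketch never uses. Likewise, ``minimum over coefficients in a local étale coordinate description'' is not a definition, since elements of an étale algebra over a polynomial ring have no coefficients.

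The paper closes this step as follows. It writes \(A=C_{\mathfrak q}\) with \(C\) an integral smooth \(V_0\)-algebra, and identifies \(A_{\mathfrak m_0A}\) with \(C_{\mathfrak q_0}\), where \(\mathfrak q_0\) is the generic point of the special-fibre component through \(\mathfrak q\). It then cites \cite[Lem.~2.13(1)]{Kundu26SmoothValuation}, which says that \(C_{\mathfrak q_0}\) is a valuation domain.

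If you want a self-contained argument instead, proceed in four steps.
\begin{enumerate}
\item Choose an étale \(V_0\)-map from a neighbourhood of \(\mathfrak q\) to \(\mathbf A^n_{V_0}\). Here \(n\) is the relative dimension at \(\mathfrak q\), not \(d=\dim\bar A\); the two differ by \(\operatorname{trdeg}(k/\kappa_0)\), which is why your first chart attempt failed.
\item Openness of the induced étale map on special fibres sends \(\mathfrak q_0\) to the generic point \(\mathfrak m_0V_0[t_1,\dots,t_n]\), whose local ring is the Gauss valuation ring \(V_g\).
\item Then \(C_{\mathfrak q_0}\) is a local ring of an étale \(V_g\)-algebra. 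By Zariski's main theorem and normality, it is a localization of the integral closure of \(V_g\) in the finite separable extension \(\Frac(A)/\Frac(V_g)\).
\item That integral closure is a Prüfer domain, so \(C_{\mathfrak q_0}\) is a valuation ring.
\end{enumerate}

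Separately, your appeal to geometrically integral fibres is unjustified, since components of a smooth \(\kappa_0\)-scheme need not be geometrically integral. Nothing in the argument requires it, however.
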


\begin{proof}
By \Cref{lem:prufer-pointed-smooth-stage}, the local ring \(A\) is a
localization of a smooth \((P_0)_{\mathfrak p}\)-algebra. Since \(A\)
is local, we may choose a smooth \((P_0)_{\mathfrak p}\)-algebra \(C\)
and a prime \(\mathfrak q\subset C\) such that
\(A\simeq C_{\mathfrak q}\); indeed, starting from any localization
presentation, take \(\mathfrak q\) to be the inverse image of
\(\mathfrak m_A\). The contraction of \(\mathfrak q\) to
\((P_0)_{\mathfrak p}\) is the maximal ideal
\(\mathfrak p(P_0)_{\mathfrak p}\). The closed fibre
\(A/\mathfrak pA\) is therefore an essentially smooth local algebra
over the residue field of \((P_0)_{\mathfrak p}\), hence a noetherian
regular local domain with residue field \(k\).

The ring \(C\) is normal by
\cite[\stackstag{033C}]{StacksProject}. The valuation domain
\((P_0)_{\mathfrak p}\) is a domain, so its spectrum is irreducible,
and every nonempty open subspace of an irreducible topological space is
irreducible. Thus every quasi-compact open of
\(\Spec((P_0)_{\mathfrak p})\) has at most one irreducible component.
Applying \cite[\stackstag{07TD}]{StacksProject} to the smooth morphism
\(\Spec(C)\to\Spec((P_0)_{\mathfrak p})\) shows that
\(\Spec(C)\) has finitely many irreducible components. Equivalently,
\(C\) has finitely many minimal primes. It now follows from
\cite[\stackstag{030C}]{StacksProject} that \(C\) is a finite product
of normal domains. The prime \(\mathfrak q\) belongs to exactly one
of the corresponding open-and-closed factors. Replacing \(C\) by
that factor preserves smoothness over \((P_0)_{\mathfrak p}\) and does
not change \(C_{\mathfrak q}\). We may therefore assume that \(C\)
is an integral smooth \((P_0)_{\mathfrak p}\)-algebra.

Since \(A/\mathfrak pA\) is a domain, \(\mathfrak pA\) is prime. Let
\(\mathfrak q_0\subseteq\mathfrak q\) be its contraction under
\(C\to C_{\mathfrak q}=A\). Then
\(\mathfrak q_0C_{\mathfrak q}=\mathfrak pA\), and
\(
A/\mathfrak pA
\simeq (C/\mathfrak q_0)_{\mathfrak q/\mathfrak q_0}.
\)
The special fibre \(C/\mathfrak pC\) is smooth over the residue field
of \((P_0)_{\mathfrak p}\), hence noetherian and reduced. Its
localization at \(\mathfrak q/\mathfrak pC\) is
\(A/\mathfrak pA\), which is a domain. Minimal primes under
localization correspond to minimal primes contained in
\(\mathfrak q/\mathfrak pC\); hence exactly one minimal prime of
\(C/\mathfrak pC\) is contained in \(\mathfrak q/\mathfrak pC\).
The contraction of the zero ideal of the domain
\(A/\mathfrak pA\) is both this minimal prime and, by the definition
of \(\mathfrak q_0\), the prime \(\mathfrak q_0/\mathfrak pC\). Thus
\(\mathfrak q_0\) is the
generic point of the component of the special fibre selected by
\(\mathfrak q\), and
\[
\kappa(\mathfrak q_0)
\simeq \Frac(C/\mathfrak q_0)
\simeq \Frac(A/\mathfrak pA).
\]
Since \(\mathfrak q_0\) is a generic point of the special fibre,
Kundu's valuation lemma applies to the integral smooth
\((P_0)_{\mathfrak p}\)-algebra \(C\). For the singleton
\(\{\mathfrak q_0\}\),
\cite[Lem.~2.13(1) and Rem.~2.14(1)]{Kundu26SmoothValuation} shows that
\(C_{\mathfrak q_0}\) is a faithfully flat valuation domain over
\((P_0)_{\mathfrak p}\), with the same value group. We use only
part~\textup{(1)} of the cited lemma, which has no finite-rank
hypothesis; the finite-rank assumption there enters only in the
approximation statement of part~\textup{(2)}. Its fraction field
is \(\Frac(C)=\Frac(A)\), and its residue field is
\(\Frac(A/\mathfrak pA)\). Moreover,
\(C_{\mathfrak q_0}\simeq A_{\mathfrak pA}\): indeed, localizing
\(C_{\mathfrak q}=A\) at
\(\mathfrak q_0C_{\mathfrak q}=\mathfrak pA\) gives both rings.
Consequently, \(A_{\mathfrak pA}\)
is a valuation ring of \(\Frac(A)\), with residue field
\(\Frac(A/\mathfrak pA)\).

Apply \Cref{lem:regular-local-parameter-chain-valuation} to
\(A/\mathfrak pA\). We obtain a valuation ring
\(\overline U\subseteq\Frac(A/\mathfrak pA)\) which dominates
\(A/\mathfrak pA\) and has residue field \(k\). Let \(U\) be the
inverse image of \(\overline U\) under the residue map
\(C_{\mathfrak q_0}\to\Frac(A/\mathfrak pA)\).

By composition of valuations, \(U\) is a valuation ring of
\(\Frac(A)\). The quotient map \(A\to A/\mathfrak pA\) is the restriction
of the residue map on \(C_{\mathfrak q_0}\), and
\(\overline U\) contains and dominates \(A/\mathfrak pA\). Hence
\(A\subseteq U\), the maximal ideal of \(U\) contracts to
\(\mathfrak m_A\), and the residue field of \(U\) is \(k\). Thus \(U\)
dominates \(A\).

Put \(W:=U^h\). The ring \(W\) is a henselian valuation ring by
\cite[\stackstag{0ASK}]{StacksProject}, and its residue field is still
\(k\) by \cite[\stackstag{04GN}]{StacksProject}. The local map
\(A\to W\) extends, by the universal property of henselization, to a
local map \(A^h\to W\).

It remains to verify injectivity. Since \(C\) is a normal domain and
\(A=C_{\mathfrak q}\), the ring \(A\) is a normal local domain. Hence
\(A^h\) is a normal domain by
\cite[\stackstag{06DI}]{StacksProject}.

The restriction of \(A^h\to W\) to \(A\) is the inclusion
\(A\subseteq U\subseteq W\); therefore localization gives a unital map
\[
(A\setminus\{0\})^{-1}A^h\longrightarrow\Frac(W).
\]
Since \(A^h\) is a domain,
\((A\setminus\{0\})^{-1}A^h\) is a subring of \(\Frac(A^h)\).
By base change for ind-\'etale algebras
\cite[\stackstag{0BSH}]{StacksProject}, it is ind-\'etale over
\(\Frac(A)\), hence algebraic over \(\Frac(A)\). Being an algebraic
domain over a field, it is a field. It contains \(A^h\) and is contained
in \(\Frac(A^h)\); consequently
\[
(A\setminus\{0\})^{-1}A^h=\Frac(A^h).
\]
Thus the preceding unital map is a homomorphism
\(\Frac(A^h)\to\Frac(W)\) between fields and is therefore injective.
Its restriction \(A^h\to W\) is injective as well.

\end{proof}

\begin{lemma}[nonconnective rigidity at a pointed smooth stage]
\label{lem:prufer-pointed-smooth-nonconnective-rigidity}
In the situation of
\Cref{lem:prufer-pointed-smooth-stage}, let \(N\ge1\) be invertible in
\(A\). Reduction induces an equivalence of nonconnective spectra
\[
K(A^h;\mathbf Z/N)\xrightarrow{\ \simeq\ }K(k;\mathbf Z/N).
\]
\end{lemma}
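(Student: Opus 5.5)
The plan follows the proof of \Cref{lem:valuation-closed-point-finite-coefficient-k-rigidity}: first establish connective rigidity for the henselian pair \((A^h,\mathfrak m_{A^h})\), then show that nonconnective and connective \(K\)-theory agree on both sides. The residue field of \(A^h\) is \(k\), and \(N\) remains invertible in \(A^h\). Hence \Cref{thm:gabber-rigidity-finite-coefficient-k}, applied to the henselian local pair \((A^h,\mathfrak m_{A^h})\), gives an equivalence
\[
K^{\mathrm{cn}}(A^h;\mathbf Z/N)\xrightarrow{\ \simeq\ }K^{\mathrm{cn}}(k;\mathbf Z/N).
\]
On the target side, \(K(k)\) is connective because \(k\) is a field; this is the valuation-ring case of \Cref{lem:negative-finite-coefficient-k-valuation-domain} applied to \(k\). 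So \(K^{\mathrm{cn}}(k;\mathbf Z/N)\simeq K(k;\mathbf Z/N)\).

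The main point is the vanishing of the negative \(K\)-groups of \(A^h\), at least after reduction mod \(N\). In general \(A^h\) is not noetherian, since the base \((P_0)_{\mathfrak p}\) may be a non-discrete valuation ring, so regularity arguments are unavailable. Instead I will use \Cref{lem:prufer-pointed-smooth-stage}: \(A^h\) is a filtered colimit of smooth algebras over the valuation ring \(V:=(P_0)_{\mathfrak p}\). Nonconnective \(K\)-theory commutes with filtered colimits of rings \cite[\arxivhtmlref{1001.2282v4}{S9.E10}{Thm.~9.10}]{BGT13KTheoryStableCategories}. It therefore suffices to show \(K_i(S)=0\) for \(i<0\) and every smooth \(V\)-algebra \(S\).

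For this I plan to cite Kelly--Morrow's vanishing of negative \(K\)-theory for schemes smooth over a valuation ring \cite{KellyMorrowValuationRings}. Their argument uses that valuation rings are coherent and that smooth algebras over them have finite-length regularity in the sense of Vorst--Kelly--Morrow, so \(K_{<0}=0\). Should that citation not cover exactly this case, the fallback is to write \(V\) as a filtered colimit of its finite-rank, or even noetherian-approximating, subrings. This descends \(S\) to a smooth algebra over a regular noetherian base, and in fact suffices to work over finitely generated subrings approximating \(V\). However, a valuation ring is not a filtered colimit of regular noetherian rings without extra care. So I would first try the Kelly--Morrow citation and treat this step as the main obstacle.

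Granting \(K_i(A^h)=0\) for \(i<0\), the universal-coefficient sequence shows that \(K(A^h;\mathbf Z/N)\) is connective, exactly as in \Cref{lem:negative-finite-coefficient-k-valuation-domain}. Hence \(K^{\mathrm{cn}}(A^h;\mathbf Z/N)\simeq K(A^h;\mathbf Z/N)\). Composing the three equivalences shows that the reduction map \(K(A^h;\mathbf Z/N)\to K(k;\mathbf Z/N)\) is an equivalence. Its naturality is clear because each comparison map \(K^{\mathrm{cn}}\to K\) is natural.
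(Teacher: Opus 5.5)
Your proposal is correct and is essentially the paper's proof: apply Gabber rigidity to the henselian pair \((A^h,\mathfrak m_{A^h})\), and get connectivity of \(K(A^h)\) by writing \(A^h\) as a filtered colimit of smooth \((P_0)_{\mathfrak p}\)-algebras and using that nonconnective \(K\)-theory commutes with filtered colimits. The step you single out as the main obstacle is settled in the paper by \cite[Cor.~2.3 and Prop.~2.4(1)]{AntieauMathewMorrow22PerfectoidKTheory}, which shows that smooth algebras over a valuation ring are weakly regular and stably coherent and hence have \(K_{<0}=0\), so your noetherian-approximation fallback is not needed.
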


\begin{proof}
By \Cref{lem:prufer-pointed-smooth-stage}, write
\(A^h=\varinjlim_\lambda B_\lambda\), with every \(B_\lambda\) smooth
over the valuation domain \((P_0)_{\mathfrak p}\).
By \cite[\arxivhtmlref{2203.06472v1}{S2.Thmtheorem3}{Cor.~2.3}]{AntieauMathewMorrow22PerfectoidKTheory}, each
\(B_\lambda\) is weakly regular and stably coherent. Therefore
\cite[\arxivhtmlref{2203.06472v1}{S2.I1.i1}{Prop.~2.4\textup{(1)}}]{AntieauMathewMorrow22PerfectoidKTheory}
applies and gives \(K_i(B_\lambda)=0\) for \(i<0\).
Nonconnective \(K\)-theory commutes with filtered colimits
\cite[\arxivhtmlref{1001.2282v4}{S9.E10}{Thm.~9.10}]{BGT13KTheoryStableCategories}; at the ring level this
uses that perfect complexes and their morphisms descend along filtered
ring colimits. Hence \(K_i(A^h)=0\) for
every \(i<0\). Thus \(K(A^h)\) is connective, and the canonical
comparison from connective to nonconnective \(K\)-theory is an
equivalence for \(A^h\). The same holds for the field \(k\). Therefore
the image of \(N^{-1}\in A\) shows that \(N\in(A^h)^\times\), and
\Cref{thm:gabber-rigidity-finite-coefficient-k}, applied to the
henselian pair \((A^h,\mathfrak m_{A^h})\), gives the claimed
equivalence of nonconnective finite-coefficient spectra.
\end{proof}

\begin{proposition}[Gersten injectivity at a pointed smooth stage]
\label{prop:prufer-pointed-smooth-generic-injectivity}
In the situation of
\Cref{lem:prufer-pointed-smooth-stage}, let
\(N=\ell^\nu\) be invertible in \(A\). Then
\[
K_n(A^h;\mathbf Z/N)\longrightarrow
K_n(\Frac(A^h);\mathbf Z/N)
\]
is injective for every \(n\in\mathbf Z\).
\end{proposition}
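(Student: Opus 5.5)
The plan is to compare \(A^h\) with the henselian valuation domain \(W\) of \Cref{lem:prufer-pointed-smooth-residue-valuation}, using the fact that both rings have the finite-coefficient \(K\)-theory of the common residue field \(k\).

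First, fix the injective local map \(A^h\to W\), which induces an isomorphism on residue fields, and the induced embedding \(\Frac(A^h)\hookrightarrow\Frac(W)\). These fit into a commutative square of rings,
\[
\begin{tikzcd}
A^h \ar[r]\ar[d] & \Frac(A^h)\ar[d]\\
W \ar[r] & \Frac(W),
\end{tikzcd}
\]
and applying \(K(-;\mathbf Z/N)\) gives a commutative square of spectra. Since the map is local, reduction to residue fields gives a second commutative triangle:
\[
A^h\to W\to k \quad\text{agrees with}\quad A^h\to k,
\]
where the map \(W\to k\) is reduction composed with the inverse of the residue isomorphism.

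Second, identify the vertical comparison \(K(A^h;\mathbf Z/N)\to K(W;\mathbf Z/N)\) as an equivalence.
\begin{itemize}
\item By \Cref{lem:prufer-pointed-smooth-nonconnective-rigidity}, reduction \(K(A^h;\mathbf Z/N)\to K(k;\mathbf Z/N)\) is an equivalence.
\item By \Cref{lem:valuation-closed-point-finite-coefficient-k-rigidity}, reduction \(K(W;\mathbf Z/N)\to K(k;\mathbf Z/N)\) is an equivalence. Here \(N\) is a unit in \(W\) because it is a unit in \(A\) and \(A\to W\) is a ring map.
\item The triangle above then shows, by two-out-of-three, that \(K(A^h;\mathbf Z/N)\to K(W;\mathbf Z/N)\) is an equivalence. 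In particular it is injective on every \(\pi_n\).
\end{itemize}

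Third, conclude by a diagram chase. Let \(x\in K_n(A^h;\mathbf Z/N)\) map to zero in \(K_n(\Frac(A^h);\mathbf Z/N)\).
\begin{itemize}
\item Pushing further to \(\Frac(W)\), commutativity of the square shows that the image of \(x\) in \(K_n(W;\mathbf Z/N)\) dies in \(K_n(\Frac(W);\mathbf Z/N)\).
\item \Cref{thm:henselian-valuation-generic-injectivity}, applied to the henselian valuation ring \(W\) with \(N=\ell^\nu\) invertible, shows that this image is zero.
\item Injectivity of \(K_n(A^h;\mathbf Z/N)\to K_n(W;\mathbf Z/N)\) then gives \(x=0\).
\end{itemize}
This is valid for all \(n\in\mathbf Z\), including negative degrees, since the rigidity statements above are nonconnective.

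The only point requiring care is the compatibility of the two rigidity equivalences. One must check that the reduction maps to \(k\) really form a commutative triangle through \(W\), that is, that the residue isomorphism of \Cref{lem:prufer-pointed-smooth-residue-valuation} is the one induced by \(A^h\to W\). This holds because the map is local, so it induces the residue map. The remaining ingredients are already in place: the substantive inputs are \Cref{lem:prufer-pointed-smooth-residue-valuation} and \Cref{thm:henselian-valuation-generic-injectivity}, so I expect no serious obstacle here.
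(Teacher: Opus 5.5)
Your proposal is correct and follows the paper's proof essentially verbatim: the paper uses the same square with vertical rigidity equivalences (Lemma~\ref{lem:prufer-pointed-smooth-nonconnective-rigidity} for \(A^h\), Lemma~\ref{lem:valuation-closed-point-finite-coefficient-k-rigidity} for \(W\)) to conclude that \(K(A^h;\mathbf Z/N)\to K(W;\mathbf Z/N)\) is an equivalence. It then runs the same diagram chase through \(\Frac(A^h)\hookrightarrow\Frac(W)\) and Theorem~\ref{thm:henselian-valuation-generic-injectivity}.
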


\begin{proof}
Choose \(A^h\hookrightarrow W\) as in
\Cref{lem:prufer-pointed-smooth-residue-valuation}. The square
\[
\begin{tikzcd}[column sep=large]
K(A^h;\mathbf Z/N) \ar[r] \ar[d,"\sim"'] &
K(W;\mathbf Z/N) \ar[d,"\sim"] \\
K(k;\mathbf Z/N) \ar[r,equal] & K(k;\mathbf Z/N)
\end{tikzcd}
\]
has vertical equivalences by
\Cref{lem:prufer-pointed-smooth-nonconnective-rigidity,lem:valuation-closed-point-finite-coefficient-k-rigidity}.
Here \(N\in A^\times\) remains a unit in \(A^h\) and \(W\), and the
bottom map is the identity under the residue-field identification in
\Cref{lem:prufer-pointed-smooth-residue-valuation}. Hence the top map
is an equivalence. If a class in the source of the
proposition vanishes over \(\Frac(A^h)\), its image in
\(K_n(W;\mathbf Z/N)\)
vanishes over \(\Frac(W)\). It is zero by
\Cref{thm:henselian-valuation-generic-injectivity}, and therefore the
original class is zero.
\end{proof}

\begin{theorem}[Gersten injectivity over a Prüfer base]
\label{thm:prufer-ind-smooth-generic-injectivity}
Let \(P\) be a Prüfer ring, let \(R\) be a henselian local ind-smooth
\(P\)-algebra, and let \(N=\ell^\nu\) be invertible in \(R\). Then
\(R\) is a domain, and
\[
K_n(R;\mathbf Z/N)\longrightarrow
K_n(\Frac(R);\mathbf Z/N)
\]
is injective for every \(n\in\mathbf Z\).
\end{theorem}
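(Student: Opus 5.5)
The plan is to reduce to \Cref{prop:prufer-pointed-smooth-generic-injectivity} by a continuity argument. We write \(R=\varinjlim_i R_i\) with each \(R_i\) smooth over \(P\), let \(\mathfrak m\) be the maximal ideal of \(R\), and set \(\mathfrak q_i\) to be its contraction to \(R_i\). Put \(A_i:=(R_i)_{\mathfrak q_i}\), an essentially smooth local \(P\)-algebra. Since \(R\) is local, it is the filtered colimit of the local rings \(A_i\) along local maps. Since \(R\) is henselian, the universal property of henselization gives compatible local maps \(A_i^h\to R\), and we obtain \(R\simeq\varinjlim_i A_i^h\). Indeed, the colimit of the \(A_i^h\) is a filtered colimit of henselian local rings along local maps, hence henselian. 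It also contains \(\varinjlim A_i=R\) and is ind-étale over it, so it equals the henselization of \(R\), which is \(R\) itself.

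For the domain assertion, each \(A_i^h\) is a normal domain by \Cref{lem:prufer-pointed-smooth-stage}. A filtered colimit of domains along arbitrary maps need not be a domain, so we check injectivity of the transition maps. The maps \(A_i\to A_j\) are injective: they factor through the generic fibres, and \(\Frac(A_i)\to\Frac(A_j)\) makes sense because \(R_i\to R_j\to R\) is flat. Indeed, smooth stages are flat over the Prüfer base, and a local homomorphism between essentially smooth local algebras over a valuation ring sends nonzero elements to nonzero elements once we pass to the generic point. An easier route, which I would try first, is to show directly that \(R\) is a domain. By \Cref{lem:prufer-pointed-smooth-residue-valuation}, for each \(i\) there is an injection \(A_i^h\hookrightarrow W_i\). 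Then a product \(xy=0\) in \(R\) comes from some stage \(j\) with \(xy=0\) in \(A_j^h\), so \(x=0\) or \(y=0\) in \(A_j^h\), and hence in \(R\). This argument works without injectivity of the transition maps. Thus \(R\) is a domain and \(\Frac(R)=\varinjlim\) of images.

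For injectivity, we take \(\alpha\in K_n(R;\mathbf Z/N)\) whose image in \(K_n(\Frac(R);\mathbf Z/N)\) vanishes. We write \(\Frac(R)=\varinjlim_{0\ne s\in R}R[1/s]\). By continuity of nonconnective \(K\)-theory (\cite[Thm.~9.10]{BGT13KTheoryStableCategories}, after smashing with \(\mathbf S/N\)), \(\alpha\) already dies in \(K_n(R[1/s];\mathbf Z/N)\) for some \(s\ne0\). Applying continuity again along \(R=\varinjlim A_i^h\), we find \(i\), a class \(\alpha_i\in K_n(A_i^h;\mathbf Z/N)\) lifting \(\alpha\), and an element \(s_i\in A_i^h\) mapping to \(s\), such that \(\alpha_i\) vanishes in \(K_n(A_i^h[1/s_i];\mathbf Z/N)\). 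Since \(s\ne0\) we have \(s_i\ne0\), and \(N\) is a unit at a large enough stage. Hence \(\alpha_i\) dies in \(K_n(\Frac(A_i^h);\mathbf Z/N)\). By \Cref{prop:prufer-pointed-smooth-generic-injectivity} we get \(\alpha_i=0\), and therefore \(\alpha=0\).

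The main obstacle is the bookkeeping in the colimit step. We need the identification \(R\simeq\varinjlim A_i^h\) as henselian local rings, and we need \(N\) to be invertible at the chosen stage, which we can arrange by enlarging \(i\). The domain argument also requires care, because it uses the valuation embedding rather than flatness.
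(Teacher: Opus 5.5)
Your proposal is correct and follows essentially the paper's route: you descend the class and a nonzero denominator witnessing its generic vanishing to a single stage by continuity, pass to the henselized local ring at the contraction of $\mathfrak m$, and conclude with \Cref{prop:prufer-pointed-smooth-generic-injectivity}. The differences are minor. The paper descends to a smooth stage $C$ and uses the factorization $C\to(C_{\mathfrak q})^h\to R$, which avoids your identification $R\simeq\varinjlim_i A_i^h$. Also, $N$ is automatically a unit in every $A_i$ because $N\notin\mathfrak m$, so no enlarging of $i$ is needed. Finally, a filtered colimit of domains is always a domain, by exactly the lifting argument you give. So you can drop your worry about transition maps and the flatness digression, which is not correct as stated, and the domain step needs only that each $A_j^h$ is a domain, not the valuation embedding.
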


\begin{proof}
After a cofinal reindexing, choose a filtered poset \(I\) and smooth
\(P\)-algebras \(C_i\), \(i\in I\), such that
\(R\simeq\varinjlim_{i\in I}C_i\). Let \(\mathfrak m\) be the maximal
ideal of \(R\).

We first prove that \(R\) is a domain. Let \(x,y\in R\) satisfy
\(xy=0\). After increasing an index, there are \(i\in I\) and
\(x_i,y_i\in C_i\) mapping to \(x,y\) such that \(x_iy_i=0\). Put
\(\mathfrak q_i:=\mathfrak m\cap C_i\). The local ring
\((C_i)_{\mathfrak q_i}\) is a domain by
\Cref{lem:prufer-pointed-smooth-stage}; hence, say, \(x_i/1=0\).
There is \(s\in C_i\setminus\mathfrak q_i\) such that \(sx_i=0\).
The image of \(s\) in \(R\) is a unit, so \(x=0\). Thus \(R\) is a
domain.

Let \(\alpha\in K_n(R;\mathbf Z/N)\) have zero image in
\(K_n(\Frac(R);\mathbf Z/N)\). 

Since
\(\Frac(R)=\varinjlim_{0\ne f\in R}R[1/f]\), and finite-coefficient
nonconnective \(K\)-theory commutes with filtered colimits
\cite[\arxivhtmlref{1001.2282v4}{S9.E10}{Thm.~9.10}]{BGT13KTheoryStableCategories},
there is a nonzero \(f\in R\) such that \(\alpha\) vanishes in
\(K_n(R[1/f];\mathbf Z/N)\).

After increasing an index, choose \(i\in I\),
\(\alpha_i\in K_n(C_i;\mathbf Z/N)\), and \(f_i\in C_i\) mapping to
\(\alpha\) and \(f\), respectively. For \(j\ge i\), denote their
images in \(C_j\) by \(\alpha_j\) and \(f_j\). Since
\(R[1/f]\simeq\varinjlim_{j\ge i}C_j[1/f_j]\), there is \(j\ge i\)
such that \(\alpha_j\) vanishes in
\(K_n(C_j[1/f_j];\mathbf Z/N)\). Put
\(C:=C_j\), \(\alpha_C:=\alpha_j\), \(f_C:=f_j\), and
\(\mathfrak q:=\mathfrak m\cap C\).

The integer \(N\) is a unit in \(C_{\mathfrak q}\), because its image
is a unit in \(R\). Hence
\Cref{prop:prufer-pointed-smooth-generic-injectivity} applies to
\((C_{\mathfrak q})^h\). Since \(R\) is henselian, the local map
\(C_{\mathfrak q}\to R\) extends to
\((C_{\mathfrak q})^h\to R\). The image of \(f_C\) in
\((C_{\mathfrak q})^h\) is nonzero, since its image in \(R\) is
\(f\ne0\). The image of \(\alpha_C\) vanishes after inverting \(f_C\),
hence in \(K_n(\Frac((C_{\mathfrak q})^h);\mathbf Z/N)\). The cited
proposition shows that it already vanishes in
\(K_n((C_{\mathfrak q})^h;\mathbf Z/N)\). Its image in
\(K_n(R;\mathbf Z/N)\) is \(\alpha\), so \(\alpha=0\).
\end{proof}

\subsection*{Acknowledgments}
The author is grateful to Fr\'ed\'eric D\'eglise for introducing him to the Gersten
conjecture in January 2018 and for many subsequent discussions around this problem.

\medskip
\noindent\textbf{Use of AI-assisted tools}.
Large language models were used as interactive tools for expanding proof
sketches, checking consistency, and improving exposition. All resulting
suggestions were checked by the author, who assumes full responsibility
for the mathematical statements, proofs, and references.

\end{document}